\documentclass{amsart}
\usepackage{amsmath}
\usepackage{amsthm,amssymb,amsfonts,hyperref}
\usepackage{amsmath,ifthen,srcltx,amsthm,amsopn,amssymb,amsfonts,color}

%
\newcommand{\showcomments}{yes}

\newsavebox{\commentbox}
%
{\ifthenelse{\equal{\showcomments}{yes}}%
{\footnotemark
        \begin{lrbox}{\commentbox}
        \begin{minipage}[t]{1.25in}\raggedright\sffamily\tiny
        \footnotemark[\arabic{footnote}]}
{\begin{lrbox}{\commentbox}}}%
{\ifthenelse{\equal{\showcomments}{yes}}%
{\end{minipage}\end{lrbox}\marginpar{\usebox{\commentbox}}}
{\end{lrbox}}}

\title[Growth of $L^2$-invariants for sequences of lattices in Lie groups]{On the growth of $L^2$-invariants for sequences of lattices in Lie groups
}

\author[Abert, Bergeron, Biringer, Gelander, Nikolov, Raimbault, Samet]{Miklos Abert, Nicolas Bergeron, Ian Biringer, Tsachik Gelander, Nikolay Nikolov, Jean Raimbault and Iddo Samet}

\address {Renyi Institute of Mathematics \\
13-15 Realtanoda utca, 
1053 Budapest, Hungary\\}
\email{abert.miklos@renyi.mta.hu}


\address{Sorbonne Universit\'es, UPMC Univ Paris 06, Institut de Math\'ematiques de Jussieu--Paris Rive Gauche, UMR 7586, CNRS, Univ Paris Diderot, Sorbonne Paris Cit\'e, F-75005, Paris, France}
\email{nicolas.bergeron@imj-prg.fr}
\urladdr{http://people.math.jussieu.fr/~bergeron}

\address {Boston College \\
Mathematics Department \\
 140 Commonwealth Ave.
Chestnut Hill, MA  02467-3806\\}
\email{ianbiringer@gmail.com}

\address{Einstein Institute of Mathematics\\
Edmond J. Safra Campus, Givat Ram\\
The Hebrew University of Jerusalem\\
Jerusalem, 91904, Israel\\}
\email{tsachik.gelander@gmail.com}

\address{University College, \\ Oxford \\ OX1 4BH, UK. }
\email{zarkuon@gmail.com}

\address{Institut de Math\'ematiques de Toulouse ; UMR5219 \\ Universit\'e de Toulouse ; CNRS \\ UPS IMT, F-31062 Toulouse Cedex 9, France}
\email{Jean.Raimbault@math.univ-toulouse.fr}

\address{
University of Illinois at Chicago\\
Department of Mathematics, Statistics, and Computer Science\\
Chicago, IL 60607\\}
\email{samet@math.uic.edu}

 \DeclareFontFamily{OT1}{rsfs}{}
 
\DeclareFontShape{OT1}{rsfs}{n}{it}{<-> rsfs10}{}
\DeclareMathAlphabet{\mathscr}{OT1}{rsfs}{n}{it}

\newcommand{\BE}{{\mathbb{E}}}
\newcommand{\BN}{{\mathbb{N}}}
\newcommand{\BZ}{\mathbb{Z}}

\newcommand{\BC}{\mathbb{C}}
\newcommand{\BR}{\mathbb{R}}

\newcommand{\BH}{\mathbb{H}}

\newcommand{\Tr}{\mathrm{tr}\,}

\newcommand{\tx}{\tilde{x}}

\newcommand{\actson}{\curvearrowright}

\newcommand{\eps}{\varepsilon}

\newcommand{\C}{\mathbb{C}}

\newcommand{\Z}{\mathbb{Z}}

\newcommand{\p}{\mathfrak{p}}
\newcommand{\g}{\mathfrak{g}}

\renewcommand{\H}{\mathbf{H}}

\DeclareFontFamily{OT1}{rsfs}{}

\DeclareFontShape{OT1}{rsfs}{n}{it}{<-> rsfs10}{}
\DeclareMathAlphabet{\mathscr}{OT1}{rsfs}{n}{it}
\newcommand{\Q}{\mathbb{Q}}

\newcommand{\R}{\mathbb{R}}
\newcommand{\SO}{\mathrm{SO}}

\newcommand{\Sp}{\mathrm{Sp}}

\newcommand{\gD}{\Delta}
\newcommand{\gd}{\delta}
\newcommand{\gb}{\beta}

\newcommand{\gC}{\Gamma}
\newcommand{\gc}{\gamma}

\newcommand{\gS}{\Sigma}

\newcommand{\gep}{\epsilon}

\newcommand{\ga}{\alpha}
\newcommand{\gt}{\tau}

\newcommand{\ti}[1]{\tilde{#1}}
\newcommand{\vol}{\mathrm{vol}}

\newcommand\probability{\operatorname{Prob}}

\swapnumbers
\newtheorem{thm}[subsection]{Theorem}
\newtheorem{lem}[subsection]{Lemma}
\newtheorem*{lem*}{Lemma}
\newtheorem{prop}[subsection]{Proposition}
\newtheorem*{prop*}{Proposition}

\newtheorem{exa}[subsection]{Example}
\newtheorem{conj}[subsection]{Conjecture}

\newtheorem{cor}[subsection]{Corollary}

\theoremstyle{definition}
\newtheorem{defn}[subsection]{Definition}
\newtheorem*{rem}{Remark}

\numberwithin{equation}{subsection}

\renewcommand{\H}{\mathbb H}  %

\newcommand{\cal}{\mathcal}
\newcommand{\GL}{\mathrm{GL}}
\newcommand{\SL}{\mathrm{SL}}

\newcommand{\SU}{\mathrm{SU}}

\newcommand{\sub}{\mathrm{Sub}}

\newcommand {\comment} [1] {}

\setcounter{tocdepth}{1}
\begin{document}

\begin{abstract}
We study the asymptotic behaviour of Betti numbers, twisted torsion and other spectral invariants of sequences of locally symmetric spaces. 
Our main results are uniform versions of the DeGeorge--Wallach Theorem, of a theorem of Delorme and various other limit multiplicity theorems.

A basic idea is to adapt the notion of Benjamini--Schramm convergence (BS-convergence), originally introduced for sequences of finite graphs of 
bounded degree, to sequences of Riemannian manifolds, and analyze the possible limits. We show that BS-convergence of locally symmetric 
spaces $\gC\backslash G/K$ implies convergence, in an appropriate sense, of the normalized relative Plancherel measures associated 
to $L^2 (\gC\backslash G)$.
This then yields convergence of normalized multiplicities of unitary representations, Betti numbers and other spectral invariants.
On the other hand, when the corresponding Lie group $G$ is simple and of real rank at least two, we prove that there is only one 
possible BS-limit, i.e. when the volume tends to infinity, locally symmetric spaces always BS-converge to their universal cover 
$G/K$. This leads to various general uniform results.

When restricting to arbitrary sequences of congruence covers of a fixed arithmetic manifold we prove a strong quantitative version of 
BS-convergence which in turn implies upper estimates on the rate of convergence of normalized Betti numbers in the spirit of Sarnak--Xue. 

An important role in our approach is played by the notion of Invariant Random Subgroups. For higher rank simple Lie groups $G$, we exploit
rigidity theory, and in particular the Nevo--St\"{u}ck--Zimmer theorem and Kazhdan`s property (T), 
to obtain a complete understanding of the space of IRSs of $G$.

\end{abstract}
\maketitle
\tableofcontents

\section{Introduction and statement of the main results}

Let $G$ be a connected center-free semi-simple Lie group without compact factors, $K\le G$ a maximal compact subgroup and $X=G/K$ the associated Riemannian symmetric space. The main results of this paper concern the asymptotic of $L^2$-invariants of the spaces $\Gamma\backslash X$, where $\Gamma$ varies over the space of lattices of $G$.

Most of our results rely on the notion of \emph {Benjamini--Schramm convergence}, or \emph {BS-convergence}, for sequences of locally symmetric spaces $\Gamma_n\backslash X$. We start by introducing a particularly transparent case:  when $\Gamma_n \backslash X $ BS-converges to $X$.

\begin{defn}\label{defn:conv-to-G}
Let $(\Gamma_n)$ be a sequence of lattices in $G$. We say that the $X $-orbifolds $M_n=\Gamma_n\backslash X$ {\it BS-converge} to $X$  if for every $R>0$, the probability that the $R$-ball centered around a random point in $M_n$ is isometric to the $R$-ball in $X$ tends to $1$ when $n\to \infty$; i.e.\ for every $R>0$, we have 
$$\lim_{n \to +\infty} \frac{\vol ((M_n)_{<R})}{\vol (M_n)} = 0,$$
where $M_{<R} = \{ x \in M \; : \; \mathrm{InjRad}_M (x) < R \}$ is the $R$-thin part of $M$.
\end{defn}

A straightforward and well studied example is when $\Gamma \leq G $ is a uniform lattice and $\Gamma_n \leq \Gamma $ is a chain of normal 
subgroups with trivial intersection; in this case, the $R $-thin part of $\Gamma_n \backslash X $ is empty for large enough $n $.

\medskip
\noindent \textbf{General BS-convergence.} The definition above fits into a more general notion of convergence, adapted from that introduced by Benjamini and Schramm \cite {localconvergence} for sequences of bounded degree graphs. 

Consider the space $\mathcal M $ of  pointed, proper metric spaces, endowed with the  pointed Gromov--Hausdorff topology. Each $\Gamma_n \backslash X$ can be turned into a probability measure on $\mathcal M $ by choosing the  basepoint at random with respect to volume; this measure is supported on  pointed spaces isometric to $\Gamma_n \backslash X$.  We say that $\Gamma_n\backslash X $ \emph {BS-converges} if these measures weakly converge.  The limit object is then a probability measure on $\mathcal M $. This  perspective is elaborated on in Section \ref {sec:3}. 

Most of the results of this paper assume (or prove) BS-convergence to $X$. These results can often be extended to general BS-convergent sequences, but they tend to get more technical and sometimes further assumptions are needed, they will appear in a sequel of this paper to be extracted from our original arXiv paper \cite{7s}. 

This definition of BS-convergence is very broad and works just as well for sequences of finite volume Riemannian manifolds. In our situation, the common ambient group $G$ allows a useful algebraic reformulation of BS-convergence where probability measures on $\mathcal M$ are replaced by \emph {invariant random subgroups} of $G $, i.e.\ $G$-invariant measures on the space of closed subgroups of $G$.  This reformulation is what we use in most of the paper. This will be discussed at the end of the Introduction and in Sections \ref {sec:2} and \ref {sec:3}.

\medskip
\noindent \textbf{Uniform discreteness.} A family of lattices (resp.\ the associated $X$-orbifolds) is {\it uniformly discrete} if there is an identity neighborhood in $ G$ that intersects trivially all of their conjugates. For torsion-free lattices $\Gamma_n$, this is equivalent to saying that there is a uniform lower bound for the injectivity radius of the manifolds $M_n=\Gamma_n\backslash X$. (So in particular, a uniformly discrete family of  lattices consists only of uniform lattices.) 

Any family $(M_n)$ of covers of a fixed compact orbifold is uniformly discrete.
Margulis has conjectured \cite[page 322]{margulis:book}
(see also \cite[Section 10]{Gel:HV}) that the family of all cocompact torsion-free arithmetic lattices in $G$ is uniformly discrete.  This is a weak form of the famous Lehmer conjecture on monic integral polynomials.

\medskip
\noindent \textbf{BS-convergence and Plancherel measure.} Our first result says that BS-convergence to $X$ implies a spectral convergence: namely, when $(\Gamma_n)$ is uniformly discrete, the relative Plancherel measure of $\Gamma_n \backslash G$ will converge to the Plancherel measure of $G$ in a strong sense.  

For an irreducible unitary representation $\pi\in \widehat G$ and a uniform lattice $\Gamma$ in $G$ 
let $\mathrm{m}(\pi,\Gamma)$ be the multiplicity of $\pi$ in the right regular representation $L^2(\Gamma \backslash G)$. 
Define the relative Plancherel measure of $\Gamma \backslash G$ as the measure
$$\nu_{\Gamma} = \frac{1}{\vol (\Gamma \backslash G)} \sum_{\pi \in \widehat{G}} \mathrm{m}(\pi,\Gamma) \delta_{\pi}$$
on $\widehat{G}$. Finally denote by $\nu^G$ the Plancherel measure of the right regular representation $L^2 (G)$. Recall that the support of $\nu^G$ is $\widehat{G}_{\rm temp}$ --- the subset of the unitary dual $\widehat{G}$ which consists
of tempered representations.

\begin{thm}[Theorem \ref{T1}] \label{thm1}
Let $(\Gamma_n)$ be a uniformly discrete sequence of lattices in $G$ such that the spaces $\Gamma_n \backslash X$ BS-converge to $X$. Then for every quasi-compact $\nu^G$-regular open subset $S \subset \widehat{G}$ or $S \subset \widehat{G}_{\rm temp}$, we have: 
$$
\nu_{\Gamma_n} (S) \to \nu^G (S).
$$
\end{thm}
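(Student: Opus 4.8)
The plan is to test the measures against Fourier transforms of compactly supported functions and reduce to the trace formula. Observe first that a uniformly discrete lattice contains no nontrivial unipotents, hence is cocompact by Godement's criterion; so each $\Gamma_n\backslash G$ is compact, $L^2(\Gamma_n\backslash G)$ decomposes discretely as $\bigoplus_{\pi\in\widehat G}\pi^{\oplus \mathrm{m}(\pi,\Gamma_n)}$ with finite multiplicities, and for $f\in C_c^\infty(G)$ the convolution operator $R_{\Gamma_n}(f)$ is trace class. Writing $\widehat f(\pi)=\mathrm{Tr}\,\pi(f)$ we then have $\int_{\widehat G}\widehat f\,d\nu_{\Gamma_n}=\vol(\Gamma_n\backslash G)^{-1}\,\mathrm{Tr}\,R_{\Gamma_n}(f)$, while the Plancherel theorem of Harish-Chandra gives $\int_{\widehat G}\widehat f\,d\nu^G=f(e)$. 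So it suffices to prove
$$\frac{1}{\vol(\Gamma_n\backslash G)}\,\mathrm{Tr}\,R_{\Gamma_n}(f)\;\longrightarrow\;f(e)\qquad(f\in C_c^\infty(G)),$$
and then to upgrade this convergence of spectral integrals to convergence of the measures on regular sets.

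For the first step I use the pre-trace formula: $R_{\Gamma_n}(f)$ is the integral operator on $\Gamma_n\backslash G$ with kernel $\sum_{\gamma\in\Gamma_n}f(x^{-1}\gamma y)$, so
$$\mathrm{Tr}\,R_{\Gamma_n}(f)=\int_{\Gamma_n\backslash G}\sum_{\gamma\in\Gamma_n}f(g^{-1}\gamma g)\,dg=\vol(\Gamma_n\backslash G)\,f(e)+\int_{\Gamma_n\backslash G}\sum_{\gamma\in\Gamma_n\setminus\{e\}}f(g^{-1}\gamma g)\,dg .$$
Fix $r>0$ with $\mathrm{supp}\,f\subseteq B_G(e,r)$. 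The remaining sum is supported over the $r$-thin part of $M_n=\Gamma_n\backslash X$: if $g^{-1}\gamma g\in B_G(e,r)$ then, since $G\to X$ is $1$-Lipschitz and $G$-equivariant, the lift $gK\in X$ of the image point $x$ of $g$ satisfies $d_X(gK,\gamma\,gK)\le r$, which forces $\mathrm{InjRad}_{M_n}(x)<r$. Moreover, uniform discreteness provides a fixed neighbourhood $U\ni e$ with $g^{-1}\Gamma_n g\cap U=\{e\}$ for all $n$ and all $g$, and a standard covering argument then bounds $\#\bigl(g^{-1}\Gamma_n g\cap \mathrm{supp}\,f\bigr)$ by a constant $m(\mathrm{supp}\,f,U)$ independent of $n$ and $g$. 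Hence $\bigl|\sum_{\gamma\ne e}f(g^{-1}\gamma g)\bigr|\le \|f\|_\infty\,m(\mathrm{supp}\,f,U)$ uniformly, and
$$\Bigl|\frac{\mathrm{Tr}\,R_{\Gamma_n}(f)}{\vol(\Gamma_n\backslash G)}-f(e)\Bigr|\;\le\; C_f\,\frac{\vol\bigl((M_n)_{<r}\bigr)}{\vol(M_n)}\;\longrightarrow\;0$$
by BS-convergence to $X$, where $C_f$ absorbs $\|f\|_\infty$, $m(\mathrm{supp}\,f,U)$ and the fixed factor $\vol(K)$. This is the geometric heart of the argument, and it is exactly here that both hypotheses are used: uniform discreteness furnishes the uniform-in-$n$ bound on short lattice elements, and BS-convergence makes the thin-part volume fraction disappear.

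It remains to pass from $\int\widehat f\,d\nu_{\Gamma_n}\to\int\widehat f\,d\nu^G$ for all $f\in C_c^\infty(G)$ to $\nu_{\Gamma_n}(S)\to\nu^G(S)$. For this I invoke Sauvageot's density principle: given a relatively compact $\nu^G$-regular open $S$ (in $\widehat G$ or in $\widehat G_{\mathrm{temp}}$) and $\varepsilon>0$, there exist $f^+,f^-\in C_c^\infty(G)$ with $\widehat{f^-}\le \mathbf 1_S\le \mathbf 1_{\overline S}\le \widehat{f^+}$ and $\widehat{f^\pm}\ge 0$ pointwise on all of $\widehat G$, and with $f^+(e)\le\nu^G(S)+\varepsilon$, $f^-(e)\ge\nu^G(S)-\varepsilon$; the essential feature is that the inequalities hold on the whole unitary dual, so the a priori uncontrolled non-tempered part of the spectrum of the $\Gamma_n$ is handled for free. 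Then $\nu_{\Gamma_n}(S)\le\int_{\widehat G}\widehat{f^+}\,d\nu_{\Gamma_n}\to f^+(e)\le\nu^G(S)+\varepsilon$ and, symmetrically, $\nu_{\Gamma_n}(S)\ge\int_{\widehat G}\widehat{f^-}\,d\nu_{\Gamma_n}\to f^-(e)\ge\nu^G(S)-\varepsilon$; letting $\varepsilon\to0$ gives the theorem. (All measures here are only locally finite, but the manipulations take place on the relatively compact set $S$; finiteness of $\nu_{\Gamma_n}(S)$ follows from the trace-class bound applied to a single $f$ with $\widehat f\ge\mathbf 1_{\overline S}$, and $\nu^G$-regularity, i.e.\ $\nu^G(\partial S)=0$, is precisely what allows the two one-sided approximations to both be $\varepsilon$-close to $\nu^G(S)$.)

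Granting Godement's criterion, the pre-trace formula, the Harish-Chandra Plancherel theorem and Sauvageot's density principle, the main obstacle is the geometric localization in the second paragraph: one must get the comparison between displacement in $G$, the injectivity radius of $M_n$ and the $r$-thin part exactly right — including the behaviour at orbifold points and the bookkeeping between $\Gamma_n\backslash G$ and $\Gamma_n\backslash X$ — and, above all, make the bound on the number of short lattice elements \emph{uniform over the whole sequence}. That uniformity is exactly what fails for sequences that are not uniformly discrete, which is why the hypothesis cannot simply be dropped; the more classical-looking non-tempered subtlety is subsumed once one quotes Sauvageot's principle.
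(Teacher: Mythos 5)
Your proof is correct and follows essentially the same route as the paper's: convert $\nu_{\Gamma_n}$ to the geometric side of the pre-trace formula, use uniform discreteness to control the non-identity term and localize it to the $R$-thin part (which BS-convergence to $X$ makes volume-negligible), and finally pass from Fourier transforms of test functions in $C_c^\infty(G)$ to relatively compact $\nu^G$-regular sets via Sauvageot's density principle. The paper packages the middle step as Proposition \ref{P1} --- a general statement that for uniformly discrete sequences of IRSs weak convergence in the Chabauty topology entails convergence of the expected spectral linear forms, proved via continuity of $\Lambda\mapsto\sum_{\lambda\in\Lambda}\phi(\lambda)$ --- and your explicit thin-part estimate is the concrete unwinding of that proposition in the special case where the limit IRS is $\mu_{\mathrm{id}}$.
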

Note that the Plancherel measure of $G$ depends on a choice of a Haar measure on $G$ as does $\vol(\Gamma \backslash G)$. 
We recall basic facts on the topology of $\widehat{G}$ in Section \ref{sec:7}.

Let $d(\pi)$ be the `multiplicity' --- or rather the formal degree --- of $\pi$ in the regular representation $L^2(G)$ with respect to the Plancherel measure of $G$. Thus, $d(\pi)=0$ unless $\pi$ is a discrete series representation. 
Theorem \ref{thm1} implies the following:

\begin{cor}\label{thm1bis}
Let $(\Gamma_n) $ be a uniformly discrete sequence of lattices in $G$ such that the spaces $\Gamma_n \backslash X $ BS-converge to $X$. Then for all $\pi \in \widehat G$, we have 
$$
 \frac{m(\pi,\Gamma_n)}{\vol(\Gamma_n \backslash G)}\to d(\pi).
$$
\end{cor}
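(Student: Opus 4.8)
The plan is to deduce Corollary \ref{thm1bis} from Theorem \ref{thm1} by feeding the latter cleverly chosen test sets $S$ and reading off $m(\pi,\Gamma_n)$ as the $\nu_{\Gamma_n}$-mass of the singleton $\{\pi\}$. The two facts about $G$ that make this work, both recalled in Section \ref{sec:7}, are: (i) a discrete series representation $\pi$ is an isolated (indeed clopen) point of the tempered dual $\widehat{G}_{\rm temp}$, and $\nu^G(\{\pi\}) = d(\pi)$ for such $\pi$; and (ii) on the complement of the discrete series the Plancherel measure is non-atomic --- it is carried by the continuous families $\mathrm{Ind}_P^G(\sigma\otimes\nu)$ induced from proper cuspidal parabolics, on each of which it is a smooth density --- so that $\nu^G(\{\pi\}) = 0 = d(\pi)$ for every other $\pi \in \widehat G$. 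Throughout I will use the trivial bound $m(\pi,\Gamma_n)/\vol(\Gamma_n\backslash G) \le \nu_{\Gamma_n}(S)$, valid whenever $\pi \in S$, which holds because $\nu_{\Gamma_n}$ is a sum of the nonnegative atoms $m(\pi',\Gamma_n)/\vol(\Gamma_n\backslash G)\cdot\delta_{\pi'}$.

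First suppose $\pi$ is a discrete series representation. By (i) the singleton $S=\{\pi\}$ is open in $\widehat{G}_{\rm temp}$; it is relatively compact, and $\nu^G$-regular since its boundary in $\widehat{G}_{\rm temp}$ is empty. Hence Theorem \ref{thm1}, applied with $S\subset\widehat{G}_{\rm temp}$, gives $\nu_{\Gamma_n}(\{\pi\}) \to \nu^G(\{\pi\})$, which is exactly $m(\pi,\Gamma_n)/\vol(\Gamma_n\backslash G) \to d(\pi)$.

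Now suppose $\pi$ is not a discrete series representation, so $d(\pi)=0$ and, by (ii), $\nu^G(\{\pi\})=0$. Fix $\varepsilon>0$. Using that $\widehat G$ is locally compact together with the local description of $\nu^G$ near $\pi$ --- if $\pi$ is tempered, the smoothness of the Plancherel density on the continuous family through $\pi$; if $\pi$ is non-tempered, the fact that $\widehat{G}_{\rm temp}=\mathrm{supp}(\nu^G)$ is closed and avoids $\pi$ --- one can choose a relatively compact, $\nu^G$-regular open neighbourhood $S$ of $\pi$ in $\widehat G$ with $\nu^G(S)<\varepsilon$. Then Theorem \ref{thm1} yields $\limsup_n m(\pi,\Gamma_n)/\vol(\Gamma_n\backslash G) \le \lim_n \nu_{\Gamma_n}(S) = \nu^G(S) < \varepsilon$. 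Since $\varepsilon>0$ was arbitrary, $m(\pi,\Gamma_n)/\vol(\Gamma_n\backslash G) \to 0 = d(\pi)$, which finishes the proof.

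The only genuinely non-formal point --- the main obstacle --- is the construction of these test sets, i.e.\ verifying that $\pi$ admits arbitrarily small relatively compact open neighbourhoods with $\nu^G$-null boundary, and that $\{\pi\}$ itself is such a set when $\pi$ is discrete series. This rests on the structure theory of $\widehat G$ and of the Plancherel measure on $\widehat{G}_{\rm temp}$ (its atoms are precisely the discrete series; on every continuous component it has a smooth density) recalled in Section \ref{sec:7}. A mild technical annoyance is that $\widehat G$ need not be Hausdorff, but since $\widehat{G}_{\rm temp}$ is closed and any pathological identifications occur only among tempered representations, shrinking $S$ and perturbing its boundary off a $\nu^G$-null set is enough. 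Everything else is a soft measure-theoretic consequence of Theorem \ref{thm1}.
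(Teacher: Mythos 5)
Your proof is correct and is precisely the argument the paper leaves implicit when it asserts that Corollary~\ref{T3} follows from Theorem~\ref{T1}: the discrete-series case via the clopen singleton in $\widehat{G}_{\rm temp}$, and the non-discrete-series case by squeezing through arbitrarily small relatively compact $\nu^G$-regular neighbourhoods using the monotonicity bound $m(\pi,\Gamma_n)/\vol(\Gamma_n\backslash G)\le\nu_{\Gamma_n}(S)$. (The paper also records a second, genuinely different proof in \S\ref{alternative}, built on a DeGeorge--Wallach/Savin-type upper bound together with the Rohlfs--Speh identity of Proposition~\ref{PRS}, but your route is the primary one.)
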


In the special situation when $(\Gamma_n)$ is a chain of normal subgroups with trivial intersection in some fixed cocompact lattice $\Gamma\le G$, Corollary \ref{thm1bis} is the classical theorem of DeGeorge and Wallach \cite{DeGeorgeWallach}. In that very same situation Theorem \ref{thm1} is due to Delorme 
\cite{Delorme}. Since the pioneering work of DeGeorge and Wallach, `limit formulas' have been the subject of extensive studies. Two main directions of improvement have been considered. 

The first direction is concerned with the extension of the theorems of DeGeorge--Wallach and Delorme to 
non-uniform lattices. In the case of the DeGeorge--Wallach theorem we refer to \cite{DeGeorge,BarbaschMoscovici,Clozel,RohlfsSpeh,Savin}. 
Note that these works were partially motivated by a question of Kazhdan \cite{Kazhdan} pertaining to his work on the field of definition of arithmetic varieties. 
The limit multiplicity problem for the entire unitary dual has been solved for the standard congruence subgroups of $\SL_2 (\Z)$ by Sarnak in \cite{Sarnak} (see also \cite{Iwaniec1,DeitmarHoffman}) but is still open in general. A partial result for certain normal towers of congruence arithmetic lattices defined by groups
of $\Q$-rank one has been shown in \cite{DeitmarHoffman}. Very recently important progress have been made by 
Finis, Lapid and M\"uller \cite{FLM} who can deal with groups of arbitrary rank. In these works the authors usually deal with towers of normal subgroups. 

A second direction is to extend the theorems of DeGeorge--Wallach and Delorme to more general sequences of (uniform) lattices. This has been addressed in some of the above mentioned works for certain (non-principal) congruence subgroups of a fixed lattice, such as $\Gamma_0 (N)$, see also \cite{Iwaniec2} for another example. Theorem \ref{thm1} is the first example where one can deal with sequences of non-commensurable lattices. 

The classical theorem of DeGeorge and Wallach implies a corresponding statement on the approximation of 
$L^2$-Betti numbers by normalized Betti numbers of finite covers (see also Donnelly \cite{Donnellytowers}). 
Theorem \ref{thm1} implies the following uniform version of it. 

\begin{cor}\label{cor2}
Let $(\Gamma_n)_{n\geq 1}$ be a uniformly discrete sequence of uniform lattices in $G$ such that $\Gamma_n \backslash X$ BS-converges to $X$. Then for every $k\le \dim(X)$ we have
$$\frac{b_{k}(\Gamma_{n})}{\mathrm{vol}(\Gamma_n \backslash X)} \to \beta
_{k}^{(2)}(X).
$$
\end{cor}

In the corollary, $b_k (\Gamma_n) $ is the $k ^ {\text {th}} $ Betti number of the (virtually torsion-free) group $\Gamma_n $,\footnote{The group $\Gamma_n $ being virtually torsion-free, the orbifold $\Gamma_n \backslash X$ is finitely covered by a manifold whose $\Gamma_n$-invariant rational $k$-th cohomology group coincides with the rational $k$-th orbifold cohomology of $\Gamma_n \backslash X$ and is of finite rank $b_k (\Gamma_n )$; in particular if $\Gamma_n$ is torsion-free $b_k (\Gamma_n)$ is the $k$-th Betti number of $\Gamma_n \backslash X$.} and
$$
 \beta_k ^ {(2)}(X) = \begin {cases} \frac{\chi (X^d)}{\vol (X^d)} & k = \frac12 \dim X \\ 0 & \text {otherwise},\end {cases}
 $$
is the $k ^ {\text {th}} $ {\it $L^2$-Betti number of $X$,}
where $X ^d $ is the compact dual of $X$ equipped with the Riemannian metric induced by the Killing form on $\mathrm{Lie}(G)$.    We refer the reader to \S\ref{8.2} for an analytic definition of $\beta_k ^ {(2)}(X)$. By \cite {Allday} and \cite {Papadima}, the Euler characteristic $\chi (X ^ d) $ is nonzero exactly when the \emph {fundamental rank}
$$
 \delta (G) = \C\mbox{-rank} (G) - \C\mbox{-rank} (K)
$$
of $G $ is zero.  Alternatively, it follows from the equality of the Euler characteristic and its $L ^ 2 $-analogue that in the middle dimension, $\beta^ {(2)}_k (X) \neq 0$ if and only if the Euler characteristic of some (or, equivalently, every) closed $X$-manifold is nonzero.

\medskip
\noindent \textbf{Uniform BS-convergence in higher rank.} In the higher rank case we have the following remarkable phenomenon, that gives a surprisingly strong result when combined with Theorem \ref{thm1}. Note that in the following result we do not restrict to the case where the $\Gamma_n$ are cocompact and in particular, we do not assume uniform discreteness.

\begin{thm}[Corollary \ref{cor:4.15}] \label{thm3}
Suppose that $G$ has property $(T)$ and real rank at least $2$. Let $\Gamma_n\le G$ be any sequence of pairwise non-conjugate irreducible lattices in $G$. Then $\Gamma_n\backslash X$ BS-converges to $X$.
\end{thm}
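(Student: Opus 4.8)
The plan is to recast the statement in terms of invariant random subgroups (IRS) and then feed it into the classification of IRSs of higher-rank groups, supplemented by local rigidity and the Zassenhaus lemma. First I would recall (Section~\ref{sec:3}) that $\Gamma_n\backslash X$ BS-converges to $X$ exactly when the IRSs $\mu_{\Gamma_n}$ --- the push-forwards of the normalized invariant probability measures on $\Gamma_n\backslash G$ under $\Gamma_n g\mapsto g^{-1}\Gamma_n g$, viewed as measures on the compact metrizable Chabauty space $\sub(G)$ --- converge weak-$*$ to the Dirac mass $\delta_{\{e\}}$ at the trivial subgroup. Since the set of IRSs of $G$ is weak-$*$-closed, it suffices to prove that every weak-$*$ accumulation point $\mu$ of the sequence $(\mu_{\Gamma_n})$ equals $\delta_{\{e\}}$; so I would fix such a $\mu$, a subsequence with $\mu_{\Gamma_{n_k}}\to\mu$, and its ergodic decomposition.

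The one place where property $(T)$ and $\mathrm{rank}_\R(G)\ge 2$ are really used is the Nevo--St\"uck--Zimmer theorem, which forces every ergodic p.m.p.\ action of $G$ to be essentially free or essentially transitive. Applied to the conjugation action on $\sub(G)$ equipped with an ergodic component $\mu'$ of $\mu$, it gives the dichotomy: either the stabilizer $N_G(H)$ of a $\mu'$-generic $H$ is trivial --- so $H=\{e\}$ and $\mu'=\delta_{\{e\}}$ --- or $\mu'$ is concentrated on one $G$-conjugacy class of closed subgroups $H$ with $\vol(G/N_G(H))<\infty$. Hence, if $\mu\ne\delta_{\{e\}}$, there is a closed subgroup $H_0$ with $\vol(G/N_G(H_0))<\infty$ that lies in $\operatorname{supp}(\mu)$; and then, by the portmanteau inequality $\liminf_k\mu_{\Gamma_{n_k}}(O)\ge\mu(O)>0$ valid for every Chabauty-open $O\ni H_0$, arbitrarily small such neighborhoods are met by conjugates $g^{-1}\Gamma_{n_k}g$ for infinitely many $k$. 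I must derive a contradiction.

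I would split into two cases according to whether the identity component $H_0^0$ is trivial. If $H_0^0\ne\{e\}$: Borel density makes $\Ad\,N_G(H_0)$ Zariski-dense in $\Ad\,G$, so it preserves $\Lie(H_0^0)$, which is thus an ideal; hence $H_0^0$ is a nontrivial connected normal subgroup $N\trianglelefteq G$ and $H_0\supseteq N$. But a discrete subgroup $g^{-1}\Gamma_{n_k}g$ Chabauty-close to $H_0$ contains, inside any prescribed Zassenhaus neighborhood $W$, approximations to generators of the semisimple group $N$, so $\langle g^{-1}\Gamma_{n_k}g\cap W\rangle$ has nonzero iterated commutators of every length, contradicting the uniform bound on its nilpotency class in the Zassenhaus--Kazhdan--Margulis lemma. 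If $H_0^0=\{e\}$, i.e.\ $H_0$ is discrete: then up to finite index and the (finite) center, $H_0$ is a lattice in the smallest connected normal subgroup $G'\trianglelefteq G$ containing it. If $G'\ne G$, then the projection of the \emph{irreducible} lattice $\Gamma_{n_k}$ to the nontrivial semisimple quotient $G/G'$ is dense, which prevents any conjugate of $\Gamma_{n_k}$ from Chabauty-accumulating inside the proper factor $G'$ --- contradicting $H_0\in\operatorname{supp}(\mu)$. So $G'=G$, $H_0$ is a lattice in $G$, and $\Lambda:=N_G(H_0)$ is again a lattice; by local rigidity of lattices in higher-rank $G$ (Weil, Garland--Raghunathan, or Margulis superrigidity) the conjugacy class of $\Lambda$ contains a Chabauty-neighborhood in which every lattice is conjugate to $\Lambda$, so infinitely many of the $\Gamma_{n_k}$ are conjugate to $\Lambda$ --- contradicting that the $\Gamma_n$ are pairwise non-conjugate.

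In either case $\mu=\delta_{\{e\}}$, and since this holds for every accumulation point, $(\mu_{\Gamma_n})$ converges weak-$*$ to $\delta_{\{e\}}$, which is the claimed BS-convergence. I expect the genuinely hard part to be entirely in the classification step: putting Nevo--St\"uck--Zimmer into the sharp form ``$\delta_{\{e\}}$, or finite-covolume normalizer'' is where higher rank and property $(T)$ really bite, and it should absorb most of the work. The endgame above is comparatively soft --- it only uses local rigidity, Borel density, the Zassenhaus lemma, and weak-$*$ semicontinuity --- with the one delicate bookkeeping point being the passage from ``$\mu$ charges a neighborhood of $H_0$'' to ``a conjugate of some $\Gamma_{n_k}$ genuinely lies in a prescribed small Chabauty-neighborhood of $H_0$''.
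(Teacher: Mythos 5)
Your plan---reformulate BS-convergence as weak-$^*$ convergence of the IRSs $\mu_{\Gamma_n}$ to $\delta_{\{e\}}$, pass to an accumulation point $\mu$, and feed it into the Nevo--St\"uck--Zimmer theorem---is exactly the paper's route, and the Zassenhaus argument for ruling out $H_0^\circ \neq \{e\}$ parallels Proposition \ref{conv-to-nilpotent} in the paper. However, there is one genuine gap and one notable divergence.

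\textbf{The gap.} You take the ergodic decomposition of the limit $\mu$ and apply Nevo--St\"uck--Zimmer to an arbitrary ergodic component $\mu'$. But Theorem \ref{prop:SZ} needs more than $G$-ergodicity of $\mu'$: it requires that every rank-one factor of $G$ acts ergodically, and the downstream arguments (Margulis' normal subgroup theorem, and your own step ruling out $H_0$ landing in a proper normal factor $G'$) really use ergodicity of \emph{every} simple factor, i.e.\ that $\mu'$ is an \emph{irreducible} IRS. An ergodic component of a weak-$^*$ limit has no a priori reason to be irreducible. The paper plugs this hole with Lemma \ref{lem:ergodiclimit}: since each simple factor has property $(T)$, a weak-$^*$ limit of actions that are ergodic for that factor remains ergodic for it; applied factor-by-factor, this shows the limit $\mu$ is already irreducible (hence ergodic, no decomposition needed) before Nevo--St\"uck--Zimmer is invoked. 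This is a second, independent use of $(T)$ that your outline omits. Relatedly, your claim that dense projection of $\Gamma_{n_k}$ to $G/G'$ ``prevents Chabauty-accumulation inside $G'$'' does not follow as stated: a Chabauty limit only controls elements in compact windows, and the densely-projecting elements could escape to infinity; the correct way to exclude this case is via irreducibility, as above.

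\textbf{The divergence.} To rule out $\mu = \mu_\Lambda$ for a lattice $\Lambda$, you invoke local rigidity: the conjugacy class of a higher-rank lattice is Chabauty-open, so infinitely many $\Gamma_{n_k}$ would be conjugate to $\Lambda$, contradicting pairwise non-conjugacy. The paper instead uses Wang's finiteness theorem to see the covolumes must tend to infinity, then Leuzinger's uniform spectral gap plus a Cheeger-constant/isoperimetric argument (Lemma \ref{discrete}) to show $\{\mu_{\Gamma_n}\}$ is discrete. Your route is cleaner here if made precise (the Chabauty-openness statement needs Kazhdan--Margulis to control small elements and to pass from ``Chabauty-close'' to ``near-inclusion homomorphism,'' which is essentially the content of Wang's theorem anyway), whereas the paper's spectral-gap argument is softer and does not depend on rank or local rigidity---a point worth noting since the paper also wants these tools available elsewhere.
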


\begin{cor}\label{cor4}
If in addition to the conditions of Theorem \ref {thm3} we have that $(\Gamma_n)$ is uniformly discrete (in particular, cocompact), then for every quasi-compact $\nu^G$-regular subset $S \subset \widehat G$, we have: 
$$
 \nu_{\Gamma_n} (S) \to \nu^G (S),
$$
and in particular,
$$
 \frac{m(\pi,\Gamma_n)}{\vol(\Gamma_n \backslash X)}\to d(\pi)
$$
for any $\pi\in\widehat G$. And even more particularly, we have: 
$$
 \frac{b_k(\Gamma_n)}{\vol(\Gamma_n \backslash X)}\to \beta^ {(2)}_k (X)
$$
for every $k\le \dim(X)$.
\end{cor}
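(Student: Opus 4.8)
The plan is to derive everything by feeding Theorem \ref{thm3} into Theorem \ref{thm1} and its corollaries. First I would observe that Theorem \ref{thm3} is exactly the hypothesis needed to activate the machinery: since $G$ has property $(T)$ and real rank at least $2$, any sequence $(\Gamma_n)$ of pairwise non-conjugate irreducible lattices satisfies $\Gamma_n\backslash X \to X$ in the BS-sense. So the only thing one needs to check, to invoke Theorem \ref{thm1}, is the uniform discreteness hypothesis — and that is precisely what we are additionally assuming here. (One should also recall the standard fact, used implicitly, that a uniformly discrete lattice in a semisimple Lie group without compact factors is automatically cocompact: a non-uniform lattice always has unipotent elements arbitrarily close to the identity in suitable conjugates, by the Kazhdan–Margulis / Godement criterion, contradicting uniform discreteness. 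This justifies the parenthetical ``in particular, cocompact''.)

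Next, with uniform discreteness in hand, Theorem \ref{thm1} gives directly
$$
\nu_{\Gamma_n}(S) \to \nu^G(S)
$$
for every relatively compact $\nu^G$-regular open $S\subset\widehat G$ (or $S\subset\widehat G_{\mathrm{temp}}$); the passage from ``relatively compact open'' to ``relatively quasi-compact'' $\nu^G$-regular $S$ is a routine approximation argument using inner/outer regularity of the measures together with the fact that $\nu^G$ has no mass on the boundary of $S$, and will be dealt with when we set up the topology of $\widehat G$ in Section \ref{sec:7}. Applying this with $S$ a small $\nu^G$-regular neighborhood of a fixed $\pi\in\widehat G$, or more directly quoting Corollary \ref{thm1bis}, yields
$$
\frac{m(\pi,\Gamma_n)}{\vol(\Gamma_n\backslash X)} \to d(\pi).
$$

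Finally, for the Betti number statement, when the $\Gamma_n$ are in addition torsion free the spaces $M_n=\Gamma_n\backslash X$ are uniformly discrete compact $X$-manifolds BS-converging to $X$, so Corollary \ref{cor2} applies verbatim and gives $b_k(\Gamma_n)/\vol(\Gamma_n\backslash X)\to\beta_k^{(2)}(X)$ for all $k\le\dim X$. The footnote's claim — that torsion-freeness can be dropped if one interprets $b_k$ as the orbifold Betti number — will follow from the analytic version of the argument (Theorem \ref{thm:analytic}), where the relevant heat-kernel / $L^2$-index computation goes through for orbifolds without change.

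I do not expect any serious obstacle in Corollary \ref{cor4} itself, since it is a formal consequence of the two main inputs; the real content lives in Theorem \ref{thm1} (the limit-multiplicity statement, whose proof requires controlling the relative Plancherel measure via BS-convergence and a Sauvageot-type density argument) and in Theorem \ref{thm3} (the higher-rank rigidity input, using Stuck–Zimmer and property $(T)$ to pin down the space of invariant random subgroups). If anything needs care here, it is only bookkeeping: making sure ``relatively quasi-compact $\nu^G$-regular'' is the correct class of test sets for which the approximation from relatively compact open sets is valid, and checking that uniform discreteness is genuinely available — the latter is where one invokes that the $\Gamma_n$ are assumed uniformly discrete, which forces cocompactness and hence puts us squarely in the setting of Theorem \ref{thm1} and Corollary \ref{cor2}.
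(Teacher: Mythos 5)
Your proposal is correct and follows essentially the same route as the paper: Corollary \ref{cor4} is indeed a formal consequence of Theorem \ref{thm3} (giving BS-convergence), Theorem \ref{thm1} and Corollary \ref{thm1bis} (for the Plancherel and multiplicity statements, noting as you do that the extension property of $\nu^G$ is formulated for relatively quasi-compact $\nu^G$-regular sets, so the proof of Theorem \ref{T1} applies to them directly), and Corollary \ref{cor2} respectively Theorem \ref{thm:analytic} (for the Betti numbers). Your justification of the parenthetical ``in particular, cocompact'' via the Kazhdan--Margulis/Godement criterion is also the right observation and is needed to place the $\Gamma_n$ in the setting of Theorem \ref{T1}.
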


Here is a particular example to illustrate the strength of Corollary \ref{cor4}:

\begin{exa}
Let $n\ge 3$, let $\Gamma$ be a cocompact lattice in $\SL_n(\R)$ and let $\Gamma_m\le \Gamma$ be a sequence of distinct, finite index subgroups of $\Gamma$. Then for all $k$,
$$
 \frac{b_k(\Gamma_m)}{[\Gamma:\Gamma_m]}\to 0.
$$
\end{exa}

Even in this example, where all the lattices fall in one commensurability class, we do not see a proof that avoids using Theorem \ref{thm3}. 

\medskip

 It is easy to see that the analogue of Corollary \ref{cor4} --- and therefore of Theorem \ref{thm3} --- is false for some rank one symmetric spaces. For instance, suppose $M$ is a closed hyperbolic $d$-manifold and $\pi _{1}(M)$ surjects onto the free group of rank $2$.
Then finite covers of $M$ corresponding to subgroups of $\Z * \Z$ have first
Betti numbers that grow linearly with the volume. However, for $d\ne 2$, there will be
sublinear growth of the first Betti number in any sequence of covers corresponding to
a chain of finite index normal subgroups of $\pi _{1}(M)$ with trivial
intersection, e.g. by the DeGeorge--Wallach theorem.

\medskip
\noindent \textbf{Removing the injectivity radius condition for hyperbolic manifolds.}  If $\text{rank}(X)\ge 2$ or if $X$ is the symmetric space corresponding to $\Sp(d,1)$ or $F_4^{-20}$, then all irreducible $X$-manifolds are arithmetic, by Margulis's Arithmeticity \cite[Theorem 1.10, p. 298]{margulis:book} and the Corlette--Gromov--Schoen Theorem~\cite{Corlette,Gr-Sc}, respectively. For $\SU(d,1)$ there are few known examples of non-arithmetic manifolds for $d=2,3$, and it is likely that most manifolds are arithmetic. According to Margulis' conjecture it is therefore natural to expect that if $X$ is not isometric to some real hyperbolic space $\BH^d$ ($d\geq 2$), then the family of all irreducible compact $X$-manifolds is uniformly discrete. On the other hand, it is shown in \cite{Agol,BHW,BT} that for every $d\geq2$ there are compact hyperbolic manifolds of dimension $d$ with arbitrarily small closed geodesics. Still, a careful estimate of the norm of the heat kernel in the thin part of rank one manifolds (see Section \ref{sec:thin}) allows us to prove the following. 

\begin{thm}[Theorem \ref{thm:rank-one}] \label{thm5}
Let $M_n=\Gamma_n\backslash \BH^d$ be a sequence of compact hyperbolic $d$-manifolds that BS-converges to $\BH^d$. Then for every $k\le d$,
$$
 \lim_{n \to +\infty} \frac{b_k(M_n)}{\vol(M_n)}=\beta^{(2)}_k(\BH^d).
$$
\end{thm}

Note that for $X=\BH^2$, the hyperbolic plane, Theorem \ref{thm:rank-one} is a consequence of the Gauss--Bonnet theorem, even under the weak assumption that only $\vol(M_n)\to\infty$, without requiring BS-convergence. In general there are many sequences of hyperbolic manifolds that BS-converge to $\BH^d$, but where the global injectivity radius is not bounded below. A typical example is given by Brock--Dunfield \cite{BrockDunfield}, and while these are (intentionally) integer homology spheres, similar examples can be constructed where the only control on the first Betti numbers is through Theorem~\ref{thm5}.

The idea of our argument for Theorem \ref{thm5} also gives an alternative proof, in the real hyperbolic case, of the classical theorem of Gromov that Betti numbers are linearly bounded by volume \cite[Theorem 2]{BGS}. We were not able to perform the same analysis in the higher rank case.  However, assuming the Margulis conjecture, our result for higher rank symmetric spaces (Corollary \ref{cor4}) is much stronger than Gromov's linear bound.\footnote{Recall however that Gromov's theorem applies in the much broader setup of Hadamard spaces with bounded curvature and no Euclidian factors, that we do not consider in this paper.}

\medskip
\noindent \textbf{Explicit estimates for congruence covers.} When restricted to congruence covers of a given arithmetic {\it hyperbolic} manifold, Gromov conjectured that the $k$'th Betti number should be bounded above by a constant times $n^{\alpha}$ where $n$ is the index of the cover and
$$
 \alpha=\frac{2k}{d-1}, \quad 0 \leq k \leq [ (d-1)/2 ],
$$
see Sarnak and Xue \cite{SarnakXue}. Cossutta and Marshall \cite{CossuttaMarshall} and Bergeron, Millson and Moeglin \cite{BMM} proved an even better (and sharp) bound for principal congruence covers of level a power of a prime and small degree $k < d/3$. Our next result is a weak form of Gromov's conjecture. While we cannot approach the precise constant suggested by Gromov, we do obtain a very general result that applies to all semi-simple Lie groups and general congruence (not just principal) subgroups.

\begin{thm}[Theorem \ref{Tcong}] \label{thm6}
Let $G$ be a semi-simple Lie group and let $\Gamma\le G$ be a uniform arithmetic subgroup.
Let $\pi\in\widehat G$ be a non-tempered irreducible representation. Then there are constants $\alpha>0$ and $C<\infty$ such that for every congruence subgroup $\Delta\le\Gamma$, 
$$
 m(\pi,\Delta)\le C\cdot [\Gamma:\Delta]^{1-\alpha}.
$$
\end{thm}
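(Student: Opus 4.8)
The plan is to reduce the bound on $m(\pi,\Delta)$ to a count of automorphic forms, and then to bound that count by combining two inputs: the property-$(\tau)$-type spectral gap available for congruence subgroups, and a pointwise estimate on matrix coefficients coming from the non-temperedness of $\pi$. Concretely, since $\pi$ is non-tempered, by the classification of the unitary dual (and in particular Li--Oh / Cowling--Haagerup--Howe type bounds) there is an exponent $p = p(\pi) < \infty$ such that the $K$-finite matrix coefficients of $\pi$ lie in $L^{p+\eps}(G)$ for every $\eps>0$; equivalently $\pi$ is strongly $L^{p+\eps}$. Write $q = q(\pi)$ for the associated decay exponent, so that a $K$-fixed (or minimal-$K$-type) matrix coefficient $\varphi_\pi$ satisfies $|\varphi_\pi(g)| \ll \Xi(g)^{2/p}$, where $\Xi$ is the Harish-Chandra spherical function of $G$. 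The multiplicity $m(\pi,\Delta)$ is at most the dimension of the space of $\Delta$-invariant vectors in $\widehat\pi$-isotypic components realized inside $L^2(\Delta\backslash G)$ of a fixed minimal $K$-type $\tau$; by Frobenius reciprocity this equals $\dim \Hom_K(\tau, L^2(\Delta\backslash G)_\pi)$, a space of $\tau$-valued automorphic forms on $\Delta\backslash G$ that are eigenfunctions with the Casimir eigenvalue of $\pi$.

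The heart of the argument is a ``trace / amplification'' step. Pick a nonnegative, bi-$K$-finite, compactly supported function $f$ on $G$ (an approximation to a point mass near the identity, appropriately truncated at radius $\sim \log[\Gamma:\Delta]$), so that $\pi(f)$ acts on the minimal-$K$-type subspace with a large eigenvalue, while every other automorphic representation $\sigma$ occurring in $L^2(\Delta\backslash G)$ with a vector of $K$-type $\tau$ has $\|\sigma(f)\| \ll \Xi(\text{radius})^{2/p'}$ controlled by its own $p'$; the congruence spectral gap guarantees that apart from $\pi$ and representations at least as non-tempered as $\pi$, all such $\sigma$ satisfy a uniform $p' \le p_0 < \infty$. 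Then
\[
m(\pi,\Delta)\cdot \lambda(f)^2 \;\le\; \sum_{\sigma \subset L^2(\Delta\backslash G)} m(\sigma,\Delta)\,\|\sigma(f)\|^2 \;=\; \operatorname{tr}\big((R_\Delta(f)^* R_\Delta(f))|_{\tau}\big) \;=\; \sum_{\gamma \in \Delta} (f^* * f)(\gamma)\,(\text{local factor}),
\]
and the geometric side is bounded, via a lattice-point count in $G$ (or rather in $\Delta$ acting on $X$, using that $\Delta\backslash G$ has volume $[\Gamma:\Delta]\vol(\Gamma\backslash G)$ and injectivity radius bounded below since $\Gamma$, hence $\Delta$, is uniform and torsion-free up to finite index), by $[\Gamma:\Delta]$ times a volume factor $e^{c\,r}$ with $r$ the truncation radius. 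Balancing $\lambda(f) \sim e^{c' r/p}$ against the geometric bound $[\Gamma:\Delta]\,e^{c\,r}$ and choosing $r = A\log[\Gamma:\Delta]$ with $A$ small produces $m(\pi,\Delta) \le C\,[\Gamma:\Delta]^{1-\alpha}$ for an explicit $\alpha = \alpha(\pi, G) > 0$; the non-temperedness of $\pi$ is exactly what makes $\lambda(f)$ grow strictly faster in $r$ than the contribution of the tempered (and barely-non-tempered) spectrum, so that $\alpha>0$ rather than $\alpha = 0$.

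The main obstacle, and the place where the quantitative statement of the paper is actually used, is establishing the \emph{uniform} spectral gap: one needs that for \emph{all} congruence subgroups $\Delta \le \Gamma$ — not just principal ones — the non-tempered automorphic spectrum of $L^2(\Delta\backslash G)$ stays a bounded distance (in the $L^{p'}$ sense above) from the tempered spectrum, with constants independent of $\Delta$. This is where Theorem \ref{Tcong}'s strong quantitative Benjamini--Schramm convergence for congruence covers enters: it gives a uniform-over-$\Delta$ control of the thin part of $\Delta\backslash X$ that feeds into a uniform bound on how much of the heat kernel (equivalently, how much of $\pi(f)$ for the right test functions) can be captured by each individual representation, and in particular it lets one handle general congruence subgroups by reduction to the principal case at the cost of a bounded factor. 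A secondary technical point is the archimedean harmonic analysis: one must know the precise $p(\pi)$ for the given non-tempered $\pi$ and propagate it through the trace formula truncation — this is classical (Cowling--Haagerup--Howe, Li, Oh) but must be done with care when $G$ has several factors, since $\pi$ could be tempered on some factors and not on others, and the exponent $\alpha$ should be governed by the ``most tempered'' direction in which $\pi$ is still non-tempered.
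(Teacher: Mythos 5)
Your overall strategy --- a pre-trace inequality with a truncated test function built from the matrix coefficient of $\pi$, balanced at $r \sim \log[\Gamma:\Delta]$ --- is the right one and matches the paper's argument in spirit (the paper uses the sup-norm/Savin variant of the DeGeorge--Wallach basic identity rather than a trace formula, but the two are formally equivalent here). The archimedean input, $p(\pi)>2$ giving exponential growth of $\|\phi_r\|^2$ in $r$, is also correctly identified. But you have misidentified the role of Theorem \ref{nik}, and the ingredient you put in its place would not close the argument. The ``uniform spectral gap'' you invoke is a red herring: after writing
\[
m(\pi,\Delta)\,\|\pi(f)\|^2 \;\le\; \sum_\sigma m(\sigma,\Delta)\,\|\sigma(f)\|^2 \;=\; \mathrm{tr}\,R_\Delta(f^* * f),
\]
one simply drops every term but $\pi$ on the left by positivity; the right side is then estimated geometrically, not spectrally. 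Trying to control the right side by bounding the temperedness of the other $\sigma$ is circular (it would require the very multiplicities $m(\sigma,\Delta)$ one is trying to estimate), and no uniform spectral gap over all congruence $\Delta$ is established or needed anywhere in the paper's proof.

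The place where your proposal actually fails is on the geometric side, which amounts to bounding $\int_{\Delta\backslash G} N(x;2r)\,dx$ with $N(x;R)$ the orbit-point count for $\Delta$ at radius $R$. Your observation that the injectivity radius of $\Delta\backslash X$ is uniformly bounded below (since $\Delta\le\Gamma$ and $\Gamma\backslash X$ is compact) gives, via Lemma \ref{gauss}, only the naive bound $\int N(x;2r)\,dx \ll [\Gamma:\Delta]\,e^{2c_2 r}$, where $c_2$ is tied to the volume growth of $X$. This exponent dominates the growth rate of $\|\phi_r\|^2$, so ``$[\Gamma:\Delta]\,e^{cr}$ against $\lambda(f)^2$'' does not balance for any choice of $r$. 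What saves the argument is exactly Theorem \ref{nik}, via Proposition \ref{nik stuff}: the $r$-thin part of $\Delta\backslash X$ has volume $\ll[\Gamma:\Delta]^{1-\delta}$ uniformly over all congruence $\Delta$, for all $r$ up to $c\log[\Gamma:\Delta]$. Splitting the integral over thin and thick parts (Lemma \ref{LN}) then gives $\int N(x;2r)\,dx \ll [\Gamma:\Delta]$ with no exponential loss, provided $c$ is taken small enough that $c\,c_2\le\delta$. Only then does $r=c\log[\Gamma:\Delta]$ yield $m(\pi,\Delta)\ll[\Gamma:\Delta]/\|\phi_r\|^2\ll[\Gamma:\Delta]^{1-\alpha}$. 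In short: the quantitative BS-convergence controls the number of short-displacement lattice elements, not the automorphic spectrum; a uniform lower bound on the injectivity radius is not enough, because you need the set where the injectivity radius is merely $O(1)$ --- rather than $\gg\log[\Gamma:\Delta]$ --- to be a power-saving fraction of the volume, and that is precisely what Theorem \ref{nik} provides.
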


As a consequence we obtain the following:

\begin{cor}\label{cor7}
Let $G$ and $\Gamma$ be as in Theorem \ref{thm6}. Suppose that
$$
 |k-\frac{1}{2}\dim X|>\delta(G).
$$
Then there exist constants $\alpha>0$ and $C$ such that for every congruence subgroup $\Delta\le \Gamma$, we have
$$
 b_k(\Delta)\le C\cdot\vol (\Delta \backslash X)^{1-\alpha}.
$$
\end{cor}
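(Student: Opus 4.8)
The plan is to reduce to Theorem~\ref{thm6} via Matsushima's formula. Since $\Delta$ has finite index in the uniform lattice $\Gamma$ and is torsion free, the quotient $\Delta\backslash X$ is a closed Riemannian manifold, and Matsushima's formula gives
$$
 b_k(\Delta)=b_k(\Delta\backslash X)=\sum_{\pi}m(\pi,\Delta)\,\dim H^k(\mathfrak g,K;\pi),
$$
the sum running over the irreducible unitary representations $\pi$ of $G$ with $H^k(\mathfrak g,K;\pi)\ne 0$. By Wigner's lemma only representations sharing the infinitesimal character of the trivial representation can occur, and by the Vogan--Zuckerman classification (see Borel--Wallach) there are only finitely many such $\pi$, say $\pi_1,\dots,\pi_r$. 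If $r=0$ then $b_k(\Delta)=0$ and there is nothing to prove, so assume $r\ge1$; it then suffices to bound each multiplicity $m(\pi_i,\Delta)$ separately.

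The place where the hypothesis enters is the assertion that each $\pi_i$ is \emph{non-tempered}. This is a consequence of the structure theory of cohomological representations: a tempered representation with nonzero $(\mathfrak g,K)$-cohomology has its cohomology concentrated in the middle band of degrees, namely in degrees $j$ with $|j-\tfrac12\dim X|\le\tfrac12\delta(G)\le\delta(G)$, where $\delta(G)$ is the fundamental rank of the introduction. Since by assumption $|k-\tfrac12\dim X|>\delta(G)$, no tempered representation contributes to $H^k$, so every $\pi_i$ occurring in Matsushima's formula is non-tempered.

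We may therefore invoke Theorem~\ref{thm6} for each of $\pi_1,\dots,\pi_r$: for each $i$ there exist $\alpha_i>0$ and $C_i<\infty$ such that $m(\pi_i,\Delta)\le C_i[\Gamma:\Delta]^{1-\alpha_i}$ for every congruence subgroup $\Delta\le\Gamma$. Set $\alpha=\min_i\alpha_i>0$. Using $[\Gamma:\Delta]\ge1$ to bound each $[\Gamma:\Delta]^{1-\alpha_i}\le[\Gamma:\Delta]^{1-\alpha}$ and absorbing the finitely many constants $C_i\dim H^k(\mathfrak g,K;\pi_i)$ into one constant $C'$, we obtain $b_k(\Delta)\le C'[\Gamma:\Delta]^{1-\alpha}$. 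Finally $[\Gamma:\Delta]=\vol(\Delta\backslash X)/\vol(\Gamma\backslash X)$, so replacing $C'$ by $C'\vol(\Gamma\backslash X)^{-(1-\alpha)}$ yields the asserted bound $b_k(\Delta)\le C\cdot\vol(\Delta\backslash X)^{1-\alpha}$.

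In this deduction the only nontrivial input beyond Theorem~\ref{thm6} is the confinement of tempered $(\mathfrak g,K)$-cohomology to the middle band of degrees, which is classical Vogan--Zuckerman / Borel--Wallach theory; the substantive work is entirely in Theorem~\ref{thm6} itself, proved via the quantitative BS-convergence of congruence covers (Theorem~\ref{Tcong}) together with the bound on multiplicities of non-tempered representations. The hardest part is thus not in this corollary at all. One should merely note the consistency of the extremal cases $k=0$ and $k=\dim X$, where the only contributing representation is the trivial one --- non-tempered because $G$ has no compact factors --- and $b_k(\Delta)=1$, which is precisely what Theorem~\ref{thm6} yields for that representation.
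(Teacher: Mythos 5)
Your proof is correct and follows essentially the same route as the paper: the authors also deduce this from Theorem \ref{Tcong} via Matsushima's formula together with the Borel--Wallach criterion that a unitary $\pi$ with $\lambda_\pi=0$ and $\mathrm{Hom}_K(\wedge^k\mathfrak{p},\mathcal H_\pi)\neq 0$ is tempered precisely when $|k-\tfrac12\dim X|\le \tfrac12\delta(G)$ (see the corollary at the end of Section \ref{sec:heat-kernel}, which in fact gives the slightly sharper band of half-width $e=\tfrac12\delta(G)$). Your added details (Wigner's lemma, finiteness of the contributing set, taking the minimum of the exponents, and converting index to volume) are exactly the routine steps the paper leaves implicit.
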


Theorem \ref{thm6} is a consequence of the following result, which is of independent interest.

\begin{thm}[Theorem \ref{ag}]  \label{thm8}
Let $\mathbf{G}$ be a $k$-simple simply connected algebraic group defined over a number field $k$. Let $\mathcal{O}$ be the ring of integers in $k$. There exists a finite index center-free subgroup $\Gamma \subset \mathbf{G} (\mathcal{O})$  and a positive constants $\epsilon$ and $C$ (depending only on $\Gamma$ and some fixed word metric on it) with the following property: 

Let $g \in \Gamma - \{1\}$ and let $H$ be a congruence subgroup of index $N$ in $\Gamma$. Then $g$ fixes at most $e^{C  l(g)} N^{1-\epsilon}$
points in the action of $\Gamma$ on the right cosets $H \backslash \Gamma$ by multiplication. Here $l(g)$ is the length of $g$ with respect to the fixed word metric of $\Gamma$.
\end{thm}

Theorem \ref{thm8} leads to the following effective version (for subgroups of a fixed lattice) of Theorem \ref{thm3}; this allows us to prove Theorem~\ref{thm6}.  Implicit here is an effective proof (again, for subgroups of a fixed lattice) of the second part of Corollary~\ref{cor4}.

\begin{thm}[Theorem \ref{nik}] \label{thm9}
Let $\Gamma_0 \subset G$ be a cocompact arithmetic lattice. Then there exist positive constants $c$ and $\mu$ depending only on $\Gamma_0$, such that for any congruence subgroup $\Gamma \subset \Gamma_0$ and any $R>1$ we have:
$$\mathrm{vol} ((\Gamma \backslash X)_{<R}) \leq e^{cR} \mathrm{vol} (\Gamma \backslash X)^{1-\mu}.$$
\end{thm}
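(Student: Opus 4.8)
The plan is to deduce the statement from Theorem~\ref{thm8} by counting fixed points in the fibres of the covering $\Gamma\backslash X\to\Gamma_0\backslash X$. After some standard reductions we may assume that $\Gamma_0$ itself is the group supplied by Theorem~\ref{thm8}: first replace $\Gamma_0$ by a torsion-free finite-index subgroup, then use the commensurability of $\Gamma_0$ with $\mathbf{G}(\mathcal{O})$ for a suitable $k$-simple, simply connected $\mathbf{G}$ to pass to a common finite-index subgroup of $\Gamma_0$ and of the group in Theorem~\ref{thm8}. This affects the constants $c,\mu$ only by bounded factors, since a congruence subgroup of $\Gamma_0$ intersected with a fixed finite-index subgroup is again a congruence subgroup of bounded relative index, and passing between the associated finite covers distorts volumes by a bounded factor and the thin-part parameter $R$ at worst linearly (here one uses that the orbifold locus has measure zero). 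Fix once and for all a relatively compact fundamental domain $F\subset X$ for $\Gamma_0$.

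Now let $\Gamma\subset\Gamma_0$ be a congruence subgroup of index $N$. The projection $\pi\colon\Gamma\backslash X\to\Gamma_0\backslash X$ is an $N$-sheeted Riemannian covering, and since it is a local isometry,
$$\vol\bigl((\Gamma\backslash X)_{<R}\bigr)=\int_{\Gamma_0\backslash X}n_R(p)\,dp,$$
where $n_R(p)$ is the number of points of $\pi^{-1}(p)$ of injectivity radius $<R$ in $\Gamma\backslash X$. Identify the fibre over $\Gamma_0\tilde p$, for $\tilde p\in F$, with the $\Gamma_0$-set $\Gamma\backslash\Gamma_0$ ($\Gamma_0$ acting by right translation). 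A point $\Gamma g\tilde p$ of this fibre has injectivity radius $<R$ if and only if there is $\delta\in\Sigma_R(\tilde p):=\{\delta\in\Gamma_0\setminus\{1\}:d(\tilde p,\delta\tilde p)<2R\}$ with $g\delta g^{-1}\in\Gamma$, i.e.\ with $\Gamma g$ a fixed point of $\delta$ for the action on $\Gamma\backslash\Gamma_0$; hence
$$n_R(p)\le\sum_{\delta\in\Sigma_R(\tilde p)}\#\mathrm{Fix}_{\Gamma\backslash\Gamma_0}(\delta).$$

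It remains to bound the two ingredients uniformly. Because $\Gamma_0$ is cocompact, the orbit map $\gamma\mapsto\gamma\tilde p$ is a quasi-isometry from $\Gamma_0$ (with a fixed word metric) onto $X$, uniformly in $\tilde p\in F$; this yields a linear bound $|\delta|\le C_1R$ for all $\delta\in\Sigma_R(\tilde p)$ and, from the exponential volume growth of $X$, a bound $\#\Sigma_R(\tilde p)\le C_2e^{c_2R}$, both uniform over $\tilde p\in F$. On the other hand $\Gamma_0$ acts transitively on the cosets $\Gamma\backslash\Gamma_0$ of the congruence subgroup $\Gamma$, so Theorem~\ref{thm8} gives $\#\mathrm{Fix}_{\Gamma\backslash\Gamma_0}(\delta)\le e^{C|\delta|}N^{1-\epsilon}\le e^{CC_1R}N^{1-\epsilon}$ for $\delta\ne1$. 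Combining these with the identity above, and using $N=[\Gamma_0:\Gamma]=\vol(\Gamma\backslash X)/\vol(\Gamma_0\backslash X)$,
$$\vol\bigl((\Gamma\backslash X)_{<R}\bigr)\le\vol(\Gamma_0\backslash X)\,C_2e^{c_2R}\,e^{CC_1R}\,N^{1-\epsilon}=C_3e^{c_3R}\,\vol(\Gamma\backslash X)^{1-\epsilon}.$$
For $R>1$ the constant $C_3$ is absorbed into $e^{c_3R}$ after enlarging $c_3$, which proves the theorem with $\mu=\epsilon$ and $c=c_3$; all constants depend only on $\Gamma_0$ (through $\mathbf{G}$, the fixed word metric, $\vol(\Gamma_0\backslash X)$, the volume growth rate of $X$, and the constants of Theorem~\ref{thm8}).

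The essential input is Theorem~\ref{thm8}, which we take as given; within this argument the only delicate points are the uniformity over $F$ of the estimates on $\Sigma_R(\tilde p)$ --- routine from cocompactness and the \v{S}varc--Milnor lemma --- and the bookkeeping of the initial reduction, where one must verify that a congruence subgroup of $\Gamma_0$ still produces a transitive action on cosets of a congruence subgroup to which Theorem~\ref{thm8} applies, with the index changing by at most a bounded factor. I expect the reduction, rather than any single estimate, to be the part most in need of care, though it poses no real obstacle.
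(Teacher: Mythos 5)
Your argument is correct and follows essentially the same route as the paper: after the same reduction to the group supplied by Theorem \ref{thm8}, both proofs bound the volume of the thin part by associating to each thin point a conjugated element $\delta=g^{-1}\gamma g\in\Gamma_0$ of word length $O(R)$ (via compactness of the fundamental domain) and then counting fixed cosets of $\delta$ on $\Gamma_0/\Gamma$ via Theorem \ref{thm8}, with the exponential count of elements of bounded word length supplying the $e^{cR}$ factor. The only cosmetic difference is that you integrate $n_R(p)$ over the base and sum over $\delta$ fibrewise, whereas the paper sums over the elements $\gamma_0$ first and multiplies by $\mathrm{Vol}(\Omega)$; these are the same computation.
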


\medskip
\noindent \textbf{Growth of Reidemeister torsion.} When the fundamental rank $\delta (G)$ is positive, the symmetric space $X$ is $L^2$-acyclic. It is then natural to investigate a secondary invariant such as the $L^2$-torsion of $X$, see \cite{LuckBook,BV}. This is known to be non-vanishing if and only if $\delta (G)=1$, e.g. in the case $G= \SL_2 (\C)$. We study $L^2$-torsion for BS-convergent sequences in Section \ref{sec:torsion}; see in particular Theorem \ref{approxthm}. 

In this Introduction, we stress the particular case of compact \emph {orientable} hyperbolic $3$-manifolds. Given such an $M$ we denote by $\alpha_{\rm can}$ the discrete faithful $\SL_2 (\C)$-representation of $\pi_1 M$.
The corresponding twisted chain complex
$$C_* (\widetilde{M}) \otimes_{\Z [\pi_1 M ]} \C^2 $$
is acyclic \cite{Porti} and it follows that the corresponding Reidemeister torsion $$\tau (M, \alpha_{\rm can}) \in \R^*$$ is well defined. The following result is a consequence of Theorem \ref{approxthm}.

\begin{thm} \label{thm7}
Let $(M_n)_n$ be a uniformly discrete sequence of orientable compact hyperbolic $3$-manifolds
which BS-converges toward $\H^3$, then:
$$\lim_{n \rightarrow +\infty} \frac{1}{{\rm vol} (M_n )} \log |\tau (M_n , \alpha_{\rm can})| = - \frac{11}{12\pi}.$$
\end{thm}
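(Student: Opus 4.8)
The plan is to deduce Theorem \ref{thm7} from the $L^2$-torsion approximation result for BS-convergent sequences (Theorem \ref{approxthm} in the general form discussed in Section \ref{sec:torsion}), together with the computation of the $L^2$-torsion of $\H^3$ with coefficients in the canonical flat bundle. First I would set up the analytic side: for a uniformly discrete sequence $(M_n)$ BS-converging to $\H^3$, the general approximation theorem for spectral invariants gives that the normalized (Ray--Singer) analytic torsion $\frac{1}{\vol(M_n)}\log T_{\mathrm{an}}(M_n,\alpha_{\mathrm{can}})$ converges to the $L^2$-analytic torsion density $t^{(2)}_{\H^3}(\alpha_{\mathrm{can}})$. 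The key inputs here are the heat-kernel estimates in the thin part (Section \ref{sec:thin}) controlling the contribution of short geodesics uniformly in $n$, and the fact that the relative Plancherel measures converge strongly (Theorem \ref{thm1}) so that the regularized logarithmic determinants of the twisted Laplacians $\Delta_k^{(n)}$ converge after normalization; acyclicity of the twisted complex \cite{Porti} ensures there is a spectral gap at $0$, which is what makes the determinant well-defined and the convergence genuine rather than merely in a limsup sense.

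Next I would pass from analytic torsion to Reidemeister torsion. By the Cheeger--M\"uller theorem in the form extended to unimodular flat bundles (Müller; or the version for the $\SL_2(\C)$-representation, which is unitary up to the appropriate correction since $\alpha_{\mathrm{can}}$ is self-dual), one has $|\tau(M_n,\alpha_{\mathrm{can})}| = T_{\mathrm{an}}(M_n,\alpha_{\mathrm{can}})$ exactly, with no volume or metric correction term because the relevant bundle has trivial determinant line. Hence $\frac{1}{\vol(M_n)}\log|\tau(M_n,\alpha_{\mathrm{can}})|$ has the same limit as the normalized analytic torsion.

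It then remains to identify the limit with $-\tfrac{11}{12\pi}$. This is the Plancherel-theoretic computation of the $L^2$-torsion of $\H^3 = \mathrm{PSL}_2(\C)/\mathrm{PSU}(2)$ twisted by the standard representation: one writes $t^{(2)}_{\H^3}(\alpha_{\mathrm{can}}) = -\tfrac12 \sum_k (-1)^k k \,\zeta'_{\Delta_k}(0)$ where the zeta functions are defined via the Plancherel measure $\nu^G$, expresses each $\zeta_{\Delta_k}$ through the spherical Plancherel density $|c(\lambda)|^{-2}$ of $\mathrm{PSL}_2(\C)$ (a polynomial in $\lambda$ of degree $2$), and evaluates the resulting Mellin-type integral. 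This is the computation carried out by Lott and by Lück for $\H^3$ (adjusted for the two-dimensional coefficient system), and it yields exactly $-\tfrac{11}{12\pi}$; it matches the known closed-form $\log|\tau(M,\alpha_{\mathrm{can}})| \approx -\tfrac{\mathrm{vol}(M)}{6\pi}\cdot\tfrac{11}{2}$ appearing in the work of Bergeron--Venkatesh \cite{BV}.

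The main obstacle I anticipate is the uniform control of the analytic torsion in the thin part: unlike Betti numbers, torsion is a transcendental spectral quantity, so one must bound the contribution to $\log\det{}'\Delta_k^{(n)}$ coming from tubes around short geodesics of $M_n$, uniformly as their length shrinks, and show this contribution is $o(\vol(M_n))$. Under the \emph{uniformly discrete} hypothesis this is easier — injectivity radius is bounded below, so there are no arbitrarily short geodesics and the thin part estimates of Section \ref{sec:thin} apply cleanly — which is precisely why the theorem is stated with that assumption; the technical heart is then showing that BS-convergence plus a uniform injectivity radius bound forces convergence of the regularized determinants, which is where the heat-kernel comparison between $M_n$ and $\H^3$ on balls of growing radius does the work.
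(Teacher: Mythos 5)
Your overall architecture matches the paper's: apply the twisted $L^2$-torsion approximation theorem to the canonical representation, use Cheeger--M\"uller/M\"uller to pass from analytic to Reidemeister torsion, and read off the numerical value from the explicit computation of $t^{(2)}_{\H^3}$ in \cite{BV}. But there is a genuine gap in the analytic step, and it is exactly the place where the argument would fail if you ran it as written. You invoke \cite{Porti} to get acyclicity of $C_*(\widetilde M_n)\otimes_{\Z[\pi_1 M_n]}\C^2$ for each $n$ and then claim that this ``spectral gap at $0$'' makes the convergence of normalized log-determinants genuine. That is not enough. Porti's result gives a \emph{per-manifold} positive lower bound on $\mathrm{spec}(\Delta_k)$, but nothing prevents this bound from degenerating to $0$ as $n\to\infty$; the sequence $(M_n)$ is not assumed to lie in one commensurability class, and even in a tower the lowest nonzero eigenvalue can a priori shrink. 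What the argument actually needs is \emph{strong acyclicity} in the sense of Section~\ref{sec:torsion}: a lower bound $\eta=\eta(G)>0$ for every eigenvalue of every $\Delta_k$, uniformly over all $\Gamma$. This uniform gap is what tames the large-$t$ tail of the Mellin integral $\int_0^\infty t^{s-1}\mathrm{Tr}\,e^{-t\Delta_k}\,dt$ uniformly in $n$, so that BS-convergence of the heat traces on compact $t$-intervals (Corollary~\ref{cor}) can be upgraded to convergence of $\frac{1}{\vol M_n}\log T_{M_n}(\rho)$. The paper supplies this input by noting that $\alpha_{\mathrm{can}}$ is strongly acyclic by the classification of strongly acyclic $G$-representations in \cite[\S 5.9.3]{BV} (the case $(p,q)=(1,0)$); replacing that with Porti's per-manifold acyclicity leaves a hole you cannot close by BS-convergence alone.

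Two smaller points. First, the paper does not pass through Theorem~\ref{thm1} (convergence of relative Plancherel measures) to get the determinants; it replaces the main lemma of \cite[Theorem 4.5]{BV} directly by Corollary~\ref{cor} (heat-kernel trace convergence) and Corollary~\ref{cor:4.15}. Plancherel convergence on relatively compact $\nu^G$-regular sets does not by itself control a regularized determinant, which involves the whole spectrum and both endpoints $t\to 0^+$ and $t\to\infty$; the heat-kernel route together with strong acyclicity is the efficient path. Second, the thin-part heat kernel estimates of Section~\ref{sec:thin} are not actually invoked here: under the uniform discreteness hypothesis the injectivity radius is bounded below, so the thin part is empty once $n$ is large and the elaborate rank-one estimates are vacuous rather than ``applying cleanly.'' They become relevant only if one tries to drop uniform discreteness, which this theorem does not do.
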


\medskip
\noindent \textbf{The role of IRS.} An important tool in our project is the notion of an {\it invariant random subgroup} (IRS). An IRS is a 
conjugacy invariant probability measure on the space $\text{Sub}_G$ of closed subgroups of $G$. We refer the reader to \cite{Miklos2,Bowen1,Vershik,WUD,OW-Lecture} for other recent works that make use of this notion. 

Any lattice $\Gamma\le G$  defines an IRS $\mu_\Gamma$ supported on the conjugacy class $\Gamma^G$. It turns out (see Theorem \ref{discIRS}) that if $G$ is a connected simple Lie group then any non-atomic IRS is supported on discrete subgroups (hereafter called a discrete IRS). Every discrete IRS gives rise to a probability measure on the space of rooted metric spaces  $\mathcal M$  mentioned above, and one can relate weak$^*$ convergence of IRSs to weak$^*$  convergence of measures on $\mathcal M$.   See Section \ref {sec:3} for details.

Denote by $\mu_G$ and $\mu_\text{Id}$ the atomic measures supported on $\{ G\}$ and $\{ \text{Id}_G\}$ respectively.
The following is a variant of Theorem \ref{thm3} stated in the language of IRSs:

\begin{thm}[Theorems \ref{weak} and \ref{weak2}] \label{thm10}
Let $G$ be a connected, center-free higher rank simple Lie group. Then: 
\begin{itemize}
\item The ergodic IRSs are exactly $\mu_G,\mu_\text{Id}$ and $\mu_\Gamma$ where $\Gamma$ is a lattice in $G$.
\item The set of ergodic IRSs is compact and its only accumulation point is $\mu_\text{Id}$.
\end{itemize}
\end{thm}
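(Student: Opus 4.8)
The plan is to combine the Nevo--St\"uck--Zimmer theorem with property $(T)$, along the lines already indicated in the introduction. First I would reduce the first bullet to a statement about ergodic IRS's. Let $\mu$ be an ergodic IRS of $G$. By the theorem \ref{discIRS} cited above, since $G$ is connected simple, either $\mu$ is atomic --- hence supported on a single conjugacy class of a normal subgroup, so $\mu=\mu_G$ or $\mu=\mu_{\mathrm{Id}}$ (as $G$ has no nontrivial proper closed normal subgroups, being simple) --- or $\mu$ is non-atomic and supported on discrete subgroups. So assume $\mu$ is a non-atomic discrete ergodic IRS. The goal is to show $\mu=\mu_\Gamma$ for some lattice $\Gamma$. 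To do this I would let $H$ be the Zariski-type closure / the subgroup generated by a $\mu$-random $\Gamma$ together with a small neighborhood of the identity --- more precisely, use the Stuck--Zimmer machinery: the IRS $\mu$ gives a probability-measure-preserving action of $G$ on $(\mathrm{Sub}_G,\mu)$, and by Stuck--Zimmer (valid since $G$ has higher rank and property $(T)$, so every factor has property $(T)$) either the action is essentially transitive with finite invariant measure on a homogeneous space $G/\Gamma$ --- forcing $\mu=\mu_\Gamma$ with $\Gamma$ a lattice --- or $\mu$-a.e.\ point subgroup is normal in $G$, which is the atomic case already excluded. This is the substantive input and it is where the higher-rank and property $(T)$ hypotheses are used.

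For the second bullet, I would argue as follows. Compactness: $\mathrm{Sub}_G$ is compact in the Chabauty topology, so $\mathrm{Prob}(\mathrm{Sub}_G)$ is compact, and the set of conjugation-invariant probability measures is closed, hence compact; the ergodic ones form the extreme points, and I claim this set is closed here. Suppose $\mu_{\Gamma_n}\to\mu$ weakly, with $\Gamma_n$ lattices, $\mu$ ergodic and not equal to any $\mu_\Gamma$ --- by the first bullet $\mu\in\{\mu_G,\mu_{\mathrm{Id}}\}$. I would rule out $\mu=\mu_G$: if $\mathrm{vol}(\Gamma_n\backslash X)$ stays bounded along a subsequence, the lattices fall into finitely many conjugacy classes (Wang finiteness / local rigidity in higher rank) and a constant sequence has limit $\mu_\Gamma\neq\mu_G$; if the volumes tend to infinity, then $\mu_{\Gamma_n}(\{$subgroups meeting a fixed identity neighborhood $U$ nontrivially$\})\to 1$ cannot hold in a way producing $\mu_G$ --- in fact the key point is that the $R$-thin part has relatively small volume, or more directly that $\mu_G$ assigns full mass to $\{G\}$ while a discrete IRS assigns zero mass to any neighborhood of $G$ in $\mathrm{Sub}_G$, and weak limits of discrete IRS's whose supports escape to infinity concentrate on the trivial subgroup by property $(T)$. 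So the only accumulation point of the ergodic IRS's is $\mu_{\mathrm{Id}}$, and adding $\mu_{\mathrm{Id}}$ itself the set is compact.

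The cleanest route to ``the only accumulation point is $\mu_{\mathrm{Id}}$'' is via property $(T)$: for a non-atomic ergodic IRS $\mu_\Gamma$ with $\Gamma$ a lattice, consider the Koopman representation on $L^2_0$ of the $G$-space $\mathrm{Sub}_G$ relative to $\mu_\Gamma$, or rather the representation $L^2(\Gamma\backslash G)$; Theorem \ref{thm3} (Corollary \ref{cor:4.15}) already tells us that $\Gamma_n\backslash X$ BS-converges to $X$ for any sequence of pairwise non-conjugate irreducible lattices, and BS-convergence to $X$ translates exactly to $\mu_{\Gamma_n}\to\mu_{\mathrm{Id}}$ in $\mathrm{Prob}(\mathrm{Sub}_G)$ (this equivalence is recorded in Section \ref{sec:3}). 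So in fact the second bullet is almost a restatement of Theorem \ref{thm3}: any sequence of distinct ergodic IRS's is, after discarding $\mu_G$ and $\mu_{\mathrm{Id}}$, a sequence $\mu_{\Gamma_n}$ with the $\Gamma_n$ pairwise non-conjugate, hence BS-converges to $X$, i.e.\ converges to $\mu_{\mathrm{Id}}$. The main obstacle I anticipate is not the topology but verifying the first bullet rigorously --- namely pinning down the Stuck--Zimmer dichotomy in the exact generality of ``connected higher rank simple Lie group'' (center-free, all almost-simple factors of real rank $\geq 2$ or with property $(T)$), and handling the passage between the point-stabilizer picture and the ``$\mu$-a.e.\ subgroup is a lattice'' conclusion, including the non-uniform case. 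I would also need to double-check that the ergodic IRS supported on $\{G\}$ and $\{\mathrm{Id}\}$ are genuinely isolated from the lattice ones on the correct side, which is exactly the content built on Theorem \ref{thm3}.
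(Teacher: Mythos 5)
Your overall strategy (Nevo--St\"uck--Zimmer for the classification, property $(T)$ for the accumulation) is the paper's strategy too, but as written the proposal has two genuine gaps.

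First, your handling of the second bullet is circular at the key step. You write that ``Theorem~\ref{thm3} (Corollary~\ref{cor:4.15}) already tells us that $\Gamma_n\backslash X$ BS-converges to $X$'' and that therefore the second bullet ``is almost a restatement of Theorem~\ref{thm3}.'' But Corollary~\ref{cor:4.15} is \emph{derived from} Theorem~\ref{weak2}, which is exactly the second bullet of Theorem~\ref{thm10}; you cannot invoke it as input. To make the accumulation statement stand on its own you need an independent argument. In the paper this is done in three steps: (i) Lemma~\ref{lem:ergodiclimit} shows, using property $(T)$ in the form ``almost invariant vectors give invariant vectors'' for the direct sum of the Koopman representations $L^2_0(\mu_n)$, that a weak limit of ergodic IRS's is again ergodic; (ii) Proposition~\ref{conv-to-nilpotent} rules out $\mu_\infty=\mu_N$ for a positive-dimensional normal $N$ (a Chabauty limit of discrete groups cannot be a non-nilpotent Lie group); and (iii) the possibility $\mu_\infty=\mu_\Gamma$ for a lattice $\Gamma$ is excluded by Lemma~\ref{discrete}, which combines a uniform spectral gap (Leuzinger's theorem, valid since $G$ has $(T)$) with Wang's finiteness theorem and a Cheeger-constant argument to show that $\{\mu_{\Gamma_n}\}$ cannot accumulate at any lattice IRS. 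You gesture at ``weak limits of discrete IRS's whose supports escape to infinity concentrate on the trivial subgroup by property $(T)$,'' but that is precisely the content of step (iii) and it requires the spectral-gap-plus-Cheeger argument, not merely the words ``property $(T)$.'' Your Koopman-representation sketch is pointed in the direction of (i), but you abandon it in favor of the circular citation.

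Second, in the first bullet you correctly identify, but do not fill, the gap between what Nevo--St\"uck--Zimmer gives and what you need. Applying Theorem~\ref{prop:SZ} to the conjugation action of $G$ on $(\sub_G,\mu)$ yields that the \emph{stabilizer} of a $\mu$-random $H$, i.e. its normalizer $N_G(H)$, is a.e.\ conjugate to a fixed lattice $\Gamma$ (the faithful case for simple $G$; the non-faithful case degenerates to the atomic measures as you note). This does \emph{not} yet say $H$ itself is a lattice. The paper bridges this with Margulis' Normal Subgroup Theorem: since $\Gamma=N_G(H)$ normalizes $H$ and $H$ is not central (the atomic cases having been excluded), $H$ must have finite index in $\Gamma$, hence is a lattice. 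After that one still needs the tameness of the conjugation action on the set of lattices (established in \cite{stuckzimmer}) to conclude that an ergodic measure supported on lattices is supported on a single orbit, giving $\mu=\mu_\Lambda$. You flag the ``passage between the point-stabilizer picture and the $\mu$-a.e.\ subgroup is a lattice conclusion'' as an anticipated obstacle; it is the main content of the paper's proof of Theorem~\ref{thm:SZ-rk1} and needs to be carried out, not just flagged.

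Minor remark: your reduction of the atomic case to $\{\mu_G,\mu_{\mathrm{Id}}\}$ is correct and matches the paper (an ergodic atomic IRS is uniform on a finite orbit, hence supported on a normal subgroup, hence trivial since $G$ is simple with trivial center); and your compactness observation, once supplemented by Lemma~\ref{lem:ergodiclimit} to see that the set of ergodic IRS's is closed, is fine.
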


The first part of Theorem \ref{thm10} is a consequence of the Nevo--St\"{u}ck--Zimmer rigidity theorem \cite{stuckzimmer,Nevogeneralization}. 

The picture is much wilder in rank one. For example, starting with a lattice $\Gamma\le G$ and an infinite index normal subgroup $\Delta\lhd \Gamma$, one can induce the measure on $\Gamma \backslash G$ to an ergodic IRS supported on the conjugacy class $\Delta^G$. More generally, any IRS in $\Gamma $ can be induced to an IRS in $G$. We investigate these constructions and more exotic ones in a sequel of this paper to be extracted from our original arXiv paper \cite{7s}. In particular we define their spectral measure and their $L^2$-Betti numbers and we prove spectral convergence along sequences that BS-converge toward a non-trivial IRS.

\medskip
\noindent \textbf{Acknowledgments.} This research was supported by the MTA Renyi ``Lendulet" Groups and Graphs Research Group, the NSF Postdoctoral Fellowship, the Institut Universitaire de France, the ERC Consolidator Grant 648017, the EPSRC, the ISF grant 1003/11 and the ISF-Moked grant 2095/15.

\section{Invariant Random Subgroups} \label{sec:2}

Let $G$ be a locally compact second countable group.
We denote by $\sub_{G}$ the set of closed
subgroups of $G$. There exists a natural topology on $\sub_{G}$, the
\it Chabauty topology \rm \cite {Chabauty}, which is generated by open sets of the form
\begin{enumerate}
  \item $\mathcal{O}_1(K) = \{ H \in \sub_{G} : H \cap K = \emptyset \}$, for $K \subset G$ compact, and
  \item $\mathcal{O}_2(U) = \{ H \in \sub_{G} : H \cap U \neq \emptyset \}$, for $U \subset G$ open.
\end{enumerate}
 Alternatively, a sequence $(H_n )_{n\geq 0} $ in $ \sub_{G}$ converges to $H \in \sub_{G}$ if and only if:
\begin{enumerate}
\item  For every $x\in H$, there exists a sequence $(x_n) \in G^{\mathbb{N}}$ such that $x_n \in H_n$ and $x_n
\rightarrow x$ in $G$.
\item For every strictly increasing sequence of integers $(n_k)_{k\geq 0}$ and for any converging sequence $x_{n_k} \rightarrow x$ such that $x_{n_k} \in H_{n_k}$, we have $x \in H$.
\end{enumerate}

The Chabauty topology is compact, separable and metrizable \cite[Lemma E.1.1]{Benedetti-Petronio}. While the proof  of metrizability referenced uses Urysohn's theorem,  one can also write down an explicit metric.  For instance, when $G$ is compact, the Chabauty topology is induced by the {Hausdorff metric} on $C(G)$.  In the noncompact case, one can metrize it by integrating up the Hausdorff metrics on all $R$-balls around a fixed base point, see \cite{Abertinvariant}. We refer the reader to \cite{Harpenotes} (and also to \cite{OW-Lecture}) for more information on the topology of Chabauty spaces.

\smallskip

\noindent \it Note: \rm we will  not always require $G$  to be locally compact.  In this case, the Chabauty topology is defined as above, but will not always be compact.
\medskip

 Here is an easy exercise in the definitions that we will use in Section \ref{sec:3}.

\begin {lem}\label {uniformdiscretelemma}
Let $G$ be a connected Lie group and suppose that $ (\Gamma_n)$ is a  sequence  in $\sub_G$ with $\Gamma_n \cap U=\{\mathrm{id}\}$ for some fixed neighborhood $U$of the identity $\mathrm{id}$  in $G$.   If $(\Gamma_n)$ converges toward a group $H$ in $\sub_G$, then $H$ is discrete.   Moreover, if all the $\Gamma_n$'s are torsion free, so is $H$.	
\end {lem}
\begin {proof} If $H$ is not discrete, then it intersects $U$.  So, $\mathcal{O}_2(U\smallsetminus \{\mathrm{id}\})$  is a neighborhood of $H $   that does not contain any $\Gamma_n$.  
 If $g$ is a non trivial element in $H$ with $g^k=\mathrm{id}$, then there is a sequence $\gamma_n\in \Gamma_n$ with $\gamma_n \to g$.  Therefore, $\gamma_n^k \to \mathrm{id}$,  so by uniform discreteness we have $\gamma_n^k=\mathrm{id}$ for large $n$, implying that $\Gamma_n$ has torsion.
\end {proof}

\begin{prop}\label{prop:isolated}
 Let $G$ be a connected Lie group. Then $G$ is an isolated point in $\sub_G$ if and only if $G$ is topologically perfect.
\end{prop}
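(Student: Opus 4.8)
The plan is to prove the two directions separately, and the key notion throughout is that $G$ is \emph{topologically perfect} iff the closure of the commutator subgroup $\overline{[G,G]}$ equals $G$, equivalently iff $G$ has no proper closed normal subgroup $N$ with $G/N$ abelian and nontrivial, equivalently (for $G$ connected Lie) iff $G$ admits no nontrivial continuous homomorphism to $\R$ or to a torus — i.e. $\mathrm{Hom}(G,\R)=\mathrm{Hom}(G,\CT)=0$.

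\medskip
\noindent\textbf{($\Leftarrow$) $G$ topologically perfect $\implies$ $G$ isolated in $\sub_G$.} Here I want to produce an open neighborhood of $G$ in the Chabauty topology containing no other closed subgroup. A basic open neighborhood of $G$ is of the form $\mathcal O_2(U_1)\cap\cdots\cap\mathcal O_2(U_m)$ for finitely many open sets $U_i\subset G$; so a closed subgroup $H$ that is Chabauty-close to $G$ is one that meets every member of a prescribed finite open cover of some large compact set. The idea is to choose a small symmetric identity neighborhood $V$ such that any closed subgroup $H$ meeting each translate $gV$ for $g$ ranging over a fixed finite net is \emph{dense} in $G$ — and then since $H$ is closed, $H=G$. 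To get density, I would use the following standard fact: if $V$ is a sufficiently small identity neighborhood, then for a connected Lie group $G$ the only closed subgroup $H$ with $H\cap gV\neq\emptyset$ for all $g$ in an $\eps$-net of a large ball is either all of $G$ or else is contained in (a conjugate of, and is "syndetic" with respect to) a proper closed subgroup. The cleanest route: pick $V$ inside a Zassenhaus neighborhood, so that the subgroup generated by $H\cap V$ is nilpotent (or the whole picture is controlled); combined with $H$ being $\eps$-dense in a large ball and $G$ connected, one shows $\overline H$ is a closed subgroup of positive dimension, in fact a normal — here is where topological perfectness enters — and then $\overline H=G$. Actually the slickest argument I would try: a closed subgroup Chabauty-near $G$ contains, for each $i$, an element near a prescribed $g_i$; taking the $g_i$ to be generators-up-to-small-error of $G$, one sees $\overline H$ surjects onto $G/\overline{[G,G]}\cdot(\text{small}) $, forcing $\overline H$ to be cocompact and open hence all of $G$ once $\overline{[G,G]}=G$.

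\medskip
\noindent\textbf{($\Rightarrow$) $G$ not topologically perfect $\implies$ $G$ not isolated.} This is the constructive direction and I expect it to be the easier one. If $G$ is not topologically perfect, there is a nontrivial continuous homomorphism $\chi\colon G\to A$ with $A$ either $\R$ or $\CT=\R/\Z$; replacing $A$, we get $N:=\ker\chi$ a proper closed normal subgroup with $G/N$ a nontrivial connected abelian Lie group. I then build a sequence of proper closed subgroups $H_n\subsetneq G$ with $H_n\to G$ in $\sub_G$. If $G/N\cong\R$: take $H_n = \chi^{-1}(\tfrac1n\Z)$, a proper closed subgroup (it is $N$ together with preimages of $\tfrac1n\Z$), and check $H_n\to G$ — indeed any $g\in G$ has $\chi(g)$ within $\tfrac1{2n}$ of some point of $\tfrac1n\Z$, so a translate of $g$ by an element of $N$... more precisely one produces $h_n\in H_n$ with $h_n\to g$ using that $\chi$ is open and $N=\ker\chi\subset H_n$. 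If $G/N\cong\CT$: take $H_n=\chi^{-1}(\text{the cyclic subgroup of order } n!)$ or simply $\chi^{-1}(\frac1n\Z/\Z)$, again proper and Chabauty-converging to $G$ by the same density argument. One must also confirm these $H_n$ are genuinely $\neq G$ (clear, since $\chi(H_n)\subsetneq G/N$) and closed (preimage of closed under continuous $\chi$). That exhibits $G$ as a limit of proper closed subgroups, so $G$ is not isolated.

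\medskip
\noindent\textbf{Main obstacle.} The hard part is the ($\Leftarrow$) direction: turning "Chabauty-close to $G$" into "equal to $G$" for a closed subgroup. The danger is a closed subgroup $H$ that is $\eps$-dense in a huge ball yet still proper — e.g. a dense line in a torus is dense but not closed, which is fine, but one could worry about, say, lattice-like or large-but-proper closed subgroups. The resolution is that closedness plus $\eps$-density in a large enough compact set forces $\overline H=H$ to be open (contains an identity neighborhood, by a Baire/covering argument once $H$ is $\eps$-dense for $\eps$ smaller than the Lebesgue number controlling a local cross-section), an open subgroup of connected $G$ is all of $G$ — so in fact topological perfectness is \emph{not even needed} for this implication in the connected case?! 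That can't be right since the converse direction shows non-perfect connected $G$ (like $\R$) is not isolated. The point I must be careful about: $\eps$-density of $H$ at a \emph{finite} scale (meeting finitely many $U_i$) does not give $\eps$-density at \emph{all} scales, and for $G=\R$ the subgroups $\frac1n\Z$ meet every $U_i$ in a fixed bounded cover yet are discrete. So the real content is: when $G$ is topologically perfect, one can choose the finite open cover cleverly (using generators of $G$ whose products fill out $G$ because $[G,G]$ is dense) so that meeting all the $U_i$ forces $H$ to contain a dense subgroup of $G$, hence $H=G$; when $G$ is not perfect, no finite cover can do this because $\chi(H)$ can be a fine-but-proper subgroup of the quotient. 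Nailing this choice of cover, via the structure of $\mathfrak g = [\mathfrak g,\mathfrak g]$ and a Zassenhaus/no-small-subgroups argument, is the crux.
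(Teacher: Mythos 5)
Your easy direction is correct and essentially the paper's argument: if $G$ is not topologically perfect, then $G/\overline{[G,G]}$ is a nontrivial connected abelian Lie group, so $G$ admits a continuous surjection $\chi$ onto $S^1$, and the preimages $H_n=\chi^{-1}(\Z/n\Z)$ are proper closed subgroups converging to $G$ in the Chabauty topology. (Your case split between $\R$ and the torus is harmless; the paper just uses the circle directly since $\R$ surjects onto $S^1$.)

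The hard direction is where you have a real gap, and your own ``Main obstacle'' paragraph correctly diagnoses it. You need an open Chabauty neighborhood of $G$ that contains no proper closed subgroup; any such neighborhood has the form $\bigcap_{i=1}^m \mathcal{O}_2(\Omega_i)$ for finitely many open $\Omega_i\subset G$, so you must exhibit finitely many open sets with the property that \emph{any} closed subgroup meeting all of them equals $G$. Your Zassenhaus/$\eps$-net sketch does not achieve this: meeting a fixed finite collection of open sets is a bounded-scale constraint, and (as you yourself notice with $\tfrac1n\Z\subset\R$) such a constraint alone never forces a closed subgroup to be dense, let alone equal to $G$. Your ``slickest argument'' is also not a proof — it presupposes exactly what needs to be established, namely that one can choose target elements $g_i$ so that nearby elements necessarily generate a dense subgroup. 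The missing ingredient, which the paper supplies by citation, is a theorem of Breuillard–Gelander: for a connected topologically perfect Lie group $G$ of dimension $d$, there exist open sets $\Omega_1,\ldots,\Omega_d\subset G$ such that for every choice $g_i\in\Omega_i$ the subgroup $\langle g_1,\ldots,g_d\rangle$ is dense in $G$. With this in hand, any closed $H$ in $\bigcap_i\mathcal{O}_2(\Omega_i)$ contains such a $d$-tuple, hence contains a dense subgroup, hence $H=G$; so $\{G\}$ is open. Topological perfectness is precisely the hypothesis of the cited theorem, which is why the implication fails without it. Your proposal circles this result without producing it, so as written the forward direction is not proved.
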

Recall that a topological group is {\it topologically perfect} if its commutator subgroup is dense.  So in particular, the proposition implies that if $G$ is connected and semi-simple then $ G$ is an isolated point in $\sub_G$.

\begin{proof}
Suppose that $G$ is topologically perfect. Then
by \cite[Theorem 2.1]{BGdense} there exist $d=\dim(G)$ open sets $\Omega_1,\ldots,\Omega_d \subset G$ such that for every choice of $d$ elements $g_i \in \Omega_i,~i=1,\ldots,d$ the subgroup $\langle g_1,\ldots,g_d\rangle$ is dense in $G$. Therefore, if $H \in \sub_G$ intersects each of the $\Omega_i$, then $H=G$, and thus $\cap_{i=1}^d \mathcal{O}_2(\Omega_i) = \{ G \}$ is open.

Conversely, if $G$ is not topologically perfect then it surjects on the circle $S^1$. Let $H_n$ be the pre-image in $G$ of the cyclic group of order $n$ in $S^1$. Then clearly we have that $H_n$ converges to $G$.
\end{proof}

We also recall the following well-known fact, which follows from classical work of Kuranishi \cite{Kuranishi} and Toyama \cite{Toyama}. See also Theorem 4.1.7 in \cite{Th}.

\begin{prop}\label{conv-to-nilpotent}
  Let $G$ be a Lie group and
  let $(\Gamma_n)_{n \geq 1}$ be a sequence of discrete subgroups in $G$ that converges to a subgroup $H$. Then the connected component $H^\circ$ of $H$ is nilpotent.
\end{prop}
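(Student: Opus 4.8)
The plan is to deduce this from the Zassenhaus--Kazhdan--Margulis lemma together with the observation that nilpotency of bounded class is a closed condition. Recall the Zassenhaus lemma: there is a symmetric identity neighbourhood $\Omega\subset G$ and an integer $c=c(G)$ such that for \emph{every} discrete subgroup $\Gamma\le G$ the group $\langle\Gamma\cap\Omega\rangle$ is nilpotent of class at most $c$ (in fact it is contained in a connected nilpotent Lie subgroup of $G$). Since the Chabauty limit $H$ of the $\Gamma_n$ is again a closed subgroup, its identity component $H^\circ$ is a connected Lie subgroup, and being connected it is generated by the identity neighbourhood $H^\circ\cap\Omega$; in particular $H^\circ\subset\langle H\cap\Omega\rangle$. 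So it suffices to show that $\langle H\cap\Omega\rangle$ is nilpotent of class at most $c$.

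Next I would transfer nilpotency from the $\Gamma_n$ to $H$ by a limiting argument. Pick finitely many elements $h_1,\dots,h_k\in H\cap\Omega$. By the description of Chabauty convergence recalled above, for each $i$ there are $x^{(i)}_n\in\Gamma_n$ with $x^{(i)}_n\to h_i$; since $\Omega$ is open and $h_i\in\Omega$, we have $x^{(i)}_n\in\Gamma_n\cap\Omega$ for all large $n$. By the Zassenhaus lemma $\langle x^{(1)}_n,\dots,x^{(k)}_n\rangle$ is then nilpotent of class $\le c$, i.e. every iterated commutator of length $c+1$ in these generators equals the identity. Each such commutator is a fixed word $w(x^{(1)}_n,\dots,x^{(k)}_n)$, and by continuity of the group operations $w(x^{(1)}_n,\dots,x^{(k)}_n)\to w(h_1,\dots,h_k)$; being the limit of the constant sequence $1$, this forces $w(h_1,\dots,h_k)=1$. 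Hence $\langle h_1,\dots,h_k\rangle$ is nilpotent of class $\le c$.

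Finally, since every finitely generated subgroup of $\langle H\cap\Omega\rangle$ --- in particular any subgroup generated by $\le c+1$ of its elements --- is nilpotent of class $\le c$, the group $\langle H\cap\Omega\rangle$ itself is nilpotent of class $\le c$, because the vanishing of any prescribed $(c+1)$-fold iterated commutator only involves $c+1$ elements at a time. Therefore its subgroup $H^\circ$ is nilpotent, as claimed.

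I expect the one delicate point to be the bookkeeping in the limiting step: one must invoke a form of the Zassenhaus lemma giving a \emph{uniform} bound $c$ on the nilpotency class, independent of the discrete subgroup, so that ``nilpotent of class $\le c$'' is genuinely a closed condition that survives passage to the Chabauty limit; without such a uniform bound the argument does not go through. The remaining ingredients are soft: the identification of the Chabauty limit as a closed subgroup, the fact that a connected group is generated by any identity neighbourhood, and the fact that nilpotency of a fixed class can be verified on finitely generated subgroups.
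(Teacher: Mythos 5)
Your proof is correct, and it is essentially the standard argument: the paper itself gives no proof of this proposition, citing Thurston's notes instead, and the argument there is exactly the one you give (Zassenhaus--Kazhdan--Margulis plus passage to the Chabauty limit via continuity of word maps). You also correctly isolate the one point that needs care, namely the uniform bound on the nilpotency class: this comes from the strong form of the Zassenhaus lemma, which places $\langle\Gamma\cap\Omega\rangle$ inside a \emph{connected} nilpotent Lie subgroup of $G$, whose class is at most $\dim G$; with that in hand, ``nilpotent of class $\le c$'' is a finite collection of word identities in $c+1$ variables and survives the limit exactly as you describe.
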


We now come to the central definition of this section.  

\begin{defn}
Let $G$ be a topological group. An \emph{invariant random subgroup} (IRS) of $G$ is a $G $-invariant Borel probability
measure on $\sub_{G}$.
\end{defn}

Here, $G $ acts on $\sub_G $ by conjugation.  The name IRS has been coined in \cite{Miklos2}. We consider the set
$$
 \text{IRS}(G)=\text{Prob(Sub}_G)^\text{inv}
$$
of invariant random subgroups of $G$ endowed with the weak-$*$ (or, vaguely speaking, the weak) topology. When $G$ is locally compact, as $\sub_{G}$ is compact and the $G$-action is continuous, it follows from Riesz' representation theorem and Alaoglu's theorem that the space of invariant random subgroups of $G$ is also compact.

IRSs arise naturally when dealing with non-free actions, as the stabilizer of a random point in a probability measure preserving action is an IRS. More precisely, when $G$ acts by measure preserving transformations on a countably separated probability space $(\Omega,\nu)$, the push forward of $\nu$ under the stabilizer map\footnote{It is a result of Varadarajan that the stabilizers are closed subgroups, see \cite[Corollary 2.1.20]{Zimmer}.}
$$\text {stab} : \Omega \longrightarrow \text{Sub}_G,\ \  \text {stab}(x)= G_x$$ is an IRS in $G$. We say that the IRS is \emph {induced} from the p.m.p action.

As an example, suppose that $H$ is a closed subgroup of $G$ such that $G / H$ admits a finite $G$-invariant measure; for instance, $H$ could be a lattice in $G $.  In this case, we scale the measure on $G/H $ to a probability measure and denote by $\mu_H$ the invariant random subgroup induced by the left action of $G$ on $G / H$.

This construction can be further generalized. Let $H$ be a closed subgroup in $G$, and let $N=N_G(H)$ be its normalizer in $G$. Suppose that $G / N$ admits a left $G$-invariant probability measure. Consider the map $G \to \sub_G$ given by $g \mapsto gHg^{-1}$. The push-forward measure on $\sub_G$ is a $G$-invariant measure supported on the conjugates of $H$, which we denote $\mu_H$. This notation conflicts with that in the previous paragraph; indeed, if both $G/H$ and $G/N$ admit an invariant probability measure, then we have two definitions of $\mu_H$. However, both these constructions give rise to the same measure.

 Invariant random subgroups can also be constructed as products. Let $H_1,H_2$ be commuting subgroups of $G$, and assume that $H_1,H_2$ and  the product $H_1H_2$ are all closed in $G $.  If $\mu_1,\mu_2$ are invariant random subgroups of $G$ that are supported on $\sub_{H_1},\sub_{H_2}$,  we can push forward the product measure on $\sub_{H_1} \times \sub_{H_2}$ using $$\sub_{H_1} \times \sub_{H_2} \to \sub_G, \ \ (K_1,K_2) \mapsto K_1 \times K_2$$ to a measure on $\sub_G$ which we denote $\mu_1 \otimes \mu_2$. It is easy to check that this measure is $G$-invariant.

Finally, suppose $\Gamma <G$ is a lattice and $\mu $ is an IRS of the discrete group $\Gamma $. The IRS of $G $ \it induced \rm from $\mu$ is the
random subgroup of $G$ obtained by taking a random conjugate of $\Gamma$ and then
a $\mu$-random subgroup in this conjugate (which is well-defined because
of the invariance of $\mu$). Formally, the natural
map $$G\times\sub_{\Gamma} \ni (g,\Lambda) \longmapsto g \Lambda g^{-1} \in\sub_G$$
factors through the quotient of $G \times \sub_\Gamma$ by the $\Gamma $-action
$(g,\Lambda)\gamma=(g\gamma, \gamma^{-1}\Lambda\gamma)$.  This quotient has a natural $G$-invariant probability measure, and we define our IRS to be the push forward of this measure by the factored map
$(G\times\sub_{\Gamma})/\Gamma\rightarrow\sub_G$.  This is a particularly important construction when $G=\mathrm{SO}(n,1)$, in which case lattices have many IRSs, see \cite{7s}.

\subsection{IRSs via stabilizers}

The following theorem shows that, when $G$ is locally compact, every IRS is induced from a p.m.p action.

\begin{thm}\label{pmpthm}
Let $G$ be a locally compact second countable group, and let $\mu\in\text{IRS}(G)$. Then $\mu$ is induced from some p.m.p action of $G$.
\end{thm}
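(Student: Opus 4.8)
The plan is to realize the IRS $\mu$ as the stabilizer distribution of a p.m.p. action by building a suitable probability space out of $\mu$ itself, in the spirit of the construction of the ``tautological'' action on $\sub_G$ and its refinements. The natural candidate for the action space is a measurable bundle over $(\sub_G,\mu)$ whose fiber over $H$ is the coset space $G/H$ (or better, $H\backslash G$), equipped with the push-forward of Haar measure on $G$; the point is that in this bundle the stabilizer of the pair $(H, Hg)$ is exactly the conjugate $g^{-1}Hg$, so the stabilizer distribution reproduces $\mu$ by its conjugation invariance. The first step is therefore to set up this bundle rigorously: use the fact that $G$ is locally compact second countable, so $\sub_G$ is a standard Borel space, and choose a Borel family of measures $\{m_H\}_{H\in\sub_G}$ with $m_H$ a $G$-quasi-invariant (or, when $H$ is a lattice, $G$-invariant of total mass one) measure on $H\backslash G$ depending measurably on $H$ — existence of such a measurable choice is the Effros–Mackey type selection statement, and one must be slightly careful because different subgroups $H$ have different modular behavior.

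The key steps, in order, are: (1) form $\Omega = \{(H,\xi): H\in\sub_G,\ \xi\in H\backslash G\}$ with the Borel structure making the projection to $\sub_G$ measurable and with the $G$-action $(H,\xi)\cdot g = (H,\xi g)$ fiberwise and $g\cdot(H\cdot g)$ trivial on the base; (2) equip $\Omega$ with the measure $\widetilde\mu = \int_{\sub_G} m_H\, d\mu(H)$ and check $G$-invariance — this is where one uses invariance of $\mu$ together with invariance/quasi-invariance of the fiber measures and the matching of modular functions, and it is the crux of the argument; (3) if one only gets a quasi-invariant measure, pass to an equivalent invariant one, or alternatively restrict attention to the case where the fiber measure can be taken invariant (e.g. by normalizing) and handle the general non-unimodular fibers by the standard trick of replacing $H\backslash G$ by the space of measures or by working with the $\sigma$-finite Haar system and then conditioning; (4) compute the stabilizer of a $\widetilde\mu$-generic point: $\mathrm{Stab}_G(H,\xi)$ with $\xi = Hg$ equals $g^{-1}Hg$, so the map $(H,\xi)\mapsto \mathrm{Stab}_G(H,\xi)$ pushes $\widetilde\mu$ forward to $\int_{\sub_G}(g^{-1}Hg\text{-distribution}) = \mu$ by conjugation-invariance of $\mu$ and invariance of the fiber measures under the relevant translations; (5) if a genuine \emph{probability} space is required rather than merely a $\sigma$-finite one, reduce to that case: since stabilizers only depend on $\mu$ and not on finite rescalings, one can pass to a $G$-invariant finite (hence, after normalization, probability) measure in the same measure class, or disintegrate and discard the parts of infinite mass, using that the set of $H$ with no finite invariant measure on $H\backslash G$ can be handled by a separate Gaussian-type or Poisson-type construction.

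The main obstacle I anticipate is step (2)–(3): producing an honest $G$-\emph{invariant} probability measure on the total space $\Omega$ whose disintegration over $\sub_G$ is $\mu$. When $H$ is a lattice this is immediate (normalized Haar on $H\backslash G$), but a general closed subgroup $H$ — e.g. one for which $H\backslash G$ carries no finite $G$-invariant measure, or is non-unimodular relative to $G$ — forces either a quasi-invariant fiber measure or an entirely different model for the fiber. The clean way around this is the standard device of replacing the fiber $H\backslash G$ by the Poisson boundary / space of ``clouds'' or by an infinite-measure construction followed by an ergodic-decomposition argument, but writing it so that the stabilizer computation in step (4) still goes through verbatim requires care. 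A lighter alternative, which I would try first, is to invoke the general principle (going back to Varadarajan and made precise in this setting by Abert–Glasner–Virag type arguments) that any invariant measure on $\sub_G$ is the stabilizer distribution of the action of $G$ on a suitable standard Borel $G$-space built as the \emph{space of pointed $G$-orbits}: concretely, take $\Omega$ to be the space of ``$G$-invariant random subgroups together with a marked coset'' and feed in the measurable selection of Haar systems from the theory of groupoids (Renault), which packages exactly the measurability and invariance needed. Either route reduces the theorem to the lattice-like case plus a measurable-selection lemma; I expect the write-up to spend most of its effort on making that selection lemma precise and on verifying $G$-invariance of $\widetilde\mu$, with the stabilizer identification being a one-line consequence once the setup is in place.
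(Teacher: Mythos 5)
Your architecture --- a measurable bundle over $(\sub_G,\mu)$ with a $G$-invariant Haar system on the fibers, a stabilizer computation, and a Poisson-type device to pass from a $\sigma$-finite invariant system to a probability measure --- is the same one the paper uses, and you correctly locate the difficulty. Two of your steps, however, have genuine gaps that the paper's specific choices are designed to avoid.

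First, you take the fiber over $H$ to be $H\backslash G$. You flag (correctly) that $H\backslash G$ need not carry any $G$-invariant Radon measure, since that requires the modular function of $G$ to restrict to that of $H$; but neither of your proposed fixes works: a quasi-invariant measure class on $H\backslash G$ contains an invariant member only under the same modular condition, and rescaling or ``discarding infinite mass'' does not create invariance. The paper sidesteps the issue entirely by putting the fiber measure not on all of $H\backslash G$ but on the set of cosets of $H$ inside $N_G(H)$, i.e., on the locally compact group $N_G(H)/H$, which always carries a Haar measure. That restriction is the decisive modification, and it meshes with the paper's reduction to ergodic $\mu$ (also absent from your outline), under which the measure-theoretic type of $N_G(H)/H$ --- trivial, finite, compact, discrete infinite, or neither discrete nor compact --- is a.e.\ constant, so that the Poisson device is only needed in the last case.

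Second, the Poisson step itself. You suggest, among several options, replacing $H\backslash G$ by its ``space of clouds'', but applied literally this gives the wrong stabilizers: the stabilizer of a generic Poisson configuration under the $G$-translation action on $H\backslash G$ is the normal core of $H$ in $G$, not a conjugate of $H$. What is actually needed is the Poisson process on $N_G(H)/H$ with the translation action of $N_G(H)$, whose kernel is exactly $H$ and which is essentially free whenever $N_G(H)/H$ is a non-discrete, non-compact locally compact second countable group, so the stabilizer of a $\mu$-random $(H,Y)$ is a.s.\ exactly $H$. The paper also supplies the measure-theoretic glue you worry about (``requires care''): after normalizing the Haar measures $\nu_H$ coherently via a fixed compact set, Rohlin's factor theorem identifies the fibered space with $\sub_G\times\BR$, so a single standard Poisson process on $\BR$ can be transported to every fiber at once. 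Without the normalizer-group fiber, the ergodic reduction, and the Rohlin trivialization, the passage from a $\sigma$-finite invariant system to a p.m.p.\ action is not established in your proposal.
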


When $G$ is countable and discrete, this was proven in \cite[Proposition 13]{Miklos2}.  The reader should note that we do not use this result anywhere in this paper, although it could be used to give a slightly shorter proof of Theorems \ref{thm:SZ-rk1} and \ref{prop:SZ}.  However, we consider it of  independent interest.

\begin{proof}
The coset space of $G$, written $\text{Cos}_G$, is the set
\begin{align*}
  \text{Cos}_G = \{Hg \,:\, H \in \sub_G, \, g\in G\},
\end{align*}
equipped with the Fell topology of closed subsets of $G$.   Then $G $ acts on $  \text{Cos}_G $, both from the left via $Hg \overset {k}{\mapsto} kHg$ and from the right via $Hg \overset {k}{\mapsto} Hgk$.  With respect to the left action, the stabilizer map is the natural projection
$$ \text {stab} : \text{Cos}_G \longrightarrow\sub_G, \ \ Hg \longmapsto H,$$
where the fiber above $H \in \sub_G$  is the coset space $H \backslash G$.  Note that as usual for a stabilizer map, the $G$-action permutes the fibers, and descends to the conjugation action of $G$ on $\sub_G$. 

By \cite[Theorem 3.1]{ Biringerunimodularity}, for almost every subgroup $H$ in the support of $\mu$, there is a right $G$-invariant measure $\nu_H$ on $H \backslash G$, and one can choose the map $H \mapsto \nu_H$ to be Borel\footnote {Here, we regard the $\nu_H $ as measures on $ \text{Cos}_G$, so the parameterization is Borel when for every Borel $ B\subset \text{Cos}_G$  the map $H\longmapsto \nu_H (B) $ is Borel.}.  So,  integrating against $\mu$  creates a measure $\nu$ on $ \text{Cos}_G$:
$$\nu=\int_{\sub_G} \nu_H \, d\mu.$$
The left and right $G$-actions on $\text{Cos}_G$ commute, so the left action of $g$ pushes forward $\nu_H$  to a measure on $gHg^ {-1} \backslash G$  that is again right $G $-invariant.  By uniqueness,  we have $g_*\nu_H=\lambda \nu_{gHg^ {-1} }$  for some $\lambda\in \BR $.  Combining this with the  conjugation invariance  of $\mu $, the left $G $-action preserves the \emph {measure class} of $\nu$.

Suppose for a moment that each $\nu_H $ is finite,  i.e.\ that $\mu $-a.e.\ $H\in \sub_G$ has co-finite volume.  Then  after scaling, we can  take each $\nu_H $ to be a probability measure.  The $\nu_H$ are then permuted by the left $G $-action,  which implies that $\nu$ (and not just its measure class) is left $G$-invariant.  Furthermore, $\nu$  is  then a probability  measure that pushes forward to $\mu$ under $\text {stab}$. So, the theorem follows.

 In general, it is enough to prove the theorem when $\mu$ is ergodic,  so we can break into cases depending on whether  for $\mu$-a.e.\ $H\in \sub_G$ we have
 \begin {enumerate}
 \item $\nu_H $ is finite (in which case we are done),
 \item $\nu_H$ is infinite and $H \backslash G$  is discrete,
 \item $\nu_H$ is infinite and $H \backslash G$  is non-discrete.
 \end {enumerate}
 Of the latter two cases, (3)  is the more difficult, so we will focus on that and  mention (2) again at the end.   Assuming (3), the two problems are that only the measure class of $\nu$ is $G $-invariant, and  that $\nu$ is not a probability measure.   To deal with the first issue, we replace  the action $ G \actson \text{Cos}_G $  with its \emph {Maharam  extension}
$$ G \actson \text{Cos}_G \times \BR, \ \ (Hg,t) \overset {k}{ \longmapsto} \left (kHg, \frac {d\nu}{d k_*(\nu)}(Hg) \cdot t\right ),$$
 which preserves the measure $\nu\times \ell $ on $\text{Cos}_G \times \BR$, where $\ell$ is Lebesgue measure.  Note that the stabilizer map for the Maharam  extension is just the projection
 $$ \text{Cos}_G \times \BR \longrightarrow \sub_G, \ \ (Hg,t) \longmapsto H,$$
 for since  $H$ acts trivially on $H \backslash G$ it preserves $\nu_H$, and combining this with the conjugation invariance of $\mu$ gives that $ \frac {d\nu}{d h_*(\nu)}(Hg)=1$  whenever $h\in H$.
 
We now have a $ G $-invariant measure $\nu\times \ell $,  and a disintegration
$$\nu \times \ell =\int_{\sub_G} \nu_H \times \ell \ d\mu$$
with respect to the stabilizer map.  If the fiber measures $\nu_H \times \ell$ were probability measures, we would be done as before, but they are not. So, the idea is to replace each fiber $H \backslash G \times\BR $  with an associated Poisson process.

 One way to make this rigorous is as follows.  Both $\text{Cos}_G \times \BR$ and $\sub_G\times\BR_+$  are Polish spaces  that map onto $\sub_G$, such that the fiber measures 
$$\nu \times \ell \text{ on } H \backslash G \times\BR, \ \text { and } \ \ell \text { on } \{H\} \times\BR_+$$
 have no atoms, so a result of Rokhlin \cite[pg 41]{Rohlin} gives a measure isomorphism $$\phi : \text{Cos}_G \times \BR \longrightarrow \sub_G\times\BR_+$$
that commutes with the projections to $\sub_G$.\footnote{In the cited reference, Rokhlin's theorem is only for probability measures, not arbitrary $\sigma$-finite measures.  However, one can always scale a   $\sigma$-finite measure with a Borel function to become a probability measure,  and it is not hard to then see that his theorem extends to the $\sigma$-finite case.}
Conjugating  the $G $-action on $\text{Cos}_G \times \BR $ by  $\phi$, we have a  measurable $G$-action on $\sub_G\times\BR_+$  such that
\begin {enumerate}
\renewcommand{\theenumi}{\alph{enumi}}
\item the fibers  $\{H\} \times\BR_+ $  are permuted by conjugating $H$,
\item $g\in G$  pushes forward the Lebesgue measure on $\{H\} \times \BR_+$ to the Lebesgue measure on $\{gHg^{-1}\} \times\BR_+$,
\item the stabilizer of $(H,t)\in \sub_G\times\BR_+$ is $H$.
\end {enumerate}

Let $\mathcal S(\BR_+)$ be the set of all countable subsets of $\BR_+ $, and let $\pi$ be the Poisson process on $\BR_+$, which we regard as a probability measure on $\mathcal S(\BR_+)$. (We refer the reader to \cite{Coxpoint,Daleyintroduction}  for details.)   There is an induced action $G \actson \sub_G\times \mathcal S(\BR_+)$, which (using (b) above) preserves  the \emph {probability measure} $\mu \times \pi$. 

 By (b) and (c),  the quotient group $N_G(H)/H$  acts freely on  the fiber $\{H\} \times\BR_+ \subset\sub_G \times\BR_+$,   preserving the Lebesgue measure, so by Lemma \ref{pois} below the induced action on  $\{H\} \times \mathcal S(\BR_+) $ is essentially free.  In other words,  the $G$-stabilizer map   
$$\text {stab} : \sub_G\times \mathcal S(\BR_+) \longrightarrow \sub_G$$  is the projection onto the first coordinate. Hence, $\text {stab}_*(\mu \times\ell)=\mu$, and we're done.

\medskip 

Finally, a quick remark about the proof of (2).  Here, there is no need for the Maharam  extension since if one defines the $\nu_H$ to be  the appropriate counting measures, then they will be permuted by the $G $-action.  The rest of the proof proceeds in the same way,  with  all references to $\BR_+ $ replaced by $\BZ$  and the Poisson  process replaced by an i.i.d.\end {proof}

 As promised, here is the lemma  we used in the proof above.

\begin {lem}\label {pois}
Suppose that a group $G $ acts  measurably and freely on $\BR_+$,  preserving Lebesgue measure $\ell$.   Then $G $ acts  essentially freely on  the space  $\mathcal S(\BR_+) $ of countable subsets of $\BR_+ $, with respect to the Poisson process  $\pi$.
\end {lem}

This is folklore, but we include a brief proof for completeness, since we are not aware of a reference.

\begin {proof}
On the contrary, suppose that  there is a positive $\pi$-probability  that an element $D\in\mathcal S(\BR_+) $  has nontrivial stabilizer.  Then there is some interval $(0,n)$ such that  there is a positive $\pi$-probability  that for $D \in S(\BR_+) $,  the intersection $D\cap (0,n)$ contains elements $x_1,\ldots,x_4$  with 
\begin {equation}g(x_1)=x_2, \ g (x_3) = x_4, \ \ \text { for some } g \in G. \label {gxeq}\end {equation}

The intersection $D\cap (0,n)$ is almost surely finite, and after conditioning on  cardinality $k$,  the points of $D $ are distributed within $(0,n)$  according to the Lebesgue  measure on $(0,n)^k$, c.f.\ \cite[Example 7.1(a)]{Daleyintroduction}:
\begin {equation*}\label {janossy}\mathrm {Prob}\left (\substack{\text {for } (x_1,\ldots,x_k)\in (0,n)^k, \text{ we have } D\cap (0,n) =\{x_1,\ldots,x_k\}, \\ \text { given that } D\cap (0,n) \text { has } k \text { elements.}}\right)=d\ell^k(x_1,\ldots,x_k).\end {equation*}
So  in particular, there is a positive probability that $ (x_1,\ldots,x_4) \in (0,n)^4$ satisfies \eqref{gxeq}. But by freeness of the action, for such  $ (x_1,\ldots,x_4)$, the last coordinate is determined by the first three. So,  applying Fubini's theorem on $(0,n)^4=(0,n)^3 \times (0,n)$, we have a contradiction.
 \end{proof}

\subsection {IRSs in Lie groups}  From now on, unless explicitly mentioned otherwise, we will assume that $G $ is a connected Lie group.  

The following is a variant of the classical Borel density theorem:

\begin{thm}[Borel's density theorem] \label{discIRS}
If $G$ is simple (with trivial center) then every IRS with no atoms is supported on discrete Zariski dense subgroups of $G$.
\end{thm}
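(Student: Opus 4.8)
The plan is to exploit the $G$-invariance of an IRS $\mu$ together with the Borel density theorem for the ambient Zariski-dense situation, reducing everything to a statement about a single closed subgroup and its connected component.

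\begin{proof}[Proof sketch]
First I would reduce to the ergodic case: the ergodic decomposition of an IRS is an IRS, the atoms of $\mu$ correspond to conjugacy classes of subgroups $H$ with $G/N_G(H)$ carrying an invariant probability measure, and one checks that if $\mu$ has no atoms then $\mu$-almost every ergodic component has no atoms. So assume $\mu$ is ergodic and non-atomic. Consider the map $H \mapsto H^{\circ}$, the connected component of the identity, which is Borel and $G$-equivariant; the push-forward $\mu^{\circ}$ of $\mu$ is again an ergodic IRS, now supported on \emph{connected} subgroups. There are only countably many conjugacy classes of connected subgroups of $G$ (they correspond to the finitely many conjugacy classes of Lie subalgebras of $\mathfrak{g}$ up to the action of $G$ on the relevant real points — more precisely one uses that the space of Lie subalgebras is a real algebraic variety with finitely many $G$-orbits on each stratum, or simply that $N_G(H^\circ)$ is open-closed in a suitable sense so the orbit is locally closed). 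Ergodicity then forces $\mu^{\circ}$ to be supported on a single conjugacy class, i.e.\ $H^{\circ}$ is almost surely conjugate to a fixed connected subgroup $L \le G$.

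Next I would show $L$ is normal in $G$, hence trivial. Indeed $\mu^{\circ}$ is then the measure $\mu_L$ induced from the $G$-action on $G/N_G(L)$, which in particular requires $G/N_G(L)$ to carry a $G$-invariant probability measure. Since $G$ is simple with trivial center, $N_G(L)$ is a closed subgroup, and a proper closed subgroup containing a non-trivial connected normal-in-itself subgroup cannot have finite covolume unless $N_G(L) = G$; more directly, $N_G(L)$ contains $N_G(L)^{\circ} \supseteq L$, and the normalizer of a non-trivial connected subgroup of a simple Lie group is a proper parabolic-or-reductive algebraic subgroup which has infinite covolume (no invariant probability measure on $G/P$ for proper $P$, by Furstenberg / the fact that $G/N_G(L)$ would be a non-compact homogeneous space with an affine action, contradicting amenability considerations, or simply Borel density applied to the lattice-free setting). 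Hence $N_G(L) = G$, so $L \lhd G$, and simplicity plus $L$ connected gives $L = \{1\}$ or $L = G$; the latter is excluded since $\mu$ is non-atomic (it would be $\delta_G$). Therefore $H^{\circ} = \{1\}$ almost surely, i.e.\ $\mu$ is supported on discrete subgroups.

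Finally, for Zariski density: let $\overline{H}^{Z}$ denote the Zariski closure of $H$ in (the real points of) the algebraic group underlying $G$. This is again a Borel $G$-equivariant assignment $H \mapsto \overline{H}^{Z}$, so its push-forward is an ergodic IRS supported on Zariski-closed subgroups, and by the same countability-of-orbits argument it is supported on a single conjugacy class of Zariski-closed subgroup, say conjugate to a fixed $\mathbf{M}(\R)$. As above, $G$-invariance of $\mu$ forces $G/N_G(\mathbf{M}(\R))$ to have an invariant probability measure, and since $\mathbf{M}$ is normalized by its Zariski closure and $G$ is $\R$-simple, the only possibility with finite covolume is $\mathbf{M} = \mathbf{G}$, i.e.\ $H$ is Zariski dense almost surely.

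\medskip
\noindent The main obstacle is the second step: upgrading ``$H^\circ$ is a.s.\ conjugate to a fixed $L$'' to ``$L$ is normal, hence trivial.'' The clean way is to invoke that a non-atomic ergodic IRS cannot be $\mu_L$ for $L$ with $\dim N_G(L) < \dim G$, because $G/N_G(L)$ then fails to carry a $G$-invariant probability measure — this is exactly where the classical Borel density theorem enters, and one must be careful that $G$ simple with trivial center makes every such $N_G(L)$ a proper algebraic subgroup of infinite covolume. The countability of $G$-conjugacy classes of connected (resp.\ algebraic) subgroups, needed to run the ergodicity argument, is standard but should be cited carefully.
\end{proof}
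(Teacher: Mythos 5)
Your overall architecture (reduce to the ergodic case, push forward along $H\mapsto H^\circ$ and $H\mapsto \overline{H}^Z$, use invariance to force the resulting subgroup to be normal, conclude by simplicity) is sound, but the step you lean on to get ``supported on a single conjugacy class'' contains a concretely false claim. There are \emph{uncountably} many conjugacy classes of connected closed subgroups of a simple Lie group: already in $\PSL_2(\C)$ the loxodromic one--parameter subgroups $\{\mathrm{diag}(e^{(1+ia)t},e^{-(1+ia)t})\}$ are pairwise non--conjugate as $a$ varies, and in $\SL_3(\R)$ the diagonal one--parameter subgroups $\{\mathrm{diag}(e^t,e^{at},e^{-(1+a)t})\}$ give a continuum of conjugacy classes of closed connected subgroups. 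So neither ``finitely many conjugacy classes of Lie subalgebras'' nor ``finitely many $G$--orbits on each stratum'' is true, and countability cannot be the reason an ergodic invariant measure concentrates on one orbit. The correct justification is the one you relegate to a vague parenthetical: the assignment $H\mapsto \mathrm{Lie}(H^\circ)$ lands in the Grassmannian of $\mathfrak g$, where the conjugation action is algebraic, orbits are locally closed, and hence the orbit equivalence relation is tame; only then does ergodicity force concentration on a single orbit. For the Zariski--closure pushforward the tameness of the action on the space of algebraic subgroups requires a further (Chow--variety type) argument that you do not supply. Your subsequent steps --- $G/N_G(L)$ carrying an invariant probability measure forces $N_G(L)$ to be Zariski dense by classical Borel density, $N_G(L)$ is algebraic, hence $N_G(L)=G$, hence $L$ is normal and therefore trivial or all of $G$ --- are fine.

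For comparison, the paper's proof short--circuits the whole orbit discussion: it pushes the IRS forward to the Grassmannian of $\mathfrak g$ via the Lie algebra of $H^\circ$ (resp.\ of the identity component of the Zariski closure of $H$) and invokes Furstenberg's lemma that \emph{every} $\mathrm{Ad}(G)$--invariant probability measure on the Grassmannian is supported on $\{\{0\},\mathfrak g\}$ --- no ergodic decomposition, no classification of orbits. Discreteness follows because the mass at $\mathfrak g$ is the mass of the atom at $G$, which is zero; Zariski density follows because the mass at $\{0\}$ for the second measure is the mass on finite subgroups, which is killed by a separate lemma (the only IRS supported on finite subgroups is $\delta_{\{\mathrm{id}\}}$, proved by applying classical Borel density to $N_G(F)$). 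If you want to keep your route, replace the countability claim by the tameness/local--closedness argument and verify Borel measurability of $H\mapsto H^\circ$ and $H\mapsto\overline{H}^Z$; otherwise the Grassmannian reduction is the cleaner fix.
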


A subgroup $\Gamma $ of $G $ is \emph {Zariski dense} if the only closed subgroup $H < G $ that contains $\Gamma $ and has finitely many connected components is $G $ itself.  This coincides with the algebraic definition of Zariski density when $G $ has the structure of a real algebraic group.  One recovers the classical theorem of Borel \cite {borel} when $\mu $ is the IRS $\mu_H$ associated with a closed subgroup of finite co-volume $H\le G$. 

 Although the proof could be rearranged in a way that avoids and hence reproves the classical Borel-Density-Theorem, we will make use of Borel's result in the proof of the following:

\begin{lem}\label{lem:finiteIRS}
The only IRS supported on the set of finite subgroups of $G$ is the Dirac measure $\delta_{\{ {\rm id} \}}$.
\end{lem}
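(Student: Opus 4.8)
The plan is to show that an IRS $\mu$ supported on finite subgroups must in fact be supported on $\{\mathrm{Id}\}$, by controlling the "size" of the subgroups in the support. First I would observe that the map $H \mapsto |H|$ is a Borel function on the set of finite subgroups of $G$ (it is locally constant in the Chabauty topology on this set: a finite subgroup has an identity neighborhood whose only element of $H$ is the identity, and nearby subgroups in the Chabauty topology have the same cardinality near the identity and cannot pick up extra elements without moving a point out of a fixed compact set). Since this function is conjugation-invariant and $\mu$ is ergodic (it suffices to treat the ergodic case by the ergodic decomposition), $|H|$ is $\mu$-almost surely equal to a constant $N \in \mathbb{N}$. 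If $N = 1$ we are done, so assume for contradiction $N \geq 2$.

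Next I would exploit the fact, already available from Proposition \ref{prop:isolated} and the surrounding discussion — or more directly from the structure theory of Lie groups — that a connected Lie group $G$ has, for each fixed $N$, only finitely many conjugacy classes of finite subgroups of order $N$ that sit inside a fixed maximal compact subgroup $K$, and every finite subgroup is conjugate into $K$ (Cartan's theorem, finite subgroups are compact hence contained in a conjugate of $K$). Actually the cleaner route: a finite subgroup $H$ of order $N$ is in particular a compact subgroup, hence has a normalizer $N_G(H)$, and the conjugacy class $H^G \cong G/N_G(H)$ carries the invariant measure $\mu$ restricted to it. The key point is that there are only countably many (in fact the argument can be arranged to give: the support decomposes into at most countably many conjugacy classes), so by ergodicity $\mu$ is the measure $\mu_H$ associated to a single finite subgroup $H$ with $G/N_G(H)$ of finite volume.

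Then I would derive a contradiction from $H \neq \{\mathrm{Id}\}$ as follows. Since $H$ is a nontrivial finite normal subgroup of $N := N_G(H)$ and $G$ is a connected Lie group, the centralizer $Z_G(H)$ has finite index in $N$ (it is the kernel of the map $N \to \mathrm{Aut}(H)$, a finite group), so $G/Z_G(H)$ also has finite volume, i.e. $Z_G(H)$ is a lattice-like (finite covolume) subgroup; in particular $Z_G(H)$ is Zariski dense if $G$ is semisimple, or more elementarily $Z_G(H)$ is noncompact and its Lie algebra is an ideal. But $Z_G(H)$ centralizes the nontrivial finite set $H$, and for a connected \emph{semisimple} $G$ with trivial center a proper closed subgroup of finite covolume cannot centralize a nontrivial element; more robustly, one invokes Borel's density theorem (the classical version, valid here because $\mu_H = \mu$ has no atoms only if... — careful: $\mu_H$ \emph{is} atomic when $H$ is discrete!). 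So the honest argument must avoid circularity with Theorem \ref{discIRS}: instead, use that $G$ connected acts on $\mathrm{Sub}_G$ and the orbit of $H$ is the continuous image of the connected space $G/N_G(H)$; since $G$ is connected this orbit is connected, but it carries a finite invariant measure and consists of finite subgroups all conjugate to $H$, and the only way a connected orbit of finite subgroups supports an invariant probability measure is if it is a single point — forcing $H$ to be normal in $G$. A nontrivial finite normal subgroup of a connected Lie group is central (connectedness forces the conjugation action on the finite set to be trivial), contradicting that $G$ is... well, here we only know $G$ is a connected Lie group in general, so the cleanest conclusion is: $H \subseteq Z(G)$.

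The main obstacle is precisely this last step when $Z(G)$ is nontrivial — the lemma as stated is for a general connected Lie group, yet it is used in a context where $G$ is simple with trivial center, so I would either add the hypothesis that $G$ has trivial center (consistent with Theorem \ref{discIRS}) or note that the statement must be read with that standing assumption. Granting $Z(G) = \{\mathrm{Id}\}$, the conclusion $H \subseteq Z(G) = \{\mathrm{Id}\}$ contradicts $N \geq 2$, so $N = 1$ and $\mu = \delta_{\{\mathrm{Id}\}}$. The delicate points to write carefully are (i) Borel-measurability and almost-sure constancy of $|H|$, and (ii) the reduction, via ergodicity and connectedness of conjugacy-class orbits, from "finite order" to "actually normal, hence central."
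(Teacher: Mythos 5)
Your overall skeleton matches the paper's: reduce to the ergodic case, use that $G$ has only countably many conjugacy classes of finite subgroups to conclude $\mu=\mu_F$ for a single finite $F$ with $N_G(F)$ of finite covolume, and then argue that $F$ must be normal, hence central, hence trivial. The first half is fine (your justification of countability via Cartan's theorem and finiteness of conjugacy classes of order-$N$ subgroups of a compact group is a reasonable way to back up what the paper simply asserts). The problem is the step from ``$N_G(F)$ has finite covolume'' to ``$F$ is normal.'' You correctly sense that something is needed here, flirt with Borel density, worry (unnecessarily) about circularity, and then substitute the claim that \emph{a connected conjugation-orbit of finite subgroups carrying an invariant probability measure must be a single point}. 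That claim is not proved and, as a general statement about connected Lie groups, is false: in $\SO(3)$ the conjugacy class of an order-two subgroup is a $2$-sphere with an invariant probability measure. Connectedness of the orbit by itself buys you nothing; what kills such examples in the paper's setting is non-compactness of $G$ (a standing assumption, since $G$ is semisimple without compact factors), and the tool that converts non-compactness plus finite covolume into normality is exactly Borel density.

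There is no circularity in using it. The paper invokes the \emph{classical} Borel density theorem --- a closed subgroup of finite covolume in a connected semisimple Lie group without compact factors is Zariski dense --- which is independent of, and logically prior to, the IRS version (Theorem \ref{discIRS}); the latter is what this lemma is helping to prove. So the correct completion of your argument is: $N_G(F)$ has finite covolume, hence is Zariski dense; since $F$ is finite, $N_G(F)$ is Zariski closed (it is a finite intersection of finite unions of the closed conditions $gfg^{-1}=f'$); therefore $N_G(F)=G$, i.e.\ $F$ is normal, and connectedness of $G$ forces $F$ to be central. Your alternative route through $Z_G(F)$ would also work for the same reason ($Z_G(F)$ has finite index in $N_G(F)$, hence finite covolume, hence is Zariski dense, and $F$ centralizes a Zariski dense set, so $F\subseteq Z(G)$), but it too needs Borel density, not connectedness. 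Your final remark about the center is correct and consistent with the paper: the lemma is applied under the hypotheses of Theorem \ref{discIRS}, where $G$ is simple with trivial center, which is what lets one conclude $F=\{\mathrm{Id}\}$ from $F\subseteq Z(G)$.
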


\begin{proof}
Let $\mu$ be an ergodic IRS supported on finite subgroups of $G$. Since $G$ has only countably many conjugacy classes of finite subgroups, $\mu$ is supported on a single conjugacy class, say $F^G$ for some appropriate finite subgroup $F\le G$. Thus $\mu$ induces a finite $G$-invariant probability measure on the homogenous space $G/N_G(F)$. Thus $N_G(F)$ is of finite co-volume in $G$.  By the classical Borel density theorem, $N_G(F)$ is Zariski dense. Since $F$ is finite, $N_G (F) $ is algebraic and hence $N_G(F)=G$. As $G$ is connected and $F$ is discrete, we deduce that $F$ is central in $G$. Finally, since $G$ has a trivial center $F=\{ {\rm id} \}$.
\end{proof}

\begin{proof}[Proof of Theorem \ref{discIRS}]
Associating to a closed subgroup $H < G $ either
\begin{enumerate}
\item the Lie algebra of the identity component of $H $, or
\item the Lie algebra of the identity component of the Zariski closure of $H, $
\end{enumerate}
an IRS induces two
$\mathrm{Ad} (G)$-invariant measures, $\mu_1 $ and $\mu_2 $, on the Grasmannian manifold of the Lie algebra $\mathfrak{g}$ of $G$. Note that both (1) and (2) are measurable as maps from $\text{Sub}_G$ to the Grassmannian (see \cite{Gelander-Levit} for details).
As follows from Furstenberg's proof of the Borel density theorem (see \cite{Furstenberg1}) every such measure is supported on $\{\{0\},\mathfrak{g}\}$.

The $\mu_1 $-mass of $\mathfrak{g}$ is exactly the mass that the given IRS gives to the atom $G$, which is by assumption $0$. Thus $\mu_1 $ is the Dirac measure supported on $\{0\}$, which is equivalent to the statement that our IRS is supported on discrete subgroups.

On the other hand, any Zariski closed discrete subgroup of $G $ is finite.   Therefore, $\mu_2 (\{0\}) $ is the amount of mass that our IRS gives to finite subgroups of $G $.  By Lemma \ref{lem:finiteIRS}, this must be $0 $.  Therefore, $\mu_2 $ is the Dirac measure supported on $\mathfrak{g}$, which implies that our IRS is supported on Zariski dense subgroups.
\end{proof}

\begin{rem}
For a connected semisimple group Lie $G$ (which is neither necessarily simple nor center-free) an elaboration of the argument above provides the following. Given an ergodic IRS $\mu$ in $G$, there are normal subgroups $N_1, N_2\lhd G$, with $N_1\le N_2$ such that for $\mu$-almost every $H\in\sub_G$, the identity connected component $H^\circ=N_1$ and the Zariski closure $\overline{H}^Z=N_2$.
For details see \cite{Gelander-Levit} where the analog result is established for groups over general local fields.
\end{rem}

\section{Large injectivity radius and BS-convergence} \label{sec:3} \label {IRSs}

Let $G$ be a semi-simple Lie group, and let $X=G/K$ be an associated Riemannian symmetric space.  An  \emph {$X$-orbifold} is a  Riemannian orbifold obtained as a quotient $\Gamma \backslash X$, for some discrete subgroup $\Gamma < G$. 
Our goal is to understand the topology of $\sub_G$  geometrically through these quotient orbifolds, and then to promote this to an understanding of discrete IRSs as  random pointed $X$-orbifolds.

To begin with, let's understand the geometric meaning of Chabauty convergence to the identity.  The \emph {injectivity radius} of an $X$-orbifold $M=\Gamma \backslash X$ at $x\in M$  is $$\mathrm{InjRad}_{\Gamma_n \backslash X} (x) = \frac12 \min \{ d(\tx , \gamma \tx ) \; | \; \gamma \in \Gamma_n - \{ \mathrm{id} \} \},$$
 where $\tilde x\in X$ is any lift of $x$.   We then have:

\begin {lem}\label{convergeid}
A  sequence  of subgroups $\Gamma_n<G$  converges to $\{\mathrm{id}\}$ in  the Chabauty topology if and only if $\mathrm{InjRad}_{\Gamma_n \backslash X} ([\mathrm{id}]) \to \infty$. 
\end {lem}

Here, $[\mathrm{id}]$  is the projection of the identity in $G $.

\begin {proof}
It suffices to show that the subsets 
$$ 
 U_R = \{ H \in \sub_G \ | \  \nexists \gamma \in H \smallsetminus \{\mathrm{id}\} \text{ with } d_X([\mathrm{id}],\gamma[\mathrm{id}]) \leq R \}\subset \sub_G,
$$ 
with $R\in(0,\infty)$, form a basis of open sets around $\{\mathrm{id}\}\in\sub_G $.  

To show that $U_R$ is open, consider a sequence $(H_n)$ of subgroups in $\sub_G$ that do not belong to $U_R $, i.e.\ there are elements $\gamma_n$ in $H_n \setminus \{\mathrm{id}\}$ with $d_X([\mathrm{id}],\gamma[\mathrm{id}]) \leq R$.  Passing to a subsequence, we may suppose that the sequence $(\gamma_n)$ converges towards some element $\gamma \in G$.   Using the exponential map and replacing each $\gamma_n $ by an appropriate power, we may furthermore assume that $\gamma \neq \mathrm{id}$. Then $\gamma$ translates $[\mathrm{id}]$ by at most $R$,  and appears in any Chabauty limit $H$ of $(H_n)$,  so any such limit is also outside $U_R $.

We can prove that the $U_R$  form a basis by comparing with the basic open sets $O_1(K), O_2(U)$ defined in \S \ref{sec:2}.  First, suppose $K\subset G$ is compact and $\mathrm{id} \notin K$.  Choosing $R$  larger than the $[\mathrm{id}]$-translation distance of all $\gamma \in K$,  we have $U_R \subset \mathcal O_1(K)$.  And if $U\subset G$ is open and $\mathrm{id} \in U$, then $\mathcal O_2(U)=\sub_G$, hence contains $U_R$ for all $R$.\end {proof}

On the level of IRSs,  we have:

\begin{prop}\label {convergenceprop123}
 A sequence of IRSs $(\mu_n)$ of $G$  converges weakly to $\mu_{\mathrm{id}}$ if and only if for every $R>0 $, the $\mu_n$-probability that for a subgroup $\Gamma \in\sub_G $  we have $\mathrm{InjRad}_{\Gamma \backslash X} ([\mathrm{id}]) \leq R$ tends to zero  as $n\to \infty $.	
\end{prop}
\begin {proof}
 With the notation of the proof of Lemma \ref{convergeid}, 
$\mu_{\Gamma_n} \rightarrow \mu_{\mathrm{id}}$ if and only if
$$
\lim_{n \rightarrow \infty }\mu _{\Gamma _{n}}(U_{R})=1\ \text{ for all }\
R>0.\qedhere
$$
\end {proof}

 And for lattice IRSs, this can be rewritten as follows.

\begin{cor}\label{lem:loccv}
 Suppose that $ (\Gamma_n)$ is a sequence of lattices in $G$.  Then $ ( \mu_{\Gamma_n} )$ converges weakly to $\mu _{\mathrm{id}}$  if for every $R>0$,  we have
\begin{equation}
\lim_{n  \to \infty} \cal{P}_n \{ x\in \Gamma_n \backslash X \ |\ \mathrm{InjRad}_{\Gamma_n \backslash X} (x) \leq R \}  = 0,\label {BSEQ}
\end {equation}
where $\cal {P}_n $ is the normalized Riemannian measure on $\Gamma_n\backslash X $.
\end{cor}

\begin{proof}
Consider the $G $-invariant probability measure $\hat \mu$ on $\Gamma_n \backslash G $.  Pushing forward this measure to $\mathrm {Sub}_G $ by the  stabilizer map $\Gamma_ng \mapsto g^{-1}\Gamma_n g$ gives $\mu_{\Gamma_n} $, while pushing it forward under the projection $\Gamma_n \backslash G \to \Gamma_n \backslash X$ gives $\cal{P}_n$.
Therefore,
\begin{multline*}
\mathcal{P}_n \left \{ x \in \Gamma_n \backslash X \mid \mathrm{InjRad}_{\Gamma_n \backslash X} (x) \leq R \right\}  \\ \begin{split}
& =  \hat \mu\left \{\Gamma_ng \in \Gamma_n \backslash G \ | \ \mathrm{InjRad}_{\Gamma_n \backslash X} ([g]) \leq R \right\} \\
&  = \mu\left \{g^{-1}\Gamma_ng \in \sub_G \mid  \mathrm{InjRad}_{g^{-1}\Gamma_ng \backslash X} ([\mathrm{id}]) \leq R \right \}, 
\end{split}
\end{multline*}  
where the last line uses that $d_X([g],\gamma[g])=d_X([\mathrm{id}],g^{-1}\gamma g[\mathrm{id}])$.  So, the corollary follows from Proposition \ref {convergenceprop123}.\end{proof}

In \S \ref{sec:4}, we will show that any sequence of irreducible lattice IRSs in a center-free higher rank semi-simple Lie group with property (T) weakly converges to $\mu_\mathrm{id}$.   Using Corollary \ref{lem:loccv},  we will see in Section \ref{sec:7}  how to deduce asymptotics for Betti numbers and representation multiplicities, as discussed in the introduction.
\medskip

 As in the Introduction, we say a sequence of $X $-orbifolds \emph {Benjamini--Schramm converges}  to $X $ when \eqref{BSEQ} holds for all $R $.   More generally,  the theory of invariant random subgroups of Lie groups can be  recast in a geometric context, where weak convergence  is replaced by a suitable generalized BS-convergence.
 This interpretation is inspired by a program in graph theory, e.g.\ \cite{Abertmatchings,Abert1,Abertbenjamini,Aldousprocesses,Benjaminigroup}, that was popularized by Benjamini--Schramm~\cite {localconvergence}.  We will briefly discuss the graph theory,  and then explain how to translate to the continuous setting.

\subsection{Graphs and IRSs of discrete groups}  \label{discgrpsec} All the material here is well-known; for more information we refer the reader to \cite{Miklos1, Aldousprocesses, Biringerunimodularity}.

Let $\mathcal G$ be the space of all isomorphism types of rooted graphs $(X,p)$, where
$$ 
 d\big ((X,p),(Y,q)\big )=\inf \left \{ \frac 1 R \ \Big | \ B_X(p,R)\cong B_Y(q,R)\right \}, 
$$ 
so two rooted graphs are close if balls of large radius around their base points are isomorphic.  We consider the set $\text {Prob}(\mathcal G) $ of all Borel probability measures $\mu$ on $\mathcal G$  with the topology of weak$^*$  convergence. 

One way to understand weak$^*$ convergence is as follows. For each $R>0$ and each finite rooted graph $B=(B,p)$,  let $P_{R,B}(\mu)$ be the probability that the $R$-ball around the root of a $\mu$-randomly chosen $(X,p)\in \mathcal G$ is isomorphic to $(B,p)$. Then \begin {equation} \label{Gconv} \mu_i \to \mu \text{ weakly} \ \ \iff \ \ P_{R,B}(\mu_i) \to P_{R,B}(\mu) \ \ \forall R>0, B=(B,p).\end {equation}
Here,  the condition $B_X(p,R)\cong (B,p)$ determines a basic open set for the topology of $\mathcal G$, whose $\mu$-measure is $P_{R,B}(\mu) $.  Equation \eqref{Gconv} follows since these sets are also closed.

Any finite graph $X$ determines an element $\mu_X \in \text{Prob}(\mathcal G)$, by choosing the root uniformly from the vertices of $X$. 
One says that a sequence $(X_i)$ of finite graphs \emph {Benjamini--Schramm converges} to  a measure $\mu \in \text{Prob}(\mathcal G)$ if $\mu_{X_i} \to \mu$ weakly.  In light of \eqref{Gconv}, a Benjamini--Schramm limit captures, for large $i$, the limiting statistics of the isomorphism types of all $R$-balls in $X_i$.

Now let $G=\langle S \rangle$ be a finitely generated group. A subgroup $H < G$ determines a rooted \emph {Schreier graph}, written $\mathrm{Sch}_S(H\backslash G)$, where vertices are cosets $Hg$, the root is $H$, and where an edge labeled $s \in S$ joins $Hg$ to $Hgs$.  
 Adding edge labels, we have a space $\mathcal G_S$ of isomorphism types of rooted $S$-labeled graphs, and the map
$$\sub_G \longrightarrow \mathcal G_S, \ \ H \longmapsto \mathrm{Sch}_S(H\backslash G)$$
is a homeomorphism onto its image, see\ \cite{Miklos1}.  So, an IRS $\mu$ of $ G$ determines a measure $\mathrm{Sch}_S(\mu)$ on $\mathcal G_S$, and the induced map $$\{\text {IRSs of } G\} \longrightarrow  \text {Prob}(\mathcal G) , \ \ \mu \longmapsto \mathrm{Sch}_S(\mu)$$ is a weak$^*$  homeomorphism onto its image.  Therefore, the study of IRSs of $G$ is equivalent to the study of (certain) random $S$-labeled rooted graphs.

\medskip

Passing to the continuous setting, we'd like to study discrete IRSs $\Gamma$ of a Lie group $G$ as random pointed quotients $\Gamma \backslash X$. There are a number of settings in which one can develop such a theory, but the following is the most general.

\subsection{The Gromov--Hausdorff space}  
Consider the set $$\mathcal M =\big \{ \text {proper, pointed length spaces } (X,x_0) \big \}/\text {pointed isometry}.$$  In \cite{Gromovgroups}, Gromov defined a notion of convergence of pointed metric spaces using a generalization of the Hausdorff metric. Following a variant given in \cite[\S 3.2]{ECG}, two pointed metric spaces $(X , x_0)$ and $(X',x_0')$ are
{\it $(\varepsilon, R)$-related}, written $$(X, x_0) \sim_{\varepsilon, R} (X', x_0') , $$ if there are compact subspaces $K\subset X$ and $K' \subset X' $ containing the basepoints and a relation $\sim$ between $K $ and $K' $ that satisfies the following properties:
\begin{enumerate}
\item $B_X (x_0 , R) \subset K$ and $B_{X' }(x_0', R) \subset K'$,
\item $x_0 \sim x_0'$,
\item for each $x \in K$, there exists $x'\in K'$ such that $x \sim x'$,
\item for each $x' \in K'$, there exists  $x \in K$ such that $x \sim x'$, and
\item if $x \sim x'$ and $y \sim y'$, then $|d_X (x,y) - d_{X'} (x',y') | \leq \varepsilon$.
\end{enumerate}
This defined a \emph {(pointed) Gromov--Hausdorff topology} on $\mathcal M $: a basis of neighborhoods of $(X,x_0)$ is defined by considering for each $\varepsilon >0$ and $R>0$ the set of proper, pointed length spaces that are $(\varepsilon , R)$-related to $(X, x_0)$. 

 It is well-known that this topology is separable and completely metrizable, i.e.\  {Polish}; a distance between $(X , x_0)$ and $(X',x_0')$ can be defined by taking the infimal $\epsilon=\epsilon_R$ such that $(X , x_0)$ and $(X',x_0')$ are $(\epsilon,R)$-related, and then setting
$$d\big ((X , x_0),(X',y_0)\big)=\sum_{R=1} ^\infty \frac { \min\{\epsilon_R,1\}}{2^R}.$$

\noindent Note that the space of rooted graphs $\mathcal G$  from the previous section embeds in $\mathcal M$, once we declare all edges in a graph to have unit length.

\medskip

Suppose that $G$ is a semi-simple Lie group, and let $X =G/K$ be an associated Riemannian symmetric space.  For simplicity, we will deal with \emph {$X$-manifolds} $\Gamma \backslash  X $ rather than $X $-orbifolds in the rest of the section, i.e.\  we will assume that our discrete  $\Gamma $ is torsion free.  As a geometric analogue of Lemma \ref {convergeid},  we have:

\begin {prop}\label {GHX}
A sequence of pointed	 $X$-manifolds $(M_i,p_i)$ converges in the Gromov--Hausdorff topology to $X$ if and only if $\mathrm{InjRad}_{M_i}(p_i) \to \infty $.
\end {prop}

 Here, any base point for a space with a transitive isometry group, like $X$,  gives the same element of $\mathcal M $,  so we drop the base point from the notation. The difficulty in proving Proposition \ref{GHX} is that Gromov--Hausdorff convergence is metric, not topological, and the homeomorphism type may change drastically in a Gromov--Hausdorff limit.  However, the following lemma shows that when the curvature and its derivatives are bounded,  this is not the case.

\begin{lem} \label{collapse} Suppose that $(M_i,p_i)$ is a sequence of complete Riemannian $d$-manifolds
and that  the covariant derivatives of the curvature tensors $R_i$ satisfy $$|\nabla^k R_i| < C_k < \infty$$ for some fixed sequence $(C_k)$ independent of $i$ and of the point in $M_i$.  If $(M_i,p_i)$  converges to a Riemannian $d$-manifold $(M,p)$ in the Gromov--Hausdorff topology, then the convergence is smooth and $$\mathrm{InjRad}_{M_i}(p_i)\longrightarrow \mathrm{InjRad}_{M}(p).$$
\end{lem}

Here, tensor norms are induced by the associated Riemannian metrics.  We say that $(M_i,p_i)\longrightarrow (M,p)$ \emph {smoothly} if there is a sequence of embeddings
$$\phi_i:B(p,R_i)\longrightarrow M_i$$with $R_i \rightarrow\infty$ and $\phi_i (p) = p_i $,  such that $\phi_i ^*g_i\rightarrow g $ in the $C ^\infty $-topology, where $g_i,g $ are the Riemannian metrics on $M_i,M $.  

The authors would like to thank Igor Belegradek for a very helpful conversation related to the proof below.

\begin{proof}[Proof of Lemma \ref {collapse}]
First, suppose that $\mathrm{InjRad}_{M_i}(p_i)\to 0$. As the sectional curvatures of $M_i$ are bounded, a result of Cheeger--Gromov--Taylor \cite[Theorem 4.7]{Cheegerfinite} implies that $\vol_{M_i}(p_i,1)\to 0 $ as well.   The Gromov--Hausdorff limit then has Hausdorff dimension at most $d-1$; this dates back to work of Gromov in the 1970s, but a citation for a more general result is \cite[Theorem 3.1]{Cheegerstructure}.   In our case, though, the limit is a Riemannian $d$-manifold, so we have a contradiction.

So, there is a lower bound on the injectivity radii $\mathrm{InjRad}_{M_i}(p_i)$.  From this and the bounds on the derivatives of curvature, it is well-known that the convergence is smooth, see \cite[Theorem 4.1]{Lessareeb}. The continuity of injectivity radius is then a result of Erlich \cite{Erlichcontinuity}.
\end{proof}

\begin {proof}[Proof of Proposition \ref {GHX}]
Pick a base point $p\in X$. If the injectivity radius at $p_i$ goes to infinity, then there are radii $R_i\to \infty$ such that the ball $B_{M_i}(p_i,R_i)$  is isometric to $B_X (p,R_i)$. This isometry gives a $ (0,R_i)$-relation.

 Conversely, suppose  $(M_i,p_i)$ converges in the Gromov--Hausdorff topology to $X$.  The hypotheses of Lemma \ref{collapse} are satisfied, since for every $i $ and at every point in $M_i $ we have $|\nabla^k R_i|=|\nabla^k R_X| $,  which we can take as our $C_k$.  So, the injectivity radius at $p_i$ goes to infinity.
\end {proof}

Benjamini--Schramm convergence of $X$-manifolds to $X$ can now be reinterpreted using weak convergence of measures on $\mathcal M$.  Note that every complete finite volume Riemannian orbifold $M $ produces a probability measure $\mu_M $  on $\mathcal M$: one pushes forward the normalized Riemannian measure of $M $ under the natural map $M \longrightarrow \mathcal M $, where $x \longmapsto (M,x)$.  Also, we denote the atomic measure on $X \in \mathcal M$ by $\mu_X$.
 
 As an immediate consequence of Corollary \ref{lem:loccv} and Proposition \ref {GHX}, we have:
 
\begin {cor}
 For $X $-manifolds $ M_i =\Gamma_i\backslash X $, the following are equivalent:
 \begin {enumerate}
 	\item  the IRSs $\mu_{\Gamma_i}$  weakly converge  to $\mu_{\mathrm{id}}$,
 	\item for every $R>0$, $
\lim_{i  \to \infty} \cal{P}_i \{ x\in \Gamma_i \backslash X \ |\ \mathrm{InjRad}_{\Gamma_i \backslash X} (x) \leq R \}  = 0,$
\item the measures $\mu_{M_i}$  on $\mathcal M $ weakly converge to $\mu_X$.
 \end {enumerate}
\end {cor}

\subsection {General Benjamini--Schramm convergence}

 To reinterpret weak convergence of more general IRSs geometrically, we need to add frames to our space  $\mathcal M$, similarly to how we added $S $-labels to rooted graphs in \S \ref {discgrpsec}.

A \emph {frame} for a Riemannian manifold $M $ is an orthonormal basis $f$ for some tangent space $T_{\pi (f)}M $, where $\pi (f)\in M $.  We let 
$$\mathcal {M F}^d =\big\{ \text {framed Riemannian } d\text{-manifolds } (M,f)\big\}/\text{framed isometry}.$$  A \emph {framed $(\epsilon, R)$-relation} between $(M,f) $ and $ (N,f') $ is an $(\epsilon, R) $-relation between   the pointed manifolds  $(M, \pi (f)) $ and $(N,\pi (f'))$ with the additional assumption
\begin {equation} \exp_{f}(v) \sim\exp_{f'}(v) \text{ when } v\in B_{\BR^d}(0,R).\label {exper}\end {equation}
 Here, $\exp_{f} : \BR^d \longrightarrow M$ is the Riemannian exponential map associated to $f$. If a framed  $(\epsilon,R)$-relation exists, we write $(M, f) \sim_{\epsilon, R} (N, f') $.  As in the pointed case, framed $(\epsilon, R) $-relations induce a  \emph {(framed) Gromov--Hausdorff topology} on the set $\mathcal {M F}^d $, and this topology is again Polish.
 \medskip
 
If $G $ is a semi-simple Lie group and $X = G/K $ is a symmetric space, let $$\sub_G^{\text{dtf}}=\{ \text {discrete, torsion free } \Gamma < G\} \subset \sub_G,$$
 Fixing an orthonormal frame $f$ for $X $ and setting $d=\text {dim} X $, we have a map
 $$\Phi : \sub_G^{\text {dtf}} \longrightarrow \mathcal{MF}^d, \ \  \Gamma \overset{\Phi}{\longmapsto} (\Gamma \backslash X, [f]).$$ 
   
\begin{prop} \label{p:chabvsGH}  The map $\Phi$ is a homeomorphism  onto its image. \end{prop}
\begin {proof}
 As $\mathcal {MF}^d$  is Hausdorff, it suffices to show that $\Phi$  is a continuous, proper injection.
Injectivity is clear, since an isometry $(\Gamma \backslash X, [f])\longrightarrow (\Gamma' \backslash X, [f])$  lifts to a $(\Gamma,\Gamma')$-equivariant isometry $X\longrightarrow X$  fixing the base frame $f$,  which must then be the identity,  implying $\Gamma=\Gamma'$.    For properness, Lemma \ref{collapse} implies that on any compact subset $K \subset \mathcal {MF}^d$, there is some $\epsilon >0 $ such that
$$\text {InjRad}_{M} (p) \geq \epsilon \ \ \text {for all } (M,p)\in  K. $$
So, $\Phi^{-1}(K)$ is  a family of uniformly discrete, torsion free subgroups of $G$.   Lemma~\ref{uniformdiscretelemma}  implies that $\Phi^{-1}(K)$  is precompact in $\sub_G^{\text {dtf}}$, so it suffices to check that $\Phi$ is continuous, since then the preimage $\Phi^{-1}(K)$  will be closed, hence compact.

 Suppose that $\Gamma_i \to \Gamma_\infty$ in $\sub_G$, write $M_i=\Gamma_i \backslash X$ and let the projection of the frame $f\in X$ be $f_i \in T_{p_i}M_i$.  Fixing $R>0$, we define a relation $\sim$ between the balls  $B_{M_i}(p_i,R)$  and $B_{M_\infty}(p_\infty,R)$ as in \eqref{exper}, via
$$ \exp_{f_i}(v) \sim\exp_{f_\infty}(v) \text{ when } v\in B_{\BR^d}(0,R). $$

 Fixing $\epsilon >0$, we want to show that $\sim$ is an $(\epsilon, R) $-relation for large $i $. As conditions (1) -- (4)  are immediate, the point is to prove (5), i.e.\ that for  large $i $,
\begin {equation}\label {5eq}\big | \, d_{M_i}( \exp_{f_i}(v) , \exp_{f_i}(w) ) - d_{M_\infty}(\exp_{f_\infty}(v),\exp_{f_\infty}(w)) \, \big | \leq \epsilon\end {equation}
 for all $v,w\in B_{\BR ^ d}(0,R)$.
If not, then after passing to a subsequence, there are sequences $v_i,w_i$ that violate this inequality for all $i$.   Passing to another subsequence,  we can assume that $(v_i,w_i)\to (v_\infty,w_\infty) $ in $\overline { B_{\BR ^ d}(0,R) }\times \overline { B_{\BR ^ d}(0,R)}$.

Now $\exp_{f_i}(v_i)$ is the projection under $X \longrightarrow M_i$ of the point $\exp_{f}(v_i)\in X$, and similarly for $w_i$ and $i=\infty$. So for $i=1,2,\ldots,\infty$, there are $g_i \in \Gamma_i$  with
\begin {align*} d_{M_i}( \exp_{f_i}(v_i) , \exp_{f_i}(w_i) ) = d_X( g_i\exp_{f}(v_i) , \exp_{f}(w_i) ). \end {align*}
 That is, $g_i \exp_{f}(v_i) $ is the closest point in the $\Gamma_i$-orbit of $\exp_{f}(v_i) $ to $\exp_{f}(w_i) $.

 Passing to a subsequence, we may assume that $g_i \to g $ in $G$,  since all the $g_i$ translate a point inside $B_X(\pi(f),R)$ a distance at most, say, $10R$. Then by Chabauty convergence,  we have $g\in \Gamma_\infty $, so
\begin{align*}
	\lim_{i\to\infty} \, d_{M_i}( \exp_{f_i}(v_i) , \exp_{f_i}(w_i) ) & = \lim_{i \to\infty} \, d_X( g_i \exp_{f}(v_i) , \exp_{f}(w_i) ) \\ &= d_X( g \exp_{f}(v_\infty) , \exp_{f}(w_\infty) ) \\
 & \geq d_{M_\infty}( \exp_{f_\infty}(v_\infty) , \exp_{f_\infty}(w_\infty) ).
\end{align*}

 On the other hand, Chabauty convergence also provides a sequence $x_i \in \Gamma_i$ with $x_i \to g_\infty$. So,
\begin{align*}
	\lim_{i\to\infty} \, d_{M_i}( \exp_{f_i}(v_i) , \exp_{f_i}(w_i) ) & \leq \lim_{i \to\infty} \, d_X( x_i \exp_{f}(v_i) , \exp_{f}(w_i) ) \\ &= d_X( g_\infty \exp_{f}(v_\infty) , \exp_{f}(w_\infty) ) \\
 & =d_{M_\infty}( \exp_{f_\infty}(v_\infty) , \exp_{f_\infty}(w_\infty) ).
\end{align*}
 This contradicts the fact that the $v_i,w_i$ were chosen to violate \eqref{5eq}.
 \end {proof}

An IRS $\mu$ of $G $  is \it discrete \rm or \emph {torsion free} if $\mu$-a.e.\  closed subgroup of $G$ is discrete, or torsion free.   As a consequence of Proposition \ref {p:chabvsGH}, we have:

\begin{cor}\label{geometric}
 The following map is a homeomorphism  onto its image: $$\Phi_* : \{\text {discrete, torsion free IRSs of } G\} \longrightarrow \probability (\mathcal {M F}^d) $$ 
 \end{cor}
 
So,  weak convergence of (discrete, torsion free) IRSs can be viewed as weak convergence of measures on  the Gromov--Hausdorff space of framed $X $-manifolds.
 
 \smallskip
 
Corollary \ref {geometric} captures most of the interesting topology of the space of all IRSs of $G$.  When $G $ is a simple Lie group, for instance, Theorem \ref{discIRS} and Proposition \ref{prop:isolated} imply that every IRS of $G$  is supported on discrete subgroups, except for a possible atom on the isolated subgroup $\{G\}\in \sub_G$. So, the space of all IRSs is a cone on the space of discrete IRSs.  

While we have chosen to simplify the argument by considering manifolds instead of orbifolds, the corollary is still true if one replaces framed manifolds by framed orbifolds and drops the torsion free hypothesis.  For simple Lie groups $G $, a quick and dirty argument is as follows. One proves as in Proposition \ref {p:chabvsGH} that the map $\Phi$ in a continuous injection, and hence a Borel isomorphism onto its image\footnote {Any continuous injection between standard Borel spaces  is a Borel isomorphism \cite [Theorem 15.6]{Kechris}, and $\sub_G$ and $\mathcal {M F}^d $ are Polish spaces.}, so $\Phi_*$  is a continuous injection.  Since $G $ is simple, the argument in the previous paragraph shows that the space of discrete IRSs is compact, and any continuous injection from a compact space into a Hausdorff space is a homeomorphism onto its image.

\medskip
\noindent

\section{IRSs in higher rank} \label{sec:4}

As in the previous section, suppose that $G$ is a center free semi-simple Lie group without compact factors, and let $X=G/K$ be the associated Riemannian symmetric space. We say an IRS is {\it irreducible} if every simple factor acts ergodically. When $G$ has higher rank and Kazhdan's property $(T)$ we prove the following strong result using the Nevo--St\"{u}ck--Zimmer rigidity theorem (see below):

\begin{thm}\label{thm:SZ-rk1}
Let $G$ be a center-free semi-simple Lie group of real rank at least $2$ and with Kazhdan's property $(T)$. Let $\mu$ be a non-atomic irreducible IRS of $G$. Then $\mu=\mu_\gC$ for some irreducible lattice $\gC\le G$.
\end{thm}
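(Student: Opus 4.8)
Here is a plan for proving Theorem \ref{thm:SZ-rk1}.

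The plan is to realize $\mu$ as the distribution of point stabilizers of an \emph{irreducible} probability-measure-preserving action of $G$, and then to invoke the Nevo--St\"uck--Zimmer rigidity theorem. First, I would record two preliminary reductions. (i) The IRS $\mu$ is automatically $G$-ergodic: by irreducibility one of the simple factors of $G$ already acts ergodically on $(\sub_G,\mu)$; in any case one may pass to an ergodic component, each of which is again an IRS, and since $\mu$ itself is extreme in the simplex of IRS's, almost every component coincides with $\mu$. (ii) By Borel's density theorem (Theorem \ref{discIRS}), since $\mu$ has no atoms, $\mu$-almost every $H\in\sub_G$ is a discrete, Zariski-dense subgroup of $G$; in particular $H\neq\{e\}$ for $\mu$-a.e.\ $H$, and, $G$ being center-free, $N_G(H)^\circ$ centralizes the discrete Zariski-dense group $H$ hence is trivial, so $N_G(H)$ is itself discrete.

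Next I would apply the theorem, established earlier in this section, that every IRS of a locally compact second countable group is induced from a p.m.p.\ action: there is a p.m.p.\ action $G\curvearrowright(\Omega,m)$ whose stabilizer map $\omega\mapsto G_\omega$ pushes $m$ forward to $\mu$, and after replacing $(\Omega,m)$ by an ergodic component (which still realizes $\mu$, by ergodicity of $\mu$ on $\sub_G$) we may assume this action is ergodic. The crucial point --- and the step I expect to be the main obstacle --- is to upgrade ``ergodic'' to ``irreducible'', i.e.\ to show that \emph{each} simple factor $G_i\le G$ acts ergodically on $(\Omega,m)$. This is precisely where the hypothesis that $\mu$ is irreducible enters: each $G_i$ acts ergodically on the base $(\sub_G,\mu)$, and one must propagate this to the (fiberwise Poisson) extension $\Omega\to(\sub_G,\mu)$ produced by the proof of the induction theorem. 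The natural mechanism is the Howe--Moore property of the simple factors: one checks that, away from the harmless case in which $\mu$ is supported on genuine lattices, $G_i$ acts on the $\sigma$-finite space over which the Poisson suspension is taken with vanishing matrix coefficients (zero type), whence the suspension is $G_i$-mixing and a fortiori $G_i$-ergodic. The lattice-supported case is disposed of separately: in higher rank there are only countably many lattices up to conjugacy, so an ergodic IRS supported on lattices is a single $\mu_\Gamma$ outright. (Alternatively, one invokes the version of the rigidity theorem whose hypothesis is only that every noncentral normal subgroup acts ergodically.)

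Granting that $G\curvearrowright(\Omega,m)$ is ergodic and irreducible, the Nevo--St\"uck--Zimmer theorem \cite{stuckzimmer,Nevogeneralization} applies: since $G$ has property $(T)$ and real rank at least $2$, the action is either essentially free or essentially transitive. If it were essentially free then $G_\omega=\{e\}$ for $m$-a.e.\ $\omega$, so $\mu=\mu_{\mathrm{id}}$, contradicting that $\mu$ has no atoms. Hence the action is essentially transitive: $(\Omega,m)$ is $G$-equivariantly isomorphic to $G/\Lambda$ with its invariant probability measure, for a closed subgroup $\Lambda\le G$ of finite covolume, and $\Lambda$, being a point stabilizer, lies in the support of $\mu$ --- so $\Lambda$ is discrete, hence a lattice. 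The stabilizer IRS of $G\curvearrowright G/\Lambda$ is $\mu_\Lambda$, so $\mu=\mu_\Lambda$. Finally, to see that $\Lambda$ is irreducible: $N_G(\Lambda)\supseteq\Lambda$ is discrete and $G/N_G(\Lambda)$ is a quotient of the finite-volume space $G/\Lambda$, so $N_G(\Lambda)$ is a lattice commensurable with $\Lambda$; since $(\sub_G,\mu)\cong G/N_G(\Lambda)$ as $G$-spaces, ergodicity of each simple factor on this homogeneous space --- which holds because $\mu$ is irreducible --- is, by Moore's ergodicity theorem together with the commensurability, exactly the assertion that $\Lambda$ is an irreducible lattice. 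This gives $\mu=\mu_\Gamma$ with $\Gamma=\Lambda$ an irreducible lattice, the heart of the argument being the irreducibility upgrade in the middle step.
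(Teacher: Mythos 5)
Your route is genuinely different from the paper's, and it has one real gap. The paper does not pass through an auxiliary p.m.p.\ action at all: it applies the Nevo--St\"uck--Zimmer theorem directly to the conjugation action of $G$ on $(\sub_G,\mu)$, for which irreducibility is literally the hypothesis that $\mu$ is an irreducible IRS (and faithfulness follows from Borel density exactly as you argue). The price of that choice is that the stabilizer of $H\in\sub_G$ is the normalizer $N_G(H)$, not $H$ itself, so NSZ only yields that the normalizer of a $\mu$-random subgroup is a lattice $\Gamma$; the paper then needs a second nontrivial input --- the Margulis Normal Subgroup Theorem, together with the decomposition of $\Gamma$ into irreducible pieces --- to conclude that the random subgroup has finite index in its normalizer and is therefore itself a lattice, and finally tameness of the $G$-action on the set of lattices to get $\mu=\mu_\Lambda$. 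Your version, if it worked, would buy you the conclusion that the random subgroup is a lattice in one stroke, with no appeal to the normal subgroup theorem.

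The gap is exactly the step you flag: upgrading ergodicity of the auxiliary action $G\curvearrowright(\Omega,m)$ to ergodicity of every (rank-one, property $(T)$) simple factor, which NSZ requires. Your proposed mechanism does not apply here: for a non-atomic $\mu$, almost every $H$ is discrete and Zariski dense, so $N_G(H)$ is discrete and the fibers $N_G(H)/H$ of the extension $\Omega\to(\sub_G,\mu)$ are countable (often finite); there is no continuous homogeneous space on which to run a Howe--Moore vanishing-of-matrix-coefficients argument, and ergodicity of a (compact or finite) extension of an ergodic action is genuinely not automatic. Since in your generality $G$ may have rank-one factors isomorphic to $\mathrm{Sp}(n,1)$ or $F_{4(-20)}$, you cannot discard this hypothesis. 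The clean repair is to abandon the auxiliary action and apply NSZ to $(\sub_G,\mu)$ itself --- but then you must add the normal-subgroup-theorem step, and you arrive at the paper's proof. (Two smaller points: ergodicity of $\mu$ follows immediately from irreducibility, with no need to invoke extremality; and the version of NSZ quoted in the paper gives stabilizers conjugate to a lattice in a normal subgroup $N\lhd G$, so ``essentially free or essentially transitive'' requires you to first rule out $\{e\}\neq N\neq G$, which your irreducibility hypothesis on $\mu$ does, since the complementary factor of $N$ would normalize every conjugate of $\Gamma<N$ and hence act trivially on $\mathrm{supp}(\mu)$.)
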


Recall that a simple Lie group of $\BR$-rank at least $2$ has property $(T)$ by Kazhdan's theorem \cite[Section 1.6]{Tbook} and a rank-1 simple Lie group has property $(T)$ if and only if it is locally isomorphic to $\text{Sp}(n,1),~n\ge 2$ or $F_{4(-20)}$ by Kostant's result \cite[Section 3.3]{Tbook}. A semi-simple Lie group has property $(T)$ iff all its simple factors have $(T) $. By the arithmeticity theorems of Margulis and Corlette--Gromov--Schoen \cite{Mar-arithmeticity,Corlette,Gr-Sc}, if $G$ has property $(T)$ then all its lattices are arithmetic.

When all the simple factors of $G$ are of real rank at least $2$, one can furthermore classify all the ergodic invariant random subgroups of $G$ as follows:

\begin{thm}\label{weak}
Let $G$ be a connected semi-simple Lie group without center, and suppose that each simple factor of $G$ has $\mathbb{R}$-rank at least $2$.
Then every ergodic invariant random subgroup is either
\begin{enumerate}
  \item $\mu_N$ for a normal subgroup $N$ in $G$,
  \item $\mu_\Lambda$ for a lattice $\Lambda$ in a normal subgroup $M$ of $G$, or
  \item products of the previous two measures, where $N$ and $M$ commute.
\end{enumerate}
Explicitly, if $\mu$ is an ergodic invariant random subgroup, then there are commuting normal subgroups $N,M$ in $G$ and a lattice $\Lambda$ in $M$ such that $\mu = \mu_N \otimes \mu_\Lambda = \mu_{N \times \Lambda}$.
\end{thm}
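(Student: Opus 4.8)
The plan is to reduce Theorem~\ref{weak} to Theorem~\ref{thm:SZ-rk1} by a decomposition argument on the simple factors of $G$, pushing through an induction on the number of factors. Write $G = G_1 \times \cdots \times G_r$ as a product of connected simple center-free Lie groups, each of $\R$-rank at least $2$ and hence (by Kazhdan's theorem) with property $(T)$. Let $\mu$ be an ergodic IRS of $G$; the aim is to produce commuting normal subgroups $N, M$ and a lattice $\Lambda$ in $M$ with $\mu = \mu_N \otimes \mu_\Lambda$. The first step is to dispose of the atomic part: if $\mu$ has an atom, then by ergodicity and the fact that $G$ has only countably many conjugacy classes of \emph{normal} closed subgroups (each normal subgroup of $G$ is a product of a subset of the $G_i$, so there are only $2^r$ of them, all self-normalizing), $\mu$ must be $\mu_N$ for such an $N$ --- this is case~(1). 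Indeed, by Theorem~\ref{discIRS} applied factorwise one sees that an atom of an IRS of a semisimple group must be normal. So from now on assume $\mu$ is non-atomic.

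Next I would set up the induction. For the projection $p_i \colon G \to G_i$, consider the pushforward of a $\mu$-random subgroup's projection; more usefully, look at $H \cap G_i$ and the closure $\overline{p_i(H)}$ for a $\mu$-random closed subgroup $H$. By ergodicity of $\mu$ under $G$, hence under each $G_i$ acting on the $i$-th coordinate, the IRS is ``irreducible'' exactly in the sense of Theorem~\ref{thm:SZ-rk1} whenever all factors act ergodically; but a general ergodic IRS need not be irreducible. The key structural observation is a dichotomy for each factor: either $G_i$ acts ergodically on $(\sub_G, \mu)$, or the $G_i$-ergodic components of $\mu$ are permuted and the ``$G_i$-direction'' of a random subgroup is essentially constant --- i.e.\ $\overline{p_i(H)}$ is $\mu$-a.s.\ a fixed normal subgroup of $G_i$, which by simplicity is $\{e\}$ or $G_i$. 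This is where property~$(T)$ and the Howe--Moore / mixing properties enter: one uses that the $G_i$ are non-compact simple with $(T)$ to run the standard ``mixing implies ergodic-or-trivial'' splitting of the ambient action, analogous to arguments in the Nevo--St\"uck--Zimmer circle. So after relabeling, write $G = G' \times G''$ where $G'$ is the product of factors along which (a suitable component of) $\mu$ ``sees only $\{e\}$ or everything'' and $G''$ the product of factors acting ergodically.

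Then I would argue as follows. Passing to a $G''$-ergodic component $\mu'$ of $\mu$ (the component function is measurable and $G$ permutes these components, but by ergodicity of $\mu$ under $G'$ which acts trivially on the $G''$-direction, one arranges a single component), $\mu'$ is a $G''$-irreducible non-atomic IRS of $G''$ in the sense required, so Theorem~\ref{thm:SZ-rk1} gives $\mu' = \mu_\Lambda$ for an irreducible lattice $\Lambda \le G''$. On the complementary factors one shows the random subgroup meets $G'$ in a subgroup whose IRS-type is handled by induction on $r$ (fewer factors), yielding $\mu_N \otimes \mu_{\Lambda'}$ there; splicing and using that the two coordinate directions commute gives $\mu = \mu_N \otimes \mu_\Lambda$ with $N \trianglelefteq G$ normal, $M \trianglelefteq G$ the product of the factors containing $\Lambda$, and $\Lambda$ a lattice in $M$, with $N$ and $M$ commuting --- this is precisely case~(3) (degenerating to (1) or (2)). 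Finally one checks $\mu_N \otimes \mu_\Lambda = \mu_{N \times \Lambda}$, which is the product-IRS identity noted at the end of Section~\ref{sec:2}: $N \times \Lambda$ is a closed subgroup of $N \times M$ of finite covolume, whence $\mu_{N\times\Lambda}$ is well-defined and visibly equals the pushforward of $\delta_N \times \mu_\Lambda$.

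The main obstacle I anticipate is the measurable bookkeeping in the second and third steps: ensuring that the decomposition $G = G' \times G''$ can be made uniformly over $\mu$-almost every $H$ (rather than merely over ergodic components), and that passing to $G''$-ergodic components does not destroy the $G$-invariance one needs to reassemble. Concretely, the delicate point is showing that for a $\mu$-random $H$ the ``shadow'' $\overline{p_i(H)}$ is a.s.\ a fixed normal subgroup of $G_i$ on the non-ergodic factors --- this requires combining the simplicity of $G_i$ with an ergodicity-of-the-shadow argument, and then a Fubini-type disintegration of $\mu$ along the product $\sub_{G'} \times \sub_{G''}$ to recognize $\mu$ as a product. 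Once that disintegration is in place, invoking Theorem~\ref{thm:SZ-rk1} on the ergodic part and induction on the rest is routine.
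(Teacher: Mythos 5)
Your plan hinges on a dichotomy that does not hold, and this is where the proposal falls apart. You claim that for each simple factor $G_i$, either $G_i$ acts ergodically on $(\mathrm{Sub}_G, \mu)$ or the ``shadow'' $\overline{p_i(H)}$ of a $\mu$-random $H$ is a.s.\ a fixed normal subgroup of $G_i$. This is false. Take $G = G_1 \times G_2$ and $\mu = \mu_{\Gamma_1 \times \Gamma_2}$ for lattices $\Gamma_i \le G_i$: then $G_1$ acts on the orbit $G/N_G(\Gamma_1 \times \Gamma_2) \cong \big(G_1/N_{G_1}(\Gamma_1)\big) \times \big(G_2/N_{G_2}(\Gamma_2)\big)$ through the first coordinate only, so $G_1$ acts \emph{non-ergodically}, yet $\overline{p_1(H)}$ ranges over all $G_1$-conjugates of $\Gamma_1$ and is certainly not a fixed normal subgroup. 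The appeal to Howe--Moore or mixing cannot rescue this: those results control decay of matrix coefficients, not the ergodicity of a given factor's action on an arbitrary p.m.p.\ space, and the example above shows the desired conclusion is simply untrue. The subsequent steps --- passing to $G''$-ergodic components, invoking Theorem~\ref{thm:SZ-rk1} on the ``irreducible'' part, and then inducting --- are all built on this broken dichotomy.

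The paper's proof decomposes on a different and correct axis: \emph{faithfulness} of the action rather than ergodicity of factors. When $G$ acts faithfully on $(\mathrm{Sub}_G, \mu)$, one applies the Nevo--St\"uck--Zimmer theorem (Theorem~\ref{prop:SZ}) directly; since every simple factor of $G$ has rank $\ge 2$, there are no rank-one factors and the hypothesis of that theorem reduces to ``$G$ acts ergodically and faithfully,'' so no irreducibility (i.e.\ ergodicity of each factor) is required at all. One then deduces from the Margulis Normal Subgroup Theorem, together with the Raghunathan decomposition of the lattice into irreducible pieces, that a $\mu$-random subgroup is a lattice, and tameness of the $G$-action on lattices gives $\mu = \mu_\Lambda$. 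When the action is not faithful, one sets $N$ to be the kernel (the product of the factors that normalize $\mu$-a.e.\ $H$) and $M$ a complement. The crucial elementary observation --- which replaces your shadow argument --- is that if a simple factor $L \le N$ normalizes $H$, then by simplicity of $L$ either $L \le H$ or $L \cap H = \{e\}$, and in the latter case $L$ and $H$ commute so the projection of $H$ to $L$ is central and hence trivial; thus $H = H_N \times H_M$ with $H_N$ a product of simple factors of $N$. By ergodicity $H_N$ is a.s.\ a fixed normal $L \le N$, so $\mu = \mu_L \otimes \mu'$, and the faithful case applied to $\mu'$ on $M$ finishes the argument. Your proposal neither identifies the kernel of the action as the right normal subgroup $N$ nor supplies the decomposition $H = H_N \times H_M$, and without those it cannot reach the conclusion.

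Two smaller remarks. Your dispatch of the atomic case is fine in spirit (an atom of an ergodic IRS in a connected group has full-index normalizer, hence is normal), but the paper does not separate it out; the atomic case is absorbed into the NSZ analysis. Your observation that $\mu_N \otimes \mu_\Lambda = \mu_{N\times\Lambda}$ is correct and is indeed the product-IRS identity recorded in Section~\ref{sec:2}.
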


We shall prove Theorems \ref{weak} and \ref{thm:SZ-rk1} by making use of the
following fundamental result of Nevo, St\"{u}ck and Zimmer, which is a particular case of \cite[Theorem 4.3]{stuckzimmer}.

\begin{thm}[Nevo--St\"{u}ck--Zimmer] \label{prop:SZ}
Let $G$ be a connected semi-simple Lie group without center, such that each simple factor of $G$ has $\mathbb{R}$-rank at least 2 or is isomorphic to $\text{Sp}(n,1),~n\ge 2$ or $F_{4(-20)}$. Suppose that $G$, as well as every rank one factor of $G$, acts ergodically and faithfully preserving a probability measure on a space $X$. Then there is a normal subgroup $N \lhd G$ and a lattice $\Gamma < N$ such that for almost every $x \in X$ the stabilizer of $x$ is conjugate to $\Gamma$.
\end{thm}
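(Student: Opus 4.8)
This is a special case of the Stück--Zimmer classification of stabilizers of ergodic measure-preserving actions of higher rank semisimple groups, together with Nevo's extension covering the property $(T)$ rank-one factors, and the plan is to reconstruct that argument. The first move is to pass to invariant random subgroups: the stabilizer map $x \mapsto G_x$ pushes the $G$-invariant probability measure on $X$ forward to a $G$-invariant probability measure $\mu$ on $\sub_G$, which is ergodic because the action on $X$ is; and the conclusion we want is precisely that $\mu = \mu_\Gamma$ for a lattice $\Gamma$ in a normal subgroup $N \lhd G$, since then a.e.\ stabilizer is conjugate to $\Gamma$.

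Next I would show $\mu$ is concentrated on discrete subgroups. Composing the stabilizer map with $H \mapsto \mathrm{Lie}(H^{\circ}) \in \mathrm{Gr}(\g)$ gives an $\Ad(G)$-invariant probability measure on the Grassmannian of $\g$; the Furstenberg argument underlying the Borel density theorem (used in the proof of Theorem \ref{discIRS}, and valid unchanged for a semisimple group with no compact factors acting on a finite-dimensional representation) forces this measure onto the $\Ad(G)$-fixed subspaces, i.e.\ onto the finitely many ideals of $\g$. By ergodicity $\mathrm{Lie}(G_x^{\circ})$ equals a fixed ideal $\mathfrak{n}$ for a.e.\ $x$; since $G$ is center-free it is the direct product of its simple factors, so the connected subgroup with Lie algebra $\mathfrak{n}$ is the corresponding normal subgroup $N_1 \lhd G$, whence $N_1 = G_x^{\circ} \subseteq G_x$ for a.e.\ $x$. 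Thus $N_1$ acts trivially mod null sets, and faithfulness forces $N_1 = \{\mathrm{id}\}$, so $G_x$ is discrete for a.e.\ $x$. (If $\mu$ has an atom, ergodicity makes it a single Dirac mass $\delta_H$ with $H \lhd G$; then $H$ acts trivially, so $H = \{\mathrm{id}\}$, the action is essentially free, and one takes $N = \{\mathrm{id}\}$. So assume henceforth that $\mu$ is non-atomic and supported on discrete subgroups.)

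The heart of the matter, and the step I expect to be by far the hardest, is to show that the discrete group $G_x$ has, for a.e.\ $x$, finite covolume in a normal subgroup of $G$. Following Stück--Zimmer and Nevo, I would bring in the Furstenberg boundary $B = G/P$ of a minimal parabolic $P \le G$: the action $G \curvearrowright X \times B$ is amenable, being the product of the probability-measure-preserving action on $X$ with the amenable boundary action. The Borel structure of the orbit relation on $X \times B$, together with a measurable choice of representatives, defines a measurable cocycle whose target is controlled by the (a.e.\ constant) isomorphism type of the stabilizer; amenability of $X \times B$ feeds this cocycle into Zimmer's theory of amenable actions, yielding equivariant families of probability measures on flag varieties, and property $(T)$ for every simple factor of $G$ --- available exactly because each factor has real rank $\geq 2$ or is $\mathrm{Sp}(n,1)$ $(n \geq 2)$ or $F_{4(-20)}$ --- rigidifies the output via Zimmer's cocycle superrigidity theorem (and its extension by Corlette and Gromov--Schoen to the rank-one property $(T)$ factors), forcing $G_x$ to be, up to conjugacy, a lattice in a normal subgroup $M \lhd G$, with faithfulness controlling how the complementary factors act. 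This is precisely the step that breaks down in rank one without property $(T)$, consistently with the abundance of rank-one invariant random subgroups described in the introduction; reproducing its technical content --- the amenability and cocycle bookkeeping, and the case analysis over the simple factors --- is the real obstacle.

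It remains to conclude by ergodicity. Granting that $\mu$-a.e.\ $\Gamma \in \sub_G$ is a lattice in a normal subgroup $M(\Gamma) \lhd G$, the map $\Gamma \mapsto M(\Gamma)$ is $G$-invariant and takes only finitely many values, hence is a.e.\ a fixed normal subgroup $N$, and $\Gamma \mapsto \vol(N/\Gamma)$ is a $G$-invariant measurable function, hence a.e.\ a finite constant. Every simple factor of $N$ has property $(T)$, so by the arithmeticity theorems of Margulis and Corlette--Gromov--Schoen every lattice of $N$ is arithmetic, and $N$ therefore has only countably many conjugacy classes of lattices; ergodicity of $\mu$ then concentrates it on a single conjugacy class $\Gamma^{G} = G/N_G(\Gamma)$. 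Since $\Gamma$ is Zariski dense in $N$, its centralizer in $N$ is trivial, so $N_G(\Gamma)$ is discrete and contains $\Gamma$ with finite covolume; hence $G/N_G(\Gamma)$ carries a unique $G$-invariant probability measure, whence $\mu = \mu_\Gamma$. Unwinding the stabilizer map, $G_x$ is conjugate to the fixed lattice $\Gamma < N$ for a.e.\ $x \in X$, which is the assertion.
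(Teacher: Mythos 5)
First, a point of comparison: the paper does not prove this statement at all --- Theorem \ref{prop:SZ} is quoted as a particular case of \cite[Theorem 4.3]{stuckzimmer} (with Nevo's extension to the rank-one property $(T)$ factors) and is used as a black box in the proofs of Theorems \ref{thm:SZ-rk1} and \ref{weak}. So there is no in-paper argument to measure your write-up against; the question is only whether your text constitutes a proof on its own.

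It does not, and you essentially say so yourself. The reduction to an ergodic IRS, the Borel-density/Furstenberg step showing the stabilizers are a.e.\ discrete (after splitting off the atomic case), and the final ergodicity argument pinning the measure to a single conjugacy class are all fine and do track the published proof. But the entire content of the theorem sits in the step you describe as ``the real obstacle'': showing that an a.e.\ discrete, non-atomic stabilizer IRS must consist of lattices in a normal subgroup. Your paragraph on this invokes amenability of $G \curvearrowright X \times G/P$, Zimmer's machinery of equivariant measures on flag varieties, property $(T)$, and cocycle superrigidity, but supplies none of the actual argument --- no cocycle is constructed, no rigidity theorem is applied to a specific object, and the ``case analysis over the simple factors'' is not performed. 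As written, this is a citation of Stück--Zimmer and Nevo--Zimmer dressed as a proof sketch, which is exactly what the paper itself does more honestly by stating the theorem with attribution. A smaller slip in your closing step: from Zariski density of $\Gamma$ in $N$ you conclude that $N_G(\Gamma)$ is discrete, but when $N \neq G$ the complementary normal factor $M$ centralizes $N$, hence centralizes and a fortiori normalizes $\Gamma$, so $N_G(\Gamma) \supseteq M$ is not discrete. What is true (and suffices) is that $N_G(\Gamma)$ contains the finite-covolume subgroup generated by $M$ and $\Gamma$, so $G/N_G(\Gamma)$ still carries a unique invariant probability measure.
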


Let us mention that some new results in the spirit of Theorem \ref{prop:SZ} were established recently in \cite{CrPe} and \cite{Levit}.

 Before we start the proofs, we would like to mention that the following could be simplified a bit by appealing to Theorem \ref{pmpthm}.  In particular, one could avoid referencing the Margulis normal subgroup theorem.   However, we have chosen to give an independent proof, as it is not that much longer.
 
\begin {proof} [Proof of Theorems \ref {thm:SZ-rk1} and \ref{weak}]
Let's assume that $G $ has $\BR $-rank at least $2 $ and Kazhdan's property $(T) $.  At various points in the proof we will mention how the assumptions of Theorem \ref{weak} imply a stronger conclusion.  In the following, let $\mu $ be an ergodic invariant random subgroup of $G$.

Suppose first that the action of $G$ is faithful. By \ref{prop:SZ} we obtain a normal subgroup $N$ and a lattice $\Gamma < N$ such that the stabilizer, i.e. the normalizer, of a $\mu$-random subgroup is conjugated to $\Gamma$.
We claim that $N = G$. Indeed, the direct complement $M$ of $N$ in $G$ normalizes every conjugate of any subgroup of $\Gamma$. Hence $M$ fixes almost every point in $\sub_G$ and as the action is faithful $M$ is trivial.

Next we claim that if $\Lambda$ is a subgroup of $G$ whose normalizer is $\Gamma$ then $[\Gamma:\Lambda]<\infty$.
Recall the Margulis Normal Subgroup Theorem: {\it a normal subgroup of an irreducible lattice in a semi-simple Lie group with $\R$-rank $\geq 2$ is either central or is a lattice}. In our cases, the assumptions of Theorem \ref{thm:SZ-rk1} implies that $\gC$ is irreducible, but the assumptions of Theorem \ref{weak} does not.
However, by \cite[Theorem 5.22]{Raghunathan},
there is a decomposition of $G$ as a product of normal subgroups $\prod G_i$ such that $\Gamma_i := \Gamma \cap G_i$ is an irreducible lattice in $G_i$ and $\prod \Gamma_i$ has finite index in $\Gamma$. Note that by the assumptions of \ref{weak} $\R\text{-rank}(G_i) \geq 2$ for every $i$. Moreover, the projection of $\Lambda$ to each $G_i$ cannot be trivial since $\Gamma$ is the full normalizer of $\Lambda$. By considering the commutator $[\Gamma_i,\Lambda]$ one deduces that $\Lambda_i := \Lambda\cap G_i$ is nontrivial for every $i$.
By the normal subgroup theorem $\Lambda_i$ is of finite index in $\Gamma_i$ as the latter is center free. Therefore $\prod\Lambda_i$ and hence also $\Lambda$ is a lattice in $G=\prod G_i$.

We have shown that a $\mu$-random subgroup in $\sub_G$ is a lattice. It is proved in \cite{stuckzimmer} that the action of $G$ on the subset of lattices in $\sub_G$ is tame (i.e., the Borel structure on the orbit space is countably separable). In particular, an ergodic measure supported on this subset must be supported on a single orbit. Thus $\mu = \mu_\Lambda$ for some lattice $\Lambda$. In particular, this finishes the proof of Theorem \ref{thm:SZ-rk1}.

\vspace {1mm}
We now finish the remaining cases of Theorem \ref{weak} when the action is not faithful. Let $N$ be the kernel of this action. If $N=G$ then $\mu$ is supported on a normal subgroup of $G$, and we are done. Otherwise, take a direct complement $M$ of $N$ such that $G \simeq N \times M$.

We note that a subgroup normalized by $N$ has a certain decomposition as a direct product. To this end, suppose that a subgroup $H \in \sub_G$ is normalized by a simple factor $L$ of $N$. By simplicity, either $H$ contains $L$ or $L \cap H = 1$. In the latter case, $L$ and $H$ commute, and thus the projection of $H$ to $L$ is central, and hence trivial. It follows that if $H$ is normalized by $N$ then it decomposes as $H = H_N \times H_M$ where $H_N := H \cap N$ is a product of simple factors in $N$, and $H_M := H \cap M$.

As there are finitely many possibilities for $H_N$, this factor of the decomposition is independent of $H$, by ergodicity. That is, there exists a normal subgroup $L \leq N$ such that $H = L \times (H \cap M)$ for almost every $H \in \sub_G$. Thus, $\mu = \mu_L \otimes \mu'$ where $\mu'$ is an invariant ergodic measure supported on the image of $\sub_M$ in $\sub_G$. Since $M$ acts faithfully and ergodically on $(\sub_M,\mu')$, we deduce from the previous case that $\mu'=\mu_\Lambda$ for a lattice $\Lambda$ in $M$. Finally, it is easy to check that $\mu = \mu_L \otimes \mu_\Lambda = \mu_{L \times \Lambda}$. This completes the proof of Theorem \ref{weak}.
\end {proof}

\medskip

The proofs of the uniform approximation results in the higher rank case relies on the following:

\begin{thm}\label{weak2}
Let $G$ be a center-free semi-simple Lie group of $\mathbb{R}$-rank at least $2$ with Kazhdan's property $(T)$.
Then $\mu _{\mathrm{id}}$ is the only accumulation point of the set
\[
\left\{ \mu _{\Gamma }\mid \Gamma \text{ is an irreducible lattice in }G\right\}.
\]
\end{thm}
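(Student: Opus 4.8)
The plan is to argue by contradiction using the classification of ergodic IRS's (Theorem \ref{thm:SZ-rk1}) together with a compactness argument in $\mathrm{IRS}(G)$. Suppose $\mu_\infty$ is an accumulation point of $\{\mu_\Gamma : \Gamma \text{ an irreducible lattice}\}$ that is different from $\mu_{\mathrm{id}}$; pick a sequence of pairwise distinct irreducible lattices $\Gamma_n$ with $\mu_{\Gamma_n} \to \mu_\infty$ weakly. Since $\mathrm{IRS}(G)$ is compact and the subset of measures supported on discrete subgroups is closed away from the isolated point $\{\mu_G\}$ (Corollary \ref{geometric} and Theorem \ref{discIRS}), we may pass to the ergodic decomposition of $\mu_\infty$; it suffices to rule out that a positive-mass ergodic component is anything other than $\mu_{\mathrm{id}}$. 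The first step is therefore to show $\mu_\infty$ gives no mass to the atom $\{G\}$: if the $\Gamma_n$ were eventually equal to a fixed lattice this would be trivial, and since they are pairwise distinct and each $\mu_{\Gamma_n}$ is non-atomic, no mass can escape to $\{G\}$ in the limit (using that $\{G\}$ is isolated in $\sub_G$ by Proposition \ref{prop:isolated}).

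Next I would invoke Theorem \ref{thm:SZ-rk1}: any non-atomic ergodic IRS of $G$ is $\mu_\Lambda$ for some lattice $\Lambda \le G$ — but here I must be careful, since Theorem \ref{thm:SZ-rk1} as stated assumes \emph{irreducibility} of the IRS, whereas $\mu_\infty$ need not be irreducible. So the real step is: show that any non-atomic ergodic IRS $\mu$ of $G$ (center-free, $\mathbb{R}$-rank $\ge 2$, property (T)) is either $\mu_{\mathrm{id}}$ or $\mu_\Lambda$ for a lattice $\Lambda$ in some normal subgroup $M$ of $G$, possibly twisted by the trivial subgroup in the complementary factor — essentially the content of the not-necessarily-faithful case in the proof of Theorem \ref{weak}, which only used property (T) and $\mathbb{R}$-rank $\ge 2$ factor-wise via Nevo--St\"uck--Zimmer. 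Thus $\mu_\infty$ decomposes over measures of the form $\mu_{\Lambda}$ with $\Lambda$ a lattice in a normal subgroup $M \trianglelefteq G$. The point of the argument is then to show that a sequence $\mu_{\Gamma_n}$ of IRS's coming from \emph{irreducible} lattices in all of $G$ cannot converge to such a limit unless the limit is $\mu_{\mathrm{id}}$.

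The key mechanism is a volume/covolume lower bound combined with property (T). By Kazhdan's property (T), $G$ has Kazhdan constant bounded below, hence by the Kazhdan--Margulis theorem there is a uniform lower bound $v_0 > 0$ on the covolume of every lattice in $G$; more relevantly, property (T) forces $\mathbf{1}_G$ to be isolated in $\widehat{G}$, so the regular representation $L^2(\Gamma_n \backslash G)$ contains the trivial representation with multiplicity one and nothing nearby. I would translate weak convergence $\mu_{\Gamma_n} \to \mu_\infty$ into the statement that for every $R$, the $\mu_{\Gamma_n}$-probability that a conjugate of $\Gamma_n$ meets $\overline{B_G(\mathrm{id},R)} \setminus \{\mathrm{id}\}$ converges to the corresponding $\mu_\infty$-probability (as in the $U_R$ sets of Lemma \ref{lem:loccv}). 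If $\mu_\infty \neq \mu_{\mathrm{id}}$, some ergodic component $\mu_\Lambda$ has $\Lambda$ a lattice in a proper normal subgroup $M$ (if $M = G$ then $\mu_\Lambda = \mu_{\mathrm{id}}$ is impossible unless $\Lambda$ is itself... here $\Lambda$ cannot be a genuine lattice in $G$ since then the orbit $\Lambda^G$ would be a single point-mass, contradicting that the $\Gamma_n$ are pairwise distinct and the orbit-counting — this uses tameness of the lattice-action from \cite{stuckzimmer} again, the orbit of any fixed lattice is relatively discrete in the Fell topology on $\mathrm{IRS}(G)$). So $M \subsetneq G$, and then $M$ acts trivially on $\mu_\Lambda$-almost every subgroup: a $\mu_\infty$-random subgroup is centralized by a fixed noncompact normal $M'$ complementary to $M$ on a positive-measure set. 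Pulling this back, the $\Gamma_n$ would have to "asymptotically" be normalized by $M'$ — but each $\Gamma_n$ is an \emph{irreducible} lattice, so its projection to $M'$ is dense, and a density/rigidity argument (Borel density theorem applied inside the limit, Theorem \ref{discIRS}) shows $\mu_\infty$ is supported on Zariski-dense subgroups, while $\mu_\Lambda$ with $\Lambda \le M \subsetneq G$ is not Zariski dense in $G$ — contradiction. Hence every ergodic component of $\mu_\infty$ is $\mu_{\mathrm{id}}$, so $\mu_\infty = \mu_{\mathrm{id}}$.

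The main obstacle, I expect, is the step ruling out a limiting ergodic component of the form $\mu_\Lambda$ with $\Lambda$ a genuine lattice in all of $G$ (the "no escape of mass to a fixed lattice" step): a priori the $\Gamma_n$ could be pairwise distinct yet accumulate in the Chabauty/IRS topology on the conjugacy class of a single lattice $\Lambda_0$. Excluding this requires knowing that distinct lattices give "separated" IRS's, i.e. that the orbit map $G/N_G(\Lambda_0) \to \mathrm{IRS}(G)$ has image that is discrete (or at least locally closed with no other lattice-orbits accumulating on it), which is exactly where the tameness statement of St\"uck--Zimmer for the action of $G$ on the space of lattices is essential, combined with the fact that a sequence of pairwise non-conjugate lattices cannot converge to a point of a single orbit. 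The rest of the argument is then a relatively routine combination of Theorem \ref{thm:SZ-rk1}, Borel density (Theorem \ref{discIRS}), and the geometric reformulation of weak convergence from Lemma \ref{lem:loccv}.
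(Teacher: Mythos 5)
Your proposal has the right target but leaves the two decisive steps unproven. First, the paper does not pass to an ergodic decomposition of the limit $\mu_\infty$; it proves directly that $\mu_\infty$ is ergodic under \emph{every simple factor} of $G$, via Lemma \ref{lem:ergodiclimit}: a weak limit of ergodic probability measures for a group with property $(T)$ is again ergodic (an almost-invariant-vectors argument in $\bigoplus_n L^2_0(\mu_n)$). This puts $\mu_\infty$ squarely in the hypotheses of Theorem \ref{thm:SZ-rk1}, so the only possibilities are $\mu_N$ for a normal subgroup $N$ or $\mu_\Lambda$ for an irreducible lattice $\Lambda$ in $G$ — no components supported in proper normal factors ever arise. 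Your alternative route breaks down in two places: the classification in Theorem \ref{weak} assumes every factor has rank at least $2$, which is strictly stronger than the hypotheses of Theorem \ref{weak2} (rank-one factors with $(T)$ are allowed), and your Zariski-density argument for excluding components $\mu_\Lambda$ with $\Lambda$ in a proper normal subgroup does not work: Theorem \ref{discIRS} is stated for \emph{simple} $G$, and Zariski density of supports is not preserved under weak limits. The case $\mu_\infty=\mu_N$ with $N$ positive-dimensional is excluded in the paper by Proposition \ref{conv-to-nilpotent}: a Chabauty neighborhood of $N$ contains no discrete subgroup, so $0=\liminf_n\mu_n(U)\geq\mu_N(U)=1$.

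Second, and more seriously, the step you yourself flag as the main obstacle — that the $\mu_{\Gamma_n}$ cannot accumulate at $\mu_\Lambda$ for a genuine lattice $\Lambda$ of $G$ — is asserted rather than proved. Tameness of the $G$-action on the set of lattices gives a countably separated Borel structure on the orbit space, which is why an ergodic measure supported on lattices sits on a single orbit; it says nothing about weak-$*$ separation of distinct lattice orbits, and ``pairwise non-conjugate lattices cannot converge to a point of a single orbit'' is precisely the nontrivial content here. The paper's mechanism is entirely different: by Wang's finiteness theorem the covolumes of the $\Gamma_n$ tend to infinity; by Leuzinger's theorem property $(T)$ gives a uniform lower bound on $\lambda_1(\Gamma_n\backslash X)$; and Lemma \ref{discrete} then derives a contradiction by a Cheeger-constant estimate — if $\mu_{\Gamma_n}\to\mu_\Gamma$ then, after conjugating, $\Gamma_n\backslash X$ converges in the framed Gromov--Hausdorff sense to $\Gamma\backslash X$, and since $\vol(\Gamma_n\backslash X)\to\infty$ one can cut $\Gamma_n\backslash X$ along a hypersurface of arbitrarily small area into two pieces of large volume, forcing the Cheeger constant to zero and contradicting the spectral gap. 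Without some substitute for this spectral-gap/isoperimetric argument, your proof does not close.
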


We will make use of the following result.

\begin{thm}[Glasner--Weiss \cite{GlasnerKazhdan}]\label{lem:ergodiclimit}
  Let $X$ be a compact topological space, and let $G$ be a topological group with property (T) acting continuously on $X$. Let $(\mu_n)$ be a sequence of $G$-invariant Borel probability measures on $X$ that weakly converges to $\mu_\infty$. If the measures $\mu_n$ are ergodic, then the limit measure $\mu_\infty$ is ergodic.
\end{thm}

\begin{proof}[Proof of Theorem \ref {weak2}]
Fix a sequence $\Gamma _{n}$ of distinct irreducible lattices in $G$ such that $\mu_n :=\mu_{\Gamma _{n}}$ weakly converges and let $\mu_{\infty}$ be the limit measure. An important point here is that $\mu_{\infty}$ is ergodic with respect to the action of every simple factor of $G$.
By our assumption, if $N$ is any simple factor of $G$ then it has property $(T)$. Therefore, by Theorem \ref{lem:ergodiclimit}, $N$ acts ergodically on $\mu_\infty$. Combining this with Theorem \ref{thm:SZ-rk1}, we deduce that
either $\mu_\infty = \mu_N$ for a normal subgroup $N \leq G$, or $\mu_\infty = \mu_\Lambda$ for an irreducible lattice $\Lambda < G$.

Let us start by ruling out the possibility that $\mu_{\infty} = \mu_N$ for any connected normal subgroup of positive dimension. Since $N$ is not nilpotent, by Proposition \ref{conv-to-nilpotent} there is a neighborhood $U$ of $N$ in $\sub_G$ that does not contain any lattice.
Thus, if $\mu_n$ weakly converges to $\mu_N$ we would have
$$0 = \liminf_{n\to\infty} \mu_n(U) \geq \mu_N(U) =1,$$
which is absurd.

Next, we exclude the case that $\mu_{\infty} = \mu _{\Gamma}$ for a lattice $\Gamma$ in $G$.  By our assumption $G$ has property $(T)$. Therefore by a theorem of Leuzinger \cite{Leuzinger} there is a uniform lower bound for $\lambda_1 (\Gamma_n \backslash X) $, the first nonzero eigenvalue of the Laplacian operator on $\Gamma_n \backslash X $. Furthermore, since $(\mu_n) $ is not eventually constant the co-volumes of $\Gamma_n $ must tend towards infinity by Wang's Finiteness Theorem \cite[8.1]{wang}.  Theorem \ref {weak2} then follows from the following lemma.

\begin{lem}
\label{discrete}Let $G$ be a semi-simple Lie group, let $X$ be its associated Riemannian symmetric space  and let
$\Gamma _{n}$ be a sequence of lattices in $G$ where the covolume of $\Gamma _{n}$ tends to
infinity and $\mathrm{inf}\, \lambda_1 (\Gamma_n \backslash X) > 0$. Then the set $\left\{ \mu _{\Gamma_{n}}\right\} $ is discrete.
\end{lem}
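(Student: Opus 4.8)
The plan is to show that no sequence $\mu_{\Gamma_n}$ of the prescribed type can have a weak limit among the $\mu_{\Gamma_n}$ themselves — in fact that the $\mu_{\Gamma_n}$ are \emph{separated} in the weak-$*$ topology. The key is that the uniform spectral gap forces the thin parts to stay uniformly small, so each $\Gamma_n\backslash X$ has a large embedded ball, and this geometric feature can be detected by a fixed open set in $\text{Prob}(\mathcal{M F}_d)$ (equivalently in $\text{IRS}(G)$), with a lower bound on its $\mu_{\Gamma_n}$-mass that does \emph{not} degenerate. First I would invoke the quantitative relationship (Buser-type inequality, or the Cheeger--Gromov / Cheng comparison) between $\lambda_1(\Gamma_n\backslash X)$ and the volume of the $R$-thin part: a uniform lower bound $\lambda_1 \geq \lambda_0 > 0$ together with covolume $\to\infty$ yields, for each fixed $R$, a uniform lower bound $\vol((\Gamma_n\backslash X)_{\geq R})/\vol(\Gamma_n\backslash X)\geq c(R) > 0$; indeed one expects $c(R)\to 1$ as would follow from combining the spectral gap with an isoperimetric argument, but even a fixed positive constant suffices.

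The next step is to translate this into separation of the measures. Fix some $R$ large enough that the thick-part estimate gives mass $\geq \tfrac12$, say. On the thick part, a ball of radius $R$ in $\Gamma_n\backslash X$ is isometric to a ball of radius $R$ in $X$; hence the pointed ball $(B_{\Gamma_n\backslash G}(\mathrm{id},R),[\mathrm{id}])$ is $(\eps,R)$-related to the corresponding ball in $X/K$ for \emph{every} $\eps$. So the measure $\mu_{\Gamma_n}$ always assigns mass $\geq \tfrac12$ to the open set $V_{R,\eps}\subset\text{Sub}_G$ consisting of subgroups $H$ for which this local geometry holds. But a subgroup $H$ whose orbifold $H\backslash X$ has an $R$-embedded ball at the root is, after conjugation, contained in the complement of a fixed identity neighborhood except for $\mathrm{id}$ — so as $R\to\infty$ these conditions pin down $\{\mathrm{id}\}$. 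Concretely, for any two indices $m\neq n$, if $\mu_{\Gamma_m}$ and $\mu_{\Gamma_n}$ were weakly close, then since both put mass $\geq\tfrac12$ on arbitrarily deep ``locally-$X$'' sets, and since the only discrete IRS charging all such sets is $\mu_{\mathrm{id}}$, any accumulation point of $\{\mu_{\Gamma_n}\}$ must be $\mu_{\mathrm{id}}$ — but $\mu_{\mathrm{id}}$ is not among the $\mu_{\Gamma_n}$ (each $\Gamma_n$ is infinite, indeed a lattice). One then argues that in fact no $\mu_{\Gamma_n}$ can be an accumulation point of the others, because approaching $\mu_{\mathrm{id}}$ means the thin part fills up (Lemma~\ref{lem:loccv}), contradicting that $\mu_{\Gamma_n}$, being a fixed lattice measure, charges $\text{Sub}_G\setminus U_{R}$ with a definite amount of mass for $R$ the systole of $\Gamma_n$. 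Hence $\{\mu_{\Gamma_n}\}$ is discrete.

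To package this cleanly: I would prove that for a lattice $\Gamma$ with $\lambda_1(\Gamma\backslash X)\geq\lambda_0$ and $\vol(\Gamma\backslash X)$ large, $\mu_\Gamma$ lies in a weak-$*$ neighborhood of $\mu_{\mathrm{id}}$ whose ``radius'' shrinks as the covolume grows; combined with the observation that $\mu_{\mathrm{id}}$ itself does not belong to the set $\{\mu_{\Gamma_n}\}$ and is its only possible accumulation point, this gives discreteness. The one genuinely analytic input is the thick-part volume bound from the spectral gap; everything else is the soft dictionary between Chabauty convergence, BS-convergence, and injectivity radius already set up in Lemma~\ref{lem:loccv} and Proposition~\ref{p:chabvsGH}.

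**The main obstacle.** The crux is the quantitative step: deducing from $\lambda_1(\Gamma_n\backslash X)\geq\lambda_0>0$ and $\vol\to\infty$ that a uniformly large proportion of $\Gamma_n\backslash X$ has injectivity radius $\geq R$. A spectral gap by itself controls the Cheeger constant from below, hence forbids thin necks separating off large-volume pieces, but one must rule out the thin part being spread out diffusely; here the relevant tool is that the thin part, by Margulis' lemma, is a union of tubes and cusps of controlled geometry, so a definite fraction of its volume would create either a small eigenvalue (from a tube/cusp of large volume) or force many short geodesics, which a refined Buser/Cheng estimate again excludes. Making this genuinely \emph{uniform} in $n$ — independent of the commensurability class — is the technical heart; but it is precisely what a careful application of the spectral-gap-to-isoperimetry machinery in the locally symmetric setting delivers.
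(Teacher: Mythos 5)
Your strategy rests entirely on the quantitative claim that a uniform lower bound $\lambda_1(\Gamma_n\backslash X)\geq\lambda_0>0$ together with $\vol(\Gamma_n\backslash X)\to\infty$ forces the $R$-thick part to carry a uniformly positive (indeed full) proportion of the volume. That is the load-bearing step, you do not supply a proof of it, and it does not follow from the Cheeger--Buser circle of ideas you invoke. Buser's inequality converts a small Cheeger constant into a small $\lambda_1$, i.e.\ it detects a single separating hypersurface of small area relative to the volume it cuts off; it says nothing about the volume of the thin part. By the Margulis lemma the compact components of the thin part are tubes of volume bounded in terms of $\varepsilon$ alone, so a thin part occupying a definite fraction of $\vol(M_n)$ just means linearly many short geodesics, and such a configuration need not produce any small-area separating hypersurface: your parenthetical ``a tube of large volume would create a small eigenvalue'' is vacuous because the tubes have bounded volume, and ``many short geodesics'' is not excluded by any refined Buser or Cheng estimate. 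Were your claim available, it would give BS-convergence to $X$ for arbitrary sequences with a spectral gap (e.g.\ all congruence covers, directly from property $(\tau)$), bypassing the arithmetic fixed-point counting of Section \ref{sec:5}; no such argument is known. (A secondary, fixable point: from $\mu_{\Gamma_n}(U_R)\geq 1/2$ one only gets $\mu_\infty(\overline{U_R})\geq 1/2$ for a weak limit, via the portmanteau inequality for closed sets, since the open-set inequality goes the wrong way.)

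The paper's proof is much softer and never touches the thin part. One assumes a subsequence $\mu_{\Gamma_n}\to\mu_\Gamma$ with $\Gamma$ a lattice, conjugates so that $\Gamma_n\to\Gamma$ in the Chabauty topology, and hence (Proposition \ref{p:chabvsGH}) $Y_n=\Gamma_n\backslash X$ converges geometrically to $Y=\Gamma\backslash X$. If $Y$ is compact the sequence is eventually constant, contradicting $\vol(Y_n)\to\infty$. If $Y$ is noncompact of finite volume, one cuts far out in its ends to obtain $B_\delta\subset Y$ with $\vol(B_\delta)$ comparable to $\vol(Y)$ and $\vol(\partial B_\delta)<\delta$, transfers $B_\delta$ to $Y_n$ for large $n$, and notes that its complement in $Y_n$ has volume tending to infinity; hence the Cheeger constant of $Y_n$ is at most about $8\delta/\vol(Y)$, which for small $\delta$ contradicts the uniform spectral gap via Buser's inequality. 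So the spectral gap is played off against one explicit cut manufactured from the hypothetical limit, rather than being used to control the global geometry of every $Y_n$. You would need either to adopt an argument of this kind or to genuinely prove your thick-part estimate, which I do not believe is available.
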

\begin{proof} For simplicity, we will first describe the proof when all the $\Gamma _{n}$ are torsion-free, and afterward indicate the modifications necessary to deal with torsion.

Assume that after passing to a subsequence, $\mu
_{\Gamma _{n}}$ weakly converges to $\mu _{\Gamma }$ for some lattice $%
\Gamma $ in $G$.   As these measures are supported on the conjugates of their defining lattices, after conjugations and passing to a further subsequence we can assume that $\Gamma_n $ converges to $\Gamma  $ in the Chabauty topology.  By Proposition \ref{p:chabvsGH} and Lemma \ref{collapse}, this implies that after a suitable choice of base points the manifolds $Y_n = \Gamma_n \backslash X$ converge to $Y =\Gamma \backslash X $ in the pointed smooth topology.

If $Y$ is compact then the sequence $(\Gamma_n)$ is eventually constant, contradicting the fact that the co-volumes tend to infinity.  Otherwise, for every $\delta >0 $ there is a codimension-zero submanifold $B_\delta \subset Y$ with 
$$\frac { \vol (Y)} {2}  \leq \vol (B_\delta) \leq \vol(Y) \  \ \text {  and   } \ \ \vol ^ {d-1}( \partial B_\delta) < \delta.$$

This implies that for large $n $, there is a subset $B_n \subset Y_n $ such that
$$\frac { \vol (Y)} {4}  \leq \vol (B_n) \leq 2\vol(Y) \  \ \text {  and   } \ \ \vol ^ {d - 1}(\partial B_n) < 2\delta,$$
where if $d=\dim X$ then  $\vol^{d-1}$ is $(d-1)$-dimensional volume. As $\vol (Y_n) \to \infty $, we have $\vol (Y_n \setminus B_n)\to \infty $ as well.   This implies that for large $n $ the \emph {Cheeger constant}
$$h(Y_n) := \inf_{B \subset Y_n}  \frac {\vol ^ {d-1}( \partial B)}{\min \{ \vol(B), \vol(Y_n \setminus B)} \leq \frac {8\delta} {\vol (Y)} ,$$  where the infimum is over codimension-zero submanifolds of $Y_n$. As $\delta $ was arbitrary, this implies that $h(Y_n)\to 0$. An inequality of Buser \cite{Busernote} then implies that $\lambda_1(Y_n)\to 0$,  contradicting the uniform spectral gap condition.

 Morally, the proof for orbifolds is the same, but we cannot rely on smooth convergence because Lemma \ref{collapse} applies only to manifolds. However, one can proceed as follows: choose a large metric ball $B \subset X$ whose projection to $Y$ is nearly full measure, but where $\partial B$ projects to have small $(d-1)$-dimensional volume.  Fixing $\epsilon>0$, we can verify that the projection of $\partial B$ has small volume by choosing a small number of $\epsilon$-balls whose $\Gamma$-translates cover an $\epsilon/2$-neighborhood of $\partial B$. For large $n$, the projection of  $B$ to $Y_n$ will still have volume bounded below. In $X$, a neighborhood of the boundary $\partial B$ will still be covered by the $\Gamma_n$-translates of our $\epsilon$-balls above, so its projection in $Y_n$ has small volume.  This is enough to force the first eigenvalue $\lambda_1(\Gamma_n \backslash X)\to 0$, compare with \cite[Proposition 2.1]{Biringerfiniteness}.
\end{proof}

In summary, we have shown that the only possible accumulation point of the set
\[
\left\{ \mu _{\Gamma }\mid \Gamma \text{ is a lattice in }G\right\}
\]
is $\mu _{\mathrm{id}}$. On the other hand $\mu _{\mathrm{id}}$ is clearly an accumulation point.  For instance, if $\Gamma < G$ is any lattice,  then by residually finiteness,  there is a chain of  finite index normal subgroups  $\Gamma_n< \Gamma$  with trivial intersection.  Then $\Gamma_n\backslash X$ BS-converges to $X$, and by  Corollary \ref{lem:loccv}, $\mu_{\Gamma_n}\to\mu_{\rm id}$.
Hence we have proved Theorem \ref{weak2}.
\end{proof}

\begin{cor} \label{cor:4.15}
Let $G$ be a  center-free semi-simple Lie group with $\mathbb{R}$-rank at least $2$ and Kazhdan's property $(T)$ and let $(\Gamma _n)_{n\geq 0}$ be a sequence of irreducible lattices in $G$ where the covolume of $\Gamma_n$ tends to infinity. Then $\Gamma _n \backslash X$ BS-converges to $X$.
\end{cor}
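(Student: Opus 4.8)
The plan is to deduce the statement directly from Theorem \ref{weak2}, which identifies $\mu_{\mathrm{id}}$ as the unique accumulation point of the set of IRS's coming from irreducible lattices, together with Lemma \ref{lem:loccv}, which translates weak convergence of such IRS's into BS-convergence of the associated orbifolds. Concretely, Lemma \ref{lem:loccv} asserts that $\Gamma_n\backslash X$ BS-converges to $X$ if and only if $\mu_{\Gamma_n}\to\mu_{\mathrm{id}}$ in $\mathrm{IRS}(G)$, so it suffices to prove this last weak convergence.

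First I would use compactness: $\sub_G$ with the Chabauty topology is compact metrizable, hence $\mathrm{IRS}(G)=\mathrm{Prob}(\sub_G)^{\mathrm{inv}}$ is compact metrizable in the weak-$*$ topology, and therefore it is enough to check that every weakly convergent subsequence $\mu_{\Gamma_{n_k}}\to\mu_\infty$ has $\mu_\infty=\mu_{\mathrm{id}}$. The crucial observation is that such a $\mu_\infty$ is an \emph{accumulation point} of the set $S=\{\mu_\Gamma\mid \Gamma \text{ an irreducible lattice in }G\}$, to which Theorem \ref{weak2} applies. Each $\mu_{\Gamma_{n_k}}$ lies in $S$; moreover, since $\mu_\Lambda$ gives full mass to the conjugacy class $\Lambda^G$, distinct conjugacy classes of lattices give rise to distinct IRS's. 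As conjugate lattices share the same covolume and $\vol(\Gamma_n\backslash X)\to\infty$, only finitely many of the $\Gamma_{n_k}$ can lie in any one conjugacy class; hence all but finitely many of the $\mu_{\Gamma_{n_k}}$ differ from $\mu_\infty$, so every neighbourhood of $\mu_\infty$ meets $S\setminus\{\mu_\infty\}$. Thus $\mu_\infty$ is an accumulation point of $S$, and Theorem \ref{weak2} forces $\mu_\infty=\mu_{\mathrm{id}}$, completing the argument.

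There is no genuine obstacle remaining: all the substance has been absorbed into Theorem \ref{weak2} (itself built on the Nevo--St\"uck--Zimmer rigidity theorem, the Margulis normal subgroup theorem, Leuzinger's uniform spectral gap for groups with property $(T)$, and Wang's finiteness theorem) and into the equivalence of Lemma \ref{lem:loccv}. The only points requiring care are bookkeeping ones: distinguishing an accumulation point of a set from the limit of a sequence, and using the hypothesis $\vol(\Gamma_n\backslash X)\to\infty$ exactly where it is needed, namely to exclude the degenerate situation in which $(\mu_{\Gamma_n})$ is eventually constant equal to $\mu_\Lambda$ for a fixed lattice $\Lambda$ (a point not covered by Theorem \ref{weak2}). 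Finally, I would note that via Wang's finiteness theorem the hypothesis ``$\vol(\Gamma_n\backslash X)\to\infty$'' is, for the purpose of this corollary, equivalent to the hypothesis that the $\Gamma_n$ be pairwise non-conjugate, which is the formulation recorded as Theorem \ref{thm3} in the introduction.
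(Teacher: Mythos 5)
Your proof is correct and follows exactly the route the paper intends: the corollary is stated as an immediate consequence of Theorem \ref{weak2} combined with Lemma \ref{lem:loccv}, and your compactness/subsequence argument (together with the observation that the covolume hypothesis guarantees any subsequential weak limit is a genuine accumulation point of the set of lattice IRS's) is precisely the bookkeeping needed to make that deduction rigorous.
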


As a consequence we have:

\begin{cor}\label{cor:M_{<r}}
Let $G$ and $X$ be as above. Then for every $r>0$ and for every sequence of $X$-orbifolds $M_n$ with $\vol(M_n)\to\infty$ one has
$$
 \frac{\vol((M_n)_{<r})}{\vol(M_n)}\to 0.
$$
\end{cor}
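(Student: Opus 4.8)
The plan is to derive this directly from Corollary~\ref{cor:4.15} by unwinding the definition of BS-convergence. Write $M_n=\Gamma_n\backslash X$ for a sequence of lattices $\Gamma_n\le G$, so that the hypothesis $\vol(M_n)\to\infty$ says exactly that the covolumes of the $\Gamma_n$ tend to infinity. Assume first that the $\Gamma_n$ are irreducible; this is automatic when $G$ is simple, which is the main case. Then Corollary~\ref{cor:4.15} applies and gives that the orbifolds $\Gamma_n\backslash X$ BS-converge to $X$. By Definition~\ref{defn:conv-to-G}, BS-convergence of $\Gamma_n\backslash X$ to $X$ \emph{is} precisely the assertion that $\vol((M_n)_{<R})/\vol(M_n)\to 0$ for every $R>0$; specializing to $R=r$ gives the claim. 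One may equivalently route this through Lemma~\ref{lem:loccv}: BS-convergence to $X$ amounts to $\mu_{\Gamma_n}\to\mu_{\mathrm{id}}$, which in turn is equivalent to the proportion of $M_n$ on which the injectivity radius is at least $R$ tending to $1$ for every $R$.

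For a general semisimple $G$ one reduces to the irreducible case by splitting off factors. Decompose $G=G_1\times\cdots\times G_s$ into simple factors; after passing to a subsequence, write $G=H_1\times\cdots\times H_t$ for a fixed grouping of these into commuting semisimple normal subgroups such that, after replacing $\Gamma_n$ by a finite-index subgroup via \cite[Theorem~5.22]{Raghunathan}, one has $\Gamma_n=\prod_j\Lambda_{n,j}$ with $\Lambda_{n,j}$ an irreducible lattice in $H_j$. Using that the $r$-thin part of a Riemannian product is contained in the union over $j$ of the product in which the $j$-th factor is replaced by its $r$-thin part, one bounds the thin-part proportion of $\prod_j\Lambda_{n,j}\backslash(H_j/K_j)$ by $\sum_j\vol((N_{n,j})_{<r})/\vol(N_{n,j})$, where $N_{n,j}=\Lambda_{n,j}\backslash(H_j/K_j)$; the terms with $\vol(N_{n,j})\to\infty$ tend to $0$ by the irreducible case already treated, and at least one index behaves this way since $\vol(M_n)\to\infty$.

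I expect the only genuine difficulties — and the reason the statement is cleanest, and is applied, when $G$ is simple — to be the two remaining subtleties of this reduction: transferring the thin-part estimate between $M_n$ and its finite cover $\prod_j\Lambda_{n,j}\backslash X$ (a finite covering can enlarge the injectivity radius, so thinness of $M_n$ need not survive in the cover), and handling factors $H_j$ along which $\vol(N_{n,j})$ stays bounded, where by Wang's finiteness theorem \cite[8.1]{wang} the factor is eventually constant up to conjugacy and one must know it cannot carry a uniformly positive share of thin part. No deep input beyond Corollary~\ref{cor:4.15} is required; all of the rigidity — Theorem~\ref{weak2}, and through it the Nevo--St\"{u}ck--Zimmer theorem together with property~$(T)$ — has already been spent there.
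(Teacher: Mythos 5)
Your argument for the irreducible case is correct and is precisely the paper's (unstated) proof: Corollary~\ref{cor:4.15} gives BS-convergence of $M_n=\Gamma_n\backslash X$ to $X$, and by Definition~\ref{defn:conv-to-G} (equivalently Lemma~\ref{lem:loccv}) that is literally the assertion $\vol((M_n)_{<r})/\vol(M_n)\to 0$ for every $r$. Nothing further is needed.

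The discussion of the reducible case is where you should have stopped and noticed something stronger: the second ``subtlety'' you flag is not a gap to be filled but an outright counterexample. Suppose $G=G_1\times G_2$, $X=X_1\times X_2$, and $\Gamma_n=\Gamma_1\times\Gamma_{2,n}$ with $\Gamma_1$ a fixed cocompact lattice in $G_1$ and $\vol(\Gamma_{2,n}\backslash X_2)\to\infty$. Since the injectivity radius of a point in a Riemannian product is the minimum of the factor injectivity radii, we have
$$(M_n)_{<r}\ \supseteq\ (\Gamma_1\backslash X_1)_{<r}\times (\Gamma_{2,n}\backslash X_2),$$
so
$$\frac{\vol((M_n)_{<r})}{\vol(M_n)}\ \geq\ \frac{\vol((\Gamma_1\backslash X_1)_{<r})}{\vol(\Gamma_1\backslash X_1)},$$
which is a strictly positive constant as soon as $r$ exceeds the global injectivity radius of $\Gamma_1\backslash X_1$. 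So the conclusion fails for this reducible sequence. The corollary therefore must be read as restricted to \emph{irreducible} $X$-orbifolds, exactly as the surrounding text indicates: the function $f$ defined just below ranges only over irreducible $X$-manifolds, and Corollary~\ref{3.10} explicitly takes the $\Gamma_n$ irreducible (and its proof invokes Theorem~\ref{weak2}, which requires irreducibility). Once you accept that ``irreducible'' is an omitted but clearly intended hypothesis, the entire product/finite-index reduction you sketch is unnecessary; its ``remaining subtleties'' were your own signal that the general case was not being claimed, and you should have drawn that conclusion rather than leaving the reduction dangling.
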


In Section \ref{sec:7} we will see how to use Corollary \ref{cor:M_{<r}} to obtain estimates on the growth of Betti numbers.
In particular, if $(\Gamma_n)_{n\geq 0}$ is a uniformly discrete sequence of nonconjugate lattices in a higher rank, center-free simple Lie group, then the hypotheses of Theorem \ref{T1} and Corollaries \ref{T3} and \ref{C84} follow from Corollary \ref{cor:4.15}. In particular the convergence of volume-normalized Betti numbers (Corollary \ref{cor4}) follows.

\section{Benjamini--Schramm convergence for congruence lattices} \label{sec:5} \label{5}

Let $G$ be a semisimple real simple Lie group, $X=G/K$ its associated symmetric space and let $\Gamma_0 \subset G$ be a uniform irreducible arithmetic lattice. We will assume that $\Gamma_0$ is torsion free, so in particular $\Gamma_0$ intersects the center $Z(G)$ of $G$ at the identity. There exists a $k$-simple, simply connected algebraic group $\mathbf{G}$ defined over a totally real number field $k$ such that $\Gamma_0$ is commensurable with $\mathbf{G}(\mathcal{O}_S)$, the group of $S$-integral points of $\mathbf{G}$. The {\it principal
congruence subgroups} of $\Gamma_0$:
$$\Gamma_0 (N) = \{ \gamma \in \Gamma_0 \cap \mathbf{G}(\mathcal{O}_S) \; : \; \gamma \equiv \mathrm{id} \ \mathrm{mod} \ N \}$$
obviously form a BS-convergent sequence of lattices in $G$. One may even quantify this observation:

\begin{lem} \label{Lprinccong}
There are constants $a>0$ and $b$, depending on $\Gamma$, such that for all $N\geq 1$,
$$\mathrm{InjRad} (\Gamma_0 (N)) \geq a \log \mathrm{vol} (\Gamma_0 (N) \backslash X) + b.$$
\end{lem}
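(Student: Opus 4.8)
The plan is to reduce the injectivity radius lower bound to an elementary statement about the minimal word length of non-trivial elements of $\Gamma_0(N)$, and then compare that word length with the volume $\mathrm{vol}(\Gamma_0(N)\backslash X)$, which is polynomial in $N$. First I would fix a finite generating set $S$ for $\mathbf{G}(\mathcal{O})$ (or a finite-index subgroup commensurable with $\Gamma_0$) and observe that there is a constant $c_1>0$ such that every $\gamma\in S^{\le n}$ has $d(o,\gamma o)\le c_1 n$ in $X$, where $o=[K]$ is the basepoint; equivalently, the displacement of an element grows at most linearly in its word length. Hence to bound $\mathrm{InjRad}(\Gamma_0(N))$ from below it suffices to show that any $\gamma\in\Gamma_0(N)\setminus\{1\}$ has word length $\gtrsim \log\mathrm{vol}$.

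The key arithmetic input is a ``separation'' or ``strong approximation at the archimedean place'' estimate: realize $\mathbf{G}$ as a closed subgroup of some $\mathrm{GL}_m$ defined over $\OO$, so that the entries of the generators of $S$ are algebraic integers of bounded size; then any word $\gamma$ of length $n$ has matrix entries that are algebraic integers whose archimedean absolute values (at all places of $k$) are $\le C^n$ for some constant $C$. If moreover $\gamma\equiv \mathrm{id}\bmod N$, then the entries of $\gamma-\mathrm{id}$ are algebraic integers divisible by $N$ in $\OO$. If $\gamma\neq 1$, some such entry $a$ is non-zero, so $\mathrm{N}_{k/\Q}(a)$ is a non-zero rational integer divisible by $N^{[k:\Q]}$ (up to a controlled constant coming from the other archimedean conjugates); hence $|\mathrm{N}_{k/\Q}(a)|\ge N^{[k:\Q]}$. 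But $|\mathrm{N}_{k/\Q}(a)|$ is bounded above by the product of the archimedean absolute values of the conjugates of $a$, which is $\le (C^n)^{[k:\Q]}$. Comparing the two bounds gives $C^n\ge N$, i.e. $n\ge \log N/\log C$. Thus every non-trivial $\gamma\in\Gamma_0(N)$ has word length $\ge a' \log N$ for some $a'>0$.

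Combining this with the first step, $\mathrm{InjRad}(\Gamma_0(N))\ge \tfrac12\,c_1^{-1}\!\cdot a'\log N + O(1)$. Finally I would invoke the standard formula for the index/volume: $[\Gamma_0:\Gamma_0(N)]$, and hence $\mathrm{vol}(\Gamma_0(N)\backslash X)/\mathrm{vol}(\Gamma_0\backslash X)$, is bounded above by a polynomial in $N$ (roughly $N^{\dim\mathbf{G}\cdot[k:\Q]}$, being a product over primes dividing $N$ of the orders of the relevant finite quotients $\mathbf{G}(\OO/\mathfrak{p}^e)$), so $\log\mathrm{vol}(\Gamma_0(N)\backslash X)\le c_2\log N + c_3$. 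Substituting gives $\mathrm{InjRad}(\Gamma_0(N))\ge a\log\mathrm{vol}(\Gamma_0(N)\backslash X) + b$ with $a = c_1^{-1}a'/(2c_2)$ and a suitable additive constant $b$, as claimed. One must also deal with the discrepancy between $\Gamma_0$ and $\mathbf{G}(\OO)$ (they are only commensurable, and $\Gamma_0(N)$ is defined relative to $\Gamma_0$): this is harmless since passing to a common finite-index subgroup changes both $\mathrm{InjRad}$ and $\log\mathrm{vol}$ by bounded amounts, which can be absorbed into $b$ (and, if necessary, into $a$).

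The main obstacle I anticipate is the number-theoretic bookkeeping in the non-archimedean estimate when $N$ is not prime: one needs that ``$\gamma\equiv\mathrm{id}\bmod N$ and $\gamma\ne 1$'' forces a genuinely non-zero algebraic integer divisible by $N$ (not merely by a proper divisor), and that the norm bound survives the contributions of all archimedean conjugates of $k$ — i.e. one really wants a clean height/Liouville-type inequality for $\mathbf{G}(\OO)$ rather than just for $\Q$. Everything else (the linear displacement bound, the polynomial volume growth) is routine.
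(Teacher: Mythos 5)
The paper states Lemma \ref{Lprinccong} without proof (the only hint at the argument is the parenthetical ``compare Lemma \ref{Lprinccong}'' in the proof of Theorem \ref{nik}, where a word-length bound of exactly this flavor is invoked), so there is no ``paper's proof'' for you to match; the task was just to supply a correct argument, and your outline is the natural one. The arithmetic bookkeeping is sound, and the worry in your last paragraph is unfounded: if $a\in N\mathcal{O}_k\setminus\{0\}$ then $\mathrm{N}_{k/\Q}(a)=N^{[k:\Q]}\,\mathrm{N}_{k/\Q}(a/N)$ with $a/N$ a nonzero algebraic integer, hence $|\mathrm{N}_{k/\Q}(a)|\geq N^{[k:\Q]}$ with no fudge factor at all and regardless of the factorization of $N$. (Your ``controlled constant coming from the other archimedean conjugates'' is a red herring: the conjugates enter the \emph{upper} bound $|\mathrm{N}_{k/\Q}(a)|\le C^{n[k:\Q]}$ via the linear growth at each archimedean place, not the lower bound.)

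There is, however, one genuine slip: you quote the wrong direction of the word-metric comparison. You write that ``the displacement of an element grows at most linearly in its word length'' ($d(o,\gamma o)\le c_1 n$ for $\gamma\in S^{\le n}$) and then conclude that a lower bound on word length forces a lower bound on displacement. That inference requires the \emph{other} half of the Milnor--\v{S}varc lemma: for a cocompact lattice $\Gamma_0$ the orbit map $\gamma\mapsto\gamma o$ is a quasi-isometry, so there are constants with $d(o,\gamma o)\geq c_1^{-1}|\gamma|_S - c_2$, and it is this lower inequality that you must invoke. With the correct inequality the argument goes through as intended. You should also make explicit the (routine) reduction to the basepoint $o$: since $\Gamma_0(N)\lhd\Gamma_0$ and $\Gamma_0\backslash X$ is compact of diameter $R$, any $\tilde x\in X$ can be written $\tilde x=g_1 g_2 o$ with $g_1\in\Gamma_0$ and $d(g_2 o,o)\le R$, whence $d(\tilde x,\gamma\tilde x)=d(g_2 o,\gamma' g_2 o)\geq d(o,\gamma' o)-2R$ with $\gamma'=g_1^{-1}\gamma g_1\in\Gamma_0(N)$; the $2R$ is absorbed into $b$. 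Finally, the comparison $\log\mathrm{vol}(\Gamma_0(N)\backslash X)\le c_2\log N+c_3$ you cite is correct, since $[\Gamma_0:\Gamma_0(N)]\le|\mathbf{G}(\mathcal{O}/N\mathcal{O})|$ which is polynomial in $N$ of degree $\dim\mathbf{G}\cdot[k:\Q]$.
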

Here we denote by $\mathrm{InjRad} (\Gamma)$ the infimum over $x \in \Gamma \backslash X$ of the
local injectivity radii $\mathrm{InjRad}_{\Gamma \backslash X} (x)$.

The conclusion of Lemma \ref{Lprinccong} does not hold for general congruence lattices (i.e. lattices which contain a principal congruence subgroup), as shown in the
following example.

\medskip
\noindent
{\it Example.} Let $\mathbf{H}$ be a proper $k$-subgroup of $\mathbf G$ which contains a semi-simple element of $\mathbf{G}$. Consider the congruence subgroups of $\Gamma_0$:
$$\{  \gamma \in \Gamma_0 \cap \mathbf{G}(\mathcal{O}_S) \; : \; \gamma \in \mathbf{H} (\mathcal{O}_S) \ \mathrm{mod} \ N \}.$$
These form a sequence of cocompact lattices in $G$ whose volumes tend to infinity but
whose (minimal) injectivity radius remains bounded (in fact it becomes stationary).

\medskip

It nevertheless remains true that any sequence of distinct congruence subgroups of $\Gamma_0$ locally
converges toward the trivial group. 

The main result of this section is to prove the following quantified version of the above statement:

\begin{thm} \label{nik}
There exist positive constants $c$ and $\delta$ depending only on $\Gamma_0$ (and $G$), such that for any congruence subgroup $\Gamma \subset \Gamma_0$ and any $R>1$ we have:
$$\mathrm{vol} ((\Gamma \backslash X)_{<R}) \leq e^{cR} \mathrm{vol} (\Gamma \backslash X)^{1-\delta}.$$
\end{thm}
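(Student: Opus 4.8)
The strategy is to fibre $\Gamma\backslash X$ over the fixed compact manifold $\Gamma_0\backslash X$ and to bound, fibrewise, the number of points lying in the $R$-thin part, thereby reducing the statement to the arithmetic estimate of Theorem \ref{ag}. Since the constants are permitted to depend on $\Gamma_0$, and since $\Gamma_0$ is commensurable with $\mathbf G(\mathcal O)$ for a $k$-simple, simply connected group $\mathbf G$, I would first replace $\Gamma_0$ by a torsion-free finite-index subgroup that is also contained in the group furnished by Theorem \ref{ag}: congruence subgroups of $\Gamma_0$ remain congruence, indices and covolumes change by bounded factors, and $R$-thin parts change in a controlled way (after enlarging $R$ by a bounded factor), so this affects only $c$ and $\delta$. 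Having done so, fix $o=eK\in X$ and a compact fundamental domain $D_0\subset X$ for $\Gamma_0$, and put $M_0=\Gamma_0\backslash X$, $M=\Gamma\backslash X$, $N=[\Gamma_0:\Gamma]$; then $p\colon M\to M_0$ is a degree-$N$ Riemannian covering of closed manifolds, $\vol(M)=N\,\vol(M_0)$, and $\Gamma$ is a congruence subgroup of index $N$ in $\Gamma_0$.

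\emph{The fibrewise count.} Fix $\bar x\in M_0$ and a lift $\tilde x\in D_0$. Because $\Gamma_0$ is torsion free, the fibre $p^{-1}(\bar x)$ is in bijection with $\Gamma\backslash\Gamma_0$ via $\Gamma\gamma\mapsto\Gamma\gamma\tilde x$. Using $d(\gamma\tilde x,\delta\gamma\tilde x)=d(\tilde x,\gamma^{-1}\delta\gamma\,\tilde x)$ and $\gamma^{-1}\Gamma\gamma\subset\Gamma_0$, one checks that $\Gamma\gamma\tilde x\in M_{<R}$ if and only if there is $\eta\in S:=\{\eta\in\Gamma_0-\{1\}:d(\tilde x,\eta\tilde x)<2R\}$ with $\gamma\eta\gamma^{-1}\in\Gamma$, i.e.\ with $\Gamma\gamma\eta=\Gamma\gamma$. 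Hence, by a union bound,
$$\#\bigl(p^{-1}(\bar x)\cap M_{<R}\bigr)\ \le\ \sum_{\eta\in S}\#\{\Gamma\gamma\in\Gamma\backslash\Gamma_0:\Gamma\gamma\eta=\Gamma\gamma\},$$
and the summand is precisely the number of fixed points of $\eta$ in the transitive right action of $\Gamma_0$ on the cosets $\Gamma\backslash\Gamma_0$. By Theorem \ref{ag} this is at most $e^{C|\eta|}N^{1-\epsilon}$, where $|\eta|$ is the word length of $\eta$ in $\Gamma_0$ and $C,\epsilon>0$ depend only on $\Gamma_0$.

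\emph{Controlling $S$ and concluding.} For $\eta\in S$ with $\tilde x\in D_0$ we have $d(o,\eta o)\le 2R+C_0$ for a constant $C_0$ depending only on $D_0$. Since $\Gamma_0$ acts cocompactly on $X$, the Milnor--Svarc lemma (the orbit map is a quasi-isometry) gives $|\eta|\le AR+B$ with $A,B$ depending only on $\Gamma_0$; and comparing orbit points with translates of $D_0$ and using the exponential volume growth of metric balls in $X$ gives $\#S\le \vol\bigl(B_X(o,2R+2C_0)\bigr)/\vol(D_0)\le e^{c_1R}$. Therefore $\#\bigl(p^{-1}(\bar x)\cap M_{<R}\bigr)\le e^{c_1R}\,e^{C(AR+B)}N^{1-\epsilon}\le e^{cR}N^{1-\epsilon}$ for a suitable $c$ (using $R>1$ to absorb the additive constant). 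Integrating over $M_0$ along the local isometry $p$,
$$\vol(M_{<R})=\int_{M_0}\#\bigl(p^{-1}(\bar x)\cap M_{<R}\bigr)\,d\vol_{M_0}(\bar x)\ \le\ e^{cR}N^{1-\epsilon}\vol(M_0)=e^{cR}\vol(M_0)^{\epsilon}\,\vol(M)^{1-\epsilon},$$
and absorbing the constant $\vol(M_0)^{\epsilon}$ (again using $R>1$) yields the assertion with $\delta=\epsilon$.

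\emph{Main obstacle.} The substantive input is Theorem \ref{ag} itself — the uniform sub-polynomial bound on the number of fixed points in congruence quotients, which here we are entitled to assume. Within the present argument the only delicate points are bookkeeping: that the word-length bound on short elements is genuinely \emph{linear} in $R$, which is exactly where cocompactness of $\Gamma_0$ is used; and the standard but slightly fiddly reduction at the outset to the specific arithmetic group of Theorem \ref{ag}, where one must check that passing to a finite-index subgroup keeps congruence subgroups congruent and keeps indices, covolumes and thin-part volumes comparable.
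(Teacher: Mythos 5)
Your proof is correct and follows essentially the same route as the paper's: reduce to the arithmetic group of Theorem \ref{ag}, observe that a point of the fibre over $\bar x\in\Gamma_0\backslash X$ lies in the $R$-thin part exactly when some $\eta\in\Gamma_0-\{1\}$ of word length $O(R)$ fixes the corresponding coset, bound the number of such $\eta$ by $e^{O(R)}$, and apply the fixed-point estimate. The paper phrases the count by fixing $x_0\in\Omega$ and enumerating $\Gamma$-classes of translates rather than integrating fibrewise, but this is the identical computation.
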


The proof of this theorem is given in the rest of the section. 
\medskip

We note that if $\Gamma$ is an arithmetic lattice that has the Strong Approximation Property below, then one can prove uniform BS-convergence for finite index subgroups of $\Gamma$ without having to use Theorem \ref{nik}. 

An arithmetic lattice $\Gamma$ in a Lie group $G$ has the \emph{Strong
Approximation Property (SAP)}, if for every Zariski dense subgroup $H$ of $\Gamma$, the closure of $H$ in the congruence completion of
$\Gamma$ is open. 
This is equivalent to the statement that for every Zariski dense subgroup $H$ there
exists $M>0$, such that for any congruence subgroup $K$ of $\Gamma$, we have
$\left\vert \Gamma:HK\right\vert \leq M$. For $\Gamma$ with SAP,  one can prove uniform BS-convergence for congruence subgroups of $\Gamma$ without having to use Theorem \ref{nik}:

\begin{thm}
Let $\Gamma$ be an arithmetic lattice with SAP and assume that $\Gamma$ has trivial center. Then for any sequence of
congruence subgroups $\Gamma_{n}$ of $\Gamma$ with $[ \Gamma
:\Gamma_{n}] \rightarrow\infty$ we have $\mu_{\Gamma_{n}}
\rightarrow\mu_{1}$.\label {easynik}
\end{thm}

SAP\ has been proved by Nori \cite{Nori} and Matthews, Vaserstein and Weisfeiler \cite{Weisfeiler} for arithmetic lattices in simple, connected, simply connected groups $G$.  So,

\begin{cor}
Let $\Gamma$ be an arithmetic lattice in a simple connected Lie group $G$ and assume that $\Gamma$ has trivial center. Then for any sequence of
congruence subgroups $\Gamma_{n}$ of $\Gamma$ with $[ \Gamma
:\Gamma_{n}] \rightarrow\infty$ we have $\mu_{\Gamma_{n}}
\rightarrow\mu_{1}$.
\end{cor}
\begin{proof}
	The group $\Gamma$ is commensurable with an arithmetic lattice $\Gamma_0$ in the universal cover of $G$. By replacing $\Gamma$ with $\Gamma_0$ and using \cite{Nori} and \cite{Weisfeiler} we can therefore reduce the corollary to the previous theorem.
\end{proof}

\begin{proof}[Proof of  Theorem \ref{easynik}]
Assume, by contradiction, that $\mu_{\Gamma_{n}}$
does not converge to $\mu_{1}$. By passing to a subsequence, we can assume
that $\mu_{\Gamma_{n}}\rightarrow\mu$ where $\mu\neq\mu_{1}$. Let $K$ be the
$\mu$-random subgroup of $\Gamma$.
Since $[\Gamma:\Gamma_{n}] \rightarrow\infty$, $K$ has
infinite index in $\Gamma$ a.s. We shall prove that $K=1$
a.s., to reach a contradiction. 

Let us say that a subgroup $H$ of $\Gamma$ is {hyperclosed}, if it can be obtained as an
ascending union $H=\bigcup_{k=1}^\infty J_{k}$ of congruence closed subgroups $J_k \subset \Gamma$. We claim that a Zariski
dense hyperclosed subgroup $H$ of $\Gamma$ has finite index in $\Gamma$.
Indeed, by a standard dimension argument, there exists some $k$ such that
$J_{k}$ is already Zariski dense. By SAP the congruence closure of $J_{k}$
(which equals $J_{k}$) has finite index in $\Gamma$ and so does $H$.

Now given a sequence of congruence subgroups $H_{n}$ of $\Gamma$, in the Chabauty
topology, we have
\[
\lim H_{n}=\bigcup\limits_{k=1}^{\infty}\bigcap\limits_{n=k}^{\infty}%
H_{n}\text{.}%
\]
Since the intersection of congruence closed subgroups is congruence closed, we
get that $\lim H_{n}$ is hyperclosed. The set of possible Chabauty limits of
congruence subgroups is compact in the Chabauty topology, so we obtain that
$\mu$ is supported on hyperclosed subgroups. Applying SAP on hyperclosed
subgroups and that $K$ has infinite index in $\Gamma$ a.s., gives us that $K$
is not Zariski dense in $\Gamma$ a.s. By Theorem \ref{discIRS}, this implies that $K=1$
a.s., a contradiction. 
\end{proof}

\subsection{Proof of Theorem \ref{nik}} \label{red} 

We first reduce the proof to the case where $\Gamma_0$ is a finite index subgroup of $\mathbf{G} (\mathcal{O}_S)$.
In fact we will show that if $\Gamma_0$ and $\Delta_0$ are two arithmetic commensurable  torsion free lattices, then the conclusion of Theorem \ref{nik} holds for the congruence subgroups of $\Gamma_0$ if and only if it holds for the congruence subgroup of $\Delta_0$ provided we change the constant $c$.

By considering $\Gamma_0 \cap \Delta_0$ inside $\Gamma_0$ and inside $\Delta_0$ we see that it is sufficient to prove the case when $\Delta_0$ is a finite index subgroup of $\Gamma_0$.

Let $\Gamma \subset \Gamma_0$ be a congruence subgroup and denote by $M$ the corresponding $X$-manifold $\Gamma \backslash X$.
Set $\Delta  = \Gamma \cap \Delta_0$ and $M' = \Delta \backslash X$.
Let $p:M' \rightarrow M$ be the covering map and $m:= [\Gamma_0 :\Delta_0]$. Then $[\Gamma : \Delta] \leq m$ and $p$ is of degree at most $m$ so that for any $x\in M$ and $x'\in M'$ with $p(x')=x$ we have:
\[ \frac{\mathrm{InjRad}_{M'}(x')}{m} \leq \mathrm{InjRad}_M (x) \leq \mathrm{InjRad}_{M'}(x') . \]
In particular:
\[ \mathrm{Vol}(M_{<R}) \leq \frac{\mathrm{Vol}(M'_{<mR} )}{[\Gamma : \Gamma ']} \leq \mathrm{Vol}(M_{<mR} ).\]

In turn $\mathrm{vol}(M')\leq \mathrm{vol}(M)m$. So if we have the inequality 

\[ \mathrm{vol} (M'_{<mR}) \leq e^{cmR} \mathrm{vol} (M')^{1-\delta},\]
for some $c$ and $\delta$ then by changing the constant $c$ (to $cm+(1-\delta)\log m$) we obtain the corresponding inequality 

\[ \mathrm{vol} (M_{<R}) \leq e^{cmR+(1-\delta)\log m} \mathrm{vol} (M)^{1-\delta}.\]

The other direction is even easier: starting with a congruence subgroup $\Delta$ containing $\Delta_0(N)$ for some integer $N$ we put $\Gamma:= \Delta \Gamma_0(N)$, a congruence subgroup in $\Gamma$, and observe that $\Delta= \Gamma \cap \Delta_0$. The inequality for $\mathrm{vol}((\Gamma \backslash X)_{<R})$ easily gives a corresponding inequality for $\mathrm{vol}((\Delta \backslash X)_{<R})$.

 So it is  sufficient to prove Theorem \ref{nik} in the case when $\Gamma_0$ is any given finite index subgroup of $\mathbf{G} (\mathcal{O}_S)$.

\medskip

We will first prove the following combinatorial version of Theorem \ref{nik}:

\begin{thm} \label{ag} Let $\mathbf{G}$ be a $k$-simple simply connected algebraic group defined over a number field $k$. For a finite set of valuations $S$ of $\mathcal O$, including all archimedian ones, let $\mathcal{O}_S$ be the ring of $S$-integers in $k$. There exists some finite index center-free subgroup $\Gamma \subset \mathbf{G} (\mathcal{O}_S)$  and some positive constants $\epsilon$ and $C$ (depending only on $\Gamma$ and some fixed word metric on it) with the following property: 

Let $g \in \Gamma - \{1\}$ and let $H$ be a congruence subgroup of index $N$ in $\Gamma$. Then $g$ fixes at most $e^{C  l(g)} N^{1-\epsilon}$
points in the action of $\Gamma$ on the right cosets $H \backslash \Gamma$ by multiplication. Here $l(g)$ is the length of $g$ with respect to the fixed word metric of $\Gamma$.
\end{thm}

 The proof of  this theorem that appeared in our original 2012 arXiv preprint had a mistake in Proposition \ref{pp} below,  a correct proof of which has since been given by Finis--Lapid \cite{FinLa},  who also  obtain explicit bounds in  Theorem \ref {ag}. Here we give a self-contained proof based on the theory of $p$-adic analytic groups, which avoids the algebraic geometry arguments in \cite{FinLa}. We only need the basic Lemma \ref{polycong} on the solutions of polynomial congruences and well-known results on the fixity of permutation actions of simple groups of Lie type. A careful examination of all the steps of the proof can lead to an explicit value of $\epsilon$, at least for Chevalley groups, on the order of magnitude $(\dim \mathbf G)^{- [k:\mathbb Q] \dim \mathbf G}$, which seems however very far from optimal.

We postpone the proof of Theorem \ref{ag} and first show how it implies Theorem~\ref{nik}.

\subsection{Proof of Theorem \ref{nik}}
According to \S \ref{red} we may assume that $\Gamma_0$ is the finite index subgroup of $\mathbf{G} (\mathcal{O}_S)$ given by Theorem \ref{ag}.
Let $\Gamma \subset \Gamma_0$ be any congruence subgroup.

Let $\Omega\subset X$ be a compact fundamental domain for the action of $\Gamma_0$ on $X$ and let $p: M =\Gamma \backslash X \rightarrow M_0=\Gamma_0 \backslash X$ be the covering map. We will identify the elements of $M$ (resp. $M_0$) with the orbits of $\Gamma$ (resp. $\Gamma_0$) in $X$.

Suppose that $y \in M$ has $\mathrm{InjRad}_M(y)<R$. Let $x$ be a lift of $y$ in $X$, i.e. $y=\Gamma x \in \Gamma \backslash X$. We have that $d(x, \gamma x) <2R$ for some $\gamma \in \Gamma$.

Now let $g$ be the unique element of $\Gamma_0$ such that $g^{-1}x=x_0 \in \Omega$. We have:
$$d(x,\gamma x)=d(x_0,g^{-1}\gamma x)=d(x_0, \gamma^{g} x_0)<2R$$
where $\gamma^g=g^{-1}\gamma g$. Since $\gamma^g$ moves the point $x_0$ of $\Omega$ to a point of distance at most $2R$ from it and since $\Omega$ is compact it follows that $l(\gamma^g)<C'R$ for some constant $C'$ depending only on the choice of $\Omega$ and a generating set (fixed by the choice of the word metric in Theorem \ref{ag}) of $\Gamma_0$.

Now, given the element $x_0 \in \Omega$ and a nontrivial $\gamma_0 \in \Gamma_0$ with $l(\gamma_0 )<C'R$ suppose that for some $x=gx_0 \in X$ ($g \in \Gamma_0$) there exists
$\gamma \in \Gamma$ with $d(x,\gamma x)<2R$ and $\gamma^g = \gamma_0$. Then
$g^{-1}\gamma g= \gamma_0$ so that $\Gamma g=\Gamma g \gamma_0$. The number of $\Gamma$-equivalence classes of points $x=gx_0$ in  $\Gamma x_0$ as above giving rise to the same
$\gamma_0$ is therefore equal to the cardinal of the set $\mathrm{fix}(\gamma_0, \Gamma \backslash \Gamma_0)$ of fixed points of $\gamma_0$ acting on $\Gamma \backslash \Gamma_0$.

Therefore
\[ \mathrm{Vol}(M_{<R}) \leq \mathrm{Vol} (\Omega) \sum_{0<l(\gamma_0) <C' R } |\mathrm{fix}(\gamma_0, \Gamma \backslash \Gamma_0)|. \]
In turn by Theorem \ref{ag}
$$|\mathrm{fix}(\gamma_0, \Gamma \backslash \Gamma_0)|\leq e^{Cl(\gamma_0)} [\Gamma_0:\Gamma]^{1- \epsilon}$$
and there are at most $e^{C''R}$ elements $\gamma_0$ with $0<l(\gamma_0)<C' R$ which combine to give the desired bound for large enough $c$.\qed

\subsection{} \label{mu} The proof of Theorem \ref{ag} takes up the rest of this section. We can consider  $\mathbf{G}(k)$ as the rational points of a restriction of scalars of an absolutely simple group defined over a larger field, moreover the respective groups of integral points and their congruence topologies are compatible. So by enlarging the field $k$ if necessary we may assume from the start that $\mathbf{G}$ is absolutely simple.

Take a prime ideal $\mathfrak P$ of $\mathcal O$ and let $p$ be the rational prime such that $\mathfrak P |p$. Let $r_\mathfrak P$ be the ramification index of $\mathfrak P$ and let $w_\mathfrak P$ be its residue degree, i.e. $|\mathcal O:\mathfrak P|=p^{w_\mathfrak P}$. We have $p=\prod_{\mathfrak P|p} \mathfrak P^{r_{\mathfrak P}}$ and $[k: \mathbb Q]=\sum_{\mathfrak P|p} r_{\mathfrak P}w_{\mathfrak P}$. 

From now on we will denote by $\mathfrak P$ a prime ideal of $\mathcal O_S$ dividing a rational prime $p$.
We will denote by $|-|_{\mathfrak P}$ the $\mathfrak P$-adic valuation on $k$ defined by $|x|_{\mathfrak P}=p^{-w_{\mathfrak P} n}$ for $x \in \mathfrak P^n \backslash \mathfrak P^{n+1}$. Denote by $k_{\mathfrak P}$ and $\mathcal O_{\mathfrak P}$ the completions of $k$ and $\mathcal O_S$ with respect to this valuation. We have $[k_{\mathfrak P}: \mathbb Q_p]=r_{\mathfrak P}w_{\mathfrak P}$. Let $G_{\mathfrak P} =\mathbf{G} (\mathcal O_{\mathfrak{P}})$ be the congruence completion of $\mathbf{G} (\mathcal{O}_S )$ with respect to the $\mathfrak P$-adic topology. For $m \geq 1$ let $G_{\mathfrak P} (m)$ be the principal congruence subgroup mod $\mathfrak P ^{r_{\mathfrak P}m}$, i.e. the matrices in $G_{\mathfrak P}$ which are congruent to the identity mod $\mathfrak P ^{r_{\mathfrak P}m}$. (The presence of $r_{\mathfrak P}$ in the exponent is to ensure that $(G_{\mathfrak P} (i))_{i=1}^ \infty$ is the Frattini series of the $p$-adic analytic group $G_{\mathfrak P}(1)$.) The dimension of $G_{\mathfrak P}$ as an analytic group over $\mathbb Q_p$ is $r_{\mathfrak P}w_{\mathfrak P} \dim \mathbf G= [k_{\mathfrak P}: \mathbb Q_p] \dim \mathbf G$. Note that all but finitely many of the prime ideals $\mathfrak P$ are unramified, i.e. $r_{\mathfrak P}=1$. For almost all unramified prime ideals $\mathfrak P$ the quotient $S_{\mathfrak P}:=G_{\mathfrak P}/G_{\mathfrak P} (1)$ is the reduction $\mathbf G (\mathcal O_S / \mathfrak P)$ of $\mathbf G$ mod $\mathfrak P$. Since $\mathbf G$ is absolutely simple and simply connected it follows that $S_{\mathfrak P}$ is generated by its unipotent elements and is therefore a finite quasi-simple group $S_{\mathfrak P}$ of Lie type over the field $\mathcal O_S / \mathfrak P$, see Proposition 7.5 of \cite{Plat}. (A perfect group is called quasi-simple if it is simple modulo its center).  Moreover the Frattini subgroup $\Phi(G_{\mathfrak P})$ contains $G_{\mathfrak P}(1)$,  see \cite{sg} Lemma 16.4.5. Let us call these prime ideals \emph{good} and all other prime ideals \emph{bad}. Let $Z$ be the finite center of $\mathbf G$.  

For a rational prime $p$ define $G_p:= \prod_{\mathfrak P |p, \mathfrak P \not \in S} G_{\mathfrak P}$ and for $m \in \mathbb N$ let $G_p(m):= \prod_{\mathfrak P |p, \mathfrak P \not \in S} G_{\mathfrak P}(m)$.
That is $G_p(m)$ consists of the elements of $G_p$ congruent to 1 mod $p^m$. The group $G_p$ is a semisimple $p$-adic analytic group of dimension at most $D:=[k:\mathbb Q] \dim \mathbb G$. (The dimension of $G_p$ is exactly $D$ when the set $S$ avoids any ideal divisors of $p$.) The level of an open subgroup $H \leq G_p$ is defined to be the integer $n$, such that $H$ contains $G_p(n)$ but not $G_p(n-1)$.

Let $P=\{ \mathfrak{P}_1 , \ldots , \mathfrak{P}_r\}$ be a finite set of prime ideals of $\mathcal O_S$ including all the bad primes such that  the principal congruence subgroup
$$\Gamma = \left\{ g \in \mathbf{G} (\mathcal O_S ) \; : \; g \equiv \mathrm{id} \ \mathrm{mod} \ \mathfrak{P}_1 \cdots \mathfrak{P}_r \right\}$$ intersects $Z$ trivially.

We will show that $\Gamma$
satisfies the conclusion of Theorem \ref{ag}. From now on we fix an element $g \in \Gamma \backslash \{1 \}$.

\subsection{}
The congruence subgroups of $\mathbf G(\mathcal O_S)$ correspond to open subgroups of its congruence completion
\[ \widetilde{G} = 
\prod_{p \textrm{ prime}} G_p = \prod_{\mathfrak P \not \in S} G_{\mathfrak P}.\]

The strategy is to reduce the fixity estimate of Theorem \ref{ag} to an analogous problem inside each local $p$-adic factor $G_p$. This reduction follows easily from the detailed knowledge of the subgroups and representations of the simple factors  $S_{\mathfrak P}$ for different prime ideals, and in particular the elementary fact that $S_{\mathfrak P}$ has no proper subgroups of index less than $p$. We then solve the local problem (Proposition \ref{pp}) using the natural coordinate system of uniform subgroups of $G_p$. To formulate the local estimate we need some further notation.

Let $L$ be the Lie algebra of $\mathbf G$, which is a simple Lie algebra defined over $k$. Let $L_{\mathfrak P}$ be the $\mathcal O_{ \mathfrak P}$-Lie ring which corresponds to the uniform pro-$p$ group $G_{\mathfrak P}(1)$, so $L_{\mathfrak P}$ is an $\mathcal O_{\mathfrak P}$-lattice of the $k_{\mathfrak P}$-Lie algebra $L \otimes_k k_{\mathfrak P}$. There is a free $\mathcal O_S$-lattice $L_0$ of $L$ such that for almost all prime ideals $\mathfrak P$ we have $L_{\mathfrak P}= L_0 \otimes_{\mathcal O_S} \mathfrak P O_{\mathfrak P}$ (The two lattices are open in  $L \otimes_k k_{\mathfrak P}$ and so commensurable for all $\mathfrak P$.) Let $\mathrm{Ad}_{\mathfrak P}: G_{\mathfrak P} \rightarrow \mathrm{End}(L_{\mathfrak P})$ be the adjoint representation on $L_{\mathfrak P}$. Define $n_{\mathfrak P}(g)$ to be the largest integer $m$ such that
$\mathrm{Ad}_{\mathfrak P}(g) \equiv 1$ mod $\mathfrak P ^m$. 

Note that since  $g \not \in Z$ by our choice of $\Gamma$, the integers $n_{\mathfrak P}(g)$ exist.

Define $n_p(g)= \max_{\mathfrak P | p} n_{\mathfrak P}(g)$ (where the maximum is over all prime ideals $\mathfrak P$ of $\mathcal O_S$ dividing $p$). For completeness we set $n_p(g)=0$ for those primes $p$ which are units in $\mathcal O_S$.
For example when $\mathbf G(\mathcal O_S)=\mathrm{SL}_n(\Z)$ then $L_p=\mathfrak{sl}_n(\Z_p)$ in which case $n_p(g)$ is the largest integer $m$ such that $g$ is congruent to a scalar matrix modulo $p^m$.

Since $g$ is not in the centre of $G$ we have $\mathrm{Ad}(g)-1 \not = 0$ as a linear endomorphism of $L$. Choose any nonzero matrix coefficient $\beta \in k$ of the matrix of $\mathrm{Ad}(g)-1$ with respect to a fixed $\mathcal O_S$-basis of $L_0$. For any prime $p$ which is not a unit of $\mathcal O_S$, by the definition of $n_p(g)$ there is a prime ideal $\mathfrak P$ of $\mathcal O$ outside $S$ which divides $p$ and such that $|\beta|_{\mathfrak P} \leq |\mathcal O: \mathfrak P|^{n_p(g)} \leq p^{-n_p(g)}$. (Here, we may need to change $\beta$ by a multiplicative constant to take account for the finitely many prime ideals such that $L_{\mathfrak P} \not = L_0 \otimes_{\mathcal O_S} \mathfrak P O_{\mathfrak P}$.)

 Hence $\prod_{\mathfrak P \not \in S} |\beta|_{\mathfrak P } \leq \prod_p p^{-n_p(g)}$. On the other hand, for any valuation $v$  the coefficient $|\beta| _v$ is bounded above exponentially by the word length $l(g)$. Putting it all together we conclude that   $\prod_{v \in S} |\beta|_v \leq e^{C_1 l(g)}$ for some constant $C_1$ depending only on $\Gamma$ (and the  basis of $L_0$ and the word metric on $\Gamma$). Note that \[ \prod_v |\beta|_v=1 \]
 where the product runs over all valuations of $\mathcal O$.
 Therefore all but finitely many of the $n_p(g)$'s are zero and (compare with Lemma \ref{Lprinccong})
\begin{equation} \label{l}
\prod_p p^{n_p(g)} \leq e^{C_1 l(g)}.
\end{equation}

Given a congruence subgroup $\Gamma '$ of $\Gamma$ we want to compute the {\it fixity ratio}
$$\alpha(g,\Gamma' \backslash \Gamma ) = \frac{|\mathrm{fix} (g , \Gamma' \backslash \Gamma)|}{[\Gamma : \Gamma ']}$$
of $g$ acting the right cosets of $\Gamma'$ in $\Gamma$ by multiplication. 

The congruence completion of $\mathbf G(\mathcal O_S)$ is 
$\tilde G:=\prod_p G_p$. Let $\bar \Gamma$ be the closure of $\Gamma$ in $\tilde G$ and let $H$ be the closure of $\Gamma'$.
We have now reduced our claim to showing that there exists
some positive constants $C$ and $\delta$ depending only on $\Gamma$ such that
\begin{equation} \label{red1}
\alpha (g,  H \backslash \bar \Gamma) \leq  e^{C l(g)} [\bar \Gamma : H ]^{-\delta}.
\end{equation}

\subsection{Reduction to the local case} We have $\bar \Gamma= \prod_p \Gamma_p$ is a product of its projections $\Gamma_p$ onto $G_p$. Moreover $\Gamma_p=G_p$ for almost all $p$ and always $\Gamma_p \geq G_p(1)$. Let $H_{p}$ be the projection of $H$ onto the factor $\Gamma_p$ of $\bar \Gamma$. We have that $H \leq \prod_p H_{p}$ and so
\begin{equation} \label{in}
\alpha(g,H \backslash \tilde \Gamma) \leq \alpha(g, \prod_p  H_{p} \backslash \tilde \Gamma ) = \prod_p \alpha(g,  H_p \backslash \Gamma_p).
\end{equation}

Since $H$ contains $G_p$ for almost all primes $p$ the product above is equal to a finite product.

Let $x_p= [\Gamma_p: H_{p}]$. Clearly $N=[\Gamma: \Gamma']=[\bar \Gamma:H] \geq \prod_p x_p$, where again we have that $x_p=1$ for almost all primes $p$.

\begin{lem} \label{b}
There is a positive constant $\delta_1$, such that
$$N^{\delta_1} \leq \prod_p x_p.$$
\end{lem}
\begin{proof} Let $\Delta= \prod_p G_p(1) \leq \tilde \Gamma$. Each $G_p(1)$ is a pro-$p$ group and hence $H \cap \Delta$ is the direct product of its projections onto the Sylow pro-$p$ factors $G_p(1)$ of $\Delta$. Since
$$N=[\tilde \Gamma:H\Delta][\Delta: H \cap \Delta],$$
it is enough to prove the special case when $H=H\Delta$ i.e. $H \geq \Delta$. Then $H/\Delta \leq \Gamma/\Delta= \prod_{\textrm{good }\mathfrak P} S_{\mathfrak P}$ where by the choice of $\Gamma$ the product runs over the good prime ideals $\mathfrak P$ of $\mathcal O_S$. Thus each $S_{\mathfrak P}$ is a finite quasisimple group of Lie type of bounded rank of characteristic $p$. Let $S_p=\prod_{\textrm{good }\mathfrak P|p} S_{\mathfrak P}$ and let us denote by $Lie(p)$ the set of simple groups of Lie type over fields of characteristic $p$. 
We can replace $H$ with its image in a finite product $\prod_{i=1}^s S_{p_i}$ such that $H$ does not contain any of the factors $S_{p_i}$. Then $H_{p_i}$ becomes the projection of $H$ in the factor $S_{p_i}$.
By the Larsen-Pink theorem \cite{Larsen}, there is a function $f: \mathbb N \rightarrow \mathbb N$ such that a subgroup of $GL(n,\mathbb F_{p^m})$ contains a subgroup of index at most $f(n)$ whose non-abelian composition factors are from $Lie(p)$. In particular if $q>f:=f(\dim \mathbb G)$ is a large prime then the direct factors of $S_q$ cannot occur as composition factors of any subgroup of any of the factors of $S_p$ for $p \not =q$.
It follows that if $q>f$ is a prime and $H_q=S_q$, i.e. $x_q=1$ then actually $H \geq S_q$.  

 By enlarging the set $P$ defining $\Gamma$ to include all prime ideals dividing $q$ for primes $q \leq f$ we can ensure that $H_{p_i}$ is a proper subgroup of $S_{p_i}$. The group $S_{p_i}$ is a product of finite quasisimple groups from $Lie(p_i)$ and in particular it is generated by elements of order $p_i$. Hence a proper subgroup of $S_{p_i}$ must have index at least $p_i$ and $x_{p_i}=[S_{p_i}:H_{p_1}] \geq p_i$. On the other hand $|S_{p_i}| < p_i^{[k: \mathbb Q]\dim \mathbf G }$ and $N \leq \prod_{i=1}^s|S_{p_i}|$ So Lemma \ref{b} follows with $\delta_1=([k:\mathbb Q] \dim \mathbf G)^{-1}=D^{-1}$.
\end{proof}

\subsection{The local case.} Lemma \ref{b} reduces the proof of \eqref{red1} to its local counterpart \eqref{eqGH} below at each prime $p$.
Here we explain how to conclude the proof of Theorem \ref{ag} assuming the following:
\medskip

\begin{prop}\label{pr} There exist constants $C_2$ and $\delta_2>0$ depending only on $\Gamma$ such that for all primes $p$
\begin{equation} \label{loc}  \alpha (g,  H_{p} \backslash \tilde \Gamma) \leq p^{C_2n_{p}(g)}x_p^{-\delta_2}.
\end{equation}
\end{prop}

Multiplying the inequalities (\ref{loc})  and using (\ref{in}), (\ref{l}) and Lemma \ref{b} we obtain

\[ \alpha(g, H \backslash  \tilde \Gamma) \leq \prod_p p^{C_2n_{p}(g)} (\prod_p x_p)^{-\delta_2} \leq  e^{C_1 C_2 l(g)} N^{-\delta_1\delta_2}, \]  Theorem \ref{ag} follows. \qed
\medskip

By considering the image of $H_{p}$ in the local factor $G_{p}$,
Proposition \ref{pr} is easily deduced from the following local bound.

\begin{prop} \label{pp} There are constants $a,b,c>0$ which depend on $\mathbf G$ but not on the prime $p$ such that if $H$ is an open subgroup of $G_p$ of level $n$ and $g \in G_p-Z(G_p)$, then the fixity proportion $\alpha(g, H \backslash G_p)$ of $g$ on $H \backslash G_p$ is at most $p^{-cn}$ provided $n> \max \{a, bn_p(g)\}$.
\end{prop}

Let us show how Proposition \ref{pp} implies the existence of $C_2$ and $\delta_2$ such that 
\begin{equation} \label{eqGH} \alpha (g, H \backslash G_p) \leq p^{C_2n_p(g)} [G_p:H]^{-\delta_2}\end{equation} provided $g \in \Gamma \backslash \{1\}$.
This inequality easily gives (\ref{loc}) by increasing $C_2$ to take into account the index  $[G(\mathcal O_S): \Gamma]$. 

Since $H$ contains $G_p(n)$ from the definition of $n$, we have $[G_p:H] \leq [G_p:G_p(n)] \leq p^{n [k:\mathbb Q] \dim \mathbf G}=p^{n D}$, 
(where $D=[k:\mathbb Q] \dim \mathbf G$). Therefore
\begin{equation} \label{Gp} \alpha (g, H \backslash G_p) \leq [G_p:H]^{-c/D}. \end{equation}
provided $n> \max \{a, bn_p(g)\}$.

It remains to prove (\ref{eqGH}) when $n \leq a$ or $n \leq bn_p(g)$.
First we consider the case that $n_{p}(g)=0$. Then by the choice of $\Gamma$ we have that $p$ is a product of good prime ideals.

Since $n_p(g)=0$ it follows that $g$ is not in the centre of any of the factors of the semi-quasisimple group $S_p=G_p/G_p(1)=\prod_{\mathfrak P|p} S_{\mathfrak P}$. We also have $HG_p(1) <G_p$ because $G_p(1)$ is in the Frattini subgroup of $G_p$. Choose a maximal subgroup $M$ of $S_p$ containing the image of  $H$. There are two possibilities: 

1. $M$ is the preimage of a maximal subgroup $M_0 \leq S_{\mathfrak P}$ of one of the factors of $S_p$. The main result of \cite{shalev} says that with finitely many exception any non-trivial element of a simple group $S$ of Lie type over a field of size $q$ has fixity at most $4/3q$ in any primitive action of $S$ unless $S=\mathrm{PSL}(2,q)$ when it is at most $2/(q+1)$ for $q \geq 5$. The proof relies on a case by case analysis of the Aschbacher classification of the  maximal subgroups of $S$. It follows that $\alpha (g, M \backslash S_p) \leq \alpha (g, M_0 \backslash S_{\mathfrak P}) \leq 2/q$.

2. $M$ is the preimage of a diagonal subgroup $T$ in a product $S \times S$ of two isomorphic factors of $S_p$. Now a direct computation shows that for $(y_1,y_2) \in S \times S$ the fixity ratio
$\alpha((y_1,y_2), T\backslash  S \times S)$ is nonzero only if $y_1$ and $y_2$ are conjugate and if so then is at most $|C_S(y_1)|/|S|$. Since we are assuming that $y_1$ is non-central in $S$ and $S$ is generated by elements of order $p$ we have as before that $|C_S(y_1)|/|S| \leq 1/p$.

All together, we can deduce that

\[  \alpha (g, H \backslash G_p) \leq  \alpha (g, M \backslash S_p) \leq p^{-\epsilon_2}\] for example with $\epsilon_2=1/2$.

We set $\delta_2= \min \{ \frac{c}{D}, \frac{\epsilon_2}{a D}  \}$
and note that when $n \leq a$ then 
$$
 p^{-\epsilon_2} \leq p^{- a \delta_2 D} \leq [G_p:H]^{-\delta_2}.
$$

To deal with the case $n_p(g) \geq 1$  we simply set
$C_2= (a+b) \delta_2 D$, which ensures that $p^{C_2n_p(g)}[G_p:H]^{-\delta_2} \geq 1$ when $n \leq \max \{a, bn_p(g) \}$.

\subsection{Proof of Proposition \ref{pp}}

From now we fix a prime number $p$ and denote $n_p(g)$ by $n_p$.

The congruence completion $G_p$ is a $p$-adic analytic group of dimension $d=d(p)=\dim \mathbb G( \sum_{\mathfrak P|p, \mathfrak P \not \in S} [k_{\mathfrak P}: \mathbb Q_p])$ over $\mathbb Q_p$ and we shall refer to \cite{SSM} for standard results about these.
Note that $d \leq D= [k:\mathbb Q] \dim \mathbf G$. Recall that $G_p(i)$ is the kernel of $G_p \rightarrow G(\mathcal O_S / p^i)$. There exists a constant $c_0 \geq 1$ such that the congruence subgroup $G_p(c_0)$ is a uniform pro-$p$ group for all primes $p$. In fact \cite{sg}, Corollary 16.4.6 gives that $c_0=1$ for all except finitely many primes $p$. We set $c_1=c_0$ unless $p=2$, when we set $c_1=c_0+1$. Define $U=G_p(c_1)$. So $U$ is a uniform group of dimension $d$ and in addition when $p=2$, $U$ is the Frattini subgroup of a uniform group (we need this in order to apply Proposition 8.21 from \cite{SSM} at a later point). The series $(G_p(i))_{i \geq c_1}$ coincides with the Frattini series of $U$ defined by $U_0=U$ and $U_{i+1}=\Phi (U_i)$, i.e. $U_i=G_p(i+c_1)=\prod_{\mathfrak P|p}G_{\mathfrak P}(i+c_1)$ for all $i \geq 0$.
For a prime ideal $\mathfrak P$ of $\mathcal O_S$ dividing $p$ we define $U_{\mathfrak P,i}= G_{\mathfrak P}(i+c_1)$, so that $U_i=\prod_{\mathfrak P|p} U_{\mathfrak P,i}$.

Let $n$ be the level of $H$. From now on we denote $n'=n-c_1$, so $G_p(n)=U_{n'} \leq H$ but $U_{n'-1} \not \leq H$.

First observe that at the cost of increasing $a$ and $b$ and decreasing $c$, it is sufficient to prove Proposition \ref{pp} with $n'$ in place of $n$. \medskip

Next we introduce coordinates of the second kind for $U$, which are more suitable for parametrizing its open subgroups, in the spirit of \cite{duS}.
The properties of a uniform pro-$p$ group $U$ we state below can be found in \cite{SSM} Chapters 4 and 8.

 Suppose elements $e_1, \ldots, e_d$ in $U$ are given such that their images form a basis of the vector space $U/U_1=U/\Phi(U)$. 
Then $e_1^{p^i}, \ldots, e_d^{p^i}$ is a basis for $U_{i}/U_{i+1}$ for each $i \in \mathbb N$.

Moreover every element of $U$ can be expressed uniquely in the form $\prod_{i=1}^d c_i$ where $c_i \in \overline{\langle e_i \rangle} \simeq \mathbb Z_p$.
The following properties hold: \medskip

1. The map $\mu : \mathbb (\mathbb Z_p)^d \rightarrow U$ defined by $\mu(y_1, \ldots, y_d) = e_1^{y_1} \cdots e_d^{y_d}$ is a homeomorphism. 
We say that the elements $e_1, \ldots, e_d$ provide a \emph{basis for coordinates $\mu$ of the second kind for $U$.}

2. $U_i$ is the image of $(p^{i}\mathbb Z_p)^d$ under $\mu$.

3. If we identify $U$ with $(\mathbb Z_p)^d$ via $\mu$, the group operations in $U$ (including exponentiation $\exp_a(z)=a^z, \  z \in \mathbb Z_p$ for fixed $a \in U$) are given by a converging power series in $\mathbb Z_p[[x_1, \ldots, x_d]]$  with $(x_1, \ldots, x_d) \in (\mathbb Z_p)^d$. This follows from \cite{SSM} Proposition 8.21: the inequality (5) there, together with the formula for the power series $g_i$ on page 192 implies that the coefficients of $g_i$ are $p$-adic integers.  

\medskip

Suppose $H$ is an open subgroup of $G_p$ of level $n$. Put $H'=H \cap U$. Then there exists a basis 
$e_1, \ldots, e_d$ for coordinates $\mu$ of the second kind for $U$ with the following additional properties: \medskip

4. For some integers $0 \leq s_1 \leq s_2 \cdots \leq s_d$ we have $\mu(x_1, \ldots, x_d) \in H'$ if and only if $x_i \in p^{s_i}\mathbb Z_p$. Moreover $H' \geq U_{s_d}$, $H' \not  \geq U_{s_d-1}$ and so $n'=s_d$. \medskip

5. For each $m \in \mathbb N$ the subgroup $H'U_m$ has the parametrization
$H'U_m=\mu(p^{k_1}\mathbb Z_p , \ldots ,p^{k_d} \mathbb Z_p)$ where $k_i= \min \{s_i, m\}$. \medskip

In the language of \cite{duS} the elements $e_1^{s_1}, \ldots, e_d^{s_d}$ are a good basis for $H'$ in $U$. 

Let us indicate how to find a good basis for an open subgroup $H'$ such that $U \geq H' \geq U_{l}$. We find subsets $B_1, \ldots B_{l}$ of $U$ inductively as follows:

$B_0$ is any subset of $U$ which is a basis for the vector subspace $H'U_1/U_1$ in $U/U_1$.
Having found $B_0, \ldots, B_{i-1}$ we choose $B_{i}$ such that the set

\[ \bigcup_{r=0}^{i} \{ a^{p^{i-r}} \ | \ a \in B_r\} \] is a basis for the subspace $(H'U_{i+1} \cap U_i) /U_{i+1}$ in $U_i/U_{i+1}$.

Then $\{e_1, \ldots e_d \}= B_0 \cup \cdots \cup B_{l}$. In addition each $|B_i|$ is the number of the integers from $s_1, \ldots, s_d$ which are equal to $i$.
\medskip

We will choose an integer $m \leq n'$ with $n'/m$ bounded, and for any element $w \in G_p$ we will find an upper bound for
the proportion

 \[ \alpha_m(g,H,w)=\frac{|\{ U_{n'}x \ | \  x \in U_m w,\  xgx^{-1} \in H\}|}{[U_{m}:U_{n'}]}. \]

The reason for focusing on fixed points of $g$ on the cosets of each $U_{n'}\backslash U_mw$ separately as opposed to the
whole space $U_{n'}\backslash G_p$ is that we will be able to express $\alpha_m(g,H,w)$ as the probability of solving polynomial congruences in $\mathbb Z_p$ of \emph{bounded degree} in the coordinates $x_1, \ldots x_d$.

If we prove that there are some constants $a,b,c$ depending only on $G$, such that $\alpha_m(g,H,w) < p^{-n'c}$ for
any $w \in G_p$ and $n'> \max \{ a,bn_p \}$, it will follow that $\alpha (g, H \backslash G_p) \leq p^{-n'c}$ and we will be done.

We may assume that the numerator of $ \alpha_m(g,H,w)$ is not zero (otherwise the proportion is zero and we are done). So we may assume that $wgw^{-1} \in H$ and by replacing $g$ with $wgw^{-1}$ we are reduced to proving that for some constant $c$ (to be specified later)
\[  \beta_m(g,H)= \frac{|\{ x \in U_{n'} \backslash U_m \ | \  [g,x] \in H\}|}{[U_{m}:U_{n'}]} \leq p^{-n'c} \]
for all sufficiently large integers $n'$.

The main idea of the proof is to reduce the membership condition $[g,x] \in H$ to a power series congruence defined by a good basis for $H$ and then the choice of $m$ reduces this to a polynomial congruence of bounded degree whose solutions we then estimate with Lemma \ref{polycong} below.

If $\mathbf x=\mu(x_1, \ldots x_d)= \prod_{i=1}^d e_i^{x_i}$ then $\mu^{-1}([g,\mathbf x])= (f_1, \ldots, f_d)$ where $f_i \in \mathbb Z_p[[x_1, \ldots, x_d]]$ are converging power series in $x_1, \ldots, x_d \in \mathbb Z_p$. Indeed if $e_i^{g}=t_i$ then $[g,\mathbf x]=\mathbf x^{-g} \mathbf x=
(\prod_{i=1}^d t_i^{x_i})^{-1} \prod_{i=1}^d e_i^{x_i}$ and we are composing the power series defining multiplication and exponentiation in the coordinate system $\mu$.  

Recall the definition of $n_p=n_p(g)$ as the largest integer $l$ such that $Ad(g)_{\mathfrak P} \equiv 1$ mod $\mathfrak P^l$ for some prime ideal $\mathfrak P$ of $\mathcal O_S$ dividing $p$.

The following lemma is well known in the case $m=1$ and is a consequence of the simplicity of the Lie algebras of the $p$-adic analytic groups $G_{\mathfrak P}$ and the fact that they are defined uniformly over $k$.

\begin{lem} \label{lie} There exists a constant $A$ independent of $p$, such that for any $m \geq 1$ $\langle g^{U_m} \rangle$ contains $U_{Am+n_p}$.
\end{lem}

\begin{proof}
Since $U_m$ is equal to the direct product $\oplus_{\mathfrak P|p} U_{\mathfrak P,m}$  it is sufficient to prove the lemma with $U_{\mathfrak P,m}$ in place of $U_m$
Note that 
\[ \langle g^{U_{\mathfrak P, m}} \rangle \supseteq [g,U_{\mathfrak P, m},U_{\mathfrak P, m}, \ldots, U_{\mathfrak P, m}] \]
so it is sufficient to show that $ [g,U_{\mathfrak P, m},U_{\mathfrak P, m}, \ldots, U_{\mathfrak P, m}]$ generates a group containing $U_{\mathfrak P, Am+n_p}$ for some constant $A$.

The logarithm map establishes a bijection between the uniform group $U_{\mathfrak P}$ and its $\mathcal O_{\mathfrak P}$-Lie ring $L_{\mathfrak P}$ such that $p^{i}L_{\mathfrak P}=\mathfrak P^{r_{\mathfrak P}i}L_{\mathfrak P}$ corresponds to $U_{\mathfrak P,i}$. Moreover for any $j \in \mathbb N$ the graded Lie algebras
$\oplus_{i \geq j} U_{\mathfrak P,i}/U_{\mathfrak P,i+j}$ and $\oplus_{i \geq j} p^iL_{\mathfrak P}/p^{i+j}L_{\mathfrak P}$ are isomorphic.
  In particular $g$ acts nontrivially by conjugation on $U_{\mathfrak P,m}/U_{\mathfrak P, m+n_p+1}$. Hence $[g,U_{\mathfrak P,m}] \not \subset U_{\mathfrak P,m+n_p+1}$ and we can choose $y \in [g,U_{\mathfrak P,m}]$ with $y \not \in U_{\mathfrak P,m+n_p+1}$. Now for almost all prime ideals $\mathfrak P$ we have that $L_{\mathfrak P}$ is isomorphic to $L_0 \otimes _{\mathcal O_S} \mathfrak P \mathcal O_{\mathfrak P}$ where $L_0$ is a fixed integral lattice of the $k$-Lie algebra of $\mathbf G$. For all $\mathfrak P$ the absolute simplicity of $\mathbf G$ gives that $L_1=L_0 \otimes_{\mathcal O_S} k_{\mathfrak P}=L_{\mathfrak P} \otimes_{\mathcal O_{\mathfrak P}}k_{\mathfrak P}$ is a simple $k_{\mathfrak P}$-Lie algebra of dimension $\dim \mathbf G$. In particular $[L_1,L_1]=L_1$ and the Jacobi identity gives $[T,L_1] \subseteq [[T,L_1],L_1]$ for any subset $T$ of $L_1$. Here and below, for subsets $X,Y$ of a given Lie algebra we denote by $[X,Y]$ the vector space spanned by all Lie brackets $[x,y]$ with $x \in X$ and $y \in Y$, and for an integer $n \in \mathbb N$ we denote $[X,_n Y]:= [ \cdots [X,Y],Y], \ldots ,Y]$ ($n$ times). It follows that for any $w \in L_1 -\{0\}$ the ascending sequence of subspaces $[w,L_1] \subset [w,L_1,L_1] \subset \cdots $ stabilizes at $L_1$ in at most $\dim L_1=\dim \mathbf G$ steps and hence $[w,_{\dim \mathbf G} L_1]=L_1$.

We claim that there is a constant $A_\mathfrak P$ such that for any $w \in L_{\mathfrak P}$ outside $pL_{\mathfrak P}$, 
\begin{equation} \label{repeat} p^{A_\mathfrak P}L_{\mathfrak P} \subset [w,_{\dim \mathbf G} L_{\mathfrak P}].
\end{equation}

Indeed if this is not true there is a sequence $(w_k)_{k=1}^\infty$ with $w_k \in  L_{\mathfrak P}-pL_{\mathfrak P}$ such that $p^k L_{\mathfrak P} \not \subset [w_k,_{\dim \mathbf G} L_{\mathfrak P}]$ for any $k \in \mathbb N$. By passing to a subsequence we may assume that $w_k$ converge to some $w_0$ in the $\mathfrak P$-adic topology of $L_{\mathfrak P}$. Moreover $w_0 \not \in pL_{\mathfrak P}$ and in particular $w_0 \not =0$. Now $[w_0,_{\dim \mathbf G }L_{1}]=L_1$ and so $p^m L_{\mathfrak P} \subset [w_0,_{\dim \mathbf G }L_{\mathfrak P}]$ for some $m \in \mathbb N$. Since $w_k \rightarrow w_0$ we have $w_k \equiv w_0$ mod $p^m L_{\mathfrak P}$ for almost all $k$ but this contradicts the choice of $w_k$ when $k >m$. The claim follows.

\medskip

In fact for almost all prime ideals the reduction $L':=L_0/ \mathfrak PL_0$ of the lattice $L_0$ mod $\mathfrak P$ is a simple Lie algebra over $\mathcal O_S/\mathfrak P$ and so $L'=[w,_{\dim \mathbf G} L']$ for any nonzero $w \in L'$. Since $L_\mathfrak P \simeq L_0 \otimes _{\mathcal O_S} \mathfrak P \mathcal O_{\mathfrak P}$ for almost all $\mathfrak P$ this gives that $A_\mathfrak P$ can be taken to be $\dim \mathbf G$ for those $\mathfrak P$. By further increasing $A_\mathfrak P$ at the remaining finitely many prime ideals we conclude that (\ref{repeat}) holds with a constant $A_1$ in place of $A_{\mathfrak P}$ which does not depend on $p$.

Applying (\ref{repeat}) to the graded Lie algebra \[ \bigoplus_{i>A_1} U_{\mathfrak P,i}/U_{\mathfrak P, i+A_1+1} \simeq \bigoplus_{i \geq A_1} p^iL_{\mathfrak P}/p^{i+A_1+1}L_{\mathfrak P}\] we see that $ \langle [y,_{\dim \mathbf G} U_{\mathfrak P,m}] \rangle $ contains $U_{\mathfrak P,A_1+m \dim \mathbf G+m+n_p}$. The Lemma follows with $A=\dim \mathbf G+A_1+1$.
\end{proof}
\medskip

To illustrate the main steps of the proof we first consider the special case when the integers $s_i$ associated to the good basis describing $H'=H \cap U$ are $s_1=\cdots = s_{d-1}=0$ and $s_d=n'$.

For $n'>6A+6n_p$ choose an integer $m$ such that $\frac{n'}{3A} <m \leq \frac{n'-2n_p}{2A}$. \medskip

The membership of $x = \mu(x_1, \ldots, x_d) \in U_m$ is described by $x_i \in p^{m} \mathbb Z_p$, and so we can write $x_i=p^{m}y_i$ for some $y_i \in \mathbb Z_p$. We now claim that $f_d(p^{m}\mathbb Z_p, \ldots, p^{m} \mathbb Z_p)  \not \subset p^{Am+n_p+1}\mathbb Z_p$ and in particular not all coefficients of $f_d(p^m y_1, \ldots, p^m y_d)$ are not divisible by  $p^{Am+n_p+1}$.

Suppose for the sake of contradiction that $f_d(p^{m}\mathbb Z_p, \ldots, p^{m} \mathbb Z_p) \subset p^{Am+n_p+1}\mathbb Z_p$. Then $[g,U_m] \subset H' U_{Am+n_p+1}$. Since $g \in H$ it follows that $\langle g^{U_m} \rangle \leq HU_{Am+n_p+1}$ and so Lemma \ref{lie} gives $HU_{Am+n_p+1} \geq U_{Am+n_p}$. But since $U_{Am+n_p+1}= \Phi (U_{Am+n_p})$ we obtain $H \geq U_{Am+n_p}$. Since $m<(n'-n_p)/A$, it follows that $H \geq U_{n'-1}$ which contradicts the condition that $H$ has level $n$. The claim follows.

We now estimate the number of elements $\mathbf x=\mu (p^my_1, \ldots p^m y_d) \in U_m/U_{n'}$ such that $[g, \mathbf x ] \in H'$. This is equivalent to the congruence

\[ f_d(p^{m}y_1,  \ldots, p^{m}y_d) \equiv 0 \  \textrm{mod} \  p^{n'} \quad  y_i \in \mathbb Z_p/ p^{n'-m}\mathbb Z_p.\]

On the other hand since $m>n'/3A$ the above congruence is equivalent to a polynomial congruence of the form  \[ p^lF(y_1, \ldots, y_d) \equiv 0 \ \textrm{mod} \ p^{n'} \] where the degree of $F$ is at most $3A$ and the integer $l$ is chosen such that some coefficient of $F$ is coprime to $p$.  

We showed that $f_d(p^{m}\mathbb Z_p, \ldots, p^{m} \mathbb Z_p)  \not \subset p^{Am+n_p+1}\mathbb Z_p$ and therefore $l  \leq Am +n_p \leq n'/2$, where the last inequality following from $m<(n'-2n_p)/2A$.  So $y_1, \ldots, y_d \in \mathbb Z_p$ are solutions to the congruence \[ F(y_1, \ldots, y_d) \equiv 0 \  \mathrm{mod} \ p^{t},\] where $F$ is a polynomial in $d$ variables of degree at most $3A$ and $t$ is an integer with $n' > t \geq n'/2$. By Lemma \ref{polycong}  the number of solutions has proportion
at most $(3A)^d(t+1)^{d-1}p^{-t/3A}$ in $(\mathbb Z_p/p^{t} \mathbb Z_p)^d$. Since $A$ does not depend on $p$ and  we can find a constant $a_1$ such that if 
$n'>a_1$ we get that $ (3A)^d(n'+1)^{d-1}< 2^{n'/12A} \leq p^{n'/12A}$. This gives $\beta_m(g,H) \leq p^{-n'/12A}$ whenever
$n'> \max \{ a_1,12A,12n_p\}$.

This concludes the proof in the special case when $H$ can be described with a good basis with integers $s_1= \ldots s_{d-1}=0$ and $s_d=n'$. 
\medskip

We now prove the general case when the integers $s_i$ associated to a good basis of $H'=H \cap U$ are $0 \leq s_1 \leq \cdots \leq s_d=n'$. Set $s_0=0$ and let $\epsilon= (3A)^{-D} \leq (3A)^{-d}$.

Since $s_d=n'>0$ and $s_0=0$ there exists an integer $i \geq 1$ such that $s_i> 3As_{i-1}$. Let $1 \leq i_0 \leq n$ be the largest such integer.
Since $s_j/s_{j-1}  \leq 3A$ for all $ d \geq j > i_0$ and $s_d=n'$ we have $s_{i_0}/s_j \geq \epsilon $ for all $j \geq i_0$ and in particilar $s_{i_0} \geq n'\epsilon$.
Assuming $s_{i_0}>6n_p+6A$ (which is the case provided $n'>6(n_p+A) \epsilon^{-1}$), choose an integer $m$ such that $ s_{i_0}/3A <m \leq (s_{i_0}-2n_p)/2A$. Consider the power series $f_, \ldots, f_d$ in $\mathbb Z_p[[x_1, \ldots, x_d]]$ defined by $\mu(f_1, \ldots, f_d)=[g, \mathbf x]$, $\mathbf x=\mu(x_1, \ldots, x_d) \in U$. 

The condition $[g,\mathbf x] \in H'$, is equivalent to \[ f_j(x_1, \ldots, x_d) \equiv 0 \ \mathrm{mod}\  p^{s_j}. \  \forall j =1, \ldots, d.\] 

Now take $\mathbf x \in U_m$ i.e. $x_j \in p^{m}\mathbb Z_p$. By setting $x_k=p^{m}y_k$ , $k=1, \ldots,d$ define $p^{l_j}$ to be the largest power of $p$ dividing all the coefficients of the power series $z_j(y_1, \ldots y_d):=f_j(p^{m}y_1, \ldots ,p^{m}y_d)$.

Now we claim that $l_j < Am+n_p+1$ for some $j \geq i_0$. Suppose for the sake of contradiction that $l_j \geq Am+n_p+1$ for all $j \geq i_0$.
This means that $p^{Am+n_p+1}|f_j(x_1, \ldots, x_d)$. for all $j \geq i_0$. At the same time  since $\mathbf x \in U_m$ $[g, \mathbf x] \in U_m$ and so $p^m | f_j$ for all $j$. Since $m>s_{i_0}/3A$ and $s_{i_0-1}< s_{i_0}/3A$ it follows that $m>s_{i_0-1}$ and so $m > s_j$ for all $j  \leq i_0-1$. 
Altogether we have $p^{\min \{s_j, Am+n_p+1\}}|f_j$ for all $j=1, \ldots, d$ and so $[g,U_m] \subseteq H'U_{Am+n_p+1}$. Lemma \ref{lie} now gives  $U_{Am+n_p} \leq H U_{Am+n_p+1}$ and hence $H \geq U_{Am+n_p}$. This is a contradiction since $Am+n_p \leq s_{i_0}/2<n'$. The claim follows.

Therefore $l_j \leq Am+n_p \leq s_{i_0}/2$ for some $j \geq i_0$.

We want to estimate the number of cosets $\mathbf x U_{n'}$ in $U_m/U_{n'}$ with $[g, \mathbf x] \in H'$. Let $\mathbf x= \mu(p^{m}y_1, \ldots, p^{m}y_d) \in U_m$. We will estimate the number of solutions in
$(y_1, \ldots, y_d)  \in (\mathbb Z_p/p^{n'-m}\mathbb Z_p)^d$ to the congruence \[ f_j(p^{m}y_1, \ldots, p^{m}y_d) \equiv \ \mathrm{mod} \ p^{s_j}. \] Since $s_j \geq s_{i_0}$ we must have that $p^{s_{i_0}}|f_j(p^{m}y_1, \ldots, p^{m}y_d)$. Since $m>s_{i_0}/3A$  the last congruence implies a polynomial congruence

\[ p^{l_j}F_j(y_1, \ldots, y_d) \equiv 0 \ \mathrm{mod} \ p^{s_{i_0}} \] where $\deg F_j \leq s_{i_0}/m < 3A $ and the polynomial $F_j$ is not divisible by $p$. We proved $l_j \leq s_{i_0}/2$ and in particular \[  F_j(y_1, \ldots, y_d) \equiv 0 \ \mathrm{ mod }  \ p^{t}, \] where the integer $t$ satisfies $ n'>t \geq s_{i_0}/2$. Recall also that $s_{i_0} \geq n' \epsilon$. By Lemma \ref{polycong} below the proportion of solutions in $(\mathbb Z_p/p^{n'-m} \mathbb Z_p)^d$ to the last congruence is at most \[ (3A)^d(t+1)^{d-1}p^{-t/ \deg F_j} < (3An')^d p^{-n'\epsilon /6A}. \] So if $n'>6(A+n_p) \epsilon^{-1}$ is in addition sufficiently large in terms of $A$ and $D$ so that $(3An')^D< p^{n' \epsilon /12A}$, we see that $\beta_m (g, H)< p^{-n'\epsilon /12A}$. Proposition \ref{pp} follows with $c=\epsilon/12A= (3A)^{-D-1}/4$.

The following is proved in \cite[Lemma A.9]{FinLa}  with a slightly stronger bound, but we include a proof here for completeness.

  \begin{lem} \label{polycong} Let $f \in \mathbb Z_p[x_1, \ldots, x_d]$ be a polynomial of degree $r$, with at least one coefficient which is not divisible by $p$.
  For any $n \in \mathbb N$ the proportion of solutions to $f \equiv 0$ mod $p^n$ in $(\mathbb Z_p/p^n \mathbb Z_p)^k$ is at most $r^d (n+1)^{d-1} p^{-n/r}$.
  \end{lem}
\begin{proof} The case $d=1$ can be found in \cite{stewart}, Corollary 2, which proves a stronger bound involving the discriminant of $f$. In particular the bound $rp^{n/r}$ we require is the inequality (44) there without any condition on the discriminant.
  
  To prove the Lemma in general we argue by induction on $d$ and assume it holds for $d-1$. Write $f= g_0 x_d^m+ g_1x_d^{m-1} + \cdots +g_m$ where $g_i \in \mathbb Z_p[x_1, \ldots, x_{d-1}]$. At least one of the $g_i$, say $g_j$ is not divisible by $p$ and $\deg g_j \leq r$.
  
  For $0 \leq s \leq n$ let $X_s$ be the set of tuples $(x_1, \ldots, x_{d-1}) \in (\mathbb Z_p/p^{n}\mathbb Z_p)^{d-1}$ such that $p^s$ is the greatest power of $p$ dividing all of $g_0, \ldots, g_m$ evaluated at $(x_1, \ldots, x_{d-1})$. By the induction hypothesis applied to $g_j$ we may assume \[ |X_s| \leq r^{d-1}(s+1)^{d-2}p^{n(d-1)-s/r}.\] For a given $(d-1)$-tuple in $X_s$, the number of choices for $x_d \in \mathbb Z_p/p^n \mathbb Z_p$ such that $p^n | f$ is at most \[ mp^{n-\frac{n-s}{m}} \leq rp^{n-\frac{n-s}{r}}\] by case $d=1$ of the Lemma applied to
  $\frac{g_0}{p^s} x_d^m + \cdots + \frac{g_m}{p^s}$ and $n-s$ in place of $n$.
  
  Putting everything together the number of solutions to $p^n|f$ is at most
  
  \[ \sum_{s=0}^n |X_s|rp^{n-\frac{n-s}{r}} \leq r^d \sum_{s=0}^n (s+1)^{d-2} p^{nd-\frac{s}{r} -\frac{n-s}{r}}  \leq  r^d (n+1)^{d-1}p^{nd-\frac{n}{r}}.\]
  
  The last inequality following from the crude estimate $1+2^{d-2}+ \cdots + (n+1)^{d-2}  \leq (n+1)^{d-1}$. The Lemma follows.
  \end{proof}

\section{Spectral approximation for locally convergent sequences of lattices} \label{sec:7}

Let $G$ be a connected center free semi-simple Lie group. We let $\widehat{G}$ be the unitary dual of $G$, i.e.
the set of equivalence classes of irreducible unitary representations of $G$, endowed with the Fell topology, see e.g. \cite[\S 2.2]{BC}. We fix once and for all a Haar measure on $G$. 

Let $\phi \in C_c^{\infty} (G)$. If $\pi \in \widehat{G}$ then
$$\pi (\phi) : =\int_G \phi(g) \pi (g) dg : \mathcal{H}_{\pi} \rightarrow \mathcal{H}_{\pi}$$
is a bounded operator of trace class.  We denote by
$$\widehat{\phi} : \pi \mapsto \mathrm{trace} \ \pi (\phi)$$
the (scalar) Fourier transform on $\widehat{G}$.

\subsection{Topology of $\widehat{G}$}
As a topological space,  $\widehat{G}$ is not separated. It is somewhat easier to work
with the set $\Theta (G)$ of {\it infinitesimal characters} of $G$,
that is the set of characters of the center $Z (\g )$ of the universal enveloping algebra of $G$.

Fix $MAN$ a minimal parabolic subgroup of $G$ and
a corresponding real vector space
$$\mathfrak{h}_0 = i \mathfrak{b}_0 \oplus \mathfrak{a}_0$$
where $\mathfrak{b}_0$ is a Cartan subalgebra of the compact Lie group $K \cap M$.
The space $\mathfrak{h}_0$ can be identified with a split Cartan subalgebra of a split inner form of $G$.
In particular the complex Weyl group $W$ of $G$ acts on $\mathfrak{h}_0$. We fix a positive definite,
$W$-invariant inner product $(\cdot,\cdot)$ on $\mathfrak{h}_0$.

The infinitesimal character of an irreducible representation $\pi \in \widehat{G}$ is represented by a $W$-orbit $\theta_{\pi}$ in the complex dual space $\mathfrak{h}^*$ of $\mathfrak{h}_0$. It satisfies
$$\pi (z f) = \langle h(z) , \theta_{\pi} \rangle \pi (f) , \quad (z \in Z(\g) , \ f \in C_c^{\infty} (G)),$$
where $h : Z(\g) \rightarrow S(\mathfrak{h} )^W$ is the isomorphism of Harish-Chandra, from $Z(\g)$ onto the algebra of $W$-invariant polynomial on $\mathfrak{h}^*$.

The map
\begin{equation} \label{theta}
p: \widehat{G} \rightarrow \Theta (G)
\end{equation}
which maps $\pi \in \widehat{G}$ onto its infinitesimal character $\theta_{\pi}$ is continuous with respect to the Fell topology. See \cite[Lem. 3.4]{Sauvageot} for a more precise description of the topological space
$\widehat{G}$  with respect to this map.

The Plancherel measure $\nu^G$ is a positive Borel measure on $\widehat{G}$. Note that $\nu^G$ depends on a choice of a Haar measure on $G$: if the Haar measure is multiplied by a scalar $c$ then $\nu^G$ is multiplied by $c^{-1}$. Denote by $\mathcal{B}_c (\widehat{G})$ the space of bounded $\nu^G$-measurable functions $f$ on $\widehat{G}$ such that the support of $f$ has compact image in the space of infinitesimal character via the map $p$ defined in \eqref{theta}.

\begin{defn}
Let $\widetilde{\mathcal{F}} (\widehat{G})$ be the space of functions $f \in \mathcal{B}_c (\widehat{G})$ such that for every Levi subgroup $L$ of $G$ and every discrete series 
$\sigma$ of $L$, the function 
$$
 \chi \mapsto f( \mathrm{ind}_L^G (\sigma \otimes \chi))
$$ 
on ``unramified'' unitary characters of $L$ (see \cite[\S 3]{Sauvageot}) has the property that its discontinuous points are contained in a measure zero set. Here by definition
$ f( \mathrm{ind}_L^G (\sigma \otimes \chi))$ is the sum of $f(\sigma ')$ as $\sigma'$ runs over the irreducible subquotients of the (normalized) induced representation $\mathrm{ind}_L^G (\sigma \otimes \chi)$ with multiplicity (any such subquotient $\sigma'$ is unitary). 
\end{defn}

For any $\phi \in C_c^{\infty} (G)$, the function $\widehat{\phi}$ belongs to $\widetilde{\mathcal{F}} (\widehat{G})$. 

\begin{defn}
Let $\mathcal{F} (\widehat{G})$ be the subspace 
$$\{ \widehat{\phi} \; : \; \phi \in C_c^{\infty} (G) \} \subset \widetilde{\mathcal{F}} (\widehat{G}).$$ 
\end{defn}

\begin{rem}
There are many functions in $\widetilde{\mathcal{F}} (\widehat{G})$ that do not belong to $\mathcal{F} (\widehat{G})$: any characteristic 
function of a $\nu^G$-regular open subset $S \subset \widehat{G}$ or $S \subset \widehat{G}_{\rm temp}$ belongs to $\widetilde{\mathcal{F}} (\widehat{G})$, see \cite[Lemma 7.2]{Sauvageot}.
\end{rem}

It is much easier to work with continuous linear forms on $\mathcal{F} (\widehat{G})$ than with Borel measure on $\widehat{G}$. This is possible thanks to the following fundamental density principle due to Sauvageot \cite[Thm. 7.3(b)]{Sauvageot} (see also \cite[Appendix A]{Shin} for some corrections). 

\begin{prop} \label{DP}
Let $f \in \widetilde{\mathcal{F}} (\widehat{G})$. For every positive $\varepsilon$,
there exist $\phi , \psi \in C_c^{\infty} (G)$ such that for every $\pi \in \widehat{G}$, we have:
$$|f (\pi ) - \widehat{\phi} (\pi ) | \leq \widehat{\psi} (\pi ) \mbox{ and } \nu^G (\widehat{\psi} ) \leq \varepsilon .$$ 
\end{prop}

In other words the Plancherel measure $\nu^G$ is completely determined by the continuous linear $I^G$ form that it defines on $\mathcal{F} (\widehat{G})$. Granted this proposition we shall work with continuous linear forms on  $\mathcal{F} (\widehat{G})$.

\subsection{The measure associated to a uniform lattice}
Let $\Gamma$ be a uniform lattice in $G$. We denote by $\rho_{\Gamma}$ the quasi-regular representation of $G$ in the space $L^2 (\Gamma \backslash G)$. Then $\rho_{\Gamma}$ is a direct sum of
representations $\pi \in\widehat{G}$ occuring with finite multiplicities $\mathrm{m} (\pi , \Gamma)$. The
measure
$$\nu_{\Gamma} = \frac{1}{\mathrm{vol} (\Gamma \backslash G)} \sum_{\pi \in \widehat{G}} \mathrm{m}(\pi , \Gamma ) \delta_{\pi}$$
is, up to the factor $\mathrm{vol} (\Gamma \backslash G)^{-1}$, the Plancherel measure of $L^{2} (\Gamma \backslash G)$. This measure defines a continuous linear form $I_\Gamma$ on $\mathcal{F} (\widehat{G})$. Here again, as the spectrum of $\rho_{\Gamma}$ is discrete, the measure $\nu_\Gamma$ is determined by $I_\Gamma$  and, if $\phi \in C_c^{\infty} (G)$, we have:
\begin{equation*}
\begin{split}
\mathrm{trace} \ \rho_{\Gamma} (\phi ) & = \sum_{\pi \in \widehat{G}} \mathrm{m} (\pi , \Gamma) \mathrm{trace} \ \pi (\phi) \\
& = \mathrm{vol} (\Gamma \backslash G) I_{\Gamma} (\widehat{\phi}) .
\end{split}
\end{equation*}
On the other hand, given $f \in L^2 (\Gamma \backslash G)$, we have:\footnote{Here and below we shall often abusively identify functions on $\Gamma \backslash G$ and $\Gamma$-invariant functions on $G$. Similarly we often use the same notation ($x$ or $y$) for an element in $\Gamma \backslash G$ and for a choice of a representative of this element in $G$.}
\begin{equation*}
\begin{split}
(\rho_{\Gamma} (\phi) f)(x) & = \int_{G}\phi (y) f(xy) dy \\
& = \int_G \phi(x^{-1} y) f(y) dy \\
& = \int_{\Gamma \backslash G} \left( \sum_{\gamma \in \Gamma} \phi (x^{-1} \gamma y) \right) f(y) dy.
\end{split}
\end{equation*}
It follows that the kernel of $\rho_{\Gamma} (\phi)$ is
\begin{equation} \label{Kphi}
K_{\Gamma}^{\phi} (x,y) = \sum_{\gamma \in 
\Gamma} \phi (x^{-1} \gamma y), \quad (x,y \in \Gamma \backslash G).
\end{equation}
The sum over $\Gamma$ is finite for any $x$ and $y$, since it may be taken over the intersection of the discrete group $\Gamma$ with the compact subset $x \mathrm{supp}(\phi) y^{-1} \subset G$.
We conclude that:
\begin{equation} \label{Kphi2}
\begin{split}
\nu_{\Gamma} (\widehat{\phi}) & = \frac{1}{\mathrm{vol} (\Gamma \backslash G)} \int_{\Gamma \backslash G} K_{\Gamma}^{\phi} (x,x) dx \\
& = \int_{\Lambda \in \mathrm{Sub}_G} K_{\Lambda}^{\phi} (\mathrm{id},\mathrm{id}) d\mu_{\Gamma} (\Lambda).
\end{split}
\end{equation}
Here for any discrete subgroup $\Lambda \in\mathrm{Sub}_G$ and any $(x,y) \in G$ we denote by $K^\phi_\Lambda (x,y)$ the sum
$$K^\phi_\Lambda (x,y) = \sum_{\lambda \in \Lambda} \phi (x^{-1} \lambda y).$$
The latter equality of \eqref{Kphi2} then follows from the fact that $K^{\phi}_{g\Lambda g^{-1}} (x,y) = K_{\Lambda}^{\phi} (g^{-1} x , g^{-1} y)$. 
Note that $\Lambda \mapsto K_\Lambda^{\phi} (\mathrm{id},\mathrm{id})$ defines a continuous function on the support of $\mu_\Gamma$.

\begin{defn}
We say that a discrete IRS $\mu$ or a sequence $\mu_1 , \mu_2 , \ldots $ of discrete IRSs of $G$ is \emph{uniformly
discrete} if there exists some positive $\varepsilon$ such that:
$$\forall \Lambda \in \overline{\cup_{n=1}^{\infty} \mathrm{supp}(\mu_n)}, \quad \Lambda \cap \mathrm{B}_G (\mathrm{id}, \varepsilon ) = \{\mathrm{id}\}.$$
We shall sometimes specify $\varepsilon$, by saying that a sequence of IRS or a single IRS is $\varepsilon$-discrete.
\end{defn}

\medskip
\noindent
{\it Example.} Let $(\Gamma_n)_{n\geq 1}$ be a uniformly discrete sequence of uniform lattices in $G$. Then the sequence
$(\mu_{\Gamma_n})_{n\geq 1}$ is uniformly discrete.

\medskip

\begin{thm} \label{T1}
Let $(\Gamma_n)_{n\geq 1}$ be a uniformly discrete sequence of uniform lattices in $G$ such that $\Gamma_n \backslash X$ BS-converges to $X$.
Then for every relatively compact $\nu^G$-regular open subset $S \subset \widehat{G}$ or $S \subset \widehat{G}_{\rm temp}$, the sequence of measures $(\nu_{\Gamma_n})_{n\geq 1}$ is such that:
$$\nu_{\Gamma_n} (S) \rightarrow \nu^G (S).$$
\end{thm}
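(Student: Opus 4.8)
The plan is to deduce Theorem \ref{T1} by combining three ingredients already in place: the translation of BS-convergence into weak convergence of invariant random subgroups (Lemma \ref{lem:loccv}), the weak convergence of expected spectral linear forms along uniformly discrete sequences (Proposition \ref{P1}), and Sauvageot's density principle, which identifies the Plancherel measure $\nu^G$ as the Borel measure extending the linear form $\nu_{\mu_{\mathrm{id}}}$.

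First I would reduce everything to convergence against test functions. Each $\Gamma_n$ is a uniform lattice, so by the Example preceding Proposition \ref{P1} the sequence of IRS's $(\mu_{\Gamma_n})$ is uniformly discrete; and by Lemma \ref{lem:loccv} the hypothesis that $\Gamma_n\backslash X$ BS-converges to $X$ is equivalent to the weak convergence $\mu_{\Gamma_n}\to\mu_{\mathrm{id}}$ in $\mathrm{IRS}(G)$. Proposition \ref{P1} then gives $\nu_{\mu_{\Gamma_n}}\to\nu_{\mu_{\mathrm{id}}}$ weakly on $\mathcal F(G)$, i.e. $\nu_{\mu_{\Gamma_n}}(\widehat\phi)\to\nu_{\mu_{\mathrm{id}}}(\widehat\phi)$ for every $\phi\in C_c^\infty(G)$. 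By the computation in \S\ref{recall}, for a uniform lattice the relative Plancherel measure $\nu_{\Gamma}$ agrees on $\mathcal F(G)$ with $\nu_{\mu_\Gamma}$, while $\nu_{\mu_{\mathrm{id}}}(\widehat\phi)=\phi(\mathrm{id})=\nu^G(\widehat\phi)$ by Harish-Chandra's Plancherel formula. Hence $\nu_{\Gamma_n}(\widehat\phi)\to\nu^G(\widehat\phi)$ for every $\phi\in C_c^\infty(G)$.

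Second I would upgrade this to indicator functions. Fix a relatively compact $\nu^G$-regular open set $S\subset\widehat G$ (or $S\subset\widehat G_{\mathrm{temp}}$) and $\varepsilon>0$. Since $\nu^G$ extends $\nu_{\mu_{\mathrm{id}}}$ and $\mathbf 1_S$ lies in the class of functions to which Sauvageot's density principle applies (see the footnote in \S\ref{recall}, and \cite[Thm. 7.3(b)]{Sauvageot} with the corrections in \cite[Appendix A]{Shin}), there exist $\phi,\psi\in C_c^\infty(G)$ with $|\mathbf 1_S-\widehat\phi|\le\widehat\psi$ pointwise on $\widehat G$ (so in particular $\widehat\psi\ge0$) and $\nu^G(\widehat\psi)\le\varepsilon$. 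Because each $\nu_{\Gamma_n}$ is a positive Borel measure agreeing with $\nu_{\mu_{\Gamma_n}}$ on $\mathcal F(G)$, the pointwise bound gives $|\nu_{\Gamma_n}(S)-\nu_{\Gamma_n}(\widehat\phi)|\le\nu_{\Gamma_n}(\widehat\psi)<\infty$, and similarly $|\nu^G(S)-\nu^G(\widehat\phi)|\le\varepsilon$. Combining these via the triangle inequality
$$|\nu_{\Gamma_n}(S)-\nu^G(S)|\le\nu_{\Gamma_n}(\widehat\psi)+|\nu_{\Gamma_n}(\widehat\phi)-\nu^G(\widehat\phi)|+\varepsilon,$$
and using the first step ($\nu_{\Gamma_n}(\widehat\phi)\to\nu^G(\widehat\phi)$ and $\nu_{\Gamma_n}(\widehat\psi)\to\nu^G(\widehat\psi)\le\varepsilon$) yields $\limsup_{n}|\nu_{\Gamma_n}(S)-\nu^G(S)|\le2\varepsilon$; letting $\varepsilon\to0$ finishes the proof.

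The main obstacle is the correct invocation of Sauvageot's density principle: one must know that indicator functions of relatively compact $\nu^G$-regular open subsets of $\widehat G$ — and of $\widehat G_{\mathrm{temp}}$ — belong to the class of functions for which $\nu^G$ genuinely extends $\nu_{\mu_{\mathrm{id}}}$. This is a delicate point about the non-Hausdorff topology of $\widehat G$ and about the continuity of the map $\chi\mapsto f(\mathrm{ind}_M^G(\sigma\otimes\chi))$ along families of unitary characters of Levi subgroups $M$, which is exactly why the discussion in \S\ref{sec:7} is routed through infinitesimal characters and cites \cite{Sauvageot,Shin}. Once that input is granted, the remaining argument is soft; the only other thing I would want to verify carefully is the elementary assertion from \S\ref{recall} that $\nu_\Gamma$ extends $\nu_{\mu_\Gamma}$ for a uniform lattice $\Gamma$, which follows from the identity $\mathrm{trace}\,\rho_\Gamma(\phi)=\mathrm{vol}(\Gamma\backslash G)\,\nu_\Gamma(\widehat\phi)=\int_{\mathrm{Sub}_G}K^\phi_\Lambda(\mathrm{id},\mathrm{id})\,d\mu_\Gamma(\Lambda)$.
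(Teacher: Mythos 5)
Your proposal is correct and follows essentially the same route as the paper's own proof: reduce BS-convergence to weak convergence of the IRS's $\mu_{\Gamma_n}\to\mu_{\mathrm{id}}$, apply Proposition \ref{P1} to get convergence of the expected spectral linear forms on $\mathcal F(G)$, and then pass from test functions $\widehat\phi$ to the indicator $\mathbf 1_S$ via the extension property of $\nu^G$ over $\nu_{\mu_{\mathrm{id}}}$ (Sauvageot's density principle) and the same triangle-inequality estimate. You also correctly isolate the genuinely delicate input, namely that $\mathbf 1_S$ for $S$ relatively compact, open and $\nu^G$-regular lies in the class of functions to which the density principle applies.
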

\begin{proof} Set $\nu_n = \nu_{\Gamma_n}$ and $I_n = I_{\Gamma_n}$. Let $\phi \in C_c^{\infty} (G)$ we shall first prove that:
\begin{equation} \label{limitphi}
\lim_{n \rightarrow +\infty} I_{n} (\widehat{\phi}) = I^G (\widehat{\phi}).
\end{equation}

We will make use of the following general lemma.

\begin{lem} \label{Lseparation}
Let $\mu_n$ be a uniformly discrete sequence of IRSs. Then there exists an open neighborhood of the identity $U \subset G$ and a compact subset $K\subset G$ such that, setting $U^*:= U \setminus \{ \mathrm{id} \}$, the open sets $\mathcal{O}_1 (K)$ and $\mathcal{O}_2 (U^*)$ in $\mathrm{Sub}_G$ are \emph{disjoint}, every non-discrete subgroup $H \in \mathrm{Sub}_G$ is contained in $\mathcal{O}_2 (U^*)$ and  
$$\overline{\bigcup_n \mathrm{supp} (\mu_n )} \subset \mathcal{O}_1 (K).$$ 
\end{lem}
\begin{proof} Let $d$ be a left invariant metric on $G$ and let $\delta$ be small enough so that the corresponding $\delta$-ball around $\mathrm{id}$ has no non-trivial subgroups. Since the sequence $(\Gamma_n)_{n\geq 1}$ is uniformly discrete, there exists some $\varepsilon < \delta$ such that
$$\Lambda \in \overline{\bigcup_n \mathrm{supp} (\mu_n )} \Rightarrow \overline{B}_G (\mathrm{id} , \varepsilon) \cap \Lambda = \{ \mathrm{id} \}.$$ 
Let $U$ be the open ball $B_G (\mathrm{id}, \varepsilon)$ and let $K$ be the compact set which is the closed $\epsilon$ ball minus the open $\epsilon/2$ ball around $\mathrm{id}$. Recall the following definitions:
$$\mathcal{O}_1 (K) = \{ H \in \mathrm{Sub}_G \; : \; H \cap K = \emptyset \}$$
and 
$$\mathcal{O}_2 (U^*)= \{ H \in \mathrm{Sub}_G \; : \; H \cap U^* \neq \emptyset \}.$$
These are open subsets of $\mathrm{Sub}_G$. Every non-discrete subgroup $H \in \mathrm{Sub}_G$ is obviously contained in $\mathcal{O}_2 (U^*)$ and 
$$\overline{\bigcup_n \mathrm{supp} (\mu_n )} \subset \mathcal{O}_1 (K).$$ 
Let us now prove that $\mathcal{O}_1 (K)$ and $\mathcal{O}_2 (U^*)$ are disjoint: suppose by way of contradiction that their intersection contains some subgroup $H<G$. Then the intersection of $H$ with the closed $\varepsilon /2$ ball around $\mathrm{id}$ contains a non-trivial element $h$. Since $\overline{B}_G (\mathrm{id}, \varepsilon/2)$ does not contain non-trivial subgroups, the cyclic group $\langle h \rangle$ is not entirely contained into $\overline{B}_G (\mathrm{id}, \varepsilon /2)$. Let $h^k$ be the first non-trivial power that does not belong to $\overline{B}_G (\mathrm{id}, \varepsilon /2)$. Since both $d(\mathrm{id} , h ) $ and $d(\mathrm{id} , h^{k-1} )$ are $\leq \varepsilon /2$ and since the metric $d$ is left invariant, we conclude that we have $d(\mathrm{id} , h^{k} ) \leq \varepsilon$. Therefore $h^k$ belongs to $K$, a contradiction.
\end{proof}

We shall apply Lemma \ref{Lseparation} to the sequence $\mu_n = \mu_{\Gamma_n}$. Let $V$ be an open symmetric neighborhood of the identity in $G$ such that $V^2 \subset U$. The $G$-translates of $V$ form an open covering of $G$ from which we may extract a finite collection $g_1V , \ldots , g_k V$ that covers the {\it compact} support of $\phi$. Every $\Lambda \notin \mathcal{O}_2 (U^*)$ intersects each $g_i V$ along at most one element. It follows that the function 
$$\Lambda \mapsto \sum_{\lambda \in \Lambda} \phi (\lambda )$$
is well defined, continuous and uniformly bounded (by $k ||\phi ||_\infty$) on $\mathrm{Sub}_G \setminus \mathcal{O}_2 (U^*)$. Tietze's Extension Theorem then allows to extend this function to a compactly supported continuous function $F_{\phi}$ on $\mathrm{Sub}_G$ such that
$$F_{\phi} (\Lambda) = \left\{
\begin{array}{ll}
\sum_{\lambda \in \Lambda} \phi (\lambda) & \mbox{ if } \Lambda \in \overline{\bigcup_n \mathrm{supp} (\mu_n )} \\
0 & \mbox { if } \Lambda \mbox{ is not discrete.}
\end{array} \right.$$

Since by hypothesis the sequence $\mu_n$ converges weakly toward $\mu_{\mathrm{id}}$ we get that:
$$I_{n} (\widehat{\phi}) = \int_{\mathrm{Sub}_G} F_{\phi} d\mu_n \rightarrow \int_{\mathrm{Sub}_G} F_{\phi} d\mu_{\mathrm{id}}.$$
The limit is equal to $\phi (\mathrm{id})$ which, according to the Plancherel formula proved by Harish-Chandra, is equal to $I^G (\widehat{\phi})$.
This proves \eqref{limitphi}.

To conclude the proof of Theorem \ref{T1} we recall that the linear form $I_n$ determines the Borel measure $\nu_n$ on $\widehat{G}$ and that it similarly follows from Proposition \ref{DP} (Sauvageot's density principle) that the linear form $I^G$ determines the Plancherel measure of $G$. The Theorem easily follows. Indeed: let $S \subset \widehat{G}$, or $S \subset \widehat{G}_{\rm temp}$, be a relatively compact open subset which is regular with respect to the Plancherel measure of $G$ (i.e. $\nu^G (S) = \nu^G (\overline{S})$).
Let $\varepsilon$ be a positive real number. By the density principle, there exist $\phi , \psi \in C_c^{\infty} (G)$ such that
$$|\mathrm{1}_S - \widehat{\phi} | \leq \widehat{\psi} \mbox{ and } \nu^G (\widehat{\psi} ) \leq \varepsilon .$$
We conclude that:
\begin{equation*}
\begin{split}
|\nu_{\Gamma_n} (S) - \nu^G (S) | & \leq \nu_{\Gamma_n} (\widehat{\psi}) + | \nu_{\Gamma_n} (\widehat{\phi}) - \nu^G (\widehat{\phi})| + \nu^G (\widehat{\psi}) \\
& \leq |I_n (\widehat{\psi}) - I^G  (\widehat{\psi}) | + 2  I^G (\widehat{\psi}) +  | I_n (\widehat{\phi}) - I^G  (\widehat{\phi})| \\
& \leq 4 \varepsilon,
\end{split}
\end{equation*}
for sufficiently large $n$.
\end{proof}

Theorem \ref{T1} implies the following:
\begin{cor}[Pointwise convergence] \label{T3}
Let $(\Gamma_n)_{n\geq 1}$ be a uniformly discrete sequence of uniform lattices in $G$ such that $\Gamma_n \backslash X$ BS-converges to $X$. Then:
$$\lim_{n \rightarrow +\infty} \nu_{\Gamma_n} (\{ \pi \}) = \nu^G (\{ \pi \} )$$
for every $\pi \in \widehat{G}$.
\end{cor}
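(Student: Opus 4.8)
The plan is to derive the pointwise convergence from Theorems \ref{T1} and \ref{T2}, treating separately the discrete series representations, which are the only atoms of $\nu^G$. First suppose $\pi$ is a discrete series representation. Then $\nu^G(\{\pi\}) = d(\pi) > 0$, and since the discrete series are exactly the atoms of the Plancherel measure and lie in its support $\widehat{G}_{\rm temp}$, the singleton $\{\pi\}$ is an isolated (hence open and closed) point of $\widehat{G}_{\rm temp}$. Thus $S = \{\pi\}$ is a relatively compact, $\nu^G$-regular open subset of $\widehat{G}_{\rm temp}$, and Theorem \ref{T1} applies verbatim to give $\nu_{\Gamma_n}(\{\pi\}) \to \nu^G(\{\pi\}) = d(\pi)$.

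Now let $\pi$ be arbitrary and put $\theta = \theta_\pi \in \Theta_h(G)$. Since $Z(\g)$ acts by a scalar character on any representation with infinitesimal character $\theta$, the fibre $F_\theta = \{\sigma \in \widehat{G} : \theta_\sigma = \theta\}$ of the continuous map \eqref{theta} is finite, by a classical finiteness theorem of Harish-Chandra. Let $D_\theta \subseteq F_\theta$ be the (possibly empty) subset of discrete series representations. Write $\nu_{\Gamma_n}^*$ for the push-forward of $\nu_{\Gamma_n}$ under \eqref{theta} --- this is the Radon measure on $\Theta_h(G)$ extending $\nu_{\mu_{\Gamma_n}}$, and likewise $\nu_{\mu_{\rm id}}^*$ is the push-forward of $\nu^G$. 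Finiteness of the fibre gives
\[
\nu_{\Gamma_n}^*(\{\theta\}) = \sum_{\sigma \in F_\theta} \nu_{\Gamma_n}(\{\sigma\}), \qquad \nu_{\mu_{\rm id}}^*(\{\theta\}) = \sum_{\sigma \in F_\theta} \nu^G(\{\sigma\}) = \sum_{\sigma \in D_\theta} d(\sigma).
\]
By Lemma \ref{lem:loccv} the hypotheses imply $\mu_{\Gamma_n} \to \mu_{\rm id}$ weakly, so Theorem \ref{T2} gives $\nu_{\Gamma_n}^* \to \nu_{\mu_{\rm id}}^*$ weakly on the locally compact Hausdorff space $\Theta_h(G)$. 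As $\{\theta\}$ is closed, the portmanteau theorem yields $\limsup_n \nu_{\Gamma_n}^*(\{\theta\}) \le \nu_{\mu_{\rm id}}^*(\{\theta\}) = \sum_{\sigma \in D_\theta} d(\sigma)$.

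If $\pi$ is a discrete series we are done by the first step. Otherwise $\nu^G(\{\pi\}) = 0$ and $D_\theta \subseteq F_\theta \setminus \{\pi\}$, so, all terms being non-negative,
\[
0 \le \nu_{\Gamma_n}(\{\pi\}) = \nu_{\Gamma_n}^*(\{\theta\}) - \sum_{\sigma \in F_\theta \setminus \{\pi\}} \nu_{\Gamma_n}(\{\sigma\}) \le \nu_{\Gamma_n}^*(\{\theta\}) - \sum_{\sigma \in D_\theta} \nu_{\Gamma_n}(\{\sigma\}).
\]
Taking $\limsup_n$, using $\limsup_n(a_n - b_n) \le \limsup_n a_n - \liminf_n b_n$, the bound of the previous paragraph, and the fact (first step) that $\nu_{\Gamma_n}(\{\sigma\}) \to d(\sigma)$ for each of the finitely many $\sigma \in D_\theta$, we obtain $\limsup_n \nu_{\Gamma_n}(\{\pi\}) \le \sum_{\sigma \in D_\theta} d(\sigma) - \sum_{\sigma \in D_\theta} d(\sigma) = 0$. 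Hence $\nu_{\Gamma_n}(\{\pi\}) \to 0 = \nu^G(\{\pi\})$.

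The essential difficulty is the non-Hausdorffness of $\widehat{G}$: a general $\pi$ is not separated from the finitely many other irreducibles with the same infinitesimal character, so Theorem \ref{T1} cannot be applied to $\{\pi\}$ itself. The representative hard case is a non-tempered $\pi$ sharing an infinitesimal character with a discrete series (e.g. a finite-dimensional representation of $\SL_2(\R)$ versus the discrete series with the same infinitesimal character): there the crude bound $\nu_{\Gamma_n}(\{\pi\}) \le \nu_{\Gamma_n}^*(\{\theta\})$ does not vanish in the limit, and one genuinely must subtract off the discrete-series masses, which is why the discrete series case is established first. The remaining points to verify carefully are the standard facts invoked above: that discrete series are isolated, open–closed points of $\widehat{G}_{\rm temp}$ with $\nu^G(\{\pi\}) = d(\pi)$; that infinitesimal character fibres are finite; and that $\nu_{\mu_{\rm id}}^*$ is indeed the push-forward of the Plancherel measure $\nu^G$, which follows from uniqueness of the Radon extension in Lemma \ref{radon} since $\nu^G$ extends $\nu_{\mu_{\rm id}}$.
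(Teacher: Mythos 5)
Your argument is correct as a whole and gives a complete derivation, but it does \emph{not} coincide with the proof the paper actually spells out, which is the ``alternative'' proof in \S\ref{alternative}. There the authors first prove $\limsup_n \nu_n(\{\pi\}) \le d(\pi)$ for \emph{every} $\pi$ directly, via the Hilbert--Schmidt inequality \eqref{alter} applied to the truncated matrix coefficient $\phi_r = \chi_r \langle \pi(\cdot)v,v\rangle$ (an argument in the spirit of DeGeorge--Wallach and Savin, not via Theorems \ref{T1} or \ref{T2} at all), and then upgrade this one-sided bound to convergence using Proposition \ref{PRS} (Rohlfs--Speh / DeGeorge--Wallach), an exact identity over the fibre $\widehat{G}(\pi)$. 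Your route is the one suggested by the unexplained sentence ``Theorem~\ref{T1} implies the following,'' but you fill the non-obvious gap by invoking Theorem~\ref{T2} on $\Theta_h(G)$ and a subtraction argument over the finite fibre of the infinitesimal-character map. Both routes have to grapple with the fibre over $\theta_\pi$: the paper does so through the representation-theoretic identity of Proposition~\ref{PRS}, you do so through weak convergence of pushed-forward spectral measures and portmanteau. Your version has the advantage of staying entirely within the machinery already developed in the section; the paper's alternative proof has the advantage of giving \eqref{limsup} as a self-contained estimate (which is then reused verbatim in the proof of Theorem~\ref{Tcong}).

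One justification in your first step is a non sequitur as stated: you write that because discrete series are the atoms of $\nu^G$ and lie in its support, $\{\pi\}$ must be isolated in $\widehat{G}_{\rm temp}$. That implication is false in general (a Radon measure can have an atom at an accumulation point of its support). The fact you need is true, but for a different reason: by Harish-Chandra's parametrization, $\widehat{G}_{\rm temp}$ is the disjoint union over cuspidal pairs $(M,\sigma)$ of the pieces obtained by unitary induction, and a discrete series corresponds to the pair $(G,\pi)$, whose piece is the single point $\{\pi\}$; hence it is a connected component, in particular clopen, relatively compact, and trivially $\nu^G$-regular. With that substitution the application of Theorem~\ref{T1} to $S=\{\pi\}\subset\widehat{G}_{\rm temp}$ is legitimate, and the rest of your argument --- finiteness of the fibre $F_\theta$, the identification of $\nu_{\Gamma_n}^*$ and $\nu_{\mu_{\rm id}}^*$ with the pushforwards of $\nu_{\Gamma_n}$ and $\nu^G$ (by density of $\mathcal{A}(G)$ in $C_0(\Theta_h(G))$, Lemma~\ref{radon}), the portmanteau bound on the closed singleton $\{\theta\}$, and the final subtraction --- is sound.
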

Note that $d(\pi):=\nu^G (\{ \pi \})$ is $0$ unless $\pi$ is square integrable (i.e. is a discrete series) in which case it is the formal degree of $\pi$,  see \cite[Theorem 6.2]{DeGeorgeWallach}.

\subsection{An alternative proof of Corollary \ref{T3}} \label{alternative} Here we propose a proof of Corollary \ref{T3} in the spirit of DeGeorge--Wallach \cite{DeGeorgeWallach} and Savin \cite{Savin} that avoids the intricate analysis of \cite{Sauvageot}. We first prove that:
\begin{equation} \label{limsup}
\begin{split}
\limsup_{n \rightarrow \infty} \frac{\mathrm{m} (\pi , \Gamma_n )}{\mathrm{vol} (\Gamma_n \backslash G)} & = \limsup_{n \rightarrow \infty} \nu_n (\{ \pi \} ) \\
& \leq \nu^G (\{ \pi \}) = d(\pi).
\end{split}
\end{equation}

Let $\phi \in C_c^{\infty} (G)$. We first note that:
\begin{equation} \label{alter}
\begin{split}
\frac{\mathrm{m} (\pi , \Gamma)}{\mathrm{vol} (\Gamma \backslash G)} ||\pi (\phi ) ||_{\rm H-S}^2
& \leq \frac{|| \rho_{\Gamma} (\phi ) ||_{\rm H-S}^2}{\mathrm{vol} (\Gamma \backslash G)} \\
& \leq \frac{\mathrm{trace} \ \rho_{\Gamma} (\phi * \widetilde{\phi})}{\mathrm{vol} (\Gamma \backslash G)} \\
& \leq \nu_{\Gamma} (\widehat{\phi * \widetilde{\phi}}).
\end{split}
\end{equation}

\medskip
\noindent
{\it Remark.} We have:
$$\nu^G (\widehat{\phi * \widetilde{\phi}}) = (\phi * \widetilde{\phi}) (1) = ||\phi ||^2.$$

\medskip

Note that:
$$||\pi (\phi ) ||_{\rm H-S}^2 \geq |\langle \pi (\phi) v , v \rangle |^2 = \left| \int_G \phi (g) \langle \pi (g) v, v \rangle dg \right|^2$$
where $v$ is any unit vector in the Hilbert space associated with $\pi$. It is therefore tempting to apply \eqref{alter} with
$\phi (g) = (\phi_r (g) :=) \chi_r(g)  \langle \pi (g) v , v \rangle$ where $\chi_r$ is the characteristic
function of $G_r = KA_r^+ K$, $A_r^+ = \{ a \in A^+ \; : \; a = \exp (H), \ ||H|| \leq r\}$ for some
metric $||\cdot ||$ on the Lie algebra of the Cartan subgroup $A$.  The function $\phi_r$ is not smooth. However it is a limit in $L^2$ of smooth functions with support in $G_r$ and \eqref{alter} still holds. Similarly, under the hypotheses of Corollary \ref{T3}, Equation \eqref{limitphi} applies to $\phi_r * \widetilde{\phi}_r$. We therefore conclude from the remark above that we have:
\begin{equation} \label{limsup2}
\limsup_{n \rightarrow +\infty} \frac{\mathrm{m} (\pi , \Gamma_n)}{\mathrm{vol} (\Gamma_n \backslash G)} \leq \frac{1}{||\phi_r||^2}.
\end{equation}
As $r$ tends to infinity $1/||\phi_r||^2$ tends to $0$ if $\pi$ is not square integrable and tends to
$d(\pi)$ if $\pi$ is a discrete series. Inequality \eqref{limsup2} therefore implies \eqref{limsup}.

\subsection{} Now fix $\pi$ a discrete series representation of $G$. The set
$$\widehat{G} (\pi) = \{ \omega \in \widehat{G} \; : \; \theta_{\omega} = \theta_{\pi} \}$$
is finite. Computing the $G(\pi)$-part of the Euler characteristic, DeGeorge and Wallach \cite[Corollary 5.3]{DeGeorgeWallach} proved the following:
\begin{prop} \label{PRS}
Given a discrete series representation $\pi$ of $G$, there are constants $c(\omega)$, $\omega \in
\widehat{G} (\pi) $ with $c(\omega)=1$ whenever $\omega$ is a discrete series representation, such that
$$\sum_{\omega \in \widehat{G} (\pi)} c(\omega) \frac{\mathrm{m} (\omega , \Gamma)}{\mathrm{vol} (\Gamma \backslash G)} = \sum_{\omega \in \widehat{G} (\pi )} d(\omega ).$$
\end{prop}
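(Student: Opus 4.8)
The plan is to obtain the identity from Matsushima's formula by passing to Euler characteristics. Fix a discrete series $\pi$ and let $E=E_\pi$ be the irreducible finite-dimensional representation of $G$ whose infinitesimal character equals $\theta_\pi$; then $\pi$ lies in the discrete series $L$-packet attached to $E$. For a \emph{torsion-free} cocompact $\Gamma$, Matsushima's isomorphism identifies the cohomology of the local system on $\Gamma\backslash X$ associated to $E$ with
\[
H^*(\Gamma,E)\;\cong\;\bigoplus_{\omega\in\widehat G}\mathrm{m}(\omega,\Gamma)\,H^*(\mathfrak g,K;H_\omega\otimes E),
\]
and by Wigner's lemma the term $H^*(\mathfrak g,K;H_\omega\otimes E)$ vanishes unless the infinitesimal character of $\omega$ matches that of $E$, i.e. unless $\omega\in\widehat G(\pi)$. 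Taking alternating sums of dimensions gives
\[
\chi(\Gamma,E)\;=\;\sum_{\omega\in\widehat G(\pi)}c_0(\omega)\,\mathrm{m}(\omega,\Gamma),\qquad c_0(\omega):=\chi\bigl(\mathfrak g,K;H_\omega\otimes E\bigr),
\]
which is the backbone of the asserted identity once we divide by $\mathrm{vol}(\Gamma\backslash G)$; the constants $c(\omega)$ of the statement will be $(-1)^{\frac12\dim X}c_0(\omega)$.

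Next I would pin down the two ingredients. First, for a discrete series $\omega$ with infinitesimal character $\theta_\pi$ the classical computation of relative Lie algebra cohomology shows that $H^i(\mathfrak g,K;H_\omega\otimes E)$ is one-dimensional for $i=\tfrac12\dim X$ and zero otherwise — note $\dim X$ is even precisely because $G$ has discrete series — so $c_0(\omega)=(-1)^{\frac12\dim X}$ and hence $c(\omega)=1$ for $\omega$ a discrete series, as required. Second, since the local system has rank $\dim E$, the Gauss–Bonnet/Hirzebruch proportionality principle gives
\[
\chi(\Gamma,E)=\dim(E)\,\chi(\Gamma\backslash X)=(-1)^{\frac12\dim X}\dim(E)\,\frac{\chi(X^d)}{\mathrm{vol}(X^d)}\,\mathrm{vol}(\Gamma\backslash X),
\]
and the right-hand side is $\mathrm{vol}(\Gamma\backslash G)$ times a constant depending only on $\theta_\pi$. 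Combining the two displays, $(-1)^{\frac12\dim X}\chi(\Gamma,E)/\mathrm{vol}(\Gamma\backslash G)$ equals both $\sum_{\omega\in\widehat G(\pi)}c(\omega)\mathrm{m}(\omega,\Gamma)/\mathrm{vol}(\Gamma\backslash G)$ and a $\Gamma$-independent constant, so it remains only to identify that constant with $\sum_{\omega\in\widehat G(\pi)}d(\omega)$.

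The last step is Harish-Chandra's formal degree formula: $\sum_{\omega\in\widehat G(\pi)}d(\omega)$, which by definition of $d(\cdot)$ runs over the discrete series $L$-packet attached to $E$, is proportional to $\bigl|\prod_{\alpha>0}\langle\lambda_E+\rho,\alpha\rangle\bigr|$, while $\dim E$ is proportional to the same product by the Weyl dimension formula; the common proportionality factor is, by Harish-Chandra's Plancherel theorem together with the volume of the compact dual, exactly $\frac{\chi(X^d)}{\mathrm{vol}(X^d)}\cdot\frac{\mathrm{vol}(\Gamma\backslash X)}{\mathrm{vol}(\Gamma\backslash G)}$. This matches the constant found above and proves the proposition for torsion-free $\Gamma$; for general uniform $\Gamma$ one passes to a finite-index torsion-free subgroup and divides both sides by the index, which leaves the normalized identity unchanged. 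The main obstacle is purely bookkeeping — reconciling the normalization of Haar measure on $G$ with the Riemannian measure on $X$, and tracking the orientation sign $(-1)^{\frac12\dim X}$ consistently through Matsushima's formula, the proportionality principle, and the Plancherel normalization of $d(\pi)$ — since the representation-theoretic inputs (Matsushima, Wigner's lemma, the $(\mathfrak g,K)$-cohomology of discrete series, and Harish-Chandra's formal degree formula) are all classical. Alternatively, as the text indicates, one can bypass the explicit evaluation of the constant by quoting the Rohlfs--Speh computation of the $\widehat G(\pi)$-part of the Euler characteristic, or \cite[Corollary~5.3]{DeGeorgeWallach}, directly.
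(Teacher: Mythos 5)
Your argument correctly reconstructs the proof that the paper only cites: the text attributes Proposition \ref{PRS} to Rohlfs--Speh and DeGeorge--Wallach ``computing the $\widehat G(\pi)$-part of the Euler characteristic,'' which is exactly your route via Matsushima's formula, Wigner's lemma, the $(\mathfrak g,K)$-cohomology of discrete series, Gauss--Bonnet/Hirzebruch proportionality, and the Harish-Chandra formal degree formula. Two small inaccuracies are worth flagging, though neither affects the substance. First, Wigner's lemma says $H^*(\mathfrak g,K;H_\omega\otimes E)=0$ unless the infinitesimal character of $H_\omega$ matches that of $E^*$, not that of $E$; so you should either take the coefficient module to be $E^*$ with $\theta_{E}=\theta_\pi$, or choose $E$ so that $\theta_{E^*}=\theta_\pi$ --- a pure bookkeeping change but one that would propagate a sign/contragredient error if left untracked. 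Second, the closing reduction to the torsion-free case by ``dividing both sides by the index'' does not quite work: for a finite-index subgroup $\Gamma'\subset\Gamma$ the individual multiplicities $m(\omega,\Gamma)$ and $m(\omega,\Gamma')$ are not related by scaling (only the weighted Euler-characteristic sums are), so the identity for $\Gamma'$ does not directly yield the identity for $\Gamma$ by dividing by $[\Gamma:\Gamma']$. The cleaner observation is that the whole argument applies verbatim to cocompact $\Gamma$ with torsion: Matsushima's isomorphism computes $H^*(\Gamma,E)$ for any cocompact lattice, and the rational (virtual) Euler characteristic is given by the same Gauss--Bonnet expression --- equivalently, $\chi(\Gamma,E)=\chi(\Gamma',E)/[\Gamma:\Gamma']$ for any torsion-free finite-index $\Gamma'$ --- so no passage to a subgroup is needed.
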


Note that when $\omega$ is not a discrete series then the limit multiplicity is $0$ by \eqref{limsup}. Proposition \ref{PRS} and \eqref{limsup} theorefore imply Corollary \ref{T3}. \qed

\subsection{Sequences of congruence lattices}
Now we fix a uniform irreducible arithmetic lattice $\Gamma \subset G$ as $\Gamma_0$ in Theorem \ref{nik}. We also fix $\pi \in \widehat{G}$ a {\it non tempered} representation, i.e. $\pi$ is not weakly contained in $L^2 (G)$. In  this setting we prove the following:

\begin{thm} \label{Tcong}
Let $(\Gamma_n)_{n\geq 1}$ be {\rm any} infinite sequence of distinct congruence subgroups of $\Gamma$. Then there exists $\alpha = \alpha ( G, \Gamma , \pi) >0$ such that
$$\mathrm{m} (\pi , \Gamma_n ) \ll \mathrm{vol} (\Gamma_n \backslash G)^{1- \alpha}.$$
\end{thm}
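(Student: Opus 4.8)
The plan is to combine the quantitative Benjamini–Schramm convergence of congruence covers (Theorem \ref{nik}) with a uniform bound on multiplicities of non-tempered representations in terms of $L^2$-norms of matrix coefficients, in the spirit of the argument in \S\ref{alternative}. Let me describe the steps.

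First I would exploit the non-temperedness of $\pi$ to get a *better-than-trivial* test function. Since $p(\pi) > 2$, the $K$-finite matrix coefficients $g \mapsto \langle \pi(g) v, v\rangle$ lie in $L^p(G)$ for some $p = p(\pi) + \text{(small)} < \infty$, so in particular a fixed such coefficient, truncated to a ball $G_R = K A_R^+ K$, has $L^2$-norm $\|\phi_R\|_2^2$ that grows *sub-exponentially* — or more precisely, the point is that as $R \to \infty$ the quantity $1/\|\phi_R\|_2^2$ does not go to zero but is bounded below by a positive constant $c(\pi) > 0$ (this is exactly the dichotomy used at the end of \S\ref{alternative}, except that now we are on the non-tempered side, where the coefficient is genuinely $L^2$ after a finite truncation and the norm stabilizes up to constants). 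Meanwhile the support of $\phi_R$ sits inside $G_R$.

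Second, I would feed this test function into inequality \eqref{alter}:
$$\frac{\mathrm{m}(\pi,\Gamma_n)}{\mathrm{vol}(\Gamma_n\backslash G)}\,\|\pi(\phi_R)\|_{\rm H\text{-}S}^2 \;\le\; \nu_{\Gamma_n}(\widehat{\phi_R * \widetilde{\phi_R}}) \;=\; \frac{1}{\mathrm{vol}(\Gamma_n\backslash G)}\int_{\Gamma_n\backslash G} K^{\phi_R*\widetilde{\phi_R}}_{\Gamma_n}(x,x)\,dx.$$
Now the kernel $K^{\psi}_{\Gamma_n}(x,x) = \sum_{\gamma\in\Gamma_n}\psi(x^{-1}\gamma x)$ with $\psi = \phi_R*\widetilde{\phi_R}$ supported in $G_{2R}$ picks up, near a point $x$ of large injectivity radius, only the term $\gamma = \mathrm{id}$, contributing the constant $\psi(\mathrm{id}) = \|\phi_R\|_2^2$; and on the thin part $(\Gamma_n\backslash X)_{<R''}$ (for a suitable $R''$ comparable to $R$) it is bounded by a constant depending only on $R$ and $G$ (using discreteness/arithmeticity to bound the number of $\gamma$ with $d(x,\gamma x)\le 2R$, e.g. via a local lattice-point count). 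Splitting the integral into the thick and thin parts, the thick part contributes $\approx \|\phi_R\|_2^2$ and the thin part contributes at most $C(R,G)\cdot \mathrm{vol}((\Gamma_n\backslash X)_{<R''})/\mathrm{vol}(\Gamma_n\backslash G)$. Since the $\Gamma_n$ are congruence subgroups of the fixed arithmetic lattice $\Gamma$, Theorem \ref{nik} gives $\mathrm{vol}((\Gamma_n\backslash X)_{<R''}) \le e^{cR''}\,\mathrm{vol}(\Gamma_n\backslash X)^{1-\mu}$ with $c,\mu$ depending only on $\Gamma$. Putting this together and using $\|\pi(\phi_R)\|_{\rm H\text{-}S}^2 \ge |\langle\pi(\phi_R)v,v\rangle|^2$, one gets
$$\mathrm{m}(\pi,\Gamma_n) \;\ll\; \mathrm{vol}(\Gamma_n\backslash G) \cdot \frac{\|\phi_R\|_2^2 + e^{cR''}\,\mathrm{vol}(\Gamma_n\backslash G)^{-\mu}\cdot\mathrm{vol}(\Gamma_n\backslash G)}{|\langle\pi(\phi_R)v,v\rangle|^2}.$$

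Third, I would optimize over $R$. The first term in the numerator, after dividing through, produces the bound $\mathrm{m}(\pi,\Gamma_n) \ll \mathrm{vol}(\Gamma_n\backslash G)\cdot \|\phi_R\|_2^2/|\langle\pi(\phi_R)v,v\rangle|^2$ — but that is not yet sublinear. The gain must come from balancing: one chooses $R = R(n)$ growing like a small multiple of $\log\mathrm{vol}(\Gamma_n\backslash G)$, so that $e^{cR''}$ is a small power of the volume and the thin-part term is $\le \mathrm{vol}(\Gamma_n\backslash G)^{1-\mu'}$ for some $\mu' > 0$, while simultaneously the main term is controlled because the non-temperedness forces the matrix coefficient to decay slower than $L^2$ — concretely, $|\langle\pi(\phi_R)v,v\rangle|^2 / \|\phi_R\|_2^2 \gtrsim$ (a constant), or at worst decays only polynomially in $R$ hence only poly-logarithmically in the volume. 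Absorbing poly-log factors into the $\ll$ with a slightly smaller exponent, this yields $\mathrm{m}(\pi,\Gamma_n) \ll \mathrm{vol}(\Gamma_n\backslash G)^{1-\alpha}$ with $\alpha = \alpha(G,\Gamma,\pi) > 0$.

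**The main obstacle.** I expect the delicate point to be the third step: making precise the quantitative comparison between $\|\pi(\phi_R)\|_{\rm H\text{-}S}^2$ (or $|\langle\pi(\phi_R)v,v\rangle|^2$) and $\|\phi_R\|_2^2$ in the non-tempered range, and showing this ratio decays at most polynomially in $R$ so that it survives against the $e^{cR}$ loss from Theorem \ref{nik} once $R$ is taken logarithmic in the volume. This requires quantitative decay estimates for matrix coefficients of non-tempered unitary representations — essentially, that the $L^p$-integrability with $p = p(\pi) > 2$ is \emph{uniform} enough over the relevant $K$-types, so that truncating at radius $R$ loses only a polynomial factor. The bookkeeping with the two radii $R', R''$ from the metric comparison between $G$ and $X$, and ensuring the constant $C(R,G)$ bounding the thin-part kernel is genuinely only exponential in $R$ (not worse), is routine but must be done carefully.
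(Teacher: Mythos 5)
Your overall architecture — truncate a matrix coefficient to a ball $G_R$, feed it into inequality \eqref{alter}, split the kernel into thick and thin parts, use Theorem~\ref{nik} with $R$ logarithmic in the volume to control the thin part, and invoke the $L^p$ property of matrix coefficients for the gain — is the right one and is essentially parallel to the paper's argument (which packages the same idea via Savin's function $B_n(x) = \sup_{f\in W_n}|f(x)|^2/\|f\|^2$ and Lemma~\ref{LN}; that is a cosmetic difference). But there is a genuine gap in your third step, and it stems from an error in the first: you claim that for non-tempered $\pi$ the quantity $1/\|\phi_R\|_2^2$ is bounded below by a positive constant, i.e.\ that $\|\phi_R\|_2^2$ stabilizes. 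This is exactly backwards. The dichotomy from \S\ref{alternative} says $1/\|\phi_r\|^2\to d(\pi)>0$ only when $\pi$ is a discrete series, and discrete series are \emph{tempered}. For non-tempered $\pi$ the matrix coefficient is not square-integrable, so $\|\phi_R\|^2 = \int_{G_R}|\langle\pi(g)v,v\rangle|^2\,dg$ tends to $\infty$ — and in fact grows \emph{exponentially} in $R$, roughly like $e^{2\rho(1-2/p(\pi))R}$, because the volume growth $e^{2\rho R}$ beats the decay $e^{-(4\rho/p(\pi))R}$ once $p(\pi)>2$. That exponential growth of $\|\phi_R\|^2$ is not an obstacle to control: it is the \emph{source} of the power saving.

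The missing ingredient is the DeGeorge--Wallach/Savin identity $\pi(\phi_R)v=\|\phi_R\|^2 v$ (Lemma~\ref{L2}; it is just the computation $\langle\pi(\phi_R)v,v\rangle = \int_{G_R}|\langle\pi(g)v,v\rangle|^2 dg = \|\phi_R\|^2$). With it, your ratio $\|\phi_R\|_2^2/|\langle\pi(\phi_R)v,v\rangle|^2$ is exactly $1/\|\phi_R\|^2$, so the ``main term'' you were worried about is
$$
\mathrm{vol}(\Gamma_n\backslash G)\cdot\frac{1}{\|\phi_R\|^2},
$$
and upon taking $R=\kappa\log\mathrm{vol}(\Gamma_n\backslash G)$ this is $\mathrm{vol}(\Gamma_n\backslash G)^{1-\alpha}$ with $\alpha=2\rho\kappa(1-2/p(\pi))>0$; the thin-part term is also $\ll\mathrm{vol}^{1-\mu'}$ after Theorem~\ref{nik} provided $\kappa$ is small enough relative to $\mu$ and the lattice-point constants. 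In your write-up, by contrast, the main term is claimed to be ``a constant, or at worst poly-log'' times the volume, which would leave you with a bound that is \emph{linear} in the volume, not sublinear — the ``absorbing poly-log factors'' step does not repair this. In short: the decay you must exploit is not some delicate uniformity of the $L^p$ integrability over $K$-types, but simply the fact that $\langle\pi(\phi_R)v,v\rangle=\|\phi_R\|^2\to\infty$; once you insert the identity and correct the direction of the asymptotic, your HS-norm variant of the argument closes cleanly.
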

\begin{proof}  This follows the same lines as \S \ref{alternative}: Let $(\tau , V_{\tau})$ be the lowest $K$-type of $\pi$, as defined by Vogan \cite{Vogan}, and let $v \in V_{\tau}$ be a highest weight vector. As in \cite{Savin} we introduce:
$$W_n = \mathrm{span} \left\{ T v \; : \; T \in \mathrm{Hom}_G (V_\tau  , L^2 (\Gamma_n \backslash G)  )\right\} \subset L^2 (\Gamma_n \backslash G)$$
and
$$B_n (x) = \sup_{f \in W_n} \frac{|f(x)|^2}{||f||^2} \quad (x \in \Gamma_n \backslash G).$$
As in \S \ref{alternative} we let
$$\phi_r (g) = \chi_r (g) \langle \pi (g) v , v \rangle \quad ( g \in G , \ r >0 ).$$

We will use the following two lemmas. The first goes back at least to Kazhdan's proof \cite{Kazhdan} of the so-called Kazhdan's inequality according to which along a residual tower the limsup of the normalized Betti numbers are bounded above by the corresponding $L^2$-Betti numbers; we include a proof of this first lemma for the reader's convenience. The second --- due to Savin \cite[Proposition 3]{Savin} --- is a reformulation of the basic identity of DeGeorge and Wallach.

\begin{lem} \label{L1}
We have:
$$\int_{\Gamma_n \backslash G}  B_n (x) dx = \mathrm{m} (\pi , \Gamma_n ).$$
\end{lem}
\begin{proof} Let $f_1 , \ldots , f_m$ ($m=\mathrm{m} (\pi , \Gamma_n )$) be an orthonormal basis of $W_n$. The Cauchy-Schwarz inequality implies that 
$B_n (x) \leq \sum_{i=1}^m |f_i (x) |^2$. Now if we fix $x$, the function $F:y \mapsto \sum_i \overline{f_i (x)} f_i (y)$ belongs in $W_n$ and we have:
$$||F||^2 = F(x) = \sum_i |f_i (x) |^2.$$
It follows that for all $x$ we have:
$$B_n (x) = \sum_{i=1}^m |f_i (x) |^2.$$
Integration over $\Gamma_n \backslash G$ gives the lemma.
\end{proof}

\begin{lem} \label{L2}
We have:
$$\pi (\phi_r ) v = ||\phi_r ||^2 v.$$
\end{lem}

Now let $f \in W_n$. It follows from Lemma \ref{L2} that:
$$||\phi_r ||^2 f(x) = \int_G \phi_r (g) f(xg) dg = \int_{\Gamma_n \backslash G} \sum_{\gamma \in \Gamma_n} \phi_r (x^{-1} \gamma g) f(g) dg .$$
By the Cauchy-Schwarz inequality, we have:
\begin{equation} \label{savin}
||\phi_r ||^2 | f(x) | \leq ||f|| \left( \int_{\Gamma_n \backslash G} \left| \sum_{\gamma \in \Gamma_n} \phi_r (x^{-1} \gamma g) \right|^2 dg \right)^{1/2}.
\end{equation}

Given $x \in G$, we set
$$N_n (x; r) = \# \{ \gamma \in \Gamma_n \; : \; \chi_r (x^{-1} \gamma x ) \neq 0 \}.$$
Theorem \ref{nik} implies the following:

\begin{prop} \label{nik stuff}
There exist positive constants $\beta$, $c$ such that for all $n$
$$\mathrm{vol} ( (\Gamma_n \backslash G)_{< c \log \mathrm{vol} (\Gamma_n \backslash G)} ) \leq \mathrm{vol} (\Gamma_n \backslash G)^{1-\beta}.$$
\end{prop}

We now recall the following:

\begin{lem} \label{gauss}
There exist constants $c_1,c_2 > 0$, depending only on $G$, such that for any $x,y \in X$,
$$N(x;R):=\left|\{ \gamma \in \Gamma \; : \; d(x,\gamma x) \leq R \} \right| \leq c_1 \mathrm{InjRad}_{\Gamma_n \backslash G} (x)^{-d} e^{c_2 R},$$
where $d$ is the dimension of $X$.
\end{lem}
\begin{proof}
Clearly, it suffices to prove this for $R \geq \mathrm{InjRad}_{M_n} (x)$. By definition, $$B(x , \mathrm{InjRad}_{\Gamma_n \backslash G} (x)) \cap B(\gamma x , \mathrm{InjRad}_{M_n} (x)) = \emptyset$$
for all $\gamma \in \Gamma - \{ \mathrm{id} \}$. This implies
\begin{align*}
N(x;R) \cdot \vol \, B(x, \mathrm{InjRad}_{\Gamma_n \backslash G} (x)) & \leq \vol \, B (x , R + \mathrm{InjRad}_{\Gamma_n \backslash G} (x)) \\ & \leq \vol \, B (x , 2 R).
\end{align*}
Now, Knieper \cite{Knieper} shows that there exists a constant $c_2 = a(G)$ such that
$$\vol \, B(x , R ) \approx R^{\frac{\mathrm{rank}_\BR(G) -1}{2}} e^{c_2R}$$
asymptotically as $R \to \infty$.
This yields an upper bound for $\vol \, B(x, 2R)$.

On the other hand, since $X$ has non-positive curvature, the volume of a ball in $X$ is bounded below by the volume of a ball with the same radius in $d$-dimensional Euclidean space. Hence
$$\vol \, B(x,\mathrm{InjRad}_{\Gamma_n \backslash G} (x)) \geq b \cdot \mathrm{InjRad}_{\Gamma_n \backslash G} (x)^d,$$
with a constant $b=b(d)$.
The lemma follows.
\end{proof}

\begin{rem}
When $\mathrm{InjRad}_{\Gamma_n \backslash G} (x)$ and $R$ are both sufficiently small, it is possible to attain better bounds in \ref{gauss} by using the Margulis Lemma, see the analysis in Section \ref{sec:thin}.
\end{rem}

\subsection{} Replacing the constant $c$ by some smaller positive constant we may assume:
\begin{equation} \label{inegconst}
cc_2 \leq \beta.
\end{equation}
Here $c_2$ is the constant of Lemma \ref{gauss}. From this we conclude:

\begin{lem} \label{LN}
There exists a positive constant $C$ such that for all $n$
$$\int_{\Gamma_n \backslash G} N_n (x ; 2c \log \mathrm{vol} (\Gamma_n \backslash G )) dx \leq C \mathrm{vol} (\Gamma_n \backslash G ).$$
\end{lem}
\begin{proof} We split the integral into two parts:
$$I_1 = \int_{\{x  \in \Gamma_n \backslash G \; : \; \mathrm{InjRad}_{\Gamma_n \backslash G} (x)
\leq c \log \mathrm{vol} (\Gamma_n \backslash G) \}}  N_n (x ; 2c \log \mathrm{vol} (\Gamma_n \backslash G )) dx$$
and $I_2$.
Since in $I_2$ the integrand is everywhere equal to $1$ we have $I_2 \leq \mathrm{vol} (\Gamma_n \backslash G )$. As for $I_1$ we use Lemma \ref{gauss} to get the bound:
$$N_n (x ; 2c \log \mathrm{vol} (\Gamma_n \backslash G )) \leq c_1 \mathrm{InjRad}_{\Gamma_n \backslash G} (x)^{-d} \mathrm{vol} (\Gamma_n \backslash G )^{c_2 c}.$$
Since each lattice $\Gamma_n$ is a subgroup of $\Gamma$, there exists a uniform (in $n$)
lower bound on $\mathrm{InjRad}_{\Gamma_n \backslash G} (x)$. We therefore conclude from Proposition \ref{nik stuff} and \eqref{inegconst} that:
$$I_1 \leq (\mathrm{const}) \mathrm{vol} (\Gamma_n \backslash G )^{c_2 c + 1- \beta } \leq (\mathrm{const}) \mathrm{vol} (\Gamma_n \backslash G ).$$
And the lemma follows.
\end{proof}

\subsection{} Now taking $r = c \log \mathrm{vol} ( \Gamma_n \backslash G)$ we note that for every $x\in \Gamma_n \backslash G$ and $g \in G$ the sum $\sum_{\gamma \in \Gamma_n} \phi_r (x^{-1} \gamma g)$ has at most $N_n (x ; 2r)$ nonzero term. Therefore
$$\left| \sum_{\gamma \in \Gamma_n} \phi_r (x^{-1} \gamma g) \right|^2 \leq  N_n (x ;2r) \sum_{\gamma \in \Gamma_n} \left| \phi_r (x^{-1} \gamma g) \right|^2.$$
Moreover, since
$$\int_{\Gamma_n \backslash G} \sum_{\gamma \in \Gamma_n} \left| \phi_r (x^{-1} \gamma g) \right|^2 dg = || \phi_r ||^2 ,$$
it follows from \eqref{savin} that for every $x\in \Gamma_n \backslash G$
\begin{equation} \label{truc}
\frac{|f(x)|^2}{||f||^2} \leq \frac{N_n (x, 2r)}{||\phi_r||^2}.
\end{equation}
Integrating \eqref{truc} over $\Gamma_n \backslash G$ we conclude from Lemma \ref{LN} that:
$$\mathrm{m} (\pi , \Gamma_n ) \leq \frac{1}{||\phi_r||^2} \int_{\Gamma_n \backslash G} N_n (x ; c \log \mathrm{vol} (\Gamma_n \backslash G )) dx \leq C \frac{\mathrm{vol} (\Gamma_n \backslash G )}{||\phi_r||^2}.$$
We finally note that
$$||\phi_r ||^2 = \int_{G_r} |\langle \pi (g) v ,v \rangle |^2 dg$$
and
$$\mathrm{vol} (G_r) \geq \exp (\nu r) = \mathrm{vol} (\Gamma_n \backslash G)^{c\nu}$$
for some positive constant $\nu$.
Combining this last inequality with the asymptotics of the matrix coefficient $g \mapsto \langle \pi (g) v ,v \rangle$, see e.g. \cite{Knapp} or \cite[Corollary 3.18 and Lemma 4.4]{VandenBan}, we conclude from the fact that $\pi$ is non-tempered that there exists some positive constant $\alpha$ such that
$$\frac{1}{||\phi_r||^2} \ll \mathrm{vol} (\Gamma_n \backslash G)^{-\alpha}.$$
The theorem follows.
\end{proof}

\subsection{Nonuniform lattices} In the nonuniform case things get more complicated: there is continuous spectrum in $L^2 (\Gamma \backslash G)$ and the integral
$$\int_{\Gamma \backslash G} K_{\Gamma}^{\phi} (x,x) dx$$
is divergent. We may nevertheless hope that, maybe under suitable conditions, Theorem \ref{T1} holds when replacing $\nu_{\Gamma}$ by the measure associated to the {\it discrete}, or to the {\it cuspidal}, spectrum of $L^2 (\Gamma \backslash G)$. There is not yet such complete results even in the case of towers of coverings. We may however refer to the already mentioned work of Shin \cite{Shin} and to the recent work of Finis, Lapid and Mueller \cite{FLM,FL2015}
which, in particular, completely solves the problem for the case of principal congruence subgroups of $\GL(n)$.

\subsection{From representations to differential forms} We conclude this section by relating the above results with the study of the spectrum of the Laplace
operator.

Given a unitary representation $\tau$ of $K$ we consider the following subset of $\widehat{G}$:
$$\widehat{G}_{\tau} = \{ \pi \in\widehat{G} \; : \;  \mathrm{Hom}_K (\tau , \mathcal{H}_{\pi}) \neq \{ 0 \} \}.$$
Let $\tau_k$ ($k=0 , 1 , \ldots $) be the adjoint representation of $K$ into $\wedge^k \p$. Representations in $\widehat{G}_{\tau_k}$ are exactly the ones which correspond to $k$-differential forms on $X=G/K$. Our choice of Haar measure on $G$ corresponds to a choice of a left invariant Riemannian metric on $G$. We denote by $\mathrm{vol} (K)$ the corresponding volume of $K$. 

Let $\Gamma$ be a lattice in $G$. First note that we have: 
\begin{equation} \label{vol}
\mathrm{vol} (\Gamma \backslash G) = \mathrm{vol} (\Gamma \backslash X ) \mathrm{vol} (K).
\end{equation}

Now let $C\in Z (\g)$ be the Casimir element. Set $\lambda_{\pi} =-\theta_{\pi} (C)$. Let $\pi \in \widehat{G}_{\tau_k}$ and $v \in \mathcal{H}_{\pi}$ be a nonzero vector in the $K$-type $\tau_k$.
Any element in
$$E_{\pi}^k (\Gamma \backslash G) := \mathrm{span} \left\{ T v \; : \; T \in \mathrm{Hom}_G ( \mathcal{H}_{\pi},L^2 (\Gamma \backslash G)  \right\} \subset L^2 (\Gamma   \backslash G)$$
defines a square integrable $k$-differential form on $\Gamma  \backslash G/K$ whose eigenvalue is $\lambda_{\pi}$. Conversely it follows from Matsushima's formula (see e.g. \cite[Thm. 1.0.2]{BC}) that
$$E_{\lambda}^k (\Gamma \backslash G) = \bigoplus_{\substack{ \pi \in  \widehat{G}_{\tau_k} \\ \lambda_{\pi} = \lambda }} E_{\pi}^k (\Gamma \backslash G)$$
where $E_{\lambda}^k (\Gamma \backslash G)$ denotes the $\lambda$-eigenspace of the laplace operator on square integrable $k$-differential form on $\Gamma \backslash X$.

We let $\Theta_k (G)$ be the image of $\widehat{G}_{\tau_k}$ by the map $p$ in \eqref{theta}.
Evaluation on the Casimir element therefore gives a map
\begin{equation} \label{map}
\Theta_k (G) \rightarrow \R^+.
\end{equation}
A Borel measure $\nu$ on $\widehat{G}$ induces a measure $p^* \nu$ on $\Theta (G)$ that we may restrict to a measure on $\Theta_k (G)$; we denote by $\nu^k$ the push-forward of the latter by the map \eqref{map} so that $\nu^k$ is a measure on $\R^+$.

\subsection{} \label{8.2} Suppose that $\Gamma$ is uniform. We have:
$$\nu^k_{\Gamma} (\{ \lambda \} )  = \frac{1}{\mathrm{vol} (\Gamma \backslash G)} \dim E_{\lambda}^k (\Gamma \backslash G).$$
In particular
$$\nu^k_{\Gamma} (\{ 0 \} ) = \frac{b_k (\Gamma )}{\mathrm{vol} (\Gamma \backslash G)}$$
where $b_k  (\Gamma )$ is the $k$-th Betti number of $\Gamma$. Note that $\Gamma$ being virtually torsion-free, $b_k (\Gamma )$ makes sense. If $\Gamma$ is torsion-free we have $b_k (\Gamma) = b_k (\Gamma  \backslash X)$.
Similarly $\nu^{G,k}$ is the spectral measure of the Laplace operator on square integrable differential $K$-forms on $X$ and we define the $k$-th $L^2$-Betti number of the symmetric space $X=G/K$ as
$$\beta_k^{(2)} (X) = \nu^{G,k} (\{ 0 \})  \mathrm{vol} (K).$$
Note that it follows from \eqref{vol} that $\mathrm{vol} (\Gamma \backslash X) \beta_k^{(2)} (X)$ is the usual $k$-th $L^2$-Betti number
of $\Gamma$.

Theorem~\ref{T1} implies the following two corollaries:

\begin{cor} \label{C84}
Let $(\Gamma_n)_{n\geq 1}$ be a uniformly discrete sequence of uniform lattices in $G$ such that $\Gamma_n \backslash X$ BS-converges to $X$.
Then for each $k$ the sequence of spectral measures $\nu^k_{\Gamma_n}$  converges weakly toward $\nu^{G , k}$.
\end{cor}

\begin{cor} \label{main}
Let $(\Gamma_n)_{n\geq 1}$ be a uniformly discrete sequence of uniform lattices in $G$ such that $\Gamma_n \backslash X$ BS-converges to $X$. Then:
$$\lim_{n\rightarrow \infty }\frac{b_{k}(\Gamma_{n})}{\mathrm{vol}(\Gamma_n \backslash X)}=\beta
_{k}^{(2)}(X)
$$
for $0\leq k\leq \dim (X)$.
\end{cor}

\subsection{} We finally recall from \cite[p. 98]{BorelWallach} that if $\pi \in \widehat{G}$ is such that $\pi \in \widehat{G}_{\tau_k}$ and $\lambda_{\pi} = 0$, we have:
$$\pi \mbox{ is tempered } \Leftrightarrow k \in \left[ \frac12 \dim X -e , \frac12 \dim X +e \right]$$
where $e = \frac12 (\mathrm{rank}_{\mathbb{C}} G - \mathrm{rank}_{\mathbb{C}} (K))$. Theorem
\ref{Tcong} therefore implies the following:

\begin{cor}
Let $(\Gamma_n )_{n\geq 1}$ be a sequence of congruence lattices in a fixed rational form $\mathbf{G} (\Q)$. Suppose that
$\mathrm{vol} (\Gamma_n \backslash X ) \rightarrow \infty$. Then there exists $\alpha = \alpha ( \mathbf{G}) >0$ such that for every $k \notin \left[ \frac12 \dim X -e , \frac12 \dim X +e \right]$,
$$b_k (\Gamma_n )  \ll \mathrm{vol} (\Gamma_n \backslash X )^{1- \alpha}.$$
\end{cor}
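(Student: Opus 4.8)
The plan is to reduce the statement to Theorem \ref{Tcong} via Matsushima's formula and the Borel--Wallach temperedness criterion recalled just above. By Hodge theory on the closed manifold $M_n = \Gamma_n \backslash X$, the spectral description of Betti numbers from \S\ref{8.2}, and Matsushima's formula, one has
$$b_k(M_n) \;=\; \dim E_0^k(\Gamma_n \backslash G) \;=\; \sum_{\pi \in \mathcal{S}_k} \mathrm{m}(\pi,\Gamma_n)\, m_k(\pi), \qquad \mathcal{S}_k := \{ \pi \in \widehat{G}_{\tau_k} \; : \; \lambda_\pi = 0 \},$$
where $m_k(\pi)$ is the finite multiplicity with which $\pi$ contributes to harmonic $k$-forms, depending only on $\pi$ and $k$ and not on $n$. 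First I would record two facts about $\mathcal{S}_k$. It is \emph{finite}: any $\pi \in \mathcal{S}_k$ has a nonzero $\tau_k$-isotypic vector killed by the Casimir, hence nonvanishing $(\mathfrak{g},K)$-cohomology $H^k(\mathfrak{g},K;\mathcal{H}_\pi)$ (Kuga's lemma), so by Wigner's lemma $\pi$ has the infinitesimal character of the trivial representation, and --- as recalled before Proposition \ref{PRS} --- only finitely many elements of $\widehat{G}$ share a given infinitesimal character. And every $\pi \in \mathcal{S}_k$ is \emph{non-tempered}: by the Borel--Wallach criterion quoted just above, a $\pi \in \widehat{G}_{\tau_k}$ with $\lambda_\pi = 0$ is tempered exactly when $k \in [\tfrac12 \dim X - e, \tfrac12 \dim X + e]$, and by hypothesis $k$ lies outside this interval.

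Next I would apply Theorem \ref{Tcong}. The $M_n$ are attached to congruence subgroups $\Gamma_n$ of a fixed uniform arithmetic lattice $\Gamma$ in $G$ associated to the $\Q$-form $\mathbf{G}$; an arbitrary sequence of congruence lattices in $\mathbf{G}(\Q)$ reduces to this case by intersecting each term with $\Gamma$, which alters covolumes --- and hence the final estimate --- only by a bounded factor. Since $\mathrm{vol}(M_n) \to \infty$, each value of $\Gamma_n$ is attained only finitely often, so applying Theorem \ref{Tcong} to the subsequence of distinct terms gives, for every $\pi \in \mathcal{S}_k$, a constant $\alpha_\pi = \alpha(G,\Gamma,\pi) > 0$ with $\mathrm{m}(\pi,\Gamma_n) \ll \mathrm{vol}(\Gamma_n \backslash G)^{1-\alpha_\pi}$, the implied constant independent of $n$.

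Finally, set $\alpha := \min_{\pi \in \mathcal{S}_k} \alpha_\pi > 0$, a minimum over a finite set of positive reals, so that $1 - \alpha_\pi \le 1 - \alpha$ for every $\pi \in \mathcal{S}_k$. For $n$ large enough that $\mathrm{vol}(\Gamma_n \backslash G) \ge 1$, summing the displayed identity over $\mathcal{S}_k$ yields $b_k(M_n) = \sum_{\pi \in \mathcal{S}_k} m_k(\pi)\,\mathrm{m}(\pi,\Gamma_n) \ll \sum_{\pi \in \mathcal{S}_k} m_k(\pi)\,\mathrm{vol}(\Gamma_n \backslash G)^{1-\alpha_\pi} \ll \mathrm{vol}(\Gamma_n \backslash G)^{1-\alpha}$; since $\mathrm{vol}(\Gamma_n \backslash G)$ and $\mathrm{vol}(M_n)$ differ by the fixed factor $\mathrm{vol}(K)$, and the finitely many remaining $n$ cost only a larger implied constant, this is the claim. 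The single substantive ingredient here is the uniformity over all congruence subgroups contained in Theorem \ref{Tcong}, which itself rests on the effective BS-convergence bound of Theorem \ref{nik}; granting this, the corollary is as formal a consequence of Matsushima's formula and the Borel--Wallach criterion as Corollary \ref{main} is of Corollary \ref{T3} and Proposition \ref{PRS} --- only now with a power saving in place of a bare $o(\mathrm{vol})$ estimate, consistently with $\beta_k^{(2)}(X) = 0$ for $k$ outside the stated interval.
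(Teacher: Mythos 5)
Your proposal is correct and follows exactly the route the paper intends: Matsushima's formula reduces $b_k(M_n)$ to multiplicities of the finitely many $\pi\in\widehat{G}_{\tau_k}$ with $\lambda_\pi=0$, the Borel--Wallach criterion recalled immediately before the corollary shows these are all non-tempered for $k$ outside the stated interval, and Theorem \ref{Tcong} then gives the power saving for each, with $\alpha$ taken as the minimum over this finite set. The paper states this as an immediate consequence without further detail, and your fleshing out (finiteness of $\mathcal{S}_k$ via infinitesimal characters, reduction to congruence subgroups of a fixed $\Gamma$, uniformity of the implied constants) is exactly what that one-line deduction requires.
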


\section{Heat kernel estimates and hyperbolic manifolds} \label{sec:thin}

As explained in the announcement \cite{cras}, our original proof of Corollary \ref{main} used the heat kernel following the original path of DeGeorge--Wallach and especially Donnelly \cite{Donnellytowers}. Introducing the notion of BS convergence allowed us to deal with more general sequences that our predecessors did. However, as in these classical works, this approach relies on heat kernel estimates which require a lower bound on the injectivity radius (our `uniformly discrete' assumption). One novel aspect of the current section is a fine study of heat kernel estimates in the thin parts of hyperbolic manifolds in dimension $d\ge 4$. This will allow us to get rid of the `uniform discreteness' assumption.

As in the preceding sections we let $X = G/K $ be the symmetric space associated to a connected center free semi-simple Lie group $G$.  

\subsection{The heat kernel on forms} 
We denote by $e^{-t \Delta_k^{(2)}} (x,y)$ the heat kernel on square integrable $k$-forms on $X$. The corresponding bounded integral operator in $\mathrm{End}(\Omega^k_{(2)}(X))$ defined by
$$
 (e^{-t\Delta_k^{(2)}} f) (x) = \int_{X} e^{-t\Delta_k^{(2)}} (x,y) f(y) \, dy, \ \ \forall f \in  \Omega^k_{(2)}(X)
$$
is the fundamental solution of the heat equation (cf. \cite{BarbaschMoscovici}).

A standard result from local index theory (see e.g. \cite[Lemma 3.8]{BV}) implies:

\begin{lem}\label{L:heat}
Let $m >0$. There exists a positive constant $c=c(G,m)$ such that
$$|| e^{-t \Delta_k^{(2)}} (x,y) || \leq c t^{-d/2} e^{- d (x, y)^2 / 5t } ,  \quad  0 < t \leq m.$$
\end{lem}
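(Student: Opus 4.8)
The plan is to reduce the estimate to a combination of two standard facts: a uniform on-diagonal bound for the heat kernel on a complete manifold of bounded geometry, and the finite-propagation-speed / Davies--Gaffney estimate that converts an on-diagonal bound into an off-diagonal Gaussian one. First I would recall that $X=G/K$ is a symmetric space of noncompact type, hence a complete Riemannian manifold whose curvature and injectivity radius are bounded (indeed it is homogeneous, so all local geometric invariants are literally constant), and the Hodge--de Rham Laplacian $\Delta_k^{(2)}$ acting on $L^2$ $k$-forms is essentially self-adjoint and nonnegative. By the Weitzenböck formula $\Delta_k^{(2)} = \nabla^*\nabla + \mathcal E_k$ where the curvature endomorphism $\mathcal E_k$ is a bounded, parallel (hence constant-norm) section of $\mathrm{End}(\wedge^k T^*X)$; write $\|\mathcal E_k\|_\infty = b$. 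Then for $0<t\le m$ the semigroup $e^{-t\Delta_k^{(2)}}$ has operator-theoretic norm controlled in terms of $e^{bt}\le e^{bm}$ relative to the Bochner Laplacian, so it suffices to obtain the pointwise kernel bound for $e^{-t\nabla^*\nabla}$ (on bundle-valued functions) and then multiply by the constant $e^{bm}$, absorbing it into $c=c(G,m)$.

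The core step is the pointwise bound $\|e^{-t\nabla^*\nabla}(x,y)\|\le c\, t^{-d/2}\,e^{-d(x,y)^2/5t}$ for $0<t\le m$. For the on-diagonal part, $\|e^{-t\nabla^*\nabla}(x,x)\|\le c\, t^{-d/2}$ follows from the standard parabolic estimate for generalized Laplacians on manifolds of bounded geometry (e.g. via a Sobolev/Nash inequality with uniform constants, which holds on $X$ because it is homogeneous with nonnegative-volume-growth lower bound coming from its Euclidean comparison, as in Lemma \ref{gauss}); homogeneity makes this constant uniform over $x$ and it is the content of the reference \cite[Lemma 3.8]{BV}. For the off-diagonal decay I would invoke the Davies--Gaffney estimate: for the self-adjoint operator $\nabla^*\nabla$ one has $\|\langle e^{-t\nabla^*\nabla}u,v\rangle\|\le e^{-\mathrm{dist}(\mathrm{supp}\,u,\mathrm{supp}\,v)^2/4t}\|u\|_2\|v\|_2$, which combines with the on-diagonal bound and a standard interpolation/Cauchy--Schwarz argument (Davies' trick, or the Cheeger--Gromov--Taylor finite-propagation-speed method) to yield a full Gaussian kernel bound $\|e^{-t\nabla^*\nabla}(x,y)\|\le C\, t^{-d/2} e^{-d(x,y)^2/(4+\epsilon)t}$ for any $\epsilon>0$ and $0<t\le m$; choosing $\epsilon$ so that $4+\epsilon<5$ (e.g. $\epsilon=1/2$) gives the exponent $1/5$ in the statement, possibly after enlarging the constant to accommodate the values of $t$ near $m$ where the Gaussian factor is not yet dominant.

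The main obstacle, such as it is, is purely bookkeeping: making sure that every constant entering the Sobolev/Nash inequality, the Davies--Gaffney estimate, and the Bochner correction is uniform in $x,y$ (which homogeneity of $X$ gives for free) and uniform over $t\in(0,m]$ (which is why the hypothesis $0<t\le m$ is imposed — for large $t$ the on-diagonal decay $t^{-d/2}$ of the full Laplacian would need the additional input that $\Delta_k^{(2)}$ has no small spectrum, or one simply restricts to $t\le m$ as here). Since all of this is exactly the content of the cited local-index-theory estimate \cite[Lemma 3.8]{BV}, the cleanest writeup is: quote that lemma for the torsion-free Bochner Laplacian, apply the Weitzenböck formula to pass to $\Delta_k^{(2)}$ at the cost of the factor $e^{bm}$, and collect constants into $c=c(G,m)$, noting $\epsilon=1/2<1$ makes $1/(4+\epsilon)>1/5$ so the stated inequality holds a fortiori.
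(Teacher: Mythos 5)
The paper itself offers no proof of this lemma: it simply cites \cite[Lemma~3.8]{BV} with the remark that the bound is a standard fact from local index theory, so there is no in-paper argument to compare yours against. Your sketch is a correct and standard way to establish such a bound, and is in the same family of arguments the paper has in mind: Weitzenb\"ock to isolate the Bochner Laplacian, Kato--Simon/Hess--Schrader--Uhlenbrock domination to absorb the (parallel, hence bounded) curvature term for $t\le m$ at the cost of $e^{bm}$, a uniform on-diagonal bound from homogeneity/bounded geometry, and then Davies' perturbation trick or Cheeger--Gromov--Taylor finite propagation speed to upgrade to a Gaussian with exponent $1/(4+\epsilon)>1/5$. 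One small simplification you could make explicit: the Kato inequality $|\nabla|u||\le|\nabla u|$ lets you dominate the Bochner semigroup $e^{-t\nabla^*\nabla}$ on $k$-forms by the \emph{scalar} heat semigroup, so after the Weitzenb\"ock step you are reduced to a scalar Gaussian bound on the homogeneous space $X$, for which the on-diagonal $t^{-d/2}$ estimate and the Grigor'yan/Davies upgrade to off-diagonal decay are entirely textbook; this avoids having to rerun Nash/Sobolev machinery in a bundle-valued setting.
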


 Much of the content of the statement above is when $t\to 0$.  Here, we are mostly interested in the case of fixed $t$, in which case Lemma \ref{L:heat} gives constants $c_1,c_2$ depending only on $G,t$ such that
\begin{equation} || e^{-t \Delta_k^{(2)}} (x,y) || \leq c_1 e^{- d (x, y)^2 / c_2} .
\label{specialized}
\end{equation}

Now let $M=\Gamma\backslash X$ be a compact  $X$-manifold. Let $\Delta_k$ be the Laplacian on differentiable $k$-forms on
$M$. It is a symmetric, positive definite, elliptic operator with pure point spectrum.
Write $e^{-t\Delta_k} (x,y)$ ($x,y \in M$) for the heat kernel on $k$-forms on $M$, then for each positive $t$ we have:
\begin{equation} \label{eq:heatkernels}
e^{-t\Delta_k} (x,y) = \sum_{\gamma \in \Gamma} (\gamma_{\tilde y})^* e^{-t\Delta_k^{(2)}} (\widetilde{x} , \gamma \widetilde{y}),
\end{equation}
where $\widetilde{x}, \widetilde{y}$ are lifts of $x,y$ to $X$ and by $(\gamma_y)^*$,
we mean pullback by the map $(x,y) \mapsto (\tilde x, \gamma \tilde y)$.
The sum converges absolutely and uniformly for $\tilde{x}, \tilde{y}$ in compacta; this follows from Lemma \ref{gauss} and \ref{L:heat}. Given $x\in M$ and a lift $\ti x\in X$ we set:
\begin{equation} \label{eq:f_t}
 f_t(x)=\| e^{-t\gD_k}(x,x)-e^{-t\gD^{(2)}_k}(\ti x,\ti x)\|=\|\sum_{\gc\in\gC\setminus\{1\}}e^{-t\gD^{(2)}_k}(\ti x,\gc\cdot\ti x)\|.
\end{equation}
Here, the middle part of the equation can be made well defined by identifying the tangent spaces of $T_xM$ and $T_{\ti x}X$.
Let $\ti f_t (\ti x)=f_t(x)$ and note that $\ti f_t$ is $\gC$-invariant.
Recall that we denote by $\mathrm{InjRad}_{M} (x)$ the injectivity radius of $M$ at $x$.

\subsection{($L^2$-)Betti numbers} 
\label {bettinumbers}
The trace of the heat kernel $e^{-t \Delta_k^{(2)}}(x,x)$ on the diagonal
is independent of $x \in X$, being $G$-invariant. We denote it by
$$\mathrm{Tr} \,e^{-t \Delta_k^{(2)}}:=\mathrm {tr} \, e^{-t \Delta_k^{(2)}}(x,x).$$
It follows from \S \ref{8.2} that
$$\beta_k^{(2)} (X) =  \lim_{t \rightarrow \infty} \mathrm{Tr} e^{-t \Delta_k^{(2)}} .$$
It is equal to zero unless $\delta (G)=0$ and $k = \frac12 \dim X$, in  which case
$$
 \beta_k^{(2)} (X) = \frac{\chi (X^d)}{\mathrm{vol} (X^d)},
 $$ 
where $X^d$ is the compact dual, see \cite{Olbrich}.

Recall also that the usual Betti numbers of $M$ are given by
$$b_k (M) = \lim_{t \rightarrow \infty} \mathrm{Tr} e^{-t\Delta_k} = \lim_{t \rightarrow \infty} \int_M \mathrm{tr} \ e^{-t \Delta_k}(x,x)dx,$$
and that since $\mathrm{Tr} \, e^{-t\Delta_k} = \sum_i e^{-t\lambda_i}$, where $\lambda_i$ are the eigenvalues of $\Delta_k$, the limit above is \emph {monotone decreasing} in $t$.

\begin{lem} \label{lem:hk}
Let $t >0$ be a real number. There exists a constant $C = C(t,G)$ such that for any $x \in M$, $$
f_t (x)  \leq C \cdot \mathrm{InjRad}_{M} (x)^{-d}.
$$
\end{lem}
\begin{proof}
Let $x \in M$ and $\tilde x$ be a lift of $x$ to $X$. 
Then by the definition \eqref{eq:f_t},
\begin {align*}
f_t(x)  & \leq \sum_{\gc\in\gC\setminus\{1\}} \| e^{-t\gD^{(2)}_k}(\ti x,\gc\cdot\ti x)\|. \\
& \leq \sum_{\gc\in\gC\setminus\{1\}} c e^{- d (x, \gamma \tilde x)^2 / 5t },  \ \text{ by } \eqref{specialized} \\
& \leq \sum_{n=0}^\infty c e^{- n^2 / 5t } N(x,n+1),
\end {align*}
where in an overestimate, $N(x;n+1)=\# \{ {\gc\in\gC\setminus\{1\}} :  d(\tilde x , \gamma \tilde x ) \leq n +1\}.$ But
$$N(x;n+1) \leq c_1 \mathrm{InjRad}_{M} (x)^{-d} e^{c_2 (n+1)}$$
by Lemma \ref{gauss}, for some $c_1,c_2$ depending only on $G $. So for some $C=C(t,G)$,
$$f_t(x)\leq  \sum_{n=0}^\infty cc_1 e^{- n^2 / 5t +c_2 (n+1)} \mathrm{InjRad}_{M} (x)^{-d}  \leq C \cdot \mathrm{InjRad}_{M} (x)^{-d}.\qedhere$$
\end{proof}

\subsection{Convergence of Betti numbers} We now explain how to use the heat kernel estimates above to prove the following proposition, which implies Corollary \ref{main}.
\begin{prop} \label{Pcvhk}
Suppose that $(M_n)$ is a sequence of compact $X $-manifolds that BS-converges to $X$. Then we have $(1) \implies (2)\implies (3) \implies (4)$, where

\begin{minipage}{.8\textwidth}
\begin {align}
\tag{1} & (M_n) \text{ is uniformly discrete,}\\
\tag{2}  & \frac{1}{\mathrm{vol} (M_n)} \int_{M_n} \mathrm{InjRad}_{M_n} (x)^{- d} dx \rightarrow 0, \\
\tag{3} &\lim_{n \to +\infty} \frac{b_k(M_n )}{\vol(M_n )}\le\gb^{(2)}_k(X), \text{ for } k=0,\ldots,\dim(X)\\
\tag{4}  &\lim_{n \to +\infty} \frac{b_k(M_n )}{\vol(M_n )}=\gb^{(2)}_k(X), \text{ for } k=0,\ldots,\dim(X).
\end {align}
\end{minipage}
\end{prop}
\begin {proof}  $(1) \implies (2)$. Since $(M_n)$  is uniformly discrete, there is some $\epsilon>0$  such that the injectivity radius $\mathrm{InjRad}_{M_n} (x)\geq\epsilon$  for all $x,n$.  Fixing $R>0$,
 \begin{align*}
	\lim_{n\to\infty}\frac{1}{\mathrm{vol} (M_n)} \int_{M_n} \mathrm{InjRad}_{M_n} (x)^{- d} dx & \leq \lim_{n\to\infty} \left (R^{-d} +
	\frac{\mathrm{vol}(M_n)_{\leq R}}{\mathrm{vol} (M_n)} \epsilon^{-d} \right ) = R^{-d},
 \end{align*}
by integrating separately over $(M_n)_{\leq R}$ and its complement, and then using BS-convergence. Sending $R\to \infty$ proves (2).

\vspace{2mm}

\noindent $(2)\implies (3)$. 
Since $\gb^{(2)}_k (X)=\lim_{t\to\infty}\text{Tr}(e^{-t\gD_k^{(2)}}(\ti x,\ti x))$, we may fix an arbitrarily small $\nu>0$ and $t$ large enough so that 
$$
 \gb^{(2)}_k(X)<\text{Tr}(e^{-t\gD_k^{(2)}}(\ti x,\ti x))+\nu.
$$
Then since $b_k(M)\leq  \int_M \mathrm{tr} \ e^{-t \Delta_k}(x,x)dx$  for each fixed $t$, we have

\begin{equation*}
\begin{split}
\frac{b_k(M_n )}{\vol(M_n )}-\gb^{(2)}_k(X) & \le \frac{1}{\vol(M_n )}\int_{M_n } \mathrm{tr} \, e^{-t\gD_k}(x,x)-\gb^{(2)}_k(X) dx \\
& \le\frac{1}{\vol(M_n )}\int_{M_n} f_t^n(x)dx+\nu\\
\end{split}
\end{equation*}
Now it follows from the hypothesis of the proposition and Lemma \ref{lem:hk} that for $n$ large enough, the right hand side is less than $2\nu$, so $(3)$ follows.

\vspace{2mm}

\noindent $(3)\implies (4)$. Unless $\dim(X)$ is even, and $k=\dim(X)/2$, the equality in $(4)$ is automatic since $\gb^{(2)}_k(X)=0$.  The equality when $k=\dim(X)/2$ follows since the Euler characteristic of $M_n$  is the same as its $L^2$-analogue.\end {proof}

\subsection{Heat kernel estimates in rank one}\label{subsection:notations}
We now establish some preliminary estimates on the difference $f_t(x)$ between the heat kernel on a rank one  locally symmetric space $M$ and the $L^2$ heat kernel in the universal cover.  While we will apply these estimates only to real hyperbolic manifolds, we write them up more generally here, since we anticipate they will be useful in the future and the proof is not any simpler for $X=\BH^n$.

Let $G=\mathbb{G}(\BR)$ be a connected adjoint simple real algebraic group of real rank one. 
We fix a Cartan decomposition $\mathfrak{g}=\mathfrak{k}\oplus\mathfrak{p}$ of $\mathfrak{g}=\text{Lie}(G)$ and let $K\le G$ be the maximal compact subgroup of $G$ corresponding to $\mathfrak{k}$. 

Let $\ti x_0\in X=G/K$ be the point corresponding to $K$.
Recall that $\mathfrak{p}$ is identified with the tangent space $T_{\ti x_0}X$ and the Killing form on $G$ induces an inner product on $\mathfrak{p}$ which determines the Riemannian structure on $X$.
Fix an $\text{Ad}(K)$-invariant inner product on $\mathfrak{k}$ and extend it to an $\text{Ad}(K)$-invariant inner product on $\mathfrak{g}$ so that $\mathfrak{k}$ and $\mathfrak{p}$ are orthogonal. 
 Finally, let $d=\dim(X)$ and let $s=\text{rank}_\BC(\mathbb{G})$ be the complex rank of $\mathbb{G}$, e.g. if $G=\SO(d,1)$ then $s=[\frac{d+1}{2}]$.

\medskip

We wish to establish estimates on $f_t$ within the `thin parts' of an $X$-manifold, i.e.\ parts where the injectivity radius is small. The geometry of thin parts is controlled by the classical Margulis lemma:

\begin{thm}\label{lem:margulis}(\cite[Section 4.1]{Th})
There is a constant $\epsilon=\epsilon(X)>0$ such that if $\Lambda$ is a discrete torsion free subgroup of $G$ consisting of semi-simple elements and is generated by
$\{ \gamma\in\Lambda : d(\gamma\cdot \ti x,\ti x)<\epsilon\}$ for some $\ti x\in X$, then $\Lambda$ is cyclic. Moreover there is a unique geodesic, the axis of $\Lambda$, on which it acts by translations.
\end{thm}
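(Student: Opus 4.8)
The plan is to deduce this from the classical Zassenhaus lemma together with the special geometry of rank one spaces. First I would fix a \emph{Zassenhaus neighbourhood} $\Omega$ of the identity in $G$: a symmetric, relatively compact open set with the property that for every discrete subgroup $\Delta\le G$ the subgroup generated by $\Delta\cap\Omega$ is contained in a connected nilpotent Lie subgroup of $G$, and in particular is nilpotent. Its existence is the standard commutator-contraction argument: one chooses $\Omega$ small enough that taking commutators of elements of $\Omega$ strictly decreases a fixed local norm while keeping one inside $\Omega$, so that in a discrete group the descending chain of iterated commutators must reach $\{1\}$. Next I would convert the displacement hypothesis into control of the group structure, using the compactness of $K$. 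One chooses $\epsilon=\epsilon(X)>0$ so small that every $g$ with $d(g\tilde x,\tilde x)<\epsilon$ lies in $KP$ for $P$ an arbitrarily small ball around $1$; then, applying the pigeonhole principle to a finite cover $K\subset\bigcup_i k_iV$ and using repeatedly that $K$-conjugation does not enlarge small elements, one obtains that $\Lambda=\Lambda_\epsilon(\tilde x)$ has a finite-index subgroup generated by elements of $\Omega$. This is exactly the proof of the Margulis lemma as in \cite[Section 4.1]{Th}, and this extraction of an honest nilpotent subgroup from the displacement condition is the technical heart of the argument, where I would rely most heavily on the reference. The conclusion of this step is that $\Lambda$ is virtually nilpotent.

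It then remains to upgrade ``virtually nilpotent'' to ``infinite cyclic acting by translations on a unique axis'' in rank one. Since $\Lambda$ is discrete and torsion free it has no nontrivial elliptic element (an elliptic element lies in a compact point stabiliser, hence generates a finite subgroup), so by hypothesis every nontrivial element of $\Lambda$ is hyperbolic. A virtually nilpotent --- in particular amenable --- discrete group of isometries of the negatively curved space $X$ that contains a hyperbolic element is elementary: it fixes a pair $\{\xi_-,\xi_+\}$ of points at infinity, and therefore preserves the geodesic $A$ joining them. The resulting action of $\Lambda$ on $A\cong\mathbb{R}$ yields a homomorphism $\Lambda\to\mathrm{Isom}(\mathbb{R})$ whose kernel consists of elements fixing $A$ pointwise; such an element has a fixed point in $X$, hence is elliptic, hence trivial. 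Thus $\Lambda$ embeds as a discrete torsion-free subgroup of $\mathrm{Isom}(\mathbb{R})$, which must be infinite cyclic and generated by a translation (reflections have order two). Hence $\Lambda=\langle\gamma\rangle$ for a hyperbolic isometry $\gamma$, and $\Lambda$ acts by translations on $A$.

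Finally, uniqueness of the axis is a standard consequence of strict negative curvature: for a hyperbolic isometry $\gamma$ of a rank one symmetric space the displacement function $d_\gamma\colon x\mapsto d(\gamma x,x)$ is convex and strictly convex transverse to any $\gamma$-invariant geodesic, so its minimum set is a single geodesic, necessarily $A$. Since any $\Lambda$-invariant axis is in particular $\gamma$-invariant, $A$ is the unique axis of $\Lambda$. (If the generating set in the hypothesis is empty, $\Lambda$ is trivial and the statement is vacuous.) The only genuine obstacle is the passage, in the second paragraph, from the metric hypothesis on generators to a nilpotent finite-index subgroup; everything else is soft once the Zassenhaus neighbourhood is in hand.
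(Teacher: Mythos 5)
Your argument is correct, and it is essentially the standard proof from the reference the paper itself cites for this statement (Thurston, \S 4.1): the paper offers no proof of its own, so there is nothing to diverge from. The one step worth making explicit is that the image of $\Lambda$ in $\mathrm{Isom}(\mathbb{R})$ is \emph{discrete} (a torsion-free but non-discrete subgroup of $\mathbb{R}$ need not be cyclic); this follows because the pointwise stabilizer of the axis $A$ in $G$ is compact and normal in $\mathrm{Stab}_G(A)$, so the projection $\mathrm{Stab}_G(A)\to\mathrm{Isom}(A)$ is proper and carries the discrete group $\Lambda$ to a discrete subgroup. With that remark added, the proof is complete.
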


An important consequence of the Margulis lemma is the thick-thin decomposition which, in our case, says that the thin part
$$
 M_{<\epsilon}=\{x\in M:~\text{InjRad}_M(x)<\frac{\epsilon}{2}\}
$$
consists of finitely many connected components, each of which is a tubular neighbourhood of a short closed geodesic.

For $\ti x\in X$, we shall denote by $\gS_{\ti x,\gep}$ the set of elements in $\gC$ that move $\ti x$ by less than $\gep$, and by $\gC_{\ti x,\gep}=\langle\gS_{\ti x,\gep}\rangle$ the cyclic group it generates.

\medskip

The following Proposition gives an estimate on $f_t$ in terms of the number of $\Gamma$-orbit points in a ball. It is easily deduced from the proof of Lemma \ref{lem:hk}, more precisely from both Lemma \ref{L:heat} and the fact that $X$ has exponential growth.

\begin{prop}\label{prop:D}
Given $r>0$ there is $D=D(r,t)$ such that for any $x\in X$,
$$
 f_t(x)\le D \cdot \text{card}(\gC\cdot x\cap B(x,r)).
$$
\end{prop}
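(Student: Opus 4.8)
The plan is to dominate the series defining $f_t(x)$ by the Gaussian heat–kernel bound of Lemma~\ref{L:heat}, and then to control, uniformly in $x$, the number of group elements moving $\tilde x$ a bounded distance by a volume–packing argument that produces exactly the quantity $\operatorname{card}(\Gamma\cdot x\cap B(x,r))$.

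\textbf{Step 1: reduction to a lattice sum.} Fix a lift $\tilde x\in X$ of $x$ and write $\Gamma$ for the deck group, which acts freely on $X$ by isometries. Since each summand in the definition of $f_t(x)$ differs from $\|e^{-t\Delta^{(2)}_k}(\tilde x,\gamma\tilde x)\|$ only by a pullback along an isometry, the triangle inequality together with Lemma~\ref{L:heat} (applied with $m=t$) gives
$$
f_t(x)\ \le\ \sum_{\gamma\in\Gamma\setminus\{1\}}\bigl\|e^{-t\Delta^{(2)}_k}(\tilde x,\gamma\tilde x)\bigr\|\ \le\ c\,t^{-d/2}\sum_{\gamma\in\Gamma\setminus\{1\}}e^{-d(\tilde x,\gamma\tilde x)^2/(5t)},
$$
with $c=c(G,t)$. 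Put $Q:=\operatorname{card}(\Gamma\cdot x\cap B(x,r))$; since $\Gamma$ is torsion free, $Q=\#\{\gamma\in\Gamma:\ d(\tilde x,\gamma\tilde x)<r\}\ge 1$. It therefore suffices to bound the last sum by $\mathrm{const}(r,t)\cdot Q$.

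\textbf{Step 2: a uniform orbit–counting estimate.} For $R\ge 0$ set $\mathcal N(R)=\#\{\gamma\in\Gamma:\ d(\tilde x,\gamma\tilde x)\le R\}$. I claim there are constants $C_1=C_1(r,G)\ge 1$ and $a=a(G)\ge 0$ with $\mathcal N(R)\le C_1 e^{aR} Q$ for all $R\ge 0$. Indeed, choose a maximal $(r/2)$–separated subset $\{y_1,\dots,y_K\}$ of $\overline{B(\tilde x,R)}$; then $\overline{B(\tilde x,R)}\subseteq\bigcup_j B(y_j,r/2)$, while the balls $B(y_j,r/4)$ are pairwise disjoint and contained in $B(\tilde x,R+r/4)$, so by homogeneity of $X$ and the (at most exponential) growth of volumes of metric balls one gets $K\le C_1 e^{aR}$. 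On the other hand, if $\gamma_1\tilde x,\gamma_2\tilde x\in B(y_j,r/2)$ then $d(\tilde x,\gamma_1^{-1}\gamma_2\tilde x)<r$, so (fixing one $\gamma_1$ when the set is nonempty) the map $\gamma_2\mapsto\gamma_1^{-1}\gamma_2$ injects $\{\gamma:\gamma\tilde x\in B(y_j,r/2)\}$ into $\{\gamma:d(\tilde x,\gamma\tilde x)<r\}$; hence each $B(y_j,r/2)$ meets $\Gamma\tilde x$ in at most $Q$ points. As every $\gamma$ counted by $\mathcal N(R)$ has $\gamma\tilde x$ in some $B(y_j,r/2)$, it follows that $\mathcal N(R)\le KQ\le C_1 e^{aR}Q$.

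\textbf{Step 3: summation and conclusion.} Grouping $\Gamma\setminus\{1\}$ according to $\lfloor d(\tilde x,\gamma\tilde x)\rfloor$ and using Step 2,
$$
\sum_{\gamma\in\Gamma\setminus\{1\}}e^{-d(\tilde x,\gamma\tilde x)^2/(5t)}\ \le\ \sum_{j=0}^\infty \mathcal N(j+1)\,e^{-j^2/(5t)}\ \le\ C_1 e^{a}\Bigl(\sum_{j=0}^\infty e^{aj-j^2/(5t)}\Bigr)Q,
$$
and the series in parentheses converges to a finite constant $S=S(a,t)$. Combining with Step 1 yields $f_t(x)\le D\cdot Q$ with $D=D(r,t):=c\,t^{-d/2}C_1 e^{a}S$, which is the assertion. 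The only point requiring care is the uniformity in $x$ and $R$ of the covering bound $K\le C_1 e^{aR}$ in Step 2; this is exactly where homogeneity of $X$ and the exponential volume growth of metric balls enter, and it is the reason the argument goes through with no change for any symmetric space of non‑compact type, not only the rank‑one ones.
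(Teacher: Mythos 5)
Your argument is correct and follows essentially the same route as the paper's: both rely on the Gaussian heat kernel bound from Lemma~\ref{L:heat}, the key observation that every $r/2$-ball contains at most $\operatorname{card}(\Gamma\cdot x\cap B(x,r))$ orbit points (via translation by $\gamma_1^{-1}$), and exponential volume growth to make the resulting sum converge. The only cosmetic difference is that the paper sums over a single global maximal $r/2$-net of $X$ while you organize the sum by annuli and prove the cumulative orbit-count bound $\mathcal N(R)\le C_1e^{aR}Q$ first, but the underlying ideas are identical.
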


In view of Proposition \ref{prop:D}, our goal is to estimate the number of orbit points in a given ball and deduce bounds on $f_t$. We will split this into two estimates: one which is better close to the geodesic core of the thin part, and one which is better at points far from the core.

\begin{lem}[Near the core]\label{lem:0.2}
Let $x\in M$ be a point in an ${\gep}$-thin tubular neighbourhood of a short geodesic, and suppose that the length of that short geodesic is $\gt$. Then $f_t(x)\le C_1\gt^{-1}$, for some constant $C_1=C_1(X,t)$.
\end{lem}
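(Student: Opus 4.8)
The plan is to bound $f_t(x)$ by splitting the defining sum over $\gC\setminus\{1\}$ into the contribution of the cyclic group generated by the core geodesic and the contribution of everything else, and then to estimate each piece with the Gaussian heat-kernel bound. First I would fix notation: since $x$ lies in a component of $M_{<\gep}$, the Margulis Lemma (Theorem~\ref{lem:margulis}) gives, for a suitable lift $\tilde x$, that $\gC_{\tilde x,\gep}=\langle\gS_{\tilde x,\gep}\rangle$ is infinite cyclic, say $\gC_{\tilde x,\gep}=\langle\gc_0\rangle$, with $\gc_0$ a hyperbolic isometry whose axis $A$ projects to the short geodesic; since $\gc_0$ is primitive in $\gC$ (any root of $\gc_0$ would move $\tilde x$ by less than $\gep$ and hence already lie in $\langle\gc_0\rangle$), the translation length of $\gc_0$ equals $\gt$. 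Using the triangle inequality together with Lemma~\ref{L:heat} (exactly as at the start of the proof of Proposition~\ref{prop:D}), I reduce to estimating $\sum_{\gc\ne 1}e^{-d(\tilde x,\gc\tilde x)^2/(5t)}$, and I split $\gC\setminus\{1\}=(\langle\gc_0\rangle\setminus\{1\})\sqcup(\gC\setminus\langle\gc_0\rangle)$, denoting the two resulting sums $\gS'$ and $\gS''$.

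For $\gS'$: since $d(\tilde x,\gc_0^n\tilde x)$ is at least the translation length of $\gc_0^n$, namely $|n|\gt$, I get $\gS'\le 2\sum_{n\ge 1}e^{-(n\gt)^2/(5t)}\le 2\int_0^\infty e^{-(s\gt)^2/(5t)}\,ds=\sqrt{5\pi t}\,\gt^{-1}$. This is precisely where the factor $\gt^{-1}$ originates: the core geodesic produces roughly $\gt^{-1}$ terms of near-unit size. For $\gS''$ I first note that $\gc\notin\gC_{\tilde x,\gep}$ forces $\gc\notin\gS_{\tilde x,\gep}$, so $d(\tilde x,\gc\tilde x)\ge\gep$; more to the point, if $\gc$ and $\gc'$ lie in distinct left cosets of $\langle\gc_0\rangle$ then $\gc^{-1}\gc'\notin\gS_{\tilde x,\gep}$, hence $d(\gc\tilde x,\gc'\tilde x)\ge\gep$. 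Picking one orbit point per coset inside $B(\tilde x,R)$ yields an $\gep$-separated set in $B(\tilde x,R+\gep/2)$, so by the (fixed) exponential volume growth of the rank-one space $X$ — all constants depending only on $X$, since $\gep=\gep(X)$ — the number of cosets meeting $B(\tilde x,R)$ is at most $A_1e^{a_1R}$; and inside a single coset $\gc_c\langle\gc_0\rangle$ the estimate $d(\gc_c\gc_0^m\tilde x,\gc_c\gc_0^{m'}\tilde x)=d(\tilde x,\gc_0^{m'-m}\tilde x)\ge|m'-m|\gt$ forces at most $2R/\gt+1$ of its orbit points into $B(\tilde x,R)$. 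Thus $\#\{\gc\in\gC\setminus\langle\gc_0\rangle:d(\tilde x,\gc\tilde x)\le R\}\le(2R/\gt+1)A_1e^{a_1R}$, and summing the Gaussian weights against this count (the Gaussian factor dominating the polynomial-times-exponential growth, and $\gt<\gep$ absorbing the additive constant) gives $\gS''\le C_2(X,t)\,\gt^{-1}$. Combining, $f_t(x)\le C(X,t)(\gS'+\gS'')\le C_1(X,t)\,\gt^{-1}$.

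The step I expect to be the main obstacle is the bound on $\gS''$: a priori the number of lattice points of $\gC$ in a ball of radius $R$ is not controlled uniformly in the thin part — this is exactly the content of the $\mathrm{InjRad}^{-d}$ factor in Lemma~\ref{gauss} — so one cannot simply invoke exponential growth. The resolution is that once the cyclic ``core'' $\langle\gc_0\rangle$ is removed, the remaining orbit points spread out transversally (distinct cosets give $\gep$-separated points) while along each coset they stay $\gt$-spaced, and these two facts together recover precisely the control needed; the book-keeping of the constants, and verifying the primitivity of $\gc_0$ and that $x\in M_{<\gep}$ really does put $\gc_0$ into $\gC_{\tilde x,\gep}$, are the routine points to be careful about.
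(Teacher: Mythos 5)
Your proof is correct, and it reaches the bound by a route that is organized differently from the paper's. The paper simply applies Proposition~\ref{prop:D} with $r=\epsilon$, which reduces the entire lemma to bounding the single quantity $\mathrm{card}\big(\Gamma\cdot\tilde x\cap B(\tilde x,\epsilon)\big)$; every $\gamma$ contributing to that set lies in the cyclic group $\Gamma_{\tilde x,\epsilon}$, and the count is finished by the nearest-point projection $\pi_c$ onto the axis $c$, which is $1$-Lipschitz and injective on the orbit and sends it to a $\tau$-separated orbit inside an interval of length $2\epsilon$, giving $\le 2\epsilon/\tau$. You instead bypass Proposition~\ref{prop:D} and redo its global-to-local reduction by hand: the transversal control that the paper gets from the net $\mathcal D$ in Proposition~\ref{prop:D} you get from the $\epsilon$-separation of distinct cosets of $\langle\gamma_0\rangle=\Gamma_{\tilde x,\epsilon}$, and the longitudinal $\tau$-spacing that the paper gets from the projection to the axis you get from the translation-length bound $d(\tilde x,\gamma_0^n\tilde x)\ge|n|\tau$. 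Both proofs ultimately rest on the same two ingredients (exponential volume growth of $X$ with constants depending only on $X$, and $\tau$-separation of the cyclic stabilizer's orbit); yours is self-contained but longer, while the paper's leans on a proposition it has already established. Your coset count $N(R)\le(2R/\tau+1)A_1e^{a_1R}$ and the Gaussian summation are sound, including the absorption of the additive constant via $\tau<\epsilon$.

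One small caveat: your parenthetical justification that $\gamma_0$ is primitive is not quite right --- a $k$-th root $\delta$ of $\gamma_0$ could carry a nontrivial rotational part and move $\tilde x$ by more than $\epsilon$ even though $\gamma_0$ does not, so one cannot conclude that roots of $\gamma_0$ lie in $\Sigma_{\tilde x,\epsilon}$. Fortunately primitivity is never needed: if $\gamma_0=\delta^k$ with $\delta$ primitive of translation length $\tau$, then $\ell(\gamma_0)=k\tau\ge\tau$, and every estimate in your argument only improves.
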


\begin{proof}
Let $x$ be such a point in $M$ and let $\ti x$ be a lift of $x$ to $X$. In view of Proposition \ref{prop:D} we should obtain an upper bound of the form $\text{const}\cdot\tau^{-1}$ on the cardinality of the set
$$
 \mathcal{E}=\Gamma\cdot\ti x \cap B(\ti x,\epsilon)=\Gamma_{\ti x,\gep}\cdot\ti x \cap B(\ti x,\epsilon).
$$
Let $c$ be the axis of $\Gamma_{\ti x,\gep}$ and  $\pi_c:X\to c$ be the nearest point projection. Since $c$ is convex and $X$ is non-positively curved, $\pi_c$ is $1$-Lipschitz. Since $\Gamma_{\ti x,\gep}$ is torsion free and stabilizes $c$, it follows that the restriction of $\pi_c$ to a $\Gamma_{\ti x,\gep}$-orbit is one to one and its image is again a $\Gamma_{\ti x,\gep}$-orbit. Moreover, since $\mathcal{E}$ has diameter $\le 2\epsilon$ we deduce that $\pi_c(\mathcal{E})$ is contained in an interval of length $2\epsilon$ in $c$. Thus $\text{Card}(\mathcal{E})\le\frac{2\epsilon}{\tau}$.
\end{proof}

Lemma \ref{lem:0.2} gives a sufficiently good bound on $f_t(x)$ when $x$ is close to a short geodesic. However when $x$ is far from the geodesic, the injectivity radius
$\mathrm{InjRad}_{M} (x)$ might be of several magnitude larger than the minimal displacement $\tau$, and the result of \ref{lem:0.2} will not be enough for our purpose, so we should obtain a better estimate in terms of $\mathrm{InjRad}_M (x)$.
At first glance one may expect that the number of orbit points in a ball is controlled by $\mathrm{InjRad}_M (x)^{-1}$ (or by $\mathrm{InjRad}_M (x)^{-r}$ in general when $r=\text{rank}_\BR(X)$). However the rotational parts of the isometries may make the orbit denser at certain distances from the submanifold of local minimal displacement. The true exponent is the absolute rank $s$:

\begin{lem}[Far from the core]\label{lem:power{-s}}
If $x$ lies in the $\epsilon$-thin part of $M$, then 
$$
 f_t(x)\le C_2\mathrm{InjRad}_M (x)^{-s}
$$ 
for some constant $C_2=C_2(X,t)$.
\end{lem}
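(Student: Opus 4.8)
The plan is to reduce $f_t(x)$, via Proposition \ref{prop:D}, to a count of $\gC$-orbit points in a fixed ball about a lift $\ti x$ of $x$, and then to estimate that count by packing it inside the Haar measure of one abelian subgroup of $G$ whose real dimension is $s$ and which contains all the relevant group elements.

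First I would pass to the combinatorial problem. Since $x$ lies in the $\gep$-thin part, the Margulis Lemma (Theorem \ref{lem:margulis}) shows that $\gC_{\ti x,\gep}=\langle\gS_{\ti x,\gep}\rangle$ is nontrivial and infinite cyclic; fix a generator $\gc_0$, a loxodromic isometry with axis $c\subset X$, and note that every $\gc\in\gC$ with $d(\ti x,\gc\ti x)<\gep$ already lies in $\langle\gc_0\rangle$. Applying Proposition \ref{prop:D} with $r=\gep$, and using that $\gC$ is torsion free (so distinct powers of $\gc_0$ move $\ti x$ to distinct points),
$$ f_t(x)\ \le\ D(\gep,t)\cdot N,\qquad N:=\#\{k\in\BZ:\ d(\ti x,\gc_0^{\,k}\ti x)<\gep\}. $$
Put $I:=\mathrm{InjRad}_M(x)$; then $d(\ti x,\gc_0^{\,k}\ti x)\ge 2I$ for every $k\neq0$, and it suffices to show $N\le C\,I^{-s}$ with $C=C(X)$. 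We may assume $I$ is smaller than any fixed constant, since otherwise the points $\gc_0^{\,k}\ti x$ lying in $B(\ti x,\gep)$ are boundedly many (a crude packing of $2I$-separated points in the ball $B(\ti x,\gep)\subset X$) and the claim is trivial after enlarging $C$.

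Being loxodromic, $\gc_0$ is an $\BR$-semisimple element of $\mathbf G$, hence $\gc_0=\exp(X_0)$ with $X_0$ in the Lie algebra $\mathfrak s\subset\mathfrak g$ of a maximal $\BR$-torus, of real dimension $\mathrm{rank}_{\BC}(\mathbf G)=s$. Writing $S:=\exp(\mathfrak s)$, this is a connected abelian subgroup of $G$ with $S\cong\BR\times T'$, where the $\BR$-factor translates along $c$ and $T'$ is a maximal torus (of rank $s-1$) of the compact group fixing $c$ pointwise; the dimension count is routine for each of $\SO(n,1),\SU(n,1),\Sp(n,1),F_{4(-20)}$ and is consistent with the definition of $s$ in \S\ref{subsection:notations}. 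The group $S$ preserves $c$, acts on $X$ by isometries, and contains all powers $\gc_0^{\,k}$. The analytic heart of the argument is a \emph{uniform doubling} estimate for $V(r):=\mathrm{Haar}_S\{g\in S:\ d(\ti x,g\ti x)<r\}$: there exist $c_0=c_0(X)$ and $r_0=r_0(X)>0$ with $V(2r)\le c_0\,V(r)$ for $0<2r<r_0$, with $c_0,r_0$ independent of $\gC$, of $\ti x$, and of $\gc_0$. Granting this, the packing is immediate: for $k$ with $d(\ti x,\gc_0^{\,k}\ti x)<\gep$ and $g\in S$ with $d(\ti x,g\ti x)<I$ one has $d(\ti x,\gc_0^{\,k}g\,\ti x)<2\gep$ by the triangle inequality, while the sets $\gc_0^{\,k}\{g\in S:d(\ti x,g\ti x)<I\}$ are pairwise disjoint over such $k$ (if $\gc_0^{\,j}g_j=\gc_0^{\,k}g_k$ with $j\neq k$ then $d(\ti x,\gc_0^{\,j-k}\ti x)=d(g_k\ti x,g_j\ti x)<2I$, contradicting $d(\ti x,\gc_0^{\,m}\ti x)\ge 2I$ for $m\neq0$). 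Hence, by left-invariance of Haar measure on $S$ and iterating the doubling inequality,
$$ N\cdot V(I)\ \le\ V(2\gep)\ \le\ c_0\,(2\gep/I)^{s}\,V(I), $$
so $N\le c_0(2\gep/I)^{s}$; since $\gep=\gep(X)$ is fixed this gives $N\le C\,I^{-s}$ and the lemma, with $C_2=D(\gep,t)\cdot C$.

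The main obstacle is the uniform doubling estimate — this is exactly where the complex rank $s$, and not $\dim X$, comes in. I would derive it from the classical screw-motion displacement identity (as in \cite{BGS}, \cite{Th}): if $g=(u,\theta)\in\BR\times T'$ and $\rho=d(\ti x,c)$ then, for $d(\ti x,g\ti x)$ below the Margulis constant, $d(\ti x,g\ti x)^2\asymp u^2a(\rho)^2+b(\rho)^2\angle_v(\theta)^2$, where $a(\rho),b(\rho)>0$ are the norms of the translational and rotational Killing fields (for real hyperbolic space $a=\cosh\rho$, $b=\sinh\rho$, with the evident analogues elsewhere) and $\angle_v(\theta)$ is the distance, in the unit normal sphere of $c$ at the foot of $\ti x$, from the direction $v$ of $\ti x$ to $\theta v$. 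Integrating out $u$ gives $V(r)\asymp (r/a(\rho))\,W_v(c\,r/b(\rho))$, where $W_v(\sigma)=\mathrm{vol}_{T'}\{\theta:\angle_v(\theta)<\sigma\}$ is the volume function of metric balls in the $T'$-orbit of $v$ — a flat torus of dimension $\le s-1$ with uniformly controlled extrinsic geometry in the sphere — and is therefore uniformly $2^{s-1}$-doubling; the $\rho$-dependent scalars $a(\rho),b(\rho)$ and the volume of the stabilizer subtorus cancel between $V(2r)$ and $V(r)$, which is precisely why the exponent comes out to be $s$. The points I expect to require the most care are the exact form of this displacement identity for complex/quaternionic hyperbolic space and the Cayley plane, and the uniformity over all $\rho>0$ of the extrinsic geometry of torus orbits in the normal sphere — although in the collapsing regime $\rho\to0$ the orbit is essentially one–dimensional and the desired bound degenerates to the elementary inequality $N\lesssim\gep/I\le\gep^{s}I^{-s}$.
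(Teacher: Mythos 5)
Your reduction to counting $N=\#\{k:\ d(\ti x,\gc_0^{\,k}\ti x)<\gep\}$ and the group-level packing step ($N\cdot V(I)\le V(2\gep)$ via disjointness of the translates $\gc_0^{\,k}\{g\in S: d(\ti x,g\ti x)<I\}$ inside $S$) are correct, and this is a genuinely different route from the paper's. The paper never works with the displacement-neighborhoods in a maximal torus: it first passes (via the Kazhdan--Margulis/Zassenhaus argument) to a uniformly bounded-index subgroup $N\le\gC_{\ti x,\gep}$ generated inside a Zassenhaus neighborhood, so that every relevant element is $\exp$ of a \emph{small} Lie algebra element; it then observes that these logarithms span an abelian subalgebra $\mathfrak n$ with $\exp(\mathfrak n)$ a torus, hence $\dim\mathfrak n\le s$, and that the bi-Lipschitz property of the orbit map makes $\log(N)$ an $\mathrm{InjRad}_M(x)/(2\alpha)$-separated subset of a fixed ball in $\mathfrak n$. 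The count is then an elementary Euclidean packing in dimension $\le s$. This sidesteps precisely the difficulty your proof runs into: the elements $\gc_0^{\,k}$ with small displacement at $\ti x$ need not be close to $1$ in $G$ (their rotational parts can be large), which is why you cannot take logarithms and instead must control the Haar measure of displacement sublevel sets in $S$.

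That is where your proof has a genuine gap. Two points. First, iterating a doubling inequality $V(2r)\le c_0V(r)$ yields $V(2\gep)\le(2\gep/I)^{\log_2 c_0}V(I)$, so you need the doubling constant to be exactly $2^{s}$ (up to a non-iterated multiplicative constant); there is no room for loss. Second, and more seriously, your justification that $W_v$ is ``$2^{s-1}$-doubling because it is the ball-volume function of a flat torus of dimension $\le s-1$'' conflates the intrinsic metric on the orbit $T'v$ (for which Bishop--Gromov does give $2^{\dim}$-doubling) with the extrinsic metric $\angle_v$ that actually appears in the displacement formula. The sublevel sets of $\angle_v$ on $T'$ are Bohr-type sets $\{\theta:\ |v_j|\,\|\langle\alpha_j,\theta\rangle\|\lesssim\sigma\ \forall j\}$, where $\alpha_1,\ldots,\alpha_m$ are the weights of $T'$ on the normal space, and the generic doubling exponent for such sets is the number of weights $m$, not $\dim T'=s-1$. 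For real hyperbolic space the weights are independent, the set factors as a product over the weight spaces, and the estimate $W_v(\sigma_1)/W_v(\sigma_2)\le C(\sigma_1/\sigma_2)^{s-1}$ follows easily; but for $\Sp(n,1)$ and $F_{4(-20)}$ one has $m>s-1$ (e.g.\ for $\Sp(2,1)$ the $2$-torus acts with three nonzero weights $\alpha\pm\beta,\,2\beta$), so the required comparison needs a genuine lattice-point/slab argument exploiting that the weights span a $(s-1)$-dimensional character group. You flag this as the delicate point but do not supply the argument, and as written the proof only covers the real (and complex) hyperbolic cases.
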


\begin{proof}
Let $\gd>0$ be sufficiently small so that for
$$
 U_{\ti x_0}=\exp(\{ X\in \mathfrak{g}:\| X\|\le\delta\}
$$
we have that $U_{\ti x_0}^2$ forms a Zassenhaus neighbourhood in $G$ (see \cite[Chapter XI]{Raghunathan} and \cite[Section 4.1]{Th}). Here, $\ti x_0\in X$ is the point corresponding to $K\in G/K=X$. 

We shall call $U_{\ti x_0}$ the Zassenhaus neighbourhood associated to $\ti x_0$. Since $G$ acts transitively, for any $\ti x\in X$ we have some $g\in G$ such that $g\cdot \ti x_0=\ti x$. Set $U_{\ti x}=gU_{\ti x_0}g^{-1}$ and $U_{\ti x}^2$ as the Zassenhaus neighbourhood associated to $\ti x$. Since $U_{\ti x_0}$ is invariant under conjugation by $K$, $U_{\ti x}$ is well defined.

The orbit map $X\mapsto \exp(X)\cdot \ti x_0$ restricted to $\{ X\in \mathfrak{g}:\| X\|\le\delta\}$ is $\alpha$-bi-Lipschitz for some constant $\alpha$ and covers an open ball $B_X(\ti x_0,\beta)$ for some $1\ge\beta>0$. It follows that if $V_1,\ldots,V_t\in \mathfrak{g}$ are of norm at most $\delta$ and $\{\exp(V_1)\cdot \ti x_0,\ldots,\exp(V_t)\cdot \ti x_0\}$ forms a $\rho$-discrete subset of $X$ then $\{V_1,\ldots,V_t\}$ is $\frac{\rho}{\alpha}$ discrete in $\mathfrak{g}$.

Let now $x\in M_{\le\epsilon}$ be the point in question and let $\ti x\in X$ be a lift of $x$. We may suppose that $\mathrm{InjRad}_M (x)<\beta$. Let
$$
 m=\frac{\mu(U_{\ti x}\cdot\{g\in G:d(g\cdot \ti x,\ti x)\le 1\})}{\mu (U_{\ti x})}+1.
$$
Note that $m$ is independent of $\ti x$.
In the proof of the Margulis Lemma given in \cite[Section 4.1]{Th} it is shown that the Margulis constant $\gep$ can be chosen to be $1/m$ or smaller. Since we have defined $m$ and $\gb$ independently of $\gep$, we may assume that $\gep\le \frac{\gb}{2m}$. In that case, as follows easily from the argument in \cite[Section 4.1]{Th}, $N=\langle U_{\ti x}^2\cap\gC_{\ti x,\gep}\rangle$ is a subgroup of index $\le m$ in $\gC_{\ti x,\gep}$ and one can choose coset representatives within $\gS_{\ti x,\gep}^m$.
In particular it follows that
$$
 \text{card}\left(\gC\cdot \ti x\cap B\left(\ti x,\frac{\beta}{2}\right)\right)\le m\cdot \text{card}(N\cdot \ti x\cap B(\ti x,\beta)).
 $$
Moreover by the Zassenhaus--Kazhdan-Margulis theorem (see \cite[Chapter XI]{Raghunathan}) $\log N$ spans a connected nilpotent Lie sub-algebra $\mathfrak{n}$ of the Lie algebra of the stabilizer $\text{Stab}_G(c)$ where $c$ is the axis of $\gC_{\ti x,\gep}$. Note that $\text{Stab}_G(c)$ is isomorphic to a compact group times $\BR^*$ and hence admits no unipotent elements. It follows that $\mathfrak{n}$ is abelian and semi-simple and its exponentiation $\exp(\mathfrak{n})$ is a torus in $G$. In particular $\dim \mathfrak{n}\le s$.
Finally since $N\cdot \ti x\cap B(\ti x,\beta)$ is $\mathrm{InjRad}_M (x)/2$ discrete we get that $\log (N)$ is $\mathrm{InjRad}_M (x)/(2\alpha)$ discrete in $\mathfrak{n}$. Thus
$$
 \text{card}(N\cdot \ti x\cap B(\ti x,\beta))\le\text{card}(\log (N)\cap B_\mathfrak{g}(0,\delta))\le C' \left(\frac{\mathrm{InjRad}_M (x)}{\alpha}\right)^s,
$$
and the result follows from Proposition \ref{prop:D}.
\end{proof}

\subsection{Real hyperbolic manifolds} 

Given a symmetric space $X$ of non-compact type, Margulis has conjectured that the set of arithmetic compact $X$-manifolds is uniformly discrete. If $\text{rank}(X)\ge 2$ or if $X$ is the symmetric space corresponding to $\Sp(d,1)$ or $F_4^{-20}$, then all irreducible $X$-manifolds are arithmetic, by Margulis's Arithmeticity Theorem \cite{Mar-arithmeticity} and the Corlette--Gromov--Schoen Theorem~\cite{Corlette,Gr-Sc}, respectively. For $\SU(d,1)$ there are a few known examples of non-arithmetic manifolds for $d=2,3$, and it is likely that most manifolds are arithmetic. It is thus natural to conjecture that if $X$ is not isometric to $\BH^d$, then the family of all irreducible compact $X$-manifolds is uniformly discrete. 

For $X=\BH^d$, it is known that for all $d$, there are closed hyperbolic $d$-manifolds with arbitrarily short systoles, \cite{Agol,BHW,BT}. Our aim here is to prove a strong generalization of Corollary \ref{main} for $\BH^d$, not assuming uniform discreteness.

\begin{thm}\label{thm:rank-one}
Let $M_n=\Gamma_n\backslash \BH^d$ be a sequence of compact hyperbolic $d$-manifolds that BS-converges to $\BH^d$. Then for every $k=0,\ldots,d$,
$$
 \lim_{n \to +\infty} \frac{b_k(M_n)}{\vol(M_n)}=\beta^{(2)}_k(\BH^d).
$$
\end{thm}

\begin{rem}
The analog of Theorem \ref{thm:rank-one} holds in the greater generality where $\BH^d$ is replaced by a general rank one symmetric space. The proof of that however uses different techniques and is much longer. This result will appear in \cite{ABBG} where we will also treat higher rank symmetric spaces. 
\end{rem}

Note that for $X=\BH^2$, the hyperbolic plane, Theorem \ref{thm:rank-one} is a consequence of the Gauss--Bonnet theorem, even under the weak assumption that only $\vol(M_n)\to\infty$, without requiring BS-convergence.   The cases $d=3$ and $d\geq 4$ will be handled separately. When $d\geq 4$, Theorem \ref{thm:rank-one} will follow from a fine analysis of the heat kernel in the thin parts of the $M_n$, which is of independent interest. In dimension $3$, the analogous statements about the heat kernel are not true, as we will explain, but we can use a trick to reduce the calculation of Betti numbers to  estimates  on the heat kernel only over the thick part of $M_n$.

We start with the case $d\geq 4.$ Fix $k\in\{1,\ldots,d\}$, and let $f_t$ be the function defined in \ref{eq:f_t}. In view of Proposition \ref{Pcvhk}, we need to prove: 
\begin{equation}
\label{prop:0.4} \lim_n \frac{1}{\vol(M_n )}\int_{M_n } f_t(x) \, dx\to 0.
\end{equation}

The crucial technical tool is the following.

\begin{thm}\label{thm:f-bounded-by-vol}
Given $d\geq 4$ and $t>0$, there is a constant $C=C(d,t)$ such that
$$
 \int_{M_{\le\gep/2}}f_t(x)\, dx\le C_0\cdot \vol(M_{\le\gep}),
$$
for every compact hyperbolic $d$-manifold.
\end{thm}

Assuming Theorem \ref{thm:f-bounded-by-vol}, let us prove \eqref{prop:0.4}. First, note that
\begin{align}
	 \lim_n \frac{\int_{M_n } f_t(x) \, dx}{\vol(M_n ) }&= \lim_n \frac{\int_{(M_n)_{\geq \epsilon} } f_t(x) \, dx}{\vol(M_n )} + \frac{\int_{(M_n)_{< \epsilon} } f_t(x) \, dx}{\vol({(M_n)_{< \epsilon} })} \cdot \frac{\vol({(M_n)_{< \epsilon} })}{\vol(M_n )}\nonumber\\
	 &\leq  \lim_n \frac{\int_{(M_n)_{\geq \epsilon} } c(t) \mathrm{InjRad}(x)^{-d} \, dx}{\vol(M_n )} + C_0 \cdot \lim_n\frac{\vol({(M_n)_{< \epsilon} })}{\vol(M_n )}\label{wow}
\end{align}
Here, $c(t) \mathrm{InjRad}(x)^{-d}$ comes from Lemma \ref{lem:hk} and the $C_0$ is from Theorem~\ref{thm:f-bounded-by-vol}.  On the far right, BS-convergence $M_n\to X$  implies that the limit is zero. So, for any fixed $r\geq \epsilon$,  splitting up the first term in \eqref{wow} gives the upper bound
\begin{align*}&\leq  \lim_n \frac{\int_{(M_n)_{\geq \epsilon} \cap (M_n)_{<r}} c(t) \epsilon^{-d} \, dx}{\vol(M_n )} +  \frac{\int_{(M_n)_{\geq r} } c(t) r^{-d} \, dx}{\vol(M_n )}\\
&\leq c(t) \epsilon^{-d}  \lim_n\frac{\vol({(M_n)_{<r} })}{\vol(M_n )} + c(t)r^{-d}\\
&=c(t)r^{-d},
\end{align*}
again by BS-convergence. Letting $r\to \infty$, this proves \eqref{prop:0.4}.

\subsection{The proof of Theorem \ref{thm:f-bounded-by-vol}}
We shall work in radial horospherical coordinates of the upper half space model of $\BH^d$
$$
 \{(x_1,\ldots,x_d\in\BR^d): x_d>0\},~ ds^2={\sum dx_i^2\over x_d^2}.
$$
Consider the vertical geodesic $c=(0,\infty)$ and the horizontal (intrinsically Euclidean) horosphere $\BE^{d-1}$ passing through $c$ at $p=(0,\ldots,0,1)$. We will consider the coordinates $(r,\theta)$ for points on $\BE^{d-1}$ where $r$ is the horospherical radial distance to $p$ and $\theta$ is the direction. (Note that the hyperbolic distance of the point $(r,\theta)$ to $p$ is roughly $\log r$.) We can extend these coordinates to the upper half space, letting $(r,\theta,a)$ denote the point $a\cdot x$ where $x$ is the point on $\BE^{d-1}$ of coordinate $(r,\theta)$ and $a$ is the isometric homothety corresponding to a multiplication by $a>0$ in $\BR^n$.
Let $G_c$ be the stabilizer of $c$ in $G$, $G_c\cong\SO(d-1)\times\BR^{>0}$.

\begin{lem}\label{lem:1.01}
There are $R<\infty$ and $\ga>1$ such that if $r_1,r_2>R$ then for any two points points $x_1=(r_1,\theta),~x_2=(r_2,\theta)$ at the same direction $\theta$ and any $g\in G_c$ for which $d_g(x_1),d_g(x_2)\le\gep$ we have:
$$
\ga^{-1}\frac{r_1}{r_2}<\frac{d_g(x_1)}{d_g(x_2)}<\ga\frac{r_1}{r_2}.
$$
\end{lem}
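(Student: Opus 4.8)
\noindent\emph{Proof plan.} The plan is to write the displacement of an element $g\in G_c$ explicitly in the given coordinates and to check that, along a fixed horospherical ray issuing from $p$, it grows essentially linearly in $r$.

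First I would describe $g$ concretely. Since $c$ is the vertical geodesic with endpoints $0$ and $\infty$, every element of $G_c\cong\SO(n-1)\times\BR^{>0}$ acts on the upper half space by $x\mapsto a\cdot o(x)$, where $a>0$ is the homothety (the $T_c$-factor) and $o\in\SO(n-1)$ acts on the first $n-1$ Euclidean coordinates and fixes $c$ pointwise (the $K_c$-factor); the two factors commute. As the metric induced on $\BE^{n-1}=\{x_n=1\}$ is the flat Euclidean one, the point with horospherical coordinates $(r,\theta)$ is the Euclidean point $(r\theta,1)$, and $g$ carries it to $(ar\,o(\theta),a)$. Substituting into the upper half space distance formula $\cosh d(x,y)=1+|x-y|_{\mathrm{eucl}}^2/(2x_ny_n)$ then yields
\[
\cosh d_g\bigl((r,\theta)\bigr)-1=B\,r^2+C,\qquad B=\frac{|\theta-a\,o(\theta)|^2}{2a},\quad C=\frac{(1-a)^2}{2a}.
\]

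The one point that needs an observation is the inequality $0\le C\le B$: since $\theta$ and $o(\theta)$ are unit vectors, $|\theta-a\,o(\theta)|^2-(1-a)^2=2a\bigl(1-\langle\theta,o(\theta)\rangle\bigr)\ge 0$, which gives $C\le B$ (geometrically: the homothety part never dominates the combined displacement measured by $|\theta-a\,o(\theta)|$). Granting this, for $r\ge 1$ we have $B r^2\le Br^2+C\le Br^2+B\le 2B r^2$, so $\cosh d_g((r,\theta))-1$ lies between $Br^2$ and $2Br^2$. Taking $R=1$ (any $R\ge 1$ works), this forces, for $r_1,r_2>R$,
\[
\tfrac12\,\frac{r_1^2}{r_2^2}\ \le\ \frac{\cosh d_g(x_1)-1}{\cosh d_g(x_2)-1}\ \le\ 2\,\frac{r_1^2}{r_2^2};
\]
the degenerate case $B=0$ forces $C=0$, so $d_g$ vanishes identically on the ray and may be set aside.

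Finally I would convert $\cosh d-1$ back to $d$, using that $d_g(x_1),d_g(x_2)\le\gep$. From $\cosh t-1=2\sinh^2(t/2)$ together with $t/2\le\sinh(t/2)\le\tfrac{t}{2}\cosh(\gep/2)$ on $[0,\gep]$, one gets $\tfrac12 t^2\le\cosh t-1\le\tfrac12\cosh^2(\gep/2)\,t^2$ there; plugging $t=d_g(x_i)$ into the previous display and taking square roots gives the lemma with $\ga=\sqrt2\,\cosh(\gep/2)$ (and strict inequalities once the displacements are nonzero). I do not expect a genuine obstacle here: the crux is noticing $C\le B$, after which everything is routine bookkeeping with the half space distance formula and the comparison $\cosh t-1\asymp t^2$ on a bounded interval; no input from outside this section is needed.
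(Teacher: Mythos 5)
Your proof is correct and is essentially a careful formalization of the paper's sketch: both arguments reduce to comparing the displacement $d_g(x_i)$ with the Euclidean horospherical distance $r_i\,|\theta - a\,o(\theta)|$, up to a bounded multiplicative factor. The explicit formula $\cosh d_g - 1 = Br^2 + C$ with $B = |\theta-a\,o(\theta)|^2/2a$, $C = (1-a)^2/2a$, together with your observation that $C\le B$ (which follows from $\langle\theta,o(\theta)\rangle\le 1$), substantiates the paper's phrase ``bounded multiplicative error,'' pins down an explicit $R=1$ and $\ga=\sqrt2\cosh(\gep/2)$, and handles the degenerate case $B=0$ cleanly; this is a genuine sharpening of the paper's one-line justification.
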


\begin{proof}
Since the points $x_1,x_2$ are far from the invariant geodesic $c=(0,\infty)$ and have small $g$-displacement, the distances $d(g\cdot x_i,x_i)$ are approximated, up to a bounded multiplicative error, by the intrinsic Euclidean distance between the Euclidean projections of $g\cdot x_i$ and $x_i$ to the horosphere $\BE^{d-1}$. For the projections (considered with the intrinsic distance) however the ratio in question is equal to $\frac{r_1}{r_2}$ by similarity of Euclidean triangles.
\end{proof}

Let now $M_{\le\gep}^\circ$ be a thin component which is a tubular neighbourhood of a short geodesic, and let $\ti M_{\le\gep}^\circ$ be a connected component of its pre-image in the upper half space. We may suppose that the short geodesic lifts to $c=(0,\infty)$. Suppose that the length of the short geodesic is $\gt$. Note that $G_c=N_G(G_c)$ and hence it follows from the Margulis Lemma that $\gC_{p,\gep}$ is contained in $G_c$. Choose a fundamental domain for $\gC_{p,\gep}$ in $\ti M_{\le\gep}^\circ$ of the following form:
$$
\mathcal{F}=\{ (r,\theta,a): r\le\psi(\theta),~1\le a<e^\gt\}
$$
where $\psi(\theta)$ is defined to be the radial horospherical distance for which at direction $\theta$ the minimal displacement is exactly $\gep$, i.e.
$$
 \min\{d_\gc(x):\gc\in\gC_{p,\gep}\setminus\{1\}\}=\min\{d_\gc(x):\gc\in\gC\setminus\{1\}\}=\gep
$$
for $x\in \BE^{d-1}$ of coordinates $(\psi(\theta),\theta)$.

\begin{lem}\label{lem:tao_0}
Given $R>0$, there is some $\ti \tau(R)>0$ such that if $\gt\le\ti \tau$ then $$\psi(\theta)>R \ \ \forall \theta.$$
\end{lem}

\begin{proof}
Let $\alpha>0$ be sufficiently small so that any two horocylic rays $r_1(t)=(t,\theta_1,a)$ and $r_2(t)=(t,\theta_2,a)$ starting at an angle $\le \alpha$ stay at distance $\le \gep/2$ from each other when $t\le R$.
Since $SO(d-1)$ is compact there is some $l\in\BN$ such that for any $o\in SO(d-1)$ there is $j=j(o)\le l$ such that $\angle (o^j(\hat v),\hat v)<{\alpha}$ for every $\hat v\in \BR^{d-1}$. Let $\lambda>0$ be small enough so that any two horocyclic rays orthogonal to $c$ that start parallel to each other at distance $\le \lambda$, stay at distance $\le \gep/2$ for $t\le T$.

Take $\gt_T={\lambda \over l}$. If $g\in G_c$ is any isometry with displacement $\gt\le\ti \gt$ and rotational part $o$, it is easy to see that $g^{j(o)}$ has translational part $\le \lambda$ on $c$ and rotational part $\le \alpha$. Thus its displacement is $\le \gep$ everywhere on the $R$ neighbourhood of $c$.
\end{proof}

We may fix $R>0$ and assume $\gt\le \ti\gt(R)$. To estimate the integral of $f_t(x)$ over $\mathcal{F}$, we divide the domain into two parts, $\mathcal{F}_1=\{ 0\le r\le R\}$ and $\mathcal{F}_2=\{ R< r<\psi(\theta)\}$:

$$
 \int_{\mathcal{F}} f_t(x)dx=\int_{\mathcal{F}_1} f_t(x)dx + \int_{\mathcal{F}_2} f_t(x)dx.
$$
The first integral can be bounded using Lemma \ref{lem:0.2}:
$$
 \int_{\mathcal{F}_1} f_t(x)dx\le \vol(\mathcal{F}_1)\cdot C_1\gt^{-1}\le\gt\cdot \vol(B^{d-1}(R))\cdot C_1\gt^{-1} = \vol(B^{d-1}(R))\cdot C_1
$$
where $B^{d-1}(R)$ is an Euclidian $(d-1)$-ball of radius $R$. So the first integral is bounded by a constant. Recall that the volume of each thin component is bounded below by a constant since one can inject an $\gep\over 2$ ball tangent to the boundary of the component.

Let us estimate the second integral. Note that by Lemma \ref{lem:1.01} the $\gC_{p,\gep}$ minimal displacement at $(r,\theta)$ for $r>R$ is at least $\ga^{-1}\frac{\gep r}{\psi(\theta)}$. Therefore using Lemma \ref{lem:power{-s}} we deduce
\begin{equation*}
\begin{split}
\int_{\mathcal{F}_2} f_t(x)dx & \le C_2\ga^{s}\int_{\theta\in \mathbb{S}^{d-2}}  \int_R^{\psi(\theta)}\big(\frac{\gep\cdot r}{\psi(\theta)}\big)^{-s}\cdot \gt\cdot r^{d-2}dr  d\theta \\
& \le\text{Const}\cdot\gt\int_{S^{d-2}} \left( \psi(\theta)^{s} \int_0^{\psi(\theta)}r^{d-s-2}dr\right) d\theta.
\end{split}
\end{equation*}
Here $s = \left[ \frac{d+1}{2} \right]$ and since $d\geq 4$ we have $d-s-2\geq 0$. It follows that 
$$\int_{\mathcal{F}_2} f_t(x)dx  \le \text{Const}\int_{\mathbb{S}^{d-2}}\gt\cdot\psi(\theta)\cdot\psi(\theta)^{d-2}d\theta.
$$
The point is that the last term is, up to a constant, the volume of the thin component.
This concludes the proof of Theorem \ref{thm:f-bounded-by-vol}.
\qed

\subsection{The case $d=3$}
Equation \eqref{prop:0.4} is false when $d=3$, essentially since $f_t$ has infinite integral when $M$ has a cusp, and cusped manifolds can be approximated by closed manifolds using hyperbolic Dehn surgery.

 To discuss this in more detail, suppose that $M$ is a finite volume hyperbolic $d$-manifold.  When $M$ is noncompact, the heat operators on $k$-forms $e^{-t\Delta_k}$ are not of trace class. In fact, following \cite[Equation (3.3)]{MP}, we may endow $M$ with a height function in the cusps. For $Y$ big enough the truncation $M(Y)$ of $M$ at height $Y$ is diffeomorphic to the so-called Borel--Serre compactification of $M$. Fixing $t$ it is a consequence of the Selberg trace formula (see e.g. \cite{Friedman} for the case of functions and \cite[Equation (5.5)]{MP} for the general case) that 
$$\int_{M(Y)} \mathrm{tr} \, e^{-t \Delta_k } (x,x) dx \, \sim \, k_0 \log Y + c .$$
Here, the notation $A(Y) \sim B(Y)$ means that $A(Y)-B(Y) \to 0$ as $Y \to +\infty$, and $k_0$ and $c$ are positive constants that depends on $t$. (In the case of $0$-forms $k_0 = \frac{h}{2\pi} \int_{0}^{+\infty} e^{-t (1+s^2)} ds$, where $h$ is the number of cusps.) 

In particular, for $Y$ big enough we have:
$$
\int_{M(Y )} \mathrm{tr} \, e^{-t \Delta_k } (x,x) dx \geq 2\vol (M).
$$
When $M$ has dimension $d=3$, hyperbolic Dehn surgery constructs from $M$ a closed hyperbolic manifold $M'$ so that $M_{\leq Y}$ is almost isometrically embedded inside $M'$ and $\vol (M')$ is close to $\vol (M)$. In particular, we may construct $M'$ so that
$$
\int_{M'} \mathrm{tr} \, e^{-t \Delta_k } (x,x) dx \geq \vol (M').
$$
Now take $k=1$. Starting from a sequence of finite volume, noncompact, hyperbolic manifolds that BS-converges toward $\BH^d$, the construction above yields a sequence of closed hyperbolic manifolds $(M_n)$ such that 
$$\int_{M_n } \mathrm{tr} \, e^{-t \Delta_1 } (x,x) dx \geq \vol (M_n ).$$
On the other hand the integral $\frac{1}{\vol (M_n )} \int_{M_n} \mathrm{tr} e^{-t \Delta_1^{(2)} } (x,x) dx$ is finite, bounded uniformly in $n$ and approaches $\beta_1^{(2)} (\BH^3 )= 0$ as $t$ tends to infinity. In particular for $t$ small enough we may assume that 
$$\int_{M_n } \mathrm{tr} \, e^{-t \Delta_1^{(2)} } (x,x) dx \leq \frac12 \vol (M_n ).$$
And it follows that \eqref{prop:0.4} cannot hold (when $d=3$).

\medskip

We now prove Theorem \ref{thm:rank-one} when $d=3$.  Suppose that $(M_n)$ is a sequence of finite volume\footnote{This argument even works for nonuniform lattices, while the estimates in the $d\geq 4$ case are just for uniform lattices.} hyperbolic $3$-manifolds that BS-converges to $\BH^3$.    In light of Proposition~\ref{Pcvhk}, we need to show that \footnote{Note that the right hand side below is $0$, but we will not make use of that.}
\begin{equation} \label{newlim}
\lim_{n \to +\infty} \frac{b_1(M_n )}{\vol(M_n )}\le\gb^{(2)}_1(\BH^3).
\end{equation}

Fix $\epsilon$ less than the Margulis constant.  When $M $ is a finite volume hyperbolic $3$-manifold, we let $M_T$ be the union of the $\epsilon$-thick part of $M$ and any components of the $\epsilon$-thin part on which the injectivity radius is bounded below by $\epsilon/2$.

\begin{lem}\label{bddg}
$M_T$ is a closed, $3$-dimensional submanifold whose boundary consists of tori or Klein bottles smoothly embedded in $M$, and the components of $M\setminus M_T$ are either solid tori or solid Klein bottles (i.e.\ disk bundles over a circle) or are products $T^2 \times (0,\infty)$ or $K^2 \times (0,\infty)$. Furthermore, $M_T$ has `bounded geometry', in the sense of  \cite[Definition 2.24]{Luckschick}.
\end{lem}

 As we will see below, `bounded geometry' requires that the boundary of $M_T$ is not too distorted in $M$, which is why we take $M_T$ instead of just the $\epsilon$-thick part.

\begin{proof}
In dimension $3$, the Margulis lemma implies that each component of the thin part $M\setminus M_T$ is the quotient of either a metric neighborhood of a geodesic in $\BH^3$ or of a horoball; this implies that the boundary is smooth, and gives the topological information above.\footnote{Isometries of $\BH^3$ that translate along an axis $c$ are compositions of pure translations and $2$-dimensional rotations in the orthogonal direction. So for a given $r>0$, a loxodromic isometry of $\BH^3$ with geodesic axis $c$ acts with the same translation distance on every point of the boundary $\partial N_r(c)$ of the $r$-neighborhood around $c$. This is not true in higher dimensions, since the rotational part of an isometry can be more complicated, and in fact the components of $M\setminus M_T$ that are (non-metric) neighborhoods of closed geodesics may not have smooth boundary.} See also \cite{Benedetti-Petronio} for details.

In \cite[Definition 2.24]{Luckschick}, `bounded geometry' means the following. First, the injectivity radius of $M_T$ should be bounded below, which is true by definition. Second, the geodesic flow starting from the inward normal vector field on $\partial M_T$ should give a collar neighborhood of the boundary with radius bounded below:  this follows since the injectivity radius of $M_T$  is bounded below and since the  components of the preimage of $M\setminus M_T$ in $\BH^3$ are convex (this again is a $3$ dimensional phenomenon and is false in higher dimension). Finally, the derivatives of the metric tensor and its inverse should be bounded, both in exponential coordinates and the `boundary normal coordinates' on the collar of $\partial M_T$ above.  In exponential coordinates, the bounds come from differentiating the metric tensor on $\BH^3$, while in boundary normal coordinates, one uses that the second fundamental form of $\partial M_T \subset M$ has bounded derivatives, as it is the quotient of a horosphere or of a metric neighborhood of a geodesic with radius bounded below by $\epsilon/4$. \end{proof}

For all four topological types of components of $M\setminus M_T$, the first cohomology of the boundary surjects, so using Mayer--Vietoris sequence we see that 
$$b_1(M)\leq b_1(M_T).$$
Therefore, to prove \eqref{newlim} it suffices to estimate the Betti numbers of $(M_n)_T$.

Let $\Delta$ be the Laplacian operator on differential $1$-forms on $M$, and $e^{-t\Delta}(x,x)$ the corresponding heat kernel. We also let $\Delta_1^T$ be the Laplacian operator on differential $1$-forms on $M_T$  with absolute boundary conditions, and denote by $e^{-t\Delta_1^T}(x,x)$ its integral kernel. It follows from \cite[Theorem 6.1]{Donnellylower} that 
\begin{equation}\label{disdat}b_1(M_T)=\lim_{t\to\infty} \mathrm{Tr} \, e^{-t\Delta_1^T} = \lim_{t\to\infty} \int_{M_{\geq \eps}} tr \, e^{-t\Delta_1^T}(x,x) \, dx;\end{equation}
note that since $\mathrm{Tr} \, e^{-t\Delta_1} = \sum_i e^{-t\lambda_i}$, where $\lambda_i$ are the eigenvalues of $\Delta_1^T$, the expression above is monotone decreasing in $t$, so the limit exists. 

Recall that $\gb^{(2)}_1(\BH^3)=\lim_{t\to\infty}  \text{tr} \, e^{-t\gD_1^{(2)}}( \ti x, \ti x),$ where $e^{-t\gD_1^{(2)}}( \ti x, \ti x)$  is the $L^2$-heat kernel of $\BH^3$. In light of \eqref{disdat}, it suffices to fix $t>0$ and show that
\begin{equation} \label{newlim2}
\limsup_{n\to \infty} \frac 1{\vol{(M_n)}} \int_{(M_n)_{T}} \mathrm{tr} \, e^{-t\Delta_1^T}(x,x) \, dx \leq \text{tr} \, e^{-t\gD_1^{(2)}}( \ti x, \ti x),
\end{equation} 
 for some (arbitrary) $\ti x \in \BH^3$. Since then, taking $t\to \infty$ proves \eqref{newlim}. 

Fix some large $R\gg 1>\epsilon$, and consider the subset $(M_n)_{\geq R} \subset (M_n)_{T}$. Then the boundary of $(M_n)_{T}$ is uniformly far from $(M_n)_{\geq R} $, and by a theorem of L\"uck and Schick \cite[Theorem 2.26]{Luckschick} we have that for all $x\in (M_n)_{\geq R} $ 
$$|| e^{-t\Delta_1^T}(x,x) -e^{-t\Delta_1}(x,x) || \leq C(t,R),$$
where $C(t,R)\to 0$ as $R \to \infty.$ (Note: although their statement assumes that $M_n$ has bounded geometry,  which in this case means the global injectivity radius $\mathrm{InjRad} M_n >0$, it suffices in their proof to assume a lower injectivity radius bound on $ (M_n)_{\geq R} $, which is automatic.) So, by Lemma \ref{lem:hk}, for all $x\in (M_n)_{\geq R} $,
$$||e^{-t\Delta_1^T}(x,x) - e^{-t\Delta_1^{(2)}}( \ti x, \ti x) || \leq C(t,R) + C(t) R^{-3}=C'(t,R),$$
where again $C'(t,R)\to 0$ as $R\to \infty$.
Hence, for all $n$, the average value
\begin{equation}\frac 1{\vol{(M_n)_{\geq R}}} \int_{(M_n)_{\geq R}} \mathrm{tr} \, e^{-t\Delta_1^T}(x,x) \, dx \leq 
\text{tr} \, e^{-t\gD_1^{(2)}}( \ti x, \ti x) + C'(t,R). \label{largeR}\end{equation}
Next, if $x\in D_n=(M_n)_{T} \setminus (M_n)_{\geq R}$, we have by \cite[Theorem 2.35]{Luckschick} that
$$||e^{-t\Delta_1^T}(x,x) || \leq C(t),$$
since by Lemma \ref{bddg} the manifold with boundary $(M_n)_{T}$ has bounded geometry, in the sense of \cite[Definition 2.24]{Luckschick}. So, we also have the average value
\begin{equation}\frac 1{\vol{D_n}} \int_{D_n} \mathrm{tr} \, e^{-t\Delta_1^T}(x,x) \, dx \leq 
C(t). \label{smallR}\end{equation}

Combining \eqref{largeR} and \eqref{smallR}, we obtain that for all $n$:
\begin{align*} \label{newlim2}
& \ \  \limsup_{n\to \infty} \frac 1{\vol{(M_n)_{T}}} \int_{(M_n)_{T}} \mathrm{tr} \, e^{-t\Delta_1^T}(x,x) \, dx \\  & \leq  \frac{\vol \, (M_n)_{\geq R}}{\vol \, M_n}  \left (
\text{tr} \, e^{-t\gD_1^{(2)}}( \ti x, \ti x) + C'(t,R) \right )+ \frac{\vol \, D_n}{\vol M_n} C(t) \\  & \leq 
\text{tr} \, e^{-t\gD_1^{(2)}}( \ti x, \ti x) + C'(t,R) + \frac{\vol \, D_n}{\vol M_n} C(t).
\end{align*} 
For a fixed $R$, by letting first $n\to\infty$ we deduce from BS convergence that the last term vanishes in the limit. Finally, by sending $R \to \infty$, the term $C'(t,R)$ disappears.

\qed
\section{Growth of torsion} \label{sec:torsion}

In this last section we consider only those $X = G/K$ for which all $\beta_k^{(2)} (X)$ vanish. It is then natural to consider the secondary invariant given by the $L^2$-torsion. We first review its definition and then consider the corresponding approximation problems. We continue with the notations of the preceeding sections. In particular we let $\Gamma$ be a cocompact torsion-free subgroup of $G$ and let $M = \Gamma \backslash X$.

\subsection{$L^2$- and analytic torsion} 
We will work in the setting of \cite{BV}: we will be here as brief as possible concerning definitions, etc. and refer to that paper for all details. 

Given a finite-dimensional representation $\rho$ of $G_\C$ on a vector space $E$ one can construct a canonical $G$-equivariant Hermitian bundle $E_\rho$ on $X$ with fiber $E$. The space of square-integrable $k$-forms with coefficients in $E_\rho$ is then endowed with a Laplacian $\Delta_k^{(2)}$ and associated heat kernels $e^{-t\Delta_k^{(2)}(\rho)}$ which are bounded operators given by convolution with a $G$-equivariant kernel $e^{-t\Delta_k^{(2)}(\rho)}(x,y)$ (a section of a bundle over $X\times X$). The trace $\Tr e^{-t\Delta_k^{(2)}(\rho)}(x,x)$ does not depend on $x\in X$. Let $\Gamma(s)$ denote the Euler Gamma-function ; the determinant $\det\Delta_k^{(2)}$ is then defined by : 
$$
\log\det\Delta_k^{(2)}  = \frac{d}{ds} \Big|_{s=0} \left( \frac{1}{\Gamma (s)} \int_0^1 t^{s-1}  \Tr e^{-t \Delta_k^{(2)}}(x,x) dt \right)  + \int_1^{+\infty} t^{-1} \Tr e^{-t \Delta_k^{(2)}}(x,x) dt.
$$
(see \cite[Definition 3.128]{LuckBook} for a justification) and the $L^2$-torsion $t_X^{(2)}(\rho)$ by : 
\begin{equation} \label{L2analyticTorsion}
t_X^{(2)} (\rho) = \frac 1 2 \sum_{k \geq 0} (-1)^k k \log\det\Delta_k^{(2)}. 
\end{equation}

The bundle $E_\rho$ descends to a bundle $V$ on $M$, with Laplacians $\Delta_k$ and heat kernels $e^{-t\Delta_k}$ ; similar to the $L^2$-case one can define determinants of the $\Delta_k$ and analytic torsion $T_M(\rho)$. We raise the following question/conjecture.

\begin{conj} \label{conjtors}
Let $(M_n )$ be a \emph{uniformly discrete} sequence of compact $X$-manifolds which BS-converges to $X$.
Then we have:
$$\frac{\log T_{M_n}(\rho)}{\vol (M_n)} \rightarrow t_X^{(2)}(\rho).$$
\end{conj}

We note that $t_X^{(2)}(\rho)$ is non-zero if and only if $\delta (G) =1$, i.e. if $G$ is one of the groups $\SL_2 (\C )$, $\SL_3(\R),  \SO_{n,m}, \ \mbox{$nm$ odd}$. In principle one can compute an explicit value of $t_X^{(2)}$ for all $G$ and $\rho$, see \cite[Section 5]{BV}. When $G=\SO_{2p+1 , 1}$ the space $X$ is the real hyperbolic space $\H^{2p+1}$. We have for trivial $\rho$ \cite[Theorem 3.152]{LuckBook}:
$$t_{\mathbb{H}^{3}}^{(2)} =  -\frac{1}{6\pi} , \ t_{\mathbb{H}^{5}}^{(2)} =  \frac{31}{45\pi^2}, \ \ldots$$

\subsection{Strongly acyclic coefficients}
The representation $\rho$ is said to be {\it strongly acyclic} if there is a constant $\eta$ such that for every cocompact $\Gamma\subset G$ and for every $k$, the spectrum of the Laplace operator $\Delta_k$ on $\Gamma \backslash X$ is contained in $[\eta,+\infty[$ (in particular this implies that $H^*(M;V) = 0$). When $\rho$ is strongly acyclic Conjecture \ref{conjtors} was proven for normal chains in \cite[Theorem 4.5]{BV}. The proof of loc. cit. adapts immediately to the setting of Benjamini--Schramm convergence, simply by replacing the main lemma there by Lemma \ref{lem:hk}. Thus we obtain : 

\begin{thm} \label{approxthm}
Assume that $\rho : G  \rightarrow \GL (E)$ is strongly acyclic. Let $(M_n )$ be a uniformly discrete sequence
of compact $X$-manifolds which BS-converges toward $X$.
Then we have:
$$\frac{\log (T_{M_n} (\rho))}{\vol(M_n)} \rightarrow t_X^{(2)} (\rho).$$
\end{thm}

\subsection{Example} Given any orientable compact hyperbolic $3$-manifold $M= \Gamma \backslash \H^3$ we can consider the discrete faithful $\SL_2 (\C)$-representation
$\alpha_{\rm can} : \Gamma \hookrightarrow \SL_2 (\C)$. It is strongly acyclic (see Example (3) of \cite[\S 5.9.3]{BV} with $(p,q)= (1,0)$). In particular: the corresponding twisted chain complex
$$C_* (\widetilde{M}) \otimes_{\Z [\Gamma ]} \C^2 $$
is acyclic and it follows that the corresponding Reidemeister torsion $\tau (M, \alpha_{\rm can}) \in \R^*$ is defined. According to the Cheeger-Mueller theorem extended to unimodular
representation by Mueller \cite{Mueller} we have $T_{M} (\rho) = |\tau (M, \alpha_{\rm can})|$ and Theorem \ref{thm7} follows from Theorem \ref{approxthm}.

\subsection{Torsion homology} 
In this (largely speculative) section we suppose that $\rho$ is trivial. According to the Cheeger-Mueller theorem \cite{Cheeger,Mueller} the analytic torsion $T_M$ decomposes as a product of
$$\prod_{k=0}^{\dim X} (-1)^{k+1} | H_k (M , \Z)_{\rm tors}|$$
by a so-called {\it regulator}, see \cite[eq. (2.2.4)]{BV}. This relates Conjecture \ref{conjtors} to the following question: let $M_n$ be a sequence of compact $X$-manifolds which BS-converges to $X$. Do we have for every $k \leq \dim (X)$:
$$\frac{\log |H_k (M_n , \mathbb{Z})_{\rm tors}|}{\vol (M_n)} \rightarrow \left\{
\begin{array}{ll}
|t_X^{(2)}| & \mbox{ if } k = \frac{\dim X -1}{2} \\
0 & \mbox{ otherwise } 
\end{array} \right.$$

To avoid discussing the growth of $H_k(M_n)$ for $k\not=(\dim(X)-1)/2$ here we will restrict to the case $X = \H^3$ so that $H_k(M_n)$ are torsion-free if $k \not=\frac{\dim X -1}2 = 1$ and $t_X^{(2)} = -(6\pi)^{-1}$. In this setting there are extensive computations by \c Sengun \cite{Sengun} for covers of a fixed manifold which suggest that the answer to the question above is negative, indicating that the contribution of the regulator to the limit in Conjecture \ref{conjtors} should be nonzero in general. However the same computations suggest that this is not the case when considering only congruence covers of an arithmetic manifold. See \cite{BSV} for a detailed discussion on regulators and the differences between congruence and non-congruence covers.

\medskip

The following result of Brock--Dunfield \cite{BrockDunfield} finally shows that Conjecture \ref{conjtors} cannot hold for general (non uniformly discrete) sequences. 

\begin{thm} \label{BDT}
There exists a sequence of hyperbolic {\rm integer} homology $3$-spheres which BS-converges toward the hyperbolic $3$-space.
\end{thm}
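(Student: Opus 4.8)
The plan is to produce hyperbolic $3$-manifolds with trivial rational homology — in fact trivial integral homology — that nevertheless accumulate, in the Benjamini--Schramm sense, onto $\mathbb{H}^3$, which forces their volumes to tend to infinity. The construction should start from a single cusped finite-volume hyperbolic $3$-manifold, for instance the figure-eight knot complement or one of the other arithmetic cusped manifolds whose Dehn surgery space is well understood, and then apply hyperbolic Dehn surgery. Thurston's Dehn surgery theorem gives that all but finitely many slopes on each cusp yield closed hyperbolic manifolds, and the cores of the filling solid tori can be made arbitrarily short by filling along slopes of large length; moreover the geometry of the surgered manifold converges geometrically to the cusped manifold on compact sets away from the cusp. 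The first step is therefore to understand which surgery coefficients produce \emph{integer homology spheres}: a Mayer--Vietoris / half-lives-half-dies computation shows that the homology of a Dehn filling is controlled by the surgery slopes against $H_1$ of the cusped manifold, so one wants a base manifold (possibly with several cusps, or a knot complement in a homology sphere) for which a cofinal family of slopes kills all of $H_1$.

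The second step is to arrange BS-convergence to $\mathbb{H}^3$ rather than to the cusped manifold. A single Dehn-surgery family on a fixed cusped $M$ does \emph{not} BS-converge to $\mathbb{H}^3$: it converges to the IRS supported on $\pi_1 M$ (the thick part stays fixed, only a thin tube is added), so the injectivity radius does not go to infinity on a set of full proportion. To fix this one should use a diagonal or iterated construction: either (a) take an exhausting sequence of cusped manifolds $M_j$ with more and more cusps (e.g. cyclic covers, or link complements with growing numbers of components) whose thick parts themselves BS-converge to $\mathbb{H}^3$, and then Dehn-fill each $M_j$ along slopes long enough that the filling is an integer homology sphere and the added thin parts occupy a vanishing proportion of the volume; or (b) observe, following Brock--Dunfield, that one can instead take a fixed $M$ and fill along slopes on a \emph{growing} set of cusps appearing in towers of covers, keeping control of homology via the surgery formula. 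Either way, the key quantitative point is a volume estimate: if $N$ is obtained from $M$ by filling cusps along slopes of length $\geq L_j$, then $\mathrm{vol}(N) \geq \mathrm{vol}(M) - o(1)$, the thick part of $N$ contains a geometric copy of most of $M$, and the total volume of the $\varepsilon$-thin tubes around the short cores is $O(\varepsilon)$-small compared to the rest, so the proportion of the $R$-thin part goes to $0$ as $j\to\infty$ for each fixed $R$.

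The main obstacle is the simultaneous control of two things that pull in opposite directions: making the manifolds be integer homology spheres (a rigid $\mathbb{Z}$-homological constraint that restricts surgery slopes to a sublattice of $H_1$, and becomes delicate when there are many cusps) while still having enough freedom in the slopes to push all the core geodesics short and all the added volume into a negligible thin part. This is exactly the technical heart of Brock--Dunfield and I would lean on their argument: one needs a base manifold (they use a specific census link complement / a manifold built from a suitable mutation or a twisting construction) whose cusped homology and whose Dehn-surgery homology map are flexible enough to admit a cofinal family of homology-sphere fillings. Given such a family, the remaining verification is routine: apply the geometric convergence statement of the Dehn surgery theorem to see that for every $R>0$ the proportion of points of injectivity radius $<R$ tends to $0$, hence by Definition \ref{defn:conv-to-G} (equivalently Lemma \ref{lem:loccv}) the sequence BS-converges to $\mathbb{H}^3$.

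Concretely, I would state the proof as: invoke \cite[Theorem 1.1]{BrockDunfield} directly for the existence of the sequence and then supply the short argument that their sequence has unbounded volume and that the proportion of the thin part vanishes — the former because hyperbolic integer homology spheres of bounded volume are finite in number (Jørgensen--Thurston) while the sequence is infinite, and the latter from the explicit tube-volume bounds in the Dehn surgery theorem together with the fact that the ambient cusped pieces can be taken to BS-converge to $\mathbb{H}^3$. In the write-up of the paper this theorem is quoted rather than reproved, so the ``proof'' here is essentially the citation plus the remark that such examples show the naive torsion question fails; I would therefore keep this section to the statement and the pointer to \cite{BrockDunfield}, deferring the corrected (conditional) approximation statement for $L^2$-torsion to Theorem \ref{approxthm}.
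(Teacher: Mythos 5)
The paper offers no proof of this statement: it is quoted verbatim from Brock--Dunfield \cite[Theorem 1.1]{BrockDunfield}, which is exactly what you settle on in your final paragraph, so your proposal matches the paper. One caveat: your speculative reconstruction via Dehn surgery is not how Brock--Dunfield actually argue (they glue handlebodies along high powers of pseudo-Anosov mapping classes lying in the Torelli group, using Namazi--Souto-type geometric control of such Heegaard gluings), and, as you yourself observe, a Dehn-filling family on a fixed cusped manifold BS-converges to that cusped manifold rather than to $\mathbb{H}^3$, so the surgery route would require the delicate diagonal argument you only gesture at.
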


According to the Cheeger-M\"uller theorem, if $M$ is a homology sphere then $T_M = 1$. Thus the theorem above provides us with a sequence $M_n$ that converges to $\H^3$ in the Benjamini--Schramm sense but such that the conclusion of the conjecture is violated in an extreme way. 

\subsection{Knot exteriors}
Given a hyperbolic knot Dunfield, Friedl and Jackson \cite{DFJ} have introduced an invariant $\mathcal{T}_K (t) \in \C[t^{\pm1}]$ which is defined as the normalized twisted Alexander polynomial of $K$ corresponding to the discrete and faithful $\SL_2 (\C)$-representation of the knot group. It follows from \cite[Theorem 4]{Le} that the following holds: let $M_n$ be the $n$-th cyclic ramified cover of $\mathbb{S}^3$ along $K$, then for $n$ large enough $M_n$ is hyperbolic and
\begin{equation} \label{eq:ex2}
\lim_{n \rightarrow +\infty} \frac{1}{n} \log | \tau (M_n , \alpha_{\rm can}) | = - \log m ( \mathcal{T}_K ) ,
\end{equation}
where $m$ is the exponential Mahler measure. On the other hand Friedl and Jackson \cite{FriedlJackson} produce computations that suggest that $\log m ( \mathcal{T}_K )$
correlates strongly with $\mathrm{vol} (K)$: as $\mathrm{vol} (K)$ tends to infinity the ratio $\log m ( \mathcal{T}_K ) / \mathrm{vol} (K)$ seems to tend to a constant $\approx 0.29$.

Let $\overline{M}_n$ be the hyperbolic orbifold with underlying space $\mathbb{S}^3$ and $n$-th cyclic singularity along $K$. Then $M_n$ is a regular $n$-sheeted cover of
$\overline{M}_n$. Now recalling that $\overline{M}_n$ BS-converges toward $\mathbb{S}^3 - K$ (and in particular that  $\mathrm{vol} (\overline{M}_n ) \rightarrow \mathrm{vol} (K)$) as $n$ tends to infinity and that $11/12 \pi \approx 0.29$, in view of Theorem \ref{thm7} and equation \eqref{eq:ex2} it is natural to ask the following question (compare \cite{PurcellSouto}):

\medskip
\noindent
{\bf Question.} Let $(K_n)$ be a sequence of hyperbolic knots in ${\Bbb S}^3$ such that
$\vol(K_n) \rightarrow +\infty$. Can it happen that the sequence of finite volume hyperbolic manifolds
${\Bbb S}^3 - K_n$ BS-converge toward $\mathbb{H}^3$ ?

\bibliography{newbib}

\bibliographystyle{plain}

\end{document}